\documentclass[a4,11pt]{amsart}

\usepackage[T1]{fontenc}
\usepackage[utf8]{inputenc}
\usepackage{lmodern}
\usepackage[english]{babel}
\usepackage[autostyle]{csquotes}
\usepackage[usenames, dvipsnames]{color}

\numberwithin{equation}{section}

\usepackage[pdfusetitle,linktocpage,colorlinks=false]{hyperref}
\usepackage{doi}
\usepackage{filecontents}
\usepackage{enumitem}

\usepackage{amsmath,amssymb,amsthm,amscd}
\usepackage{pictexwd,dcpic}
\newtheorem{thm}{Theorem}[section]
\newtheorem*{mainthm}{Theorem}
\newtheorem{prop}[thm]{Proposition}
\newtheorem{lem}[thm]{Lemma}
\newtheorem{cor}[thm]{Corollary}

\theoremstyle{definition}
\newtheorem{defn}[thm]{Definition}
\theoremstyle{remark}
\newtheorem{rem}[thm]{Remark}
\newtheorem{assump}[thm]{Assumption}

\newcommand{\X}{\mathsf{X}}
\newcommand{\Y}{\mathsf{Y}}
\newcommand{\V}{\mathsf{V}}
\newcommand{\W}{\mathsf{W}}
\newcommand{\Zm}{\mathsf{Z}}
\newcommand{\Um}{\mathsf{U}}
\newcommand{\cH}{\mathcal{H}}

\DeclareSymbolFont{symbolsC}{U}{pxsyc}{m}{n}
\DeclareMathSymbol{\coloneqq}{\mathrel}{symbolsC}{"42}
\newcommand{\cC}{\mathcal C}
\newcommand{\Bis}{\operatorname{Bis}}
\newcommand{\Bisrc}{\operatorname{Bis}_{\mathrm{rc}}}
\newcommand{\SPIso}{\operatorname{SPIso}}
\newcommand{\Bop}{\mathcal B}
\newcommand{\Span}{\operatorname{span}}
\newcommand{\supp}{\operatorname{supp}}
\newcommand{\op}{\mathrm{op}}

\begin{document}
\title[]{Morita equivalence for $L^p$-operator algebras associated with \'{e}tale groupoids}
\author[]{Yeong Chyuan Chung}
\address{School of Mathematics, Jilin University, Changchun 130012, Jilin, P.R. China}
\email{chungyc@jlu.edu.cn}
\author[]{Alonso Delfín}
\address{Department of Mathematics \& Statistics \\ 500 College Avenue \\ Swarthmore, PA 19081}
\email{adelfin1@swarthmore.edu}
\author[]{Zhen Wang}
\address{School of Mathematics, Hangzhou Normal University, Hangzhou 311121, P.R. China}
\email{wangzhen@hznu.edu.cn}
\date{\today}
\subjclass[2020]{Primary 47L10, 22A22; Secondary 46H05, 46H25, 46L80.}
\keywords{Morita equivalence, $L^p$-operator algebras, \'{e}tale groupoids,
linking groupoids, groupoid correspondences.}
\maketitle

\begin{abstract}
We prove that equivalent locally compact, locally Hausdorff, \'{e}tale groupoids with paracompact unit spaces have Morita equivalent reduced and full $L^p$-operator algebras for every $p\in[1,\infty]$. The reduced theorem requires pairing-valued approximate identities, reflecting the failure of unconditionality of the reduced norm for $1<p<\infty$, while the full theorem rests on a full-clopen reduction theorem proved by dilating spatial representations. For $1\leq p<\infty$, we also compare the concrete $p$-linking algebra of the reduced Morita equivalence with the reduced $L^p$-operator algebra of the linking groupoid: the canonical comparison is contractive, injective, and has dense range in general, and is an isometric isomorphism for $p=2$. We further study Morita cycles arising from proper \'{e}tale groupoid correspondences, including the reduced case under suitable extension hypotheses. Applications include an $L^p$ version of Green's symmetric imprimitivity theorem, results for coarse groupoids and inverse semigroups, and invariance of the corresponding $K$-theory.
\end{abstract}

\tableofcontents

\section{Introduction}

A topological groupoid is called \'{e}tale if its source and range maps are local homeomorphisms. \'{E}tale groupoids arise naturally in many areas of mathematics (including foliation theory, dynamical systems, noncommutative geometry, and inverse semigroup theory) and their operator algebras have been the subject of intensive study over the past few decades.

For an \'{e}tale groupoid, Renault's seminal work \cite{Ren1,Ren2} introduced the full and reduced groupoid $C^*$-algebras, and established that many geometric and dynamical properties of the groupoid are encoded in the structure of the $C^*$-algebra. A fundamental result in this theory, commonly known as Renault's equivalence theorem, states that if two \'{e}tale groupoids are equivalent, then their full $C^*$-algebras are strongly Morita equivalent \cite[Theorem 2.8]{MRW}. 
The case of the reduced $C^*$-algebra was proved by making use of the linking groupoid in \cite{SW}, though the authors mentioned that the result had been known to experts earlier (cf. \cite{Ren3,Tu}).
This theorem has Green's imprimitivity theorem for crossed products by group actions as a special case, and was extended to groupoid crossed products in \cite{MW}.

In recent years, there has been growing interest in the $L^p$ analogues of groupoid operator algebras.
Gardella and Lupini \cite{GL} systematically developed the theory of representations of \'{e}tale groupoids on $L^p$-spaces, and introduced the reduced $L^p$-operator algebra of such groupoids. When $p=2$, this is precisely the reduced groupoid $C^*$-algebra. Rigidity results for these $L^p$-operator algebras have been obtained recently \cite{CGT}. Other papers on groupoid $L^p$-operator algebras include \cite{AOP,HO}, and more general Banach algebras associated with \'{e}tale groupoids have also been studied \cite{BKM1,BKM2}. 

A natural question arises: does the Morita equivalence theorem in the $C^*$-algebra setting extend to the $L^p$ setting?
That is, if $G$ and $H$ are equivalent \'{e}tale groupoids, are their full and reduced $L^p$-operator algebras Morita equivalent as Banach algebras in the sense of \cite{Par09}?
In this paper, we prove the following result; see Theorem~\ref{thm:main} and Corollary~\ref{cor:full-morita}:

\begin{mainthm} 
If two locally compact, locally Hausdorff, \'{e}tale groupoids with paracompact unit spaces are equivalent, then, for every $p\in[1,\infty]$,
their reduced $L^p$-operator algebras are Morita equivalent, and their
full $L^p$-operator algebras are Morita equivalent.
\end{mainthm}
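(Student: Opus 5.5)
The plan is to build a Morita context in the sense of \cite{Par09} directly from a $(G,H)$-equivalence $Z$, imitating the construction of \cite{MRW}. We cannot use Hilbert modules or a $C^*$-valued inner product. Instead we equip $C_c(Z)$ with a left $C_c(G)$-action, a right $C_c(H)$-action, and two pairings
\[
{}_G\langle \xi,\eta\rangle(g)=\sum_{h} \xi(g\cdot z\cdot h)\,\eta(z\cdot h), \qquad \langle \xi,\eta\rangle_H(h)=\sum_{g}\xi(g^{-1}\cdot z)\,\eta(g^{-1}\cdot z\cdot h).
\]
Here there is no conjugation, since in the $L^p$ setting the module for the other side is $C_c(Z^{\op})$, a "dual" module. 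Local Hausdorffness forces $C_c$ to mean spans of functions compactly supported on open Hausdorff subsets; paracompactness of the unit spaces gives partitions of unity so that these spans behave well and the sums are finite. The first step is the algebraic verification that $(C_c(G),C_c(H),C_c(Z),C_c(Z^{\op}))$ with these pairings is a pre-Morita context: bimodule compatibility and the associativity ${}_G\langle\xi,\eta\rangle\cdot\zeta=\xi\cdot\langle\eta,\zeta\rangle_H$. This is the same computation as in \cite{MRW}.

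The second step is to complete everything. The cleanest route is the linking groupoid $L=G\sqcup Z\sqcup Z^{\op}\sqcup H$, which is again locally compact, locally Hausdorff and \'etale with paracompact unit space $G^{(0)}\sqcup H^{(0)}$. In $F^p_\lambda(L)$ or $F^p(L)$ take the clopen unit-space projections $e_G=1_{G^{(0)}}$ and $e_H=1_{H^{(0)}}$, and realize $e_GF(L)e_G$, $e_GF(L)e_H$, $e_HF(L)e_G$ and $e_HF(L)e_H$. The corners are the completions of the four spaces above, and multiplication in $L$ gives exactly the pairings. So the Morita context is obtained by restricting the Banach algebra structure of $F(L)$, and everything reduces to two claims:
\begin{enumerate}
\item[(a)] the corner $e_GF(L)e_G$ is isometrically $F(G)$ in the reduced and full senses, for all $p$, and likewise for $H$;
\item[(b)] the ranges of the pairings are dense and admit approximate identities of the form $\sum_i {}_G\langle \xi_i,\eta_i\rangle$ that are bounded in the sense required by \cite{Par09}.
\end{enumerate}
Fullness of the corners follows because $Z$ has open surjective moment maps: $G^{(0)}$ is $L$-full in $L^{(0)}$.

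For (b), the plan is to take a partition of unity on $Z/H\cong G^{(0)}$ subordinate to bisection-like open sets $U_i\subseteq Z$ on which $r$ is injective. Put $\xi_i=\sqrt[p]{\phi_i}\cdot1_{U_i}$ and $\eta_i=\sqrt[q]{\phi_i}\cdot1_{U_i}$ with $1/p+1/q=1$. Then $\sum_i{}_G\langle\xi_i,\eta_i\rangle$ approximates units in the strict topology, with norm controlled by H\"older's inequality. This is where the reduced case is delicate. For $1<p<\infty$ the reduced norm is not unconditional, so the norm of a sum of such pairings cannot be estimated by sup-norms of coefficients. The estimate therefore has to be done via the regular representation directly, splitting $\xi_i$ and $\eta_i$ across $\ell^p$ and $\ell^q$.

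For (a) in the reduced case, restricting the regular representations of $L$ on $\ell^p(L_x)$, with $x\in G^{(0)}$, to the corner gives an amplification of the regular representation of $G$, so the norms agree. In the full case the plan is a full-clopen reduction theorem. Restriction of a spatial representation of $L$ to $e_G$ gives a representation of $G$. The harder direction is to dilate every $L^p$-representation of $G$ to one of $L$, via $Z$-induction $\ell^p(Z)\otimes_{G}-$, so that the corner norm equals the universal norm of $F^p(G)$. \textbf{The main obstacle} is this dilation: proving that the induced representation is again spatial and contractive on bisections. I would try to check spatiality on a basis of compact open bisections, using that $Z$ is a free proper $H$-space so that local sections exist. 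The cases $p=1$ and $p=\infty$ require separate care with duals, but there the norms are more explicit.
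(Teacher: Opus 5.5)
Your reduced-case plan is sound and follows the paper: linking groupoid, then the corner isometry via the regular representations over $G^{(0)}\sqcup H^{(0)}$.

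\textbf{Step (b) is simpler than the paper's.} You need one correction: the $U_i\subseteq Z$ must be open bisections of $L$, i.e.\ both $r_Z$ and $s_Z$ injective on $U_i$. Such sets cover $Z$ because $L$ is \'{e}tale; injectivity of $r_Z$ alone does not suffice. With this fix, injectivity of $s_Z$ and freeness force ${}_G\langle\xi_i,\widetilde{\eta_i}\rangle$ to be supported in $G^{(0)}$, and injectivity of $r_Z$ makes it equal to $\phi_i$. Then $e_K=\sum_i\phi_i\in C_c(G^{(0)})$ satisfies $e_K*a=a$ and $e_K\cdot\phi=\phi$ exactly once $K$ is large. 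Hence $\operatorname{span}{}_G\langle\cC(Z),\cC(Z^{\op})\rangle=\cC(G)$ already algebraically, and nondegeneracy holds in every completion. No H\"older splitting or delicate estimate is needed, since $\|e_K\|\leq\|e_K\|_I=\|e_K\|_\infty\leq1$. Paravicini's definition also does not require bounded approximate identities. This replaces the paper's Muhly--Williams-type inductive-limit approximate identity (Proposition~\ref{prop:ai}).

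\textbf{The genuine gap is the full case}, exactly the dilation you flag and leave open. ``$Z$-induction $\ell^p(Z)\otimes_G-$'' is not yet a construction, for three reasons.
\begin{itemize}
\item Without a $G$-valued inner product there is no evident norm on $\cC(Z)\odot E$ whose completion is an $L^p$-space carrying a spatial representation.
\item A spatial representation on $E=L^p(\mu)$ does not split into fibres over points of $G^{(0)}$, since the measure may be diffuse. So you cannot build the induced space fibrewise as $\bigoplus_{z\in Z_v}E_{r_Z(z)}$.
\item Continuous local sections only give local transports. Gluing them requires cutting the spatial partial isometries $v_A$ down to Borel pieces, which in turn requires extending the representation to bounded Borel functions on bisections.
\end{itemize}
Also, ``contractive on bisections'' would not by itself yield a representation of $F^p(L)$. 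You need a spatial covariant representation of an inverse semigroup modelling $L$, after which \cite{BKM1} supplies $\|\cdot\|_I$-contractivity.

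The paper closes this gap as follows.
\begin{itemize}
\item It dilates only one representation: an isometric nondegenerate spatial representation $\psi=\pi\rtimes v$ of $F^p(G)$, which exists by \cite[Theorem~5.19]{BKM1}.
\item It Borel-extends $\psi$ (Proposition~\ref{prop:Borel-extension}).
\item It uses paracompactness to choose a locally finite cover $(V_i)$ of $H^{(0)}$ by relatively compact open sets, with open bisections $B_i$ of $L$ satisfying $s(B_i)\subseteq G^{(0)}$ and $r(B_i)=V_i$, and disjointifies these to Borel bisections $\widetilde B_i\colon T_i\to R_i$ (Lemma~\ref{lem:coordinates}).
\item It sets $\widehat E=E\oplus_p\bigoplus_i\widetilde\pi(1_{T_i})E$ and lets each relatively compact open bisection $A$ act by the finite matrix $\bigl(\widetilde\psi(1_{\widetilde B_i^{-1}A\widetilde B_j})\bigr)_{i,j}$.
\item It verifies the spatial covariance relations for $\Bisrc(L)\cup\{L^{(0)}\}$ and shows that the integrated form restricted to $\cC(G)$ is $\psi(f)\oplus0$.
\end{itemize}
This is a Borel-coordinate realization of your induced space. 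Without it, or an equivalent argument, the full half of the theorem is unproved. The endpoints need no duality argument: there the full norms are $\|\cdot\|_{I,s}$ and $\|\cdot\|_{I,r}$, which extension by zero preserves.
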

In the Hausdorff setting, the cases $p=1$ and $p=\infty$ are covered by \cite[Theorem 5.6]{Par09I} since the reduced norms in these cases are unconditional, but our approach is different.
For $p\in(1,\infty)$, the reduced norm is not unconditional, so \cite[Theorem 5.6]{Par09I} does not apply to these cases.

Although the statement formally parallels Renault's equivalence theorem, its proof is not a routine adaptation of the $C^*$-case. For $p\neq2$, the Hilbert-module structure and the adjoint and positivity arguments underlying $C^*$-imprimitivity are no longer available; moreover, for $1<p<\infty$ the reduced $L^p$-norm is not an unconditional completion norm. On the reduced side, this requires a direct construction of pairing-valued approximate identities in order to establish fullness and nondegeneracy. On the full side, a different difficulty arises: we prove a full-clopen reduction theorem by dilating spatial $L^p$-representations. Thus the reduced and full equivalence theorems require genuinely different $L^p$-analytic replacements for the standard $C^*$-algebraic arguments.

Rieffel's theory of strong Morita equivalence \cite{Rie74,Rie74a} has had a profound influence on the theory of $C^*$-algebras. Beyond the realm of $C^*$-algebras, one has the analogous equivalence relation for Banach algebras studied by Paravicini \cite{Par09}. Just as in the $C^*$-algebra case, Morita equivalence in this sense preserves $K$-theory and, for Banach algebras with bounded approximate identities, induces an isomorphism of closed ideal lattices \cite[Theorem 2.9]{CD2}. Only recently has it been used to study $L^p$-operator algebras \cite{Chung4}. The Morita equivalence results above provide another application of this Banach-algebraic framework to $L^p$-operator algebras.

The reduced and full results require different analytic mechanisms. For the reduced algebras, we follow the linking groupoid approach of \cite{SW}, but must establish fullness and nondegeneracy for the completed off-diagonal Banach pair in a norm that is not unconditional. The key analytic ingredient is a pairing-valued approximate identity, constructed by adapting techniques from \cite{MW}.

For the full algebras, the obstruction is different: the full $L^p$-norm is representation-universal rather than unconditional. We prove that extension by zero from a full clopen reduction preserves the intrinsic full $L^p$-operator algebra norm. For $1<p<\infty$, this is obtained by dilating spatial representations using the Borel extension machinery of \cite{BKM1}; the endpoint cases follow from the corresponding $I$-norm formulas.

We also compare the concrete $p$-linking algebra of the reduced Morita equivalence with the reduced $L^p$-operator algebra of the linking groupoid (Theorem~\ref{thm:LpLink}). This comparison highlights another distinction from the $C^*$-case: for general $p$ the canonical map from the reduced algebra of the linking groupoid to the concrete $p$-linking algebra is contractive, injective, and has dense range, whereas for $p=2$ it is an isometric isomorphism; under an additional factorization hypothesis it is a Banach algebra isomorphism for the fixed value of $p$. We also study Morita cycles associated with \'{e}tale groupoid correspondences. Proper correspondences give Morita cycles for the full algebras. On the reduced side, we construct the canonical Banach $F^p_{\mathrm{red}}(H)$-pair, characterize compactness whenever the left action extends to the reduced completion, and give sufficient conditions for such an extension; the extension always exists for $p=1$ and $p=\infty$, and for $1<p<\infty$ it exists when the left $G$-action is proper (Theorem~\ref{thm:corrected-proper-correspondence}).

Taken together, the paper has three main contributions. First, it establishes Morita equivalence for both the reduced and full $L^p$-operator algebras of equivalent \'{e}tale groupoids, using different analytic methods adapted to the two completions. Second, it develops structural results for concrete $p$-linking algebras and for Banach pairs and Morita cycles arising from groupoid correspondences, including phenomena that differ from the Hilbert-module case. Third, it applies these results to symmetric imprimitivity, coarse groupoids, inverse semigroups, and $K$-theory.

Several applications are obtained in Section~\ref{sect_app}.  These include an
$L^p$ version of Green's symmetric imprimitivity theorem, and Morita
equivalences associated with coarse groupoids and with groupoids of
inverse semigroups.  Since Banach algebra Morita equivalence
preserves $K$-theory, our main theorem also gives
\[
   K_*\bigl(F^p_{\mathrm{red}}(G)\bigr)
   \cong
   K_*\bigl(F^p_{\mathrm{red}}(H)\bigr)
\]
for equivalent groupoids.  The argument applies more generally to
the algebras $F^P_{\mathrm{red}}(G)$ for nonempty
$P\subseteq[1,\infty]$, and the resulting $K$-theory isomorphisms are
compatible with the canonical maps associated with inclusions
$P_1\subseteq P_2$ (Proposition~\ref{prop:commKT}).  Combining the $L^p$ imprimitivity theorem with the coincidence of the
full and reduced crossed products for amenable actions, we also obtain
that, for a free and proper action of a countable discrete group $G$ on a second-countable, locally compact Hausdorff space $X$, the canonical homomorphism
\[
F^p(G,C_0(X))\longrightarrow F^p_{\mathrm{red}}(G,C_0(X))
\]
is an isometric isomorphism, and this algebra is Morita
equivalent to $C_0(X/G)$.  In particular, its $K$-theory is independent
of $p\in[1,\infty)$.

\medskip

\subsection*{Organization of the paper.}

Section~\ref{sect_Prelim} contains the required preliminaries.
Sections~\ref{sect_LinkGp} and~\ref{sect_AI} establish the reduced
Morita equivalence, and Section~\ref{sec:full-Lp} proves the
corresponding result for the full $L^p$-operator algebras. Section~\ref{sect_likVSFp} treats concrete $L^p$-modules and the associated linking algebra comparison.  Groupoid correspondences and applications are
considered in Sections~\ref{sect_corresp} and~\ref{sect_app}, respectively.

\section{Preliminaries}\label{sect_Prelim}

Throughout this paper, groupoids will be assumed to be locally compact, locally Hausdorff, and \'{e}tale with locally compact Hausdorff unit space.

We denote the source and range maps on a groupoid $G$ by $s$ and $r$, respectively.

For each unit $u\in G^{(0)}$, we have the range fiber $G^u=r^{-1}(u)$ and source fiber $G_u=s^{-1}(u)$.
These fibers are discrete when $G$ is \'{e}tale.

If $G$ is \'{e}tale, then it has a left Haar system $\{\lambda^u\}_{u\in G^{(0)}}$, where $\lambda^u$ is counting measure on $G^u$ (cf. \cite[Proposition 2.2.5]{Pat}).
If $H$ is another \'{e}tale groupoid, we shall denote its left Haar system consisting of the respective counting measures by $\{\beta^v\}_{v\in H^{(0)}}$.

We refer the reader to \cite{Ren1,ADR,Pat,MW,Will,CRM} for additional background on topological groupoids and their operator algebras.

\subsection{Banach algebras associated with \'{e}tale groupoids}

Given a locally compact, locally Hausdorff groupoid $G$, we write 
\[
\mathcal{C}(G) \coloneqq \mathrm{span}\{ f\in C_c(U):U\subset G\;\text{open Hausdorff} \}.
\]
Of course, $\mathcal{C}(G)$ is simply $C_c(G)$ when $G$ is Hausdorff.
For an open Hausdorff subset $U\subset G$, we regard functions in $C_c(U)$ as functions on $G$ by extending them to be zero outside $U$.

The product in $\mathcal{C}(G)$ is defined by convolution:
\[
(f*g)(\gamma)=\sum_{r(\eta)=r(\gamma)}f(\eta)g(\eta^{-1}\gamma).
\]
The involution on $\mathcal{C}(G)$ is defined by $f^*(\gamma)=\overline{f(\gamma^{-1})}$.
These operations make $\mathcal{C}(G)$ a $*$-algebra.

We have the following three submultiplicative norms on $\mathcal{C}(G)$ associated with the source and range maps on $G$:
\begin{align*}
	\Vert f\Vert_{I,s} &= \sup_{u\in G^{(0)}}\sum_{s(\gamma)=u}|f(\gamma)|, \\
	\Vert f\Vert_{I,r} &= \sup_{u\in G^{(0)}}\sum_{r(\gamma)=u}|f(\gamma)|, \\
	\Vert f\Vert_I &= \max\{ \Vert f\Vert_{I,s},\Vert f\Vert_{I,r} \}.
\end{align*}
The norm $\Vert-\Vert_I$ is called the $I$-norm.
Note that $\Vert f^*\Vert_{I,s}=\Vert f\Vert_{I,r}$ so $\Vert f^*\Vert_I=\Vert f\Vert_I$ for all $f\in\mathcal{C}(G)$.

We denote the completions of $\mathcal{C}(G)$ with respect to these norms by $F_{I,s}(G)$, $F_{I,r}(G)$, and $F_I(G)$, respectively.

If $u\in G^{(0)}$ and $\delta_u$ is the point mass, then we denote by $\mathrm{Ind}\;\delta_u$ the representation of $\mathcal{C}(G)$ on $\ell^p(G_u)$ given by the convolution formula, i.e., 
for $f \in \mathcal{C}(G)$ and $\xi \in \ell^p(G_u)$ we get $(\mathrm{Ind}\;\delta_u)(f)\xi \in \ell^p(G_u)$
by letting for each $\gamma \in G_u$,
\[ (\mathrm{Ind}\;\delta_u)(f)\xi(\gamma)=\sum_{r(\eta)=r(\gamma)}f(\eta)\xi(\eta^{-1}\gamma).\]
The reduced norm on $\mathcal{C}(G)$ is defined to be 
\[ \Vert f\Vert_{p,red}=\sup_{u\in G^{(0)}}\Vert(\mathrm{Ind}\;\delta_u)(f)\Vert_{B(\ell^p(G_u))}. \]
For $p\in[1,\infty]$, the reduced $L^p$-operator algebra of $G$, denoted by $F^p_{red}(G)$, is defined to be the completion of $\mathcal{C}(G)$ with respect to the reduced norm $\Vert\cdot\Vert_{p,red}$.

In fact, as we shall explain below, $\Vert\cdot\Vert_{1,red}=\Vert\cdot\Vert_{I,s}$ and $\Vert\cdot\Vert_{\infty,red}=\Vert\cdot\Vert_{I,r}$ (cf. \cite[Proposition 5.1]{BKM1}).

For $f\in\mathcal{C}(G)$, $u\in G^{(0)}$, and $\gamma\in G_u$, we have
\[ (\mathrm{Ind}\;\delta_u)(f)\delta_u(\gamma) = \sum_{r(\eta)=r(\gamma)}f(\eta)\delta_u(\eta^{-1}\gamma) = f(\gamma). \]
Thus $\Vert (\mathrm{Ind}\;\delta_u)(f) \Vert_{B(\ell^1(G_u))} \geq \sum_{s(\gamma)=u} |f(\gamma)|$, and $\Vert f\Vert_{1,red}\geq\Vert f\Vert_{I,s}$.
On the other hand, for any $\xi\in\ell^1(G_u)$, we have
\begin{align*} 
\Vert (\mathrm{Ind}\;\delta_u)(f)\xi\Vert_1 &\leq \sum_{s(\gamma)=u}\sum_{r(\eta)=r(\gamma)} |f(\eta)\xi(\eta^{-1}\gamma)| = \sum_{s(\theta)=u}\sum_{s(\gamma)=s(\theta)} |f(\gamma\theta^{-1})\xi(\theta)| \\
&= \sum_{s(\theta)=u}\sum_{s(\gamma')=r(\theta)} |f(\gamma')| |\xi(\theta)| \leq \Vert f\Vert_{I,s}\Vert \xi\Vert_1,
\end{align*}
so we have \[ \Vert f\Vert_{1,red}=\Vert f\Vert_{I,s}. \]

For $\phi\in\ell^\infty(G_u)$, we have
\begin{align*}
\langle (\mathrm{Ind}\;\delta_u)(f^*)\xi,\phi \rangle &= \sum_{s(\gamma)=u} \sum_{r(\eta)=r(\gamma)}f^*(\eta)\xi(\eta^{-1}\gamma)\overline{\phi(\gamma)} \\
&= \sum_{s(\gamma)=u} \sum_{s(\eta)=r(\gamma)}\xi(\gamma)\overline{f(\eta^{-1})\phi(\eta\gamma)} \\
&= \sum_{s(\gamma)=u} \sum_{r(\eta)=r(\gamma)}\xi(\gamma)\overline{f(\eta)\phi(\eta^{-1}\gamma)} \\
&= \langle \xi,(\mathrm{Ind}\;\delta_u)(f)\phi \rangle.
\end{align*}

Hence, 
\[ \Vert (\mathrm{Ind}\;\delta_u)(f) \Vert_{B(\ell^\infty(G_u))}=\Vert (\mathrm{Ind}\;\delta_u)(f^*) \Vert_{B(\ell^1(G_u))}, \]
and \[ \Vert f\Vert_{\infty,red}=\Vert f^*\Vert_{1,red}=\Vert f^*\Vert_{I,s}=\Vert f\Vert_{I,r}. \]

By the Riesz--Thorin interpolation theorem, we then have
\[ \Vert f\Vert_{p,red} \leq \Vert f\Vert_{I,s}^{1/p} \Vert f\Vert_{I,r}^{1/q} \leq \Vert f\Vert _I \]
for $p,q\in(1,\infty)$ satisfying $\frac{1}{p}+\frac{1}{q}=1$.
(cf. \cite[Proposition 5.1]{BKM1})

For $p\in[1,\infty]$, the full $L^p$-operator algebra of $G$, denoted by $F^p(G)$, is defined to be the completion of $\mathcal{C}(G)$ with respect to the norm 
\[ \Vert f\Vert_{L^p}=\sup_{\psi\in\mathcal{R}}\Vert\psi(f)\Vert,\] 
where $\mathcal{R}$ consists of all contractive representations of $(\mathcal{C}(G),\Vert\cdot\Vert_I)$ on $L^p$-spaces
(cf. \cite[Definition 5.12 and Theorem 5.13]{BKM1}).
By \cite[Theorem 5.5]{BKM1}, we have 
\[ \Vert f\Vert_{L^p}\leq\Vert f\Vert_{I,s}^{1/p}\Vert f\Vert_{I,r}^{1/q}\leq\Vert f\Vert _I \] for all $p\in[1,\infty]$, where $q$ is the Hölder conjugate of $p$ with the usual endpoint conventions.
Since $\Vert f\Vert_{p,red}\leq\Vert f\Vert_{L^p}$, it follows that \[ \Vert f\Vert_{L^1}=\Vert f\Vert_{I,s}, \quad \Vert f\Vert_{L^\infty}=\Vert f\Vert_{I,r}. \]

For $f \in \mathcal{C}(G)$ and any nonempty $P\subseteq[1,\infty]$, define 
\[
 \Vert f\Vert_{P}=\sup_{p\in P}\Vert f\Vert_{L^p} \text{ \  and \ } \Vert f\Vert_{P,red}=\sup_{p\in P}\Vert f\Vert_{p,red}.
\]
In this notation, we have 
\[
\Vert f\Vert_{\{1,\infty\}}=\Vert f\Vert_{\{1,\infty\},red}=\Vert f\Vert_I.
\]
We then define 
\[
F^P(G)=\overline{\mathcal{C}(G)}^{\Vert\cdot\Vert_{P}}  \text{ \  and \ } F^P_{red}(G)=\overline{\mathcal{C}(G)}^{\Vert\cdot\Vert_{P,red}}
\] 
(cf. \cite[Definition 3.14]{BKM2}).

\subsection{Groupoid actions and equivalences}

\begin{defn}
If $G$ is a locally compact, locally Hausdorff groupoid, then we say that a locally compact, locally Hausdorff space $Z$ is a left $G$-space if there is a continuous, open map $r_Z:Z\rightarrow G^{(0)}$ and a continuous map $(\gamma,z)\mapsto\gamma\cdot z$ from $G*Z=\{(\gamma,z)\in G\times Z:s_G(\gamma)=r_Z(z)\}$ to $Z$ such that $r_Z(z)\cdot z=z$ for all $z\in Z$ and whenever  $(\gamma',\gamma)\in G^{(2)}$ and $(\gamma, z) \in G * Z$, then $(\gamma', \gamma \cdot z) \in G*Z$ and $(\gamma'\gamma)\cdot z=\gamma'\cdot(\gamma\cdot z)$.

The action is free if $\gamma\cdot z=z$ implies $\gamma=r_Z(z)$;
the action is proper if the map $\Theta:G*Z\rightarrow Z\times Z$ given by $\Theta(\gamma,z)=(\gamma\cdot z,z)$ is a proper map of $G*Z$ into $Z\times Z$, i.e., $\Theta$ is a closed map such that the inverse image of each compact set is compact.

Right actions are defined similarly except that the structure map is denoted by $s_Z$ instead of $r_Z$.
\end{defn}

\begin{defn}
Let $G$ and $H$ be locally compact, locally Hausdorff groupoids. A $(G,H)$-equivalence is a locally compact, locally Hausdorff space $Z$ such that
\begin{enumerate}
\item $Z$ is a free and proper left $G$-space;
\item $Z$ is a free and proper right $H$-space;
\item the actions of $G$ and $H$ on $Z$ commute;
\item $r_Z$ induces a homeomorphism of $Z/H$ onto $G^{(0)}$;
\item $s_Z$ induces a homeomorphism of $G\backslash Z$ onto $H^{(0)}$.
\end{enumerate}
If $G$ and $H$ are Hausdorff, then $Z$ is also taken to be Hausdorff.
\end{defn}

If $Z$ is a $(G,H)$-equivalence, then there is a continuous map $(y,z)\mapsto{}_G[y,z]$ from $Z*_sZ=\{ (y,z)\in Z\times Z:s_Z(y)=s_Z(z) \}$ to $G$ uniquely determined by ${}_G[y,z]\cdot z=y$ for all $(y,z)\in Z*_sZ$.
There is also a continuous map $(y,z)\mapsto[y,z]_H$ satisfying $y\cdot[y,z]_H=z$ for all $(y,z)\in Z*_rZ=\{ (y,z)\in Z\times Z:r_Z(y)=r_Z(z) \}$.

\begin{defn}
Given a $(G,H)$-equivalence $Z$, we define its opposite space to be a homeomorphic copy $Z^{op} \coloneqq \{\bar{z}:z\in Z\}$ of $Z$ with the structure of a $(H,G)$-equivalence determined by
\[ r(\bar{z})=s(z), s(\bar{z})=r(z), \eta\cdot\bar{z}=\overline{z\cdot\eta^{-1}}, \bar{z}\cdot\gamma=\overline{\gamma^{-1}\cdot z}. \]
\end{defn}

\subsection{Morita equivalence of Banach algebras}

\begin{defn}
Let $B$ be a Banach algebra. A \textit{right (resp. left) Banach $B$-module} is a Banach space $\X$ with the structure of a right (resp. left) $B$-module such that $\|x \cdot b\|_\X\leq\|x\|_\X \|b\|_B$ (resp. $\|b \cdot x\|_\X\leq\|b\|_B\|x\|_\X$) for all $x \in \X$ and $b\in B$.

We say that $\X$ is \textit{nondegenerate} if the linear span of $\X B$ (resp. $B\X$) is dense in $\X$. 
\end{defn}

\begin{defn}
Let $B$ be a Banach algebra. A \textit{Banach $B$-pair} is a pair $(\X,\Y)$ such that
\begin{enumerate}
\item $\X$ is a left Banach $B$-module,
\item $\Y$ is a right Banach $B$-module,
\item there is a $\mathbb{C}$-bilinear map $\langle -, - \rangle_B \colon \X \times \Y \to B$ such that 
\begin{itemize}
\item $\langle b\cdot x,y\rangle_B=b\langle x,y \rangle_B$,
\item $\langle x,y\cdot b \rangle_B=\langle x,y \rangle_B b$, and
\item $\|\langle x,y \rangle_B\|_B\leq\|x\|_\X \|y\|_\Y$
\end{itemize}
for all $x \in \X$, $y \in \Y$, and $b\in B$.
\end{enumerate}
We say that $(\X,\Y)$ is \textit{nondegenerate} if both $\X$ and $\Y$ are nondegenerate. We say that $(\X,\Y)$ is \textit{full} if the linear span of $\langle \X, \Y \rangle$ is dense in $B$.
\end{defn}

\begin{defn}\label{Def:PreBanachPair}
Let $B$ be a Banach algebra and let $B_0 \subseteq B$ be a dense subalgebra. 
A \textit{pre-Banach $B_0$-pair} is a pair $(\X_0, \Y_0)$ such that  
\begin{enumerate}
\item $\X_0$ is a normed left $B_0$-module satisfying $\|b \cdot x\|_{\X_0}\leq\|b\|_B\|x\|_{\X_0}$, 
\item $\Y_0$ is a normed right $B_0$-module satisfying $\|y \cdot b\|_{\Y_0}\leq\|y\|_{\Y_0}\|b\|_B$, 
\item there is a $\mathbb{C}$-bilinear map $\langle -, - \rangle_B \colon \X_0 \times \Y_0 \to B_0$ such that 
\begin{itemize}
\item $\langle b\cdot x,y\rangle_B=b\langle x,y \rangle_B$,
\item $\langle x,y\cdot b \rangle_B=\langle x,y \rangle_B b$, and
\item $\|\langle x,y \rangle_B\|_B\leq\|x\|_{\X_0} \|y\|_{\Y_0}$
\end{itemize}
for all $x \in \X_0$, $y \in \Y_0$, and $b\in B_0$.
\end{enumerate}
\end{defn}

\begin{rem}\label{Rem:CompletionBP}
If $(\X_0,\Y_0)$ is a pre-Banach $B_0$-pair, its completion is a
Banach $B$-pair.  It is nondegenerate if
\[
\operatorname{span}(B_0\X_0)=\X_0,
\qquad
\operatorname{span}(\Y_0B_0)=\Y_0,
\]
and is full if
\[
\operatorname{span}\langle\X_0,\Y_0\rangle_B=B_0.
\]
\end{rem}

\begin{defn}\label{defn_LinOP}
Let $(\X,\Y)$ and $(\V,\W)$ be Banach $B$-pairs. A \textit{bounded linear operator} from $(\X,\Y)$ to $(\V,\W)$ is a pair $T=(T^l,T^r)$ such that 
\begin{enumerate}
\item $T^l \colon \V \to \X$ is a homomorphism of left Banach $B$-modules with 
\[
\| T^l \| \coloneqq \sup_{\| v\|_\V \leq 1} \| T^l(v) \|_\X < \infty, 
\]
\item $T^r \colon \Y \to \W$ is a homomorphism of right Banach $B$-modules with 
\[
\| T^r \| \coloneqq \sup_{\| y\|_\Y \leq 1} \| T^r(y) \|_\W < \infty, 
\]
\item $T^l$ and $T^r$ are formal adjoints to each other, that is 
\[
\langle T^l(v),y \rangle_B =\langle v,T^r(y) \rangle_B
\]
 for all $v \in \V$ and $y \in \Y$.
\end{enumerate}
\end{defn}
The space of all bounded linear operators from $(\X,\Y)$ to $(\V,\W)$ is denoted by $\mathcal{L}_B((\X,\Y) \to (\V,\W))$ and is a Banach space under the norm 
\[
\|T\|=\|(T^l,T^r)\| \coloneqq \max(\|T^l\|,\|T^r\|).
\] 
Further, if $(\Um,\Zm)$ is another Banach $B$-pair and $S\in \mathcal{L}_B((\Um,\Zm) \to (\X,\Y))$, then 
it can be precomposed with $T$ by letting
\[
TS \coloneqq (S^lT^l, T^rS^r) \in \mathcal{L}_B((\Um,\Zm) \to (\V,\W)).
\]
We will write $\mathcal{L}_B(\X,\Y)$ for $\mathcal{L}_B((\X,\Y) \to (\X,\Y))$, which is a Banach algebra under this composition. For $T=(T^l,T^r) \in \mathcal{L}_B((\X,\Y) \to (\V,\W))$ we will often write 
\[
 v \cdot T  \coloneqq  T^l(v) \in \X, \ T \cdot y  \coloneqq T^r(y) \in \W,
\]
for all $y \in \Y$ and all $v \in \V$. With this notation, the formal adjoint identity becomes 
\[
\langle v \cdot T,y \rangle_B=\langle v,T \cdot y \rangle_B.
\]
Observe that this naturally gives a right action of $\mathcal{L}_B(\X,\Y)$ on $\X$ and 
a left action of $\mathcal{L}_B(\X,\Y)$ on $\Y$.

\begin{defn}
Let $(\X,\Y)$ and $(\V,\W)$ be Banach $B$-pairs. For $x \in \X$, $w \in \W$ 
we get a map $\theta_{w,x}^l \colon \V \to \X$ defined as
\[
\theta_{w,x}^l(v)=\langle v,w\rangle_B\cdot x
\]
and a map $ \theta_{w,x}^r \colon \Y \to \W$ given by 
\[
\theta_{w,x}^r (y) = w \cdot  \langle x, y  \rangle_B 
\]
\end{defn}

It is not hard to check that $\theta_{w,x}\coloneqq(\theta_{w,x}^l,\theta_{w,x}^r) \in \mathcal{L}_B((\X,\Y) \to (\V, \W))$, and that $\| \theta_{w,x} \| \leq \|w\|_\W\|x\|_\X$. Further, let $(\Um,\Zm)$ be another Banach $B$-pair. If $T \in \mathcal{L}_B((\V,\W) \to (\Um, \Zm))$, then a routine check gives
\[
T\theta_{w,x} = \theta_{T \cdot w,x} \in \mathcal{L}_B((\X,\Y) \to (\Um, \Zm)).
\]
Similarly, if now $S \in \mathcal{L}_B((\Um, \Zm) \to (\X,\Y))$, then 
\[
\theta_{w,x}S= \theta_{w,x\cdot S} \in \mathcal{L}_B((\Um, \Zm) \to (\V,\W)).
\]
\begin{defn}
Let $(\X,\Y)$ and $(\V,\W)$ be Banach $B$-pairs. We define 
the \textit{compact module linear operators} from $(\X,\Y)$ to $(\V,\W)$ by 
\[
\mathcal{K}_B((\X,\Y) \to (\V, \W)) = \overline{\mathrm{span}}\{ \theta_{w,x} \colon w \in \W, x \in \X\} \subseteq \mathcal{L}_B((\X,\Y) \to (\V, \W))
\]
\end{defn}
As usual, we write $\mathcal{K}_B(\X,\Y)$ for $\mathcal{K}_B((\X,\Y) \to (\X, \Y))$.
The preceding two identities show that $\mathcal{K}_B(\X,\Y)$ is a two-sided closed ideal in $\mathcal{L}_B(\X,\Y)$.

\begin{defn}
Let $A$ and $B$ be Banach algebras. We say $((\X,\Y), \pi_A)$ is a \textit{Banach $(A,B)$-correspondence} (also called a \textit{Banach $A$-$B$-pair} in \cite{Par09}) when $(\X,\Y)$ is a Banach $B$-pair and $\pi_A \colon A\to\mathcal{L}_B(\X,\Y)$ is a contractive homomorphism. 
\end{defn}

Observe that if $((\X,\Y), \pi_A)$ is a Banach $(A,B)$-correspondence, then $\X$ is a Banach $B$-$A$-bimodule and $\Y$ is a Banach $A$-$B$-bimodule. Further, for all $x\in \X$, $y\in \Y$, $a\in A$, we write for short $x \cdot a = x \cdot \pi_A(a)$ and $a \cdot y = \pi_A(a) \cdot y$, so that 
\[
\langle x\cdot a,y\rangle_B=\langle x,a\cdot y\rangle_B.
\]

\begin{defn}
Let $B$ be a Banach algebra, let $B_0\subseteq B$ be a dense subalgebra, and let $(\X_0,\Y_0)$ be a pre-Banach $B_0$-pair.  We write
\[
\mathcal{L}^{\mathrm{pre}}_{B_0}(\X_0,\Y_0)
\]
for the normed algebra of pairs $T=(T^l,T^r)$ such that $T^l\colon\X_0\to\X_0$ is a bounded left $B_0$-module map, $T^r\colon\Y_0\to\Y_0$ is a bounded right $B_0$-module map, and
\[
\langle T^l(x),y\rangle_B=\langle x,T^r(y)\rangle_B
\qquad (x\in\X_0,\ y\in\Y_0).
\]
We equip this space with the norm $\|T\|=\max\{\|T^l\|,\|T^r\|\}$ and with the same composition convention as for completed Banach pairs,
\[
TS=(S^lT^l,T^rS^r).
\]
\end{defn}

\begin{defn}\label{Def:PreCorresp}
Let $A$ and $B$ be Banach algebras and let $A_0\subseteq A$ and $B_0\subseteq B$ be dense subalgebras.  A \textit{pre-Banach $(A_0,B_0)$-correspondence} is a pair $((\X_0,\Y_0),\pi_{A_0})$ where $(\X_0,\Y_0)$ is a pre-Banach $B_0$-pair (see Definition \ref{Def:PreBanachPair}) and
\[
\pi_{A_0}\colon A_0\longrightarrow \mathcal{L}^{\mathrm{pre}}_{B_0}(\X_0,\Y_0)
\]
is a contractive homomorphism.
\end{defn}

\begin{rem}
As in Remark \ref{Rem:CompletionBP}, any pre-Banach $(A_0,B_0)$-correspondence can be completed into a 
Banach $(A,B)$-correspondence. 
\end{rem}

\begin{defn}\label{def:Morita}
Let $A$ and $B$ be Banach algebras. A \textit{Morita equivalence between $A$ and $B$} is a pair $({}_B\X_A,{}_A\Y_B)$ of bimodules equipped with a bilinear pairing $\langle\cdot,\cdot\rangle_B \colon \X\times \Y\to B$ and a bilinear pairing ${}_A\langle\cdot,\cdot\rangle \colon \Y\times \X \to A$ such that
\begin{enumerate}
\item $(\X,\Y)$ with $\langle \cdot, \cdot \rangle_B$ is a Banach $(A,B)$-correspondence that is also full and nondegenerate as a Banach $B$-pair,
\item $(\Y,\X)$ with ${}_A\langle\cdot , \cdot \rangle$ is a Banach $(B,A)$-correspondence that is also full and nondegenerate as a Banach $A$-pair,
\item $\langle x_1,y \rangle_B \cdot x_2=x_1 \cdot {}_A\langle y,x_2 \rangle$ for all $x_1, x_2 \in \X$ and $y \in \Y$,
\item $y_1\cdot \langle x,y_2 \rangle_B={}_A\langle y_1,x \rangle \cdot y_2$ for all $x \in \X$ and $y_1, y_2 \in \Y$.
\end{enumerate}
$A$ and $B$ are said to be \textit{Morita equivalent} if there is a Morita equivalence between $A$ and $B$.
\end{defn}

\begin{rem}
The conditions in this definition of Morita equivalence imply that the Banach algebras $A$ and $B$ are nondegenerate in the sense that the linear span of $AA$ (resp. $BB$) is dense in $A$ (resp. $B$).
\end{rem}

\begin{defn}\label{Def:PreMORITA}
Let $A$ and $B$ be Banach algebras and let $A_0 \subseteq A$ and $B_0 \subseteq B$ be dense subalgebras. 
A \textit{pre-Morita equivalence between $A_0$ and $B_0$} is a normed $B_0$-$A_0$-bimodule  $\X_0$, a normed $A_0$-$B_0$-bimodule $\Y_0$, together with bilinear pairings $\langle\cdot,\cdot\rangle_B \colon \X_0\times \Y_0\to B_0$ and ${}_A\langle\cdot,\cdot\rangle \colon \Y_0\times \X_0 \to A_0$, such that
\begin{enumerate}
\item $(\X_0,\Y_0)$ with $\langle \cdot, \cdot \rangle_B$ is a pre-Banach $(A_0,B_0)$-correspondence,
\item $(\Y_0,\X_0)$ with ${}_A\langle\cdot , \cdot \rangle$ is a pre-Banach $(B_0,A_0)$-correspondence,
\item $\langle x_1,y \rangle_B \cdot x_2=x_1 \cdot {}_A\langle y,x_2 \rangle$ for all $x_1, x_2 \in \X_0$ and $y \in \Y_0$,
\item $y_1\cdot \langle x,y_2 \rangle_B={}_A\langle y_1,x \rangle \cdot y_2$ for all $x \in \X_0$ and $y_1, y_2 \in \Y_0$.
\end{enumerate}
\end{defn}

\begin{rem}\label{Rem-CompletionPM}
Following Remark \ref{Rem:CompletionBP}, we note that the completion of 
a pre-Morita equivalence between $A_0$ and $B_0$ will result in a Morita equivalence between $A$ and $B$
provided that the nondegeneracy and fullness conditions are satisfied. 
\end{rem}

\section{The linking groupoid and the relevant Banach pair}\label{sect_LinkGp}

Let $Z$ be a $(G,H)$-equivalence.  Its \emph{linking groupoid} is the
topological disjoint union
\[
L=G\sqcup Z\sqcup Z^{op}\sqcup H,
\qquad
L^{(0)}=G^{(0)}\sqcup H^{(0)},
\]
with the structure maps described below; see \cite[Section~2]{SW} and
\cite[Section~1.4]{Par09I}.

Define $r,s:L\rightarrow L^{(0)}$ to be the maps inherited from the range and source maps on $G$, $Z$, $Z^{op}$, and $H$.
Let $L^{(2)}=\{(k,l)\in L\times L:s(k)=r(l)\}$, and let $(k,l)\mapsto kl$ be the map from $L^{(2)}$ to $L$ which restricts to multiplication on $G$ and $H$ and to the actions of $G$ and $H$ on $Z$ and $Z^{op}$, also satisfying
\begin{align*}
z\bar{y}={}_G[z,y], &\;\text{for}\; (z,y)\in Z*_sZ, \\
\bar{y}z=[y,z]_H, &\;\text{for}\; (y,z)\in Z*_rZ.
\end{align*}
Define $l\mapsto l^{-1}$ to be the map from $L$ to $L$ which restricts to inversion on $G$ and $H$, also satisfying $z^{-1}=\bar{z}$ and $\bar{z}^{-1}=z$ for $z\in Z$.

\begin{lem}
The space $L$ together with $L^{(0)}$,  $L^{(2)}$, and the maps $r,s$, $l\mapsto l^{-1}$ defined above 
is a locally compact, locally Hausdorff groupoid. Further, if $G$ and $H$ are both \'{e}tale, then so is $L$.
\end{lem}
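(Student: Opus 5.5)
The plan is to verify the groupoid axioms algebraically, then handle the topology and the étale property. Here $L$ carries the disjoint-union topology and $L^{(0)}=G^{(0)}\sqcup H^{(0)}$ sits inside $G\sqcup H\subseteq L$. Each of the four pieces is locally compact and locally Hausdorff, and these properties are local. Hence $L$ is locally compact and locally Hausdorff, and $L^{(0)}$ is locally compact Hausdorff.

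\textbf{Algebraic axioms.} First we check that multiplication is well defined and associative. The composable pairs split into the eight types that are allowed by the source and range conventions: $GG$, $GZ$, $ZH$, $Z\bar Z$, $\bar Z Z$, $HH$, $H\bar Z$, $\bar Z G$. Associativity for a triple is then a case check. Triples lying entirely in $G$, $H$, or involving only the actions reduce to the axioms of a groupoid action and to the commutation of the $G$- and $H$-actions on $Z$. The mixed cases use the defining identities of the bracket maps:
\begin{align*}
{}_G[\gamma\cdot z,y]&=\gamma\,{}_G[z,y], & {}_G[z\cdot\eta,y\cdot\eta]&={}_G[z,y],\\
{}_G[z,y]\cdot x&=z\cdot[y,x]_H. & &
\end{align*}
The analogous identities hold for $[\cdot,\cdot]_H$. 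Each identity follows from freeness together with the uniqueness characterization of the brackets. For instance, ${}_G[z,y]\cdot x$ and $z\cdot[y,x]_H$ are equal because of the following computation:
\[
\bigl(z\cdot[y,x]_H\bigr)\cdot[y,x]_H^{-1}=z={}_G[z,y]\cdot y .
\]
Applying ${}_G[z,y]$ to $y\cdot[y,x]_H=x$ and using commutativity of the actions finishes the argument.

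For inverses we check that $z\bar z={}_G[z,z]=r(z)$ and $\bar z z=[z,z]_H=s(z)$, and also that $r(l^{-1})=s(l)$. Units act trivially because $r_Z(z)\cdot z=z$ and $z\cdot s_Z(z)=z$.

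\textbf{Topological axioms.} Next we check that the maps are continuous and that $L^{(2)}$ carries the subspace topology. Inversion is continuous piecewise: on $Z$ it is the identity homeomorphism $Z\to Z^{op}$, and on $G$ and $H$ it is the usual inversion. The set $L^{(2)}$ is a disjoint union of clopen pieces of the eight types. On each piece, multiplication is continuous, because it is given by the group multiplication, the continuous actions, the action on $Z^{op}$ (a composite of continuous maps), or the continuous bracket maps. The source and range maps are continuous and open piecewise. They are open on $Z$ because $r_Z$ and $s_Z$ are open by definition; this openness passes to $Z^{op}$.

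\textbf{Étale property.} For the final statement, assume $G$ and $H$ are étale. It remains to show that $r_Z\colon Z\to G^{(0)}$ and $s_Z\colon Z\to H^{(0)}$ are local homeomorphisms; the corresponding maps on $Z^{op}$ then follow by symmetry. This is the main obstacle, since it is the only step that is not a formal verification.

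We will treat $s_Z$; the argument for $r_Z$ is symmetric. Since $s_Z$ is continuous and open, it suffices to show that $s_Z$ is locally injective. Fix $z_0\in Z$. Suppose $z,z'$ are close to $z_0$ with $s_Z(z)=s_Z(z')$. Then $\gamma={}_G[z,z']$ is close to ${}_G[z_0,z_0]=r(z_0)\in G^{(0)}$, by continuity of the bracket. Because $G$ is étale, $G^{(0)}$ is open in $G$. So for neighbourhoods small enough we get $\gamma\in G^{(0)}$, and then $z=\gamma\cdot z'=z'$. This gives a neighbourhood $V$ of $z_0$ on which $s_Z$ is injective. An open continuous injection is a homeomorphism onto its open image.

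To make the choice of neighbourhood precise, we take a neighbourhood $W$ of $z_0$ in which we have $(W\times W)\cap (Z*_sZ)\subseteq \{(y,z):{}_G[y,z]\in G^{(0)}\}$. This set is an open neighbourhood of $(z_0,z_0)$ in $Z*_sZ$, so such a $W$ exists. Local Hausdorffness is used only to shrink to Hausdorff neighbourhoods where needed.
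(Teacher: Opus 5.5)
Your proof is correct, but it is organized differently from the paper's, which gives no argument of its own. The paper defers the groupoid structure and the local compactness and local Hausdorffness to \cite[Lemma~2.1]{SW}, and the \'{e}tale assertion to \cite[Proposition~9.4.3]{CRM}.

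Your algebraic and topological part is the routine verification that underlies the first citation: the eight types of composable pairs, the sixteen types of composable triples, and the uniqueness characterization of ${}_G[\cdot,\cdot]$ and $[\cdot,\cdot]_H$. You leave a few cases implicit, for example ${}_G[z,\gamma\cdot y]={}_G[z,y]\gamma^{-1}$ for triples of type $Z\,Z^{op}\,G$, and the cancellation laws $k^{-1}(kl)=l$. These follow in exactly the same way.

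The genuine difference is the \'{e}tale part, where you replace the citation by a short self-contained argument. Since $G^{(0)}$ is open in the \'{e}tale groupoid $G$, continuity of the bracket makes $\{(y,z)\in Z*_sZ:{}_G[y,z]\in G^{(0)}\}$ an open neighbourhood of $(z_0,z_0)$. Freeness then gives injectivity of $s_Z$ near $z_0$. Openness of $s_Z$, which is part of the definition of a right $H$-space, upgrades this to a local homeomorphism.

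This buys two things. First, the \'{e}tale step uses no Hausdorffness of $G$, $H$, or $Z$, so it applies verbatim in the locally Hausdorff setting of the lemma. Second, it makes transparent that \'{e}taleness of $G$ is what makes $s_Z\colon Z\to H^{(0)}$ a local homeomorphism, while \'{e}taleness of $H$ handles $r_Z$. The paper's approach, in exchange, is shorter and leans on the standard linking-groupoid literature.
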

\begin{proof}
This is a special case of \cite[Lemma 2.1]{SW}. 
The \'{e}tale part follows from \cite[Proposition 9.4.3]{CRM}. 
\end{proof}

Given a $(G,H)$-equivalence $Z$, the range map on $Z$ induces a homeomorphism from the orbit space $Z/H$ to $G^{(0)}$. In particular, $Z/H$ is Hausdorff.
Each orbit is a closed Hausdorff subset of $Z$ \cite[Lemma 1.2]{SW}, and for each orbit $z\cdot H$, every continuous function $f\in C(z\cdot H)$ extends to some $g\in\mathcal{C}(Z)$ \cite[Lemma 1.3]{SW}.
Thus if $u\in G^{(0)}$ and $z\in Z$ with $r(z)=u$, then we can define a Radon measure $\sigma^u_Z$ on $Z$, supported on the orbit $z\cdot H$, determined by 
\[ \sigma^u_Z(\phi)=\sum_{r(\eta)=s(z)}\phi(z\cdot\eta) \]
for $\phi\in\mathcal{C}(Z)$.
Moreover, $\sigma^u_Z$ does not depend on the choice of $z\in r^{-1}(u)$ due to left invariance of $\beta$.

By symmetry, we can also define a Radon measure $\sigma^v_{Z^{op}}$ on $Z^{op}$ supported on $r^{-1}_{Z^{op}}(v)$.
Using these measures, one can obtain a Haar system for $L$ that restricts to the Haar systems for $G$ and $H$.

\begin{lem}\label{lem:haarL} \cite[Lemma 2.2]{SW}
For each $w\in L^{(0)}$, let $\kappa^w$ be the Radon measure on $L$ given on $F\in\mathcal{C}(L)$ by
\[ \kappa^w(F)=\begin{cases} \lambda^w(F|_G) + \sigma^w_Z(F|_Z) \;&\text{if}\;w\in G^{(0)}, \\ \sigma^w_{Z^{op}}(F|_{Z^{op}}) + \beta^w(F|_H) \;&\text{if}\;w\in H^{(0)}. \end{cases} \]
Then $\{\kappa^w\}_{w\in L^{(0)}}$ is a Haar system for $L$.
\end{lem}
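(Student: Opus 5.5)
To prove Lemma~\ref{lem:haarL}, I would check the three defining properties of a Haar system: each $\kappa^w$ has support $r^{-1}(w)$ (equivalently, is a measure living on $L^w$ with full support there), the continuity condition that $w\mapsto\kappa^w(F)$ lies in $\mathcal{C}(L^{(0)})$ for every $F\in\mathcal{C}(L)$, and left invariance $\int F(kl)\,d\kappa^{s(k)}(l)=\int F(l)\,d\kappa^{r(k)}(l)$. Since $L^{(0)}=G^{(0)}\sqcup H^{(0)}$ is a topological disjoint union and $\mathcal{C}(L)=\mathcal{C}(G)\oplus\mathcal{C}(Z)\oplus\mathcal{C}(Z^{op})\oplus\mathcal{C}(H)$, every check splits into the four pieces of $L$.

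For the support claim, note that for $w=u\in G^{(0)}$ we have $L^u=G^u\sqcup r_Z^{-1}(u)$. Freeness and property (4) of the equivalence give $r_Z^{-1}(u)=z\cdot H$ for any $z$ with $r(z)=u$, and $\eta\mapsto z\cdot\eta$ is a bijection $H^{s(z)}\to z\cdot H$. Hence $\sigma^u_Z$ is counting measure on $L^u\cap Z$, and $\kappa^u$ is counting measure on $L^u$. The $H^{(0)}$ case is symmetric. In particular $\kappa$ is just the counting-measure Haar system on the étale groupoid $L$.

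Given this, left invariance is automatic: left multiplication by $k$ is a bijection $L^{s(k)}\to L^{r(k)}$, and counting measure is preserved under bijections. I would still spell out the four cases for $k\in G, Z, Z^{op}, H$ by direct reindexing, using the multiplication rules $z\bar y={}_G[z,y]$ and $\bar y z=[y,z]_H$.

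Continuity is the main obstacle, because $L$ is only locally Hausdorff. By linearity, I would reduce to $F\in C_c(U)$ with $U$ an open Hausdorff subset of one of the four pieces, and then shrink $U$ to an open bisection, using that $L$ is étale by the preceding lemma and a partition of unity. On a bisection $U$ we have $\kappa^w(F)=F((r|_U)^{-1}(w))$ for $w\in r(U)$ and $0$ otherwise. This is $F\circ(r|_U)^{-1}$ extended by zero, which lies in $C_c(r(U))\subseteq\mathcal{C}(L^{(0)})$ because $r|_U$ is a homeomorphism onto an open set. This is the same argument as for $\lambda$ on an étale groupoid (cf. \cite[Proposition 2.2.5]{Pat}). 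Alternatively, I would simply invoke \cite[Lemma 2.2]{SW}, of which the statement is a citation.
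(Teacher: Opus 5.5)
Your proof is correct, but it takes a different route from the paper. The paper gives no argument of its own and imports \cite[Lemma 2.2]{SW}, which covers arbitrary locally compact, locally Hausdorff groupoids with general Haar systems $\lambda$ and $\beta$. In that generality three things must be established separately:
\begin{itemize}
\item that $\sigma^u_Z$ does not depend on the base point $z$, which uses left invariance of $\beta$;
\item that $u\mapsto\sigma^u_Z(\phi)$ is continuous, which is essentially part~(1) of Proposition~\ref{prop:fullsys} transported along the homeomorphism $Z/H\cong G^{(0)}$;
\item left invariance, checked for $k$ in each of the four pieces of $L$.
\end{itemize}
You instead use freeness together with conditions (4) and (5) of the equivalence to see that $\sigma^u_Z$ and $\sigma^v_{Z^{op}}$ are counting measures on $r_Z^{-1}(u)$ and $s_Z^{-1}(v)$. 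Hence each $\kappa^w$ is counting measure on $L^w$. Base-point independence and left invariance then become trivial. Continuity reduces, via a bisection decomposition in the \'{e}tale groupoid $L$, to the observation that $F\circ(r|_U)^{-1}\in C_c(r(U))$.

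Your route is more elementary and self-contained. It also makes explicit that the Haar system on $L$ is the counting-measure system, a fact the paper uses tacitly later, for instance in Theorem~\ref{thm:mainF_red^p}. The cost is that everything rests on $L$ being \'{e}tale, that is, on $r_Z$ and $s_Z$ being local homeomorphisms. You take this from the preceding lemma rather than proving it, whereas the Sims--Williams argument does not need it.
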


We shall always assume that $L$ is equipped with the above Haar system.

For $F\in\mathcal{C}(L)$, let $F_{11}=F|_G\in\mathcal{C}(G)$, $F_{12}=F|_Z\in\mathcal{C}(Z)$, $F_{21}=F|_{Z^{op}}\in\mathcal{C}(Z^{op})$, $F_{22}=F|_H\in\mathcal{C}(H)$.
We can view $F$ as a matrix \[ F=\begin{pmatrix} F_{11} & F_{12} \\ F_{21} & F_{22} \end{pmatrix}. \]
The convolution product on $\mathcal C(L)$ gives the following module
actions and pairings. 

Let $a \in\mathcal{C}(G)$ and let $\phi\in\mathcal{C}(Z)$. Observe that 
\[
 \begin{pmatrix} a & 0 \\ 0 & 0 \end{pmatrix}*\begin{pmatrix} 0 & \phi \\ 0 & 0 \end{pmatrix}(z)= \int_G a(l)\phi(l^{-1}z)\;d\kappa^{r(z)}(l) = \sum_{r(\gamma)=r(z)}a(\gamma)\phi(\gamma^{-1}\cdot z).
\]
This gives a left action of $\mathcal{C}(G)$ on $\mathcal{C}(Z)$ defined by
\begin{equation}\label{cG_left_cZ}
(a\cdot\phi)(z) = \sum_{r(\gamma)=r(z)} a(\gamma)\phi(\gamma^{-1}\cdot z). 
\end{equation}
Next, let $\phi\in\mathcal{C}(Z)$ and let $b\in\mathcal{C}(H)$. Then 
\begin{align*}
 \begin{pmatrix} 0 & \phi \\ 0 & 0 \end{pmatrix}*\begin{pmatrix} 0 & 0 \\ 0 & b \end{pmatrix}(z) 
&= \int_L \phi(l)b(l^{-1}z)\;d\kappa^{r(z)}(l) \\
&= \int_Z \phi(w)b(w^{-1}z)\;d\sigma_Z^{r(z)}(w) \\
&= \sum_{r(\eta)=s(z)}\phi(z\cdot\eta)b(\overline{z\cdot\eta}z) \\
&= \sum_{r(\eta)=s(z)}\phi(z\cdot\eta)b(\eta^{-1}\cdot[z,z]_H) \\
&= \sum_{r(\eta)=s(z)}\phi(z\cdot\eta)b(\eta^{-1}\cdot s(z)) \\
&= \sum_{r(\eta)=s(z)}\phi(z\cdot\eta)b(\eta^{-1}).
\end{align*}
Hence, a right action of $\mathcal{C}(H)$ on $\mathcal{C}(Z)$ 
is defined by letting 
\begin{equation}\label{cZ_right_cH}
(\phi\cdot b)(z) = \sum_{r(\eta)=s(z)} \phi(z\cdot\eta)b(\eta^{-1}).
\end{equation}
Similarly, we make $\mathcal{C}(Z^{op})$  into a $\mathcal{C}(H)$-$\mathcal{C}(G)$-bimodule with actions, for $a\in\mathcal{C}(G)$, $b\in\mathcal{C}(H)$, and $\psi\in\mathcal{C}(Z^{op})$, 
given by
\begin{align}\label{cH_Zop_cG}
(\psi \cdot a)(\bar{z})  & =  \sum_{r(\gamma)=r(\bar{z})} \psi(\bar{z}\cdot\gamma)a(\gamma^{-1}), \\
(b \cdot \psi)(\bar{z}) & =\sum_{r(\eta)=s(\bar{z})} b(\eta)\psi(\eta^{-1}\cdot \bar{z}).
 \end{align}
Now for the pairings, first let $\phi\in\mathcal{C}(Z)$ and let $\psi\in\mathcal{C}(Z^{op})$. 
Then, 
\begin{align*}
 \begin{pmatrix} 0 & 0 \\ \psi & 0 \end{pmatrix}*\begin{pmatrix} 0 & \phi \\ 0 & 0 \end{pmatrix}(\eta) 
&= \int_L \psi(l)\phi(l^{-1}\eta)\;d\kappa^{r(\eta)}(l) \\
&= \int_{Z^{op}}\psi(\bar{z})\phi(\bar{z}^{-1}\cdot\eta)\;d\sigma_{Z^{op}}^{r(\eta)}(\bar{z}) \\
&= \sum_{r(\gamma)=s(\bar{z})}\psi(\bar{z}\cdot\gamma)\phi((\bar{z}\cdot\gamma)^{-1}\cdot\eta) \\
&= \sum_{r(\gamma)=r(z)}\psi(\overline{\gamma^{-1}\cdot z})\phi((\overline{\gamma^{-1}\cdot z})^{-1}\cdot\eta) \\
&= \sum_{r(\gamma)=r(z)}\psi(\overline{\gamma^{-1}\cdot z})\phi(\gamma^{-1}\cdot z\cdot\eta). 
\end{align*}
Thus, $\langle\cdot,\cdot\rangle_H \colon \mathcal{C}(Z^{op})\times\mathcal{C}(Z)\rightarrow\mathcal{C}(H)$
defines a $\mathcal{C}(H)$-valued pairing via 
\begin{equation}\label{cZop_cZ-cH}
\langle \psi,\phi \rangle_H(\eta) =\sum_{r(\gamma)=r(z)}\psi\Big(\overline{\gamma^{-1}\cdot z}\Big)\phi(\gamma^{-1}\cdot z\cdot\eta), 
\end{equation}
for any $z\in Z$ such that $s_Z(z)=r(\eta)$. 
Finally, for $\phi\in\mathcal{C}(Z)$ and $\psi\in\mathcal{C}(Z^{op})$, we get
\begin{align*} 
 \begin{pmatrix} 0 & \phi \\ 0 & 0 \end{pmatrix}*\begin{pmatrix} 0 & 0 \\ \psi & 0 \end{pmatrix}(\gamma) 
&= \int_L \phi(l)\psi(l^{-1}\gamma)\;d\kappa^{r(\gamma)}(l) \\
&= \int_Z\phi(z)\psi(z^{-1}\cdot\gamma)\;d\sigma_Z^{r(\gamma)}(z) \\
&= \sum_{r(\eta)=s(z)}\phi(z\cdot\eta)\psi((z\cdot\eta)^{-1}\cdot\gamma) \\
&= \sum_{r(\eta)=s(z)}\phi(z\cdot\eta)\psi(\overline{z\cdot\eta}\cdot\gamma) \\
&= \sum_{r(\eta)=s(z)} \phi(z\cdot\eta)\psi(\overline{\gamma^{-1}\cdot z\cdot\eta}).
\end{align*}
Hence, we define a $\mathcal{C}(G)$-valued pairing, ${}_G\langle\cdot,\cdot\rangle \colon \mathcal{C}(Z)\times\mathcal{C}(Z^{op})\rightarrow\mathcal{C}(G)$, 
as 
\begin{equation}\label{cZ_cZop-cG}
 {}_G\langle \phi,\psi \rangle(\gamma) =\sum_{r(\eta)=s(z)} \phi(z\cdot\eta)\psi\left(\overline{\gamma^{-1}\cdot z\cdot\eta}\right),
\end{equation}
for any $z\in Z$ such that $r_Z(z)=r(\gamma)$.

As part of the proof of our Morita equivalence result, we shall need to compare the norms of $a\in\mathcal{C}(G)$ and 
\[
F_a \coloneqq \begin{pmatrix} a & 0 \\ 0 & 0 \end{pmatrix}\in\mathcal{C}(L).
\]
It is clear from the definitions that $\Vert a\Vert_{F_{I,s}(G)}=\Vert F_a\Vert_{F_{I,s}(L)}$ and $\Vert a\Vert_{F_{I,r}(G)}=\Vert F_a\Vert_{F_{I,r}(L)}$, from which it follows that $\Vert a\Vert_{F_I(G)}=\Vert F_a\Vert_{F_I(L)}$.

\begin{thm}\label{thm:mainF_red^p} (cf. \cite[Theorem 4.1]{SW} for the $p=2$ case)
If $a\in\mathcal{C}(G)$, and $F_a \in\mathcal{C}(L)$, then $\Vert F_a\Vert_{F^p_{red}(L)}=\Vert a\Vert_{F^p_{red}(G)}$ for $p\in[1,\infty]$.

A similar statement holds for functions in $\mathcal{C}(H)$.
\end{thm}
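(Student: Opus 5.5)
We compute the reduced norm of $F_a$ directly, fiber by fiber: we identify each operator $(\mathrm{Ind}\,\delta_w)(F_a)$ on $\ell^p(L_w)$ with an operator already appearing in the definition of $\Vert a\Vert_{p,red}$. The argument splits according to whether $w\in L^{(0)}$ lies in $G^{(0)}$ or in $H^{(0)}$. In both cases the key observation is that $F_a$ is supported in $G\subseteq L$. Hence for $\xi\in\ell^p(L_w)$ and $l\in L_w$,
\[
(\mathrm{Ind}\,\delta_w)(F_a)\xi(l)=\sum_{\gamma\in G^{r(l)}}a(\gamma)\xi(\gamma^{-1}l).
\]
This vanishes unless $r(l)\in G^{(0)}$. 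Writing $L_w^G=\{l\in L_w: r(l)\in G^{(0)}\}$, the operator is zero on the complement and leaves $\ell^p(L_w^G)$ invariant, so its norm equals the norm of its restriction to $\ell^p(L_w^G)$. This uses only that $\ell^p$ of a disjoint union is the $\ell^p$-direct sum, which holds for all $p\in[1,\infty]$.

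\emph{Case $w=u\in G^{(0)}$.} Here $L_w^G=G_u$, and the displayed formula is exactly $(\mathrm{Ind}\,\delta_u)(a)$ on $\ell^p(G_u)$. So $\Vert(\mathrm{Ind}\,\delta_u)(F_a)\Vert=\Vert(\mathrm{Ind}\,\delta_u)(a)\Vert$, which gives the inequality $\Vert F_a\Vert_{p,red}\ge\Vert a\Vert_{p,red}$.

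\emph{Case $w=v\in H^{(0)}$.} Here $L_w^G=\{\bar{}\,\text{-free elements}\}$, namely $\{z\in Z: s_Z(z)=v\}=s_Z^{-1}(v)$, on which $G$ acts by left multiplication. Since $Z$ is a $(G,H)$-equivalence, this action is free and transitive on $s_Z^{-1}(v)$, because $s_Z$ induces $G\backslash Z\cong H^{(0)}$. Fix $z_0\in s_Z^{-1}(v)$ and put $u=r(z_0)$. The map $G_u\to s_Z^{-1}(v)$, $\gamma\mapsto\gamma\cdot z_0$, is a bijection. Surjectivity follows from transitivity: any $z$ in the fibre equals ${}_G[z,z_0]\cdot z_0$. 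Injectivity follows from freeness. The bijection intertwines the left convolution formula:
\[
\sum_{\gamma'\in G^{r(\gamma)}} a(\gamma')\xi(\gamma'^{-1}\gamma\cdot z_0)
\]
corresponds to $(\mathrm{Ind}\,\delta_u)(a)$ under the induced isometry $\ell^p(s_Z^{-1}(v))\cong\ell^p(G_u)$. Therefore $\Vert(\mathrm{Ind}\,\delta_v)(F_a)\Vert=\Vert(\mathrm{Ind}\,\delta_u)(a)\Vert\le\Vert a\Vert_{p,red}$.

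Taking the supremum over all $w\in L^{(0)}$ then yields equality. The statement for $\mathcal{C}(H)$ follows by the symmetric argument, with $Z^{op}$ playing the role of $Z$, or alternatively by applying the $G$-case to the linking groupoid of the $(H,G)$-equivalence $Z^{op}$, which is canonically isomorphic to $L$ with the diagonal blocks swapped.

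The only step needing care is the identification in the second case. There one must check that the multiplication in $L$ of $\gamma'^{-1}\in G$ with $z\in Z$ is indeed the $G$-action, which is the definition of $L$. One must also verify that freeness and transitivity give a genuine bijection of discrete sets, so that the induced map on $\ell^p$ is an isometry, including for $p=\infty$. No topology is involved, because the reduced norm is a supremum of fibrewise operator norms. Local non-Hausdorffness is therefore irrelevant, and the convolution sums are finite for $a\in\mathcal{C}(G)$.
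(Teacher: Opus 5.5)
Your proposal is correct and follows essentially the same route as the paper. In both, $\ell^p(L_w)$ splits as $\ell^p(G_u)\oplus_p\ell^p(Z^{op}_u)$ or as $\ell^p(Z_v)\oplus_p\ell^p(H_v)$, the operator $(\mathrm{Ind}\,\delta_w)(F_a)$ has the form $T\oplus 0$, and the $Z_v$-block is identified with $(\mathrm{Ind}\,\delta_{r(z)})(a)$ via the bijection $G_{r(z)}\to G\cdot z=Z_v$. The only difference is that the paper handles $p=1,\infty$ separately through $\Vert\cdot\Vert_{1,red}=\Vert\cdot\Vert_{I,s}$ and $\Vert\cdot\Vert_{\infty,red}=\Vert\cdot\Vert_{I,r}$, whereas your fibrewise argument covers all $p\in[1,\infty]$ at once, which is equally valid.
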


\begin{proof}
Since $\Vert\cdot\Vert_{1,red}=\Vert\cdot\Vert_{I,s}$ and $\Vert\cdot\Vert_{\infty,red}=\Vert\cdot\Vert_{I,r}$, we just need to consider $p\in(1,\infty)$.

For $u\in G^{(0)}$, we have $L_u=G_u\sqcup Z^{op}_u$, and $\mathrm{Ind}^L\;\delta_u$ acts on $L^p(L_u,\kappa_u)$, where $\kappa_u$ is the forward image of $\kappa^u$ under inversion.
Let $\rho^u_{Z^{op}}$ be the Radon measure on $Z^{op}$ which is the image of $\sigma^u_Z$ under inversion.
Then $L^p(L_u,\kappa_u)=L^p(G_u,\lambda_u)\oplus_p L^p(Z^{op}_u,\rho^u_{Z^{op}})$. 
With respect to this decomposition, we have $(\mathrm{Ind}^L\;\delta_u)(F_a)=(\mathrm{Ind}^G\;\delta_u)(a)\oplus 0$.
Thus we have 
\begin{align*}
\Vert F_a\Vert_{F^p_{red}(L)} &= \max\left\{ \sup_{u\in G^{(0)}}\Vert(\mathrm{Ind}^L\;\delta_u)(F_a)\Vert,\sup_{v\in H^{(0)}}\Vert(\mathrm{Ind}^L\;\delta_v)(F_a)\Vert \right\} \\
&= \max\left\{ \Vert a\Vert_{F^p_{red}(G)},\sup_{v\in H^{(0)}}\Vert(\mathrm{Ind}^L\;\delta_v)(F_a)\Vert \right\}.
\end{align*}

We shall show that $\Vert(\mathrm{Ind}^L\;\delta_v)(F_a)\Vert\leq\Vert a\Vert_{F^p_{red}(G)}$ for all $v\in H^{(0)}$.

For $v\in H^{(0)}$, let $Z_v=\{z\in Z:s(z)=v\}$. Then $L_v=Z_v\sqcup H_v$, and $L^p(L_v,\kappa_v)=L^p(Z_v,\rho^v_Z)\oplus_p L^p(H_v,\beta_v)$, where $\rho^v_Z$ is the image of $\sigma^v_{Z^{op}}$ under inversion, i.e., for $\phi\in\mathcal{C}(Z)$, we have $\rho^v_Z(\phi)=\int_G\phi(\gamma^{-1}z)\;d\lambda^{r(z)}(\gamma)$ for any $z\in Z$ with $s(z)=v$. Moreover, $Z_v$ may be identified with $G\cdot z$ since $s$ induces a homeomorphism of $G\backslash Z$ onto $H^{(0)}$.

The action of $G$ on $Z$ is free and proper, so the map $\gamma\mapsto\gamma\cdot z$ is a homeomorphism of $G_{r(z)}$ onto the orbit $G\cdot z$, which is closed in $Z$ \cite[Lemma 1.2]{SW}.
This induces an invertible isometry from $L^p(G\cdot z,\rho^{s(z)}_Z)$ onto $L^p(G_{r(z)},\lambda_{r(z)})$. 
This invertible isometry intertwines $\mathrm{Ind}\;\delta_{r(z)}$ with a representation $R_{G\cdot z}(a)$ on $L^p(G\cdot z,\rho^{s(z)}_Z)$ given by
\[ R_{G\cdot z}(a)\xi(\gamma\cdot z) = \sum_{r(\eta)=r(\gamma)}a(\eta)\xi(\eta^{-1}\gamma\cdot z), \]
for any $\xi \in L^p(G\cdot z,\rho^{s(z)}_Z)$. 
In particular, $\Vert R_{G\cdot z}(a)\Vert\leq\Vert a\Vert_{F^p_{red}(G)}$.

Finally, we have $(\mathrm{Ind}^L\;\delta_v)(F_a)=R_{G\cdot z}(a)\oplus 0$, so $\Vert F_a\Vert_{F^p_{red}(L)}=\Vert a\Vert_{F^p_{red}(G)}$.
\end{proof}

For $\phi\in\mathcal{C}(Z)$ and $\psi\in\mathcal{C}(Z^{op})$, we define
\[
\Vert\phi\Vert_L \coloneqq \left\Vert\begin{pmatrix} 0 & \phi \\ 0 & 0 \end{pmatrix}\right\Vert_{F^p_{red}(L)}, \ \Vert\psi\Vert_L  \coloneqq \left\Vert\begin{pmatrix} 0 & 0 \\ \psi & 0 \end{pmatrix}\right\Vert_{F^p_{red}(L)}.
\]
Then the actions and pairings defined above satisfy the norm inequalities required for a pre-Morita equivalence:

\begin{lem} \label{preMor}
Let $p \in [1, \infty]$. The actions and pairings defined in Equations~\eqref{cG_left_cZ}-\eqref{cZ_cZop-cG} 
make the pair $(\mathcal{C}(Z^{op}), \mathcal{C}(Z))$, equipped with the $\Vert-\Vert_L$ norms, into a pre-Morita equivalence between $\mathcal{C}(G) \subseteq F_{red}^p(G)$ and $\mathcal{C}(H) \subseteq F_{red}^p(H)$ (see Definition \ref{Def:PreMORITA}). 
\end{lem}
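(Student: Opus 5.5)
The plan is to read off every algebraic identity and norm inequality from the fact that all the actions and pairings in Equations~\eqref{cG_left_cZ}--\eqref{cZ_cZop-cG} were derived as entries of convolution products in $\mathcal{C}(L)$, and that $\Vert\cdot\Vert_{F^p_{red}(L)}$ is submultiplicative. Concretely, I would embed $\mathcal{C}(G)$, $\mathcal{C}(Z)$, $\mathcal{C}(Z^{op})$ and $\mathcal{C}(H)$ into $\mathcal{C}(L)$ as the four matrix corners,
\[
a\mapsto\begin{pmatrix} a&0\\0&0\end{pmatrix},\quad \phi\mapsto\begin{pmatrix}0&\phi\\0&0\end{pmatrix},\quad \psi\mapsto\begin{pmatrix}0&0\\ \psi&0\end{pmatrix},\quad b\mapsto\begin{pmatrix}0&0\\0&b\end{pmatrix}.
\]
The first step is to check that these corners are closed under the relevant products. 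Since $L$ is the disjoint union of the four clopen pieces and $r,s$ map the pieces to the appropriate parts of $L^{(0)}$, the product of a $(i,j)$-corner with a $(j,k)$-corner is supported in the $(i,k)$-corner, and products of mismatched corners vanish. Hence convolution in $\mathcal{C}(L)$ is ordinary $2\times 2$ matrix multiplication of the entries. One should also note that $\mathcal{C}(L)$ decomposes as the direct sum of the four $\mathcal{C}$-spaces, because open Hausdorff subsets of $L$ split along the clopen decomposition.

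The second step is the algebra. Bimodule associativity, the pairing identities $\langle b\cdot\psi,\phi\rangle_H=b\langle\psi,\phi\rangle_H$ and $\langle\psi,\phi\cdot b\rangle_H=\langle\psi,\phi\rangle_H b$ (and their $G$-analogues), and the compatibility conditions (3) and (4) of Definition~\ref{Def:PreMORITA}, such as ${}_G\langle\phi_1,\psi\rangle\cdot\phi_2=\phi_1\cdot\langle\psi,\phi_2\rangle_H$, all follow from associativity of convolution in $\mathcal{C}(L)$. This is because each side is the corresponding corner of a triple product like $\Phi_1*\Psi*\Phi_2$. The left/right roles must be matched to Definition~\ref{Def:PreMORITA} with $\X_0=\mathcal{C}(Z^{op})$ as the $H$-$G$ bimodule and $\Y_0=\mathcal{C}(Z)$ as the $G$-$H$ bimodule. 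The action of $\mathcal{C}(G)$ on $\X_0$ by right multiplication gives the map $\pi_{A_0}$, and one sets $T^l(\psi)=\psi\cdot a$ and $T^r(\phi)=a\cdot\phi$. Formal adjointness of these two maps is again associativity, namely $(\Psi*F_a)*\Phi=\Psi*(F_a*\Phi)$. The map $\pi$ is then a homomorphism, respecting the reversed composition convention $TS=(S^lT^l,T^rS^r)$.

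The third step is the norm inequalities. For example,
\[
\Vert a\cdot\phi\Vert_L=\Vert F_a*\Phi\Vert_{F^p_{red}(L)}\le\Vert F_a\Vert_{F^p_{red}(L)}\Vert\phi\Vert_L=\Vert a\Vert_{F^p_{red}(G)}\Vert\phi\Vert_L .
\]
Here the last equality is exactly Theorem~\ref{thm:mainF_red^p}, which is the key input. Similarly,
\[
\Vert\langle\psi,\phi\rangle_H\Vert_{F^p_{red}(H)}=\Vert\Psi*\Phi\Vert_{F^p_{red}(L)}\le\Vert\psi\Vert_L\Vert\phi\Vert_L ,
\]
using the $H$-version of Theorem~\ref{thm:mainF_red^p}. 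Contractivity of $\pi_{A_0}$ follows because $\Vert T^l\|,\Vert T^r\Vert\le\Vert a\Vert$ by the same estimate. The remaining point is that $\Vert\cdot\Vert_L$ is a genuine norm on $\mathcal{C}(Z)$ and not merely a seminorm. For this I would use that $\Vert\cdot\Vert_{p,red}$ is a norm on $\mathcal{C}(L)$: applying $(\mathrm{Ind}\,\delta_{s(l)})(F)$ to $\delta_{s(l)}$ recovers $F(l)$, which is the same computation as in the preliminaries.

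I expect the main obstacle to be bookkeeping rather than analysis: making sure the orientation conventions match Definition~\ref{Def:PreMORITA}, and that the pairing formulas \eqref{cZop_cZ-cH} and \eqref{cZ_cZop-cG} really land in $\mathcal{C}(H)$ and $\mathcal{C}(G)$ in the non-Hausdorff setting. The latter is automatic from the corner description, since the convolution product preserves $\mathcal{C}(L)$. Independence of the formulas from the choice of $z$ is likewise inherited from their derivation via the Haar system of Lemma~\ref{lem:haarL}.
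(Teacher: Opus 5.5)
Your proposal is correct and follows essentially the same route as the paper. All the algebraic identities, including formal adjointness and the compatibility conditions, come from associativity of convolution under the corner decomposition of $\mathcal{C}(L)$. The six norm estimates come from submultiplicativity in $F^p_{red}(L)$ combined with Theorem~\ref{thm:mainF_red^p}. Your added check that $\Vert\cdot\Vert_L$ is a genuine norm, by evaluating $(\mathrm{Ind}\,\delta_{s(l)})(F)\delta_{s(l)}$ at $l$, is a small detail the paper leaves implicit.
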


\begin{proof}
Tracing the required conditions for Definition~\ref{Def:PreMORITA} back to Definitions~\ref{Def:PreBanachPair} and~\ref{Def:PreCorresp}, we first check that, for every $a\in\mathcal{C}(G)$, $b\in\mathcal{C}(H)$, $\phi\in\mathcal{C}(Z)$, and $\psi\in\mathcal{C}(Z^{op})$, the following identities hold:
\begin{enumerate}
\item ${}_G\langle a \cdot \phi,\psi \rangle = a * {}_G\langle \phi,\psi \rangle$,
\item ${}_G\langle \phi,\psi \cdot a \rangle = {}_G\langle \phi,\psi \rangle*a$,
\item $\langle \psi\cdot a,\phi \rangle_H=\langle \psi,a\cdot\phi \rangle_H$,
\item $\langle b \cdot \psi,\phi \rangle_H = b*\langle \psi,\phi \rangle_H$, 
\item $\langle \psi,\phi \cdot b \rangle_H = \langle \psi,\phi \rangle_H * b$, 
\item ${}_G\langle \phi,b\cdot\psi \rangle={}_G\langle \phi\cdot b,\psi \rangle$,
\item $\langle \psi,\phi  \rangle_H \cdot \psi' = \psi \cdot {}_G\langle \phi,\psi' \rangle$,
\item $ {}_G\langle \phi,\psi \rangle \cdot \phi' =\phi \cdot \langle \psi,\phi'  \rangle_H$.
\end{enumerate}

Under the corner identifications \[ \mathcal{C}(L) = \begin{pmatrix} \mathcal{C}(G) & \mathcal{C}(Z) \\ \mathcal{C}(Z^{op}) & \mathcal{C}(H) \end{pmatrix}, \]
all eight identities follow from associativity of convolution. For example, the last identity follows by associating \[ 
 \begin{pmatrix}
0& \phi  \\
0 & 0 
\end{pmatrix}*
 \begin{pmatrix}
0& 0   \\
\psi & 0 
\end{pmatrix}*
 \begin{pmatrix}
0& \phi'  \\
0 & 0 
\end{pmatrix}.
\]
Hence all the algebraic compatibility identities follow from associativity of convolution.

Finally, at the norm level, we also have to check that, 
for all $a \in \mathcal{C}(G)$, $b \in \mathcal{C}(H)$, 
$\phi \in \mathcal{C}(Z)$, and $\psi \in \mathcal{C}(Z^{op})$
we have 
\begin{enumerate}
 \setcounter{enumi}{8}
 \item $\| a \cdot \phi \|_L \leq \| a\|_{F^p_{red}(G)} \|\phi\|_L$,
 \item $\| \psi \cdot a \|_L \leq  \|\psi\|_L\| a\|_{F^p_{red}(G)}$,
  \item $\| b \cdot \psi  \|_L \leq  \| b\|_{F^p_{red}(H)}\|\psi\|_L$,
  \item  $\|  \phi \cdot b \|_L \leq  \|\phi\|_L\| b\|_{F^p_{red}(H)}$,
\item $\| \langle \psi,\phi  \rangle_H \|_{F^p_{red}(H)} \leq \| \psi \|_{L}\|\phi\|_{L}$,
\item $\| _{G}\langle \phi,\psi  \rangle\|_{F^p_{red}(G)} \leq \| \phi \|_{L} \|\psi\|_{L}$.
\end{enumerate}
All six norm inequalities follow from submultiplicativity in $F^p_{\mathrm{red}}(L)$ and Theorem~\ref{thm:mainF_red^p}.  For example, for the first norm inequality, we have 
\[
\| a \cdot \phi \|_L  =  \left\| 
\begin{pmatrix}
a &  0\\
0 & 0 
\end{pmatrix} 
*
 \begin{pmatrix}
0& \phi  \\
0 & 0 
\end{pmatrix}
\right\|_{F^p_{red}(L)}
\leq \|F_a\|_{F^p_{red}(L)} \|\phi\|_L = \|a\|_{F^p_{red}(G)} \| \phi\|_L.
\]
\end{proof}

\section{Approximate identities}\label{sect_AI}

In this section, we adapt the techniques in \cite{MW} to show that $\mathcal{C}(G)$ contains an approximate identity $\{e_\lambda\}_{\lambda\in\Lambda}$ with respect to the inductive limit topology for both the convolution product on $\mathcal{C}(G)$ and the actions of $\mathcal{C}(G)$ on $\mathcal{C}(Z)$ and $\mathcal{C}(Z^{op})$, where each $e_\lambda$ is a finite sum of elements of the form ${}_G\langle \phi,\psi \rangle$ with $\phi\in\mathcal{C}(Z)$ and $\psi\in\mathcal{C}(Z^{op})$.

Given a locally Hausdorff space $X$, a net $\{f_i\}\subset\mathcal{C}(X)$ is said to converge to $f\in\mathcal{C}(X)$ in the \emph{inductive limit topology} on $\mathcal{C}(X)$ if there is a compact set $K\subset X$ (independent of $i$) and some $i_0$ such that $f_i\to f$ uniformly and $f_i$ vanishes outside $K$ for all $i\geq i_0$.

A subset $U\subset G$ is called \emph{conditionally compact} if $VU$ and $UV$ are relatively compact whenever $V$ is relatively compact in $G$.\footnote{As in \cite{MW}, in the non-Hausdorff setting, a set is said to be relatively compact if it is contained in a compact set; it need not have compact closure.}
We say that $U$ is \emph{diagonally compact} if $VU$ and $UV$ are compact whenever $V$ is compact.
Note that if $U$ is diagonally compact, then so is $U^{-1}$, and thus also $U\cup U^{-1}$.
Also, every diagonally compact subset of $G$ is conditionally compact.

\begin{lem} \cite[Lemma 2.10]{MW} \label{lem:diagcpt}  
Suppose that $G$ is a locally compact, locally Hausdorff groupoid. If $G^{(0)}$ is paracompact, then $G$ has a fundamental system $\mathcal{U}$ of conditionally (or diagonally) compact neighborhoods of $G^{(0)}$, i.e., given any neighborhood $V$ of $G^{(0)}$, there exists $U\in\mathcal{U}$ such that $U\subset V$.
\end{lem}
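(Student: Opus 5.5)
This is the result of \cite[Lemma 2.10]{MW}, so one could simply cite it; the plan below is how I would reprove it directly. The approach is a local-to-global argument. First build, around each unit, a small open set whose products with compact sets stay compact. Then glue these sets along a locally finite cover of $G^{(0)}$, which exists because $G^{(0)}$ is paracompact.

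\textbf{Fixing the data.} Let $V$ be an arbitrary open neighborhood of $G^{(0)}$ in $G$. Since $G^{(0)}$ is locally compact Hausdorff and paracompact, I choose a locally finite open cover $\{W_i\}_{i\in I}$ of $G^{(0)}$ by relatively compact sets. I also choose a shrinking $\{W_i'\}$ with $\overline{W_i'}\subset W_i$ that still covers $G^{(0)}$.

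\textbf{Local step.} For each $i$, the compact set $\overline{W_i'}$ is covered by finitely many open Hausdorff sets. Using local compactness and the fact that the unit space is open in an étale groupoid, I pick an open neighborhood $U_i$ of $\overline{W_i'}$ inside $V$ with the following properties:
\begin{itemize}
\item $U_i$ is contained in a compact set $K_i$;
\item $r(U_i)\cup s(U_i)\subset W_i$.
\end{itemize}
Replacing $U_i$ by $U_i\cup U_i^{-1}$ (and $K_i$ by $K_i\cup K_i^{-1}$), I may assume the sets are symmetric. I then set $U=\bigcup_i U_i$, which is an open neighborhood of $G^{(0)}$ contained in $V$.

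\textbf{Compactness check.} Let $C\subset G$ be compact. Then $r(C)$ and $s(C)$ are compact in $G^{(0)}$, so by local finiteness they meet only finitely many $W_i$. If $c\in C$ and $x\in U_i$ with $s(c)=r(x)$, then $r(x)\in W_i$, so $W_i$ meets $s(C)$. Hence
\[
CU=\bigcup_{i\in F} CU_i\subset\bigcup_{i\in F} C K_i
\]
for a finite set $F$, and similarly for $UC$. Each $CK_i$ is the image of the compact set $(C\times K_i)\cap G^{(2)}$ under multiplication. That set is compact because $G^{(2)}$ is closed in the relevant Hausdorff patches. Therefore $CU$ and $UC$ are relatively compact, and $U$ is conditionally compact.

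\textbf{Diagonal compactness and the main obstacle.} For the diagonally compact version, I need $CU$ itself to be compact, not merely relatively compact. To get this, I would instead take $U=\bigcup_i K_i$ with compact $K_i\subset V$ chosen as neighborhoods of $\overline{W_i'}$. Then $CU=\bigcup_{i\in F} CK_i$ is a finite union of compact sets, hence compact; this $U$ is still a neighborhood of $G^{(0)}$ because its interior contains every $W_i'$. The main obstacle is the non-Hausdorff setting. Closedness of $G^{(2)}$ and compactness of $(C\times K)\cap G^{(2)}$ can fail there, so I would work with finite unions of compact Hausdorff pieces. The argument is arranged so that only relative compactness, in the sense of being contained in a compact set, is ever needed.
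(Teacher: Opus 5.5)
Your plan is the standard argument behind \cite[Lemma 2.10]{MW}, which the paper cites rather than reproves. It uses a locally finite cover of $G^{(0)}$ by relatively compact open sets, compact neighborhoods inside $V$ over each piece, and the observation that a compact $C\subseteq G$ can be composed with only finitely many of the pieces. The argument goes through, but your last paragraph misdiagnoses the difficulty. There is no non-Hausdorff obstruction: since $G^{(0)}$ is Hausdorff, $G^{(2)}=(s\times r)^{-1}(\Delta_{G^{(0)}})$ is closed in $G\times G$. Hence $(C\times K)\cap G^{(2)}$ is a closed subset of the compact space $C\times K$, and $CK$ is compact for all compact $C,K\subseteq G$. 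This is exactly the argument the paper uses in the proofs of Lemma~\ref{claim6.7} and Lemma~\ref{lem:SK}. Moreover, this fact cannot be bypassed, because diagonal compactness \emph{is} the assertion that $CU$ and $UC$ are compact. So your remark that only relative compactness is ever needed is false, and you should drop the detour through ``compact Hausdorff pieces.'' It is not even clear that detour works, since a compact Hausdorff neighborhood $N$ need not be closed, so $C\cap N$ need not be compact.

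Two smaller repairs are also needed. First, in the diagonal version you must localize the compact sets themselves: choose $K_i\subseteq V\cap r^{-1}(W_i)\cap s^{-1}(W_i)$ with $\overline{W_i'}\subseteq\operatorname{int}K_i$, which is possible because every point has a neighborhood base of compact sets. You imposed $r(U_i)\cup s(U_i)\subseteq W_i$ only on $U_i$, and without the corresponding condition on $K_i$ the index set $F$ need not be finite. For example, take the pair groupoid on $\mathbb{Z}$ with $V=G$ and $W_i=W_i'=\{i\}$. The choice $K_i=\{(i,i),(0,i)\}$ gives $\{(0,0)\}\cdot\bigcup_iK_i=\{(0,i):i\in\mathbb{Z}\}$, which is not compact. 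Second, $U_i\cup U_i^{-1}$ need not lie in $V$ unless you first replace $V$ by $V\cap V^{-1}$. Symmetry plays no role in the argument, so you can simply omit that step.
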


\begin{lem} \cite[Lemma 2.13]{MW} \label{lem:nbh}
Suppose that $G$ is a locally compact, locally Hausdorff groupoid, and that $K\subset G^{(0)}$ is compact.
Then there is a neighborhood $W$ of $G^{(0)}$ in $G$ such that $W\cap r^{-1}(K)$ is Hausdorff.
\end{lem}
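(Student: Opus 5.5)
Since $K\subset G^{(0)}$ is compact, I would build $W$ by covering $K$ with finitely many open Hausdorff sets and then discarding the pieces that would break the Hausdorff property over $K$.

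\textbf{Local Hausdorff pieces.} For each $u\in K$, local Hausdorffness gives an open Hausdorff neighborhood $U_u$ of $u$ in $G$. Since $G^{(0)}$ is open in $G$ (as $G$ is étale) and Hausdorff, we may shrink so that $U_u$ is a bisection. By compactness, finitely many such sets $U_1,\dots,U_n$ cover $K$.

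\textbf{Non-Hausdorffness sits on the closure of the unit space.} In a locally Hausdorff étale groupoid, two points $\gamma\neq\gamma'$ that cannot be separated by open sets must satisfy $r(\gamma)=r(\gamma')$ and $s(\gamma)=s(\gamma')$. Translating by $\gamma'^{-1}$, the failure of Hausdorffness is governed by points of $\overline{G^{(0)}}\setminus G^{(0)}$ that cannot be separated from units.

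\textbf{The candidate set.} I would define $W$ as $G^{(0)}$ together with the parts of $U_1,\dots,U_n$ lying over a neighborhood of $K$, then cut down so that any two points of $W\cap r^{-1}(K)$ with the same range either lie in a common $U_i$ or are distinct points of $G^{(0)}$. The simplest concrete choice is
\[
W=\bigl(G^{(0)}\setminus K\bigr)\cup\bigl(U_1\cup\dots\cup U_n\bigr).
\]
This is a neighborhood of $G^{(0)}$: each point of $K$ lies in some $U_i$, and each point of $G^{(0)}\setminus K$ lies in the open set $G^{(0)}\setminus K$.

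\textbf{The obstacle.} The union of the $U_i$ need not be Hausdorff. Two points $\gamma\in U_i$ and $\gamma'\in U_j$ with $r(\gamma)=r(\gamma')\in K$ might be inseparable. To prevent this, I would refine the cover. For each $u\in K$, take a single open Hausdorff bisection $V_u\ni u$. Then, using compactness and a shrinking argument (e.g.\ choose compact neighborhoods $C_i$ of units in $G^{(0)}$ with $K\subset\bigcup\mathrm{int}\,C_i$), replace $U_i$ by $U_i\cap r^{-1}(\mathrm{int}\,C_i)$.

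I would then argue as follows. If $\gamma\in U_i$ and $\gamma'\in U_j$ are inseparable, then $\gamma'\gamma^{-1}$ is inseparable from the unit $r(\gamma)$. Since $U_i$ and $U_j$ are Hausdorff bisections containing units near $K$, one can shrink them to $U_i\cap s^{-1}(U_i\cap G^{(0)})$, so that every element of $W$ over $K$ is either a unit or lies in a chosen Hausdorff neighborhood of a unit.

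The hard point is to show that two such non-unit elements over the same unit of $K$ are separable. This is where I expect to follow \cite{MW}: intersect $W$ with $G^{(0)}\cup\bigcap$ of the relevant Hausdorff opens over each finite overlap. Finiteness of the cover keeps the resulting set open, and the restriction to $r^{-1}(K)$ ensures only finitely many Hausdorff charts are involved, so their union over $K$ can be checked to be Hausdorff pairwise.
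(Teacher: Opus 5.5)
Your proposal has a genuine gap: it never actually establishes the Hausdorff property. The decisive step, that two non-unit elements of $W$ over the same point of $K$ can be separated, is deferred to \cite{MW} and only gestured at. The closing assertion that a finite union of Hausdorff charts over $K$ ``can be checked to be Hausdorff pairwise'' is exactly what fails in general.

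Your first candidate $W=(G^{(0)}\setminus K)\cup(U_1\cup\dots\cup U_n)$ already breaks down in the following example. Take the group bundle over $\mathbb{R}$ that is trivial away from $0$, has fibre $\mathbb{Z}/2$ at $0$, and in which the nontrivial arrow at $0$ is a limit of the units $x\to 0$. The set consisting of the units $x\neq 0$ together with that arrow is an open Hausdorff bisection containing units. Combined with a unit-space chart around $0$, it produces two inseparable arrows over $K=[-1,1]$. Cutting down by $r^{-1}(\mathrm{int}\,C_i)$ does not remove them, since nothing prevents $0\in\mathrm{int}\,C_i$.

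The missing idea is much simpler than what you are reaching for. You already observed that $G^{(0)}$ is open in $G$ because $G$ is \'etale, and the paper's standing assumptions make $G^{(0)}$ Hausdorff. So $W=G^{(0)}$ is itself an open neighborhood of $G^{(0)}$. Moreover $W\cap r^{-1}(K)=K$, because $r(u)=u$ for units. This is a subspace of the Hausdorff space $G^{(0)}$, and you are done.

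In fact your own shrinking step produces exactly this. If $\gamma\in U_i$ and $s(\gamma)\in U_i$, then $\gamma$ and $s(\gamma)$ are two points of the bisection $U_i$ with the same source, so $\gamma=s(\gamma)$. Hence $U_i\cap s^{-1}(U_i\cap G^{(0)})=U_i\cap G^{(0)}$, your $W$ collapses to $G^{(0)}$, and no non-unit elements over $K$ remain to be separated.

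The paper gives no proof of its own here; it quotes \cite[Lemma~2.13]{MW}. That lemma is stated for locally Hausdorff groupoids that need not be \'etale, where $G^{(0)}$ need not be open, open bisections are not available, and a genuine argument is required. Under the paper's standing \'etale hypothesis, however, the statement is immediate.
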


\begin{lem} \cite[Lemma 2.14]{MW} \label{lem:nbh1}
Suppose that $G$ is a locally compact, locally Hausdorff groupoid, and that $Z$ is a locally compact, locally Hausdorff left $G$-space.
If $V$ is open in $Z$, and if $K\subset V$ is compact, then there is a neighborhood $W$ of $G^{(0)}$ in $G$ such that $W\cdot K\subset V$.
\end{lem}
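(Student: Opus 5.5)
The statement to prove is Lemma \ref{lem:nbh1}: given $V$ open in $Z$ and $K\subset V$ compact, there should be a neighborhood $W$ of $G^{(0)}$ with $W\cdot K\subset V$. The plan is a compactness argument built on the continuity of the action map $G*Z\to Z$ at the points $(r_Z(k),k)$, $k\in K$, followed by an enlargement step that makes $W$ a neighborhood of all of $G^{(0)}$ rather than only of $r_Z(K)$.

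\textbf{Step 1: local choices.} Fix $k\in K$. We have $r_Z(k)\cdot k=k\in V$, and the action is continuous on $G*Z$ with the relative product topology. Hence there are an open neighborhood $A_k$ of $r_Z(k)$ in $G$ and an open neighborhood $B_k$ of $k$ in $Z$ such that $\gamma\cdot z\in V$ whenever $\gamma\in A_k$, $z\in B_k$ and $s_G(\gamma)=r_Z(z)$. Local Hausdorffness is harmless here: continuity at a point only needs basic open product neighborhoods. We may shrink $B_k$ so that $B_k\subset V$ and $B_k$ is contained in a compact neighborhood $C_k$ of $k$.

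\textbf{Step 2: finite subcover.} By compactness, $K\subset B_{k_1}\cup\dots\cup B_{k_n}$. Set $W_0=\bigcup_i\bigl(A_{k_i}\cap s_G^{-1}(r_Z(B_{k_i}))\bigr)$. This is the wrong shape, because an arrow from $A_{k_i}$ may act on a point of $B_{k_j}$ with $j\neq i$. To fix this, define
\[
W_1=\bigl\{\gamma\in G : \text{for all } i,\ \text{if } s_G(\gamma)\in r_Z(\overline{B_{k_i}}\cap K) \text{ then } \gamma\in A_{k_i}\bigr\}.
\]
An equivalent and more convenient description is
\[
W_1=G\setminus\bigcup_i\bigl(s_G^{-1}(r_Z(K_i))\setminus A_{k_i}\bigr),
\]
where $K=\bigcup_i K_i$ with $K_i\subset B_{k_i}$ compact. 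Such a decomposition exists by the standard shrinking argument in the compact set $K$; $K$ need not be Hausdorff, but each point has a compact neighborhood inside some $B_{k_i}$, and we can use finitely many of these. Since $r_Z(K_i)$ is compact in the Hausdorff space $G^{(0)}$, it is closed, so $s_G^{-1}(r_Z(K_i))$ is closed in $G$.

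\textbf{Step 3 (main obstacle): closedness.} The difficulty is showing that $s_G^{-1}(r_Z(K_i))\setminus A_{k_i}$ is closed. It is the intersection of a closed set with the complement of an open set, so it is closed. Thus $W_1$ is open. $W_1$ contains $G^{(0)}$: a unit $u$ with $u\in r_Z(K_i)$ must lie in $A_{k_i}$, and this may fail. To repair it, Step 1 should be redone so that $A_{k_i}\supseteq r_Z(C_{k_i})$. This is achievable by choosing $A_k$ to contain a neighborhood of $r_Z(k)$ in $G^{(0)}$ together with $G^{(0)}$-points near it, and then shrinking $B_k$ so that $r_Z(B_k)\subset A_k$. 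The condition $K_i\subset B_{k_i}$ then gives $r_Z(K_i)\subset A_{k_i}$.

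\textbf{Step 4: conclusion.} Let $\gamma\in W_1$ and $z\in K$ with $s_G(\gamma)=r_Z(z)$. Pick $i$ with $z\in K_i$. Then $s_G(\gamma)\in r_Z(K_i)$, so $\gamma\in A_{k_i}$, and $z\in B_{k_i}$. Step 1 now gives $\gamma\cdot z\in V$. Therefore $W=W_1$ works.

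The delicate points are arranging $r_Z(B_k)\subset A_k$ and handling the non-Hausdorff topology of $K$ when producing the compact pieces $K_i$. Paracompactness is not needed for this lemma.
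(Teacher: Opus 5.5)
The paper gives no proof of this lemma; it is quoted from \cite[Lemma~2.14]{MW}. Your direct construction is correct once two points are fixed.

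First, the repair in Step~3 belongs in Step~1. After choosing $A_k,B_k$ by continuity of the action at $(r_Z(k),k)$, replace $B_k$ by $B_k\cap r_Z^{-1}(A_k\cap G^{(0)})$. This set is open because $A_k\cap G^{(0)}$ is relatively open in $G^{(0)}$ and $r_Z$ is continuous. The replacement gives $r_Z(B_k)\subseteq A_k$ and keeps the defining property of the pair $(A_k,B_k)$, so nothing about ``$G^{(0)}$-points near $r_Z(k)$'' has to be arranged separately.

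Second, do not aim for $K=\bigcup_i K_i$. In a non-Hausdorff space, the intersection of $K$ with a compact neighborhood need not be compact. On the line with two origins, a compact interval through one origin meets a compact interval through the other origin in a punctured interval. Instead, let $K_i$ be the finite union of the compact neighborhoods you assigned to $B_{k_i}$. Then $K\subseteq\bigcup_iK_i$ and $K_i\subseteq B_{k_i}$ with $K_i$ compact, and this is all that Steps~3 and~4 use.

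Also drop the first description of $W_1$, the one with $\overline{B_{k_i}}\cap K$; it is not equivalent to the second. With these changes, $W_1=G\setminus\bigcup_i\bigl(s^{-1}(r_Z(K_i))\setminus A_{k_i}\bigr)$ is open, contains $G^{(0)}$, and satisfies $W_1\cdot K\subseteq V$, exactly as you argue.

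The natural alternative is a contradiction argument with nets, in the style the paper uses for Lemma~\ref{claim6.7}(3). If no $W$ works, choose $\gamma_W\in W$ and $z_W\in K$ with $\gamma_W\cdot z_W\notin V$, and pass to a subnet with $z_W\to z\in K$. Then show $\gamma_W\to r_Z(z)$, and continuity of the action together with openness of $V$ gives a contradiction.

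The step $\gamma_W\to r_Z(z)$ rests on the same device as your Step~3. Take an open $O\ni r_Z(z)$ and a compact neighborhood $C\subseteq O\cap G^{(0)}$ of $r_Z(z)$ in $G^{(0)}$. Then $O\cup\bigl(G\setminus s^{-1}(C)\bigr)$ is an open neighborhood of $G^{(0)}$, because $s^{-1}(C)$ is closed, $G^{(0)}$ being Hausdorff.

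The net argument is shorter and avoids the bookkeeping with the $K_i$. Yours produces an explicit $W$, the complement of finitely many closed sets, and avoids subnets. Neither argument uses paracompactness.
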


\begin{lem} \cite[Lemma 2.15]{MW} \label{lem:nbh2} 
Suppose that $G$ is a locally compact, locally Hausdorff groupoid, and that $Z$ is a locally compact, locally Hausdorff, free and proper left $G$-space.
If $W$ is a neighborhood of $G^{(0)}$ in $G$, then each $z\in Z$ has a neighborhood $V$ such that $(\gamma\cdot z,z)\in V\times V$ implies that $\gamma\in W$. 
\end{lem}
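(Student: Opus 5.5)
We prove Lemma~\ref{lem:nbh2} by contradiction, using properness of the $G$-action to extract a convergent subnet and freeness to identify the limit as a unit.

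Fix $z\in Z$ and a neighborhood $W$ of $G^{(0)}$, and suppose no neighborhood $V$ of $z$ works. Then for every neighborhood $V$ of $z$ in a directed neighborhood basis we can choose $\gamma_V\in G$ and $z_V\in V$ with $s(\gamma_V)=r_Z(z_V)$, $\gamma_V\cdot z_V\in V$, and $\gamma_V\notin W$. This gives a net $(\gamma_V,z_V)$ in $G*Z$ with
\[
\Theta(\gamma_V,z_V)=(\gamma_V\cdot z_V,z_V)\longrightarrow (z,z).
\]

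Next, choose a compact neighborhood $C$ of $z$; since $Z$ is locally compact, local compactness at $z$ suffices for this. Eventually $(\gamma_V\cdot z_V,z_V)\in C\times C$. Because $\Theta$ is proper, $\Theta^{-1}(C\times C)$ is compact, so after passing to a subnet we get $(\gamma_V,z_V)\to(\gamma,z')$ in $G*Z$.

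The main obstacle is the non-Hausdorff setting. Limits need not be unique, so we cannot immediately conclude $z'=z$. The plan is to work in a Hausdorff open neighborhood of $z$: $Z$ is locally Hausdorff, so choose the basis $V$ inside an open Hausdorff set $U\ni z$ and take $C\subset U$ compact. Eventually $z_V\in C$, and $C$ is closed in the Hausdorff space $U$. Hence $z'\in C\subset U$, and by uniqueness of limits in $U$ we get $z'=z$. Similarly, $\gamma_V\cdot z_V\to\gamma\cdot z$ by continuity of the action. Since $\gamma_V\cdot z_V\to z$ as well, and the limit $\gamma\cdot z$ lies in the compact, closed-in-$U$ set $C$, we obtain $\gamma\cdot z=z$.

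Freeness then gives $\gamma=r_Z(z)\in G^{(0)}\subset W$. Since $W$ is a neighborhood of $G^{(0)}$, it contains an open set containing $\gamma$, so $\gamma_V\in W$ eventually. This contradicts the choice $\gamma_V\notin W$, which proves the lemma. No Hausdorffness of $G$ is needed, because we only use that $\gamma_V\to\gamma$ and that $W$ contains an open neighborhood of $\gamma$.
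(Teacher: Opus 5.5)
Your proof is correct. It is essentially the standard argument behind \cite[Lemma~2.15]{MW}, which the paper cites without proof: argue by contradiction with a net, use properness of $\Theta$ to extract a convergent subnet, use local Hausdorffness to identify limits, and use freeness to force the limit arrow into $G^{(0)}$. One small correction: $z'\in C$ and $\gamma\cdot z'\in C$ hold because you extract the convergent subnet inside the compact set $\Theta^{-1}(C\times C)$, so its limit lies there. They do not follow from $C$ being closed in $U$, since that only helps once you know the limit lies in $U$, and $C$ need not be closed in $Z$.
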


\begin{prop} \cite[Proposition 2.16]{MW} \label{prop:fullsys} 
Suppose that $H$ is a locally compact, locally Hausdorff groupoid with Haar system $\{\beta^v\}_{v\in H^{(0)}}$, and that $Z$ is a locally compact, locally Hausdorff, free and proper right $H$-space.
Let $q:Z\rightarrow Z/H$ be the quotient map, and let $V\subset Z$ be an open Hausdorff set such that $q(V)$ is Hausdorff.
\begin{enumerate}
\item If $\psi\in C_c(V)$, then $\lambda(\psi)(q(z))=\int_H \psi(z\cdot\eta)\;d\beta^{s(z)}(\eta)$ defines an element $\lambda(\psi)\in C_c(q(V))$.
\item If $d\in C_c(q(V))$, then there is a $\psi\in C_c(V)$ such that $\lambda(\psi)=d$. Moreover, if $d$ is positive, then $\psi$ may be taken to be positive. 
\end{enumerate}
\end{prop}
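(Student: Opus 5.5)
The statement is Proposition~\ref{prop:fullsys}, so the plan is to follow \cite[Proposition 2.16]{MW}. Part (1) is a local computation. Part (2) is a partition-of-unity construction that inverts the averaging map $\lambda$.

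\textbf{Part (1).} First we check that $\lambda(\psi)$ is well defined. If $q(z)=q(z')$, then $z'=z\cdot\eta_0$ for some $\eta_0\in H$. Left invariance of $\beta$ (translating by $\eta_0$) shows that the integral does not depend on the representative. Support is handled directly: if $\psi(z\cdot\eta)\neq0$ then $z\cdot\eta\in\supp\psi$, so $\lambda(\psi)$ vanishes off $q(\supp\psi)$. The set $q(\supp\psi)$ is compact, since $q$ is continuous, and it lies inside $q(V)$.

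For continuity we use the standard fact that $z\mapsto\int_H F(z,\eta)\,d\beta^{s(z)}(\eta)$ is continuous whenever $F\in C_c(Z*H)$. Here one applies this to $F(z,\eta)=\psi(z\cdot\eta)$, working near a given $z$. Properness of the action ensures that the relevant set of $\eta$ is compact when $z$ ranges over a compact neighborhood. Local Hausdorffness is handled by writing the integrand as a finite sum of functions in $C_c$ of Hausdorff open sets, using a partition of unity on the compact set of relevant $\eta$'s. Continuity on $q(V)$ then follows because $q$ is open.

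\textbf{Part (2).} Let $d\in C_c(q(V))$, set $K=\supp d$, and note that $q|_V$ is open. For each $x\in K$ choose $z_x\in V$ with $q(z_x)=x$, and choose $g_x\in C_c(V)$ with $g_x\geq0$ and $g_x(z_x)>0$. Then $\lambda(g_x)>0$ on an open neighborhood of $x$. Compactness of $K$ gives finitely many such $g_i$, and $g=\sum g_i\in C_c(V)$ is nonnegative with $\lambda(g)>0$ on a neighborhood of $K$.

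Now define $\psi(z)=g(z)\,d(q(z))/\lambda(g)(q(z))$, taken to be $0$ where $d(q(z))=0$. The function $\psi$ is continuous and compactly supported inside $\supp g\subset V$. It is positive whenever $d$ is positive.

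Since $d/\lambda(g)$ is constant on $H$-orbits, we get
\[
\lambda(\psi)(q(z))=\frac{d(q(z))}{\lambda(g)(q(z))}\int_H g(z\cdot\eta)\,d\beta^{s(z)}(\eta)=d(q(z)).
\]

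\textbf{Main obstacle.} The delicate step is the continuity of $\lambda(\psi)$ in part (1), because $H$ and $Z$ are only locally Hausdorff. To handle it, I would reduce to Haar-system continuity on Hausdorff pieces, using properness to localize the integral. Once continuity is settled, part (2) is formal.
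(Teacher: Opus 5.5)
Your overall route is the standard one behind \cite[Proposition 2.16]{MW}, which the paper cites without giving a proof of its own. Well-definedness by left invariance and the support estimate are fine. Part (2), the normalization $\psi=g\cdot(d\circ q)/(\lambda(g)\circ q)$, is also fine. One small point there: $\lambda(g_x)(x)>0$ uses that $\beta^{s(z_x)}$ has full support $H^{s(z_x)}$, which is automatic for counting measures.

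The problem is the step you yourself call delicate: reducing continuity in (1) to Hausdorff pieces via ``a partition of unity on the compact set of relevant $\eta$'s''. That set, $\{\eta:\ z\cdot\eta\in\supp\psi\text{ for some }z\in N\}$, is compact but need not be Hausdorff. On a compact set containing two inseparable points there is no nonnegative partition of unity subordinate to Hausdorff open sets, since near those points the summands would add up to about $2$.

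Here is an example. Let $H_0$ be the group bundle over $\mathbb R$ with isotropy $\{0,0'\}\cong\mathbb Z/2$ over $0$ (the line with doubled origin). Let $H=H_0\times\{1,2\}^2$ be its product with the pair groupoid, and let $Z=H$ with right translation. Let $V$ be the union of the sheets $\mathbb R\times\{(i,j)\}$ for $(i,j)\neq(2,2)$ together with $((\mathbb R\setminus\{0\})\cup\{0'\})\times\{(2,2)\}$; this $V$ is open and Hausdorff. Take a compact neighbourhood $N$ of $(0,1,1)$ that also contains $(0,2,1)$, and take $\psi\in C_c(V)$ nonzero at $(0,1,2)$ and $(0',2,2)$. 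Then both $(0,1,2)$ and $(0',1,2)$ are relevant, and they cannot be separated. Also, $(z,\eta)\mapsto\psi(z\cdot\eta)$ need not be continuous on $Z*H$, so the ``standard fact'' must be applied to a genuinely localized function.

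The repair is to localize in $Z*H$ rather than in $H$; this is exactly where the Hausdorffness of $V$ enters.

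1. Since the action is free and proper, $(z,\eta)\mapsto(z,z\cdot\eta)$ is a homeomorphism of $Z*H$ onto $Z\times_{Z/H}Z$. Hence $W=\{(z,\eta): z\in V,\ z\cdot\eta\in V\}$ is an open Hausdorff subset of $Z*H$, because it embeds in $V\times V$.
2. Fix $z_0\in V$ and $\chi\in C_c(V)$ with $\chi=1$ near $z_0$. Then $F(z,\eta)=\chi(z)\psi(z\cdot\eta)$ lies in $C_c(W)$. It is continuous on $W$ and vanishes off $W$. It is supported in the preimage of $\supp\chi\times\supp\psi$, which is compact by properness and contained in $W$.
3. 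Take the partition of unity on this compact Hausdorff set, subordinate to sets $(A\times B)\cap W$ with $A\subseteq V$ and $B\subseteq H$ open Hausdorff.
4. By Stone--Weierstrass, approximate each piece uniformly by sums $\sum_i f_i(z)g_i(\eta)$ with $f_i\in C_c(A)$ and $g_i\in C_c(B)$.
5. Continuity of $u\mapsto\beta^u(g_i)$, together with a uniform bound on the $\beta^u$-measure of a fixed compact set, gives continuity of $z\mapsto\int F(z,\eta)\,d\beta^{s(z)}(\eta)$ on $V$. Near $z_0$ this function equals $\lambda(\psi)(q(z))$.

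The rest of your argument then goes through, including the passage to $q(V)$ via openness of $q$.
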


We use the following form of \cite[Claim~6.7]{MW}.

\begin{lem} \label{claim6.7}
Let $V$ be an open Hausdorff subset of $Z$, let $\phi \in C_c(V)$, let $K_1 \coloneqq \mathrm{supp}_V \phi \subset V$, and assume that $G^{(0)}$ is paracompact. Then there is a diagonally compact neighborhood $W_1$ of $G^{(0)}$ in $G$
such that 
\begin{enumerate}
\item $K_2 \coloneqq W_1 \cdot K_1 \subset V$, 
\item  $r_Z(K_2)W_1=W_1 \cap r^{-1}(r_Z(K_2))$ is Hausdorff,
\item  for each $\varepsilon>0$, there is a diagonally compact neighborhood $U_1$ of $G^{(0)}$ in $G$ such that $U_1\subset W_1$ and such that $\gamma\in U_1$ implies that $|\phi(\gamma^{-1}\cdot z)-\phi(z)|<\varepsilon$ for all $z\in Z$ satisfying $r_Z(z)=r(\gamma)$.
\end{enumerate} 
\end{lem}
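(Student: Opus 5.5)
We construct $W_1$ in stages, shrinking a neighborhood of $G^{(0)}$ until it satisfies (1) and (2), and only then produce $U_1$ for (3) by a uniform-continuity argument. Throughout we rely on two facts. First, $K_1$ is compact, so $r_Z(K_1)$ is a compact subset of $G^{(0)}$. Second, Lemma~\ref{lem:diagcpt} supplies a fundamental system of diagonally compact neighborhoods, so any neighborhood of $G^{(0)}$ may be shrunk to a diagonally compact one.

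\emph{Step 1.} We first build an auxiliary compact neighborhood of $K_1$. Since $V$ is locally compact Hausdorff, choose an open set $V'$ with compact closure $\overline{V'}\subset V$ and $K_1\subset V'$. By Lemma~\ref{lem:nbh1}, applied to the compact set $\overline{V'}$ inside the open set $V$, there is a neighborhood $W'$ of $G^{(0)}$ with $W'\cdot\overline{V'}\subset V$. The set $r_Z(\overline{V'})$ is compact, so Lemma~\ref{lem:nbh} gives a neighborhood $W''$ of $G^{(0)}$ such that $W''\cap r^{-1}(r_Z(\overline{V'}))$ is Hausdorff.

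\emph{Step 2.} We now fix $W_1$. By Lemma~\ref{lem:nbh1} once more, there is a neighborhood $W'''$ of $G^{(0)}$ with $W'''\cdot K_1\subset V'$. Take $W_1\subset W'\cap W''\cap W'''$ diagonally compact, using Lemma~\ref{lem:diagcpt}.
\begin{itemize}
\item Property (1): $K_2=W_1\cdot K_1\subset V'\subset V$.
\item Property (2): $r_Z(K_2)\subset r_Z(V')$. For $\gamma\in W_1$ with $s(\gamma)=r(\gamma)\in r_Z(K_2)$, we have $r^{-1}(r_Z(K_2))\cap W_1\subset W''\cap r^{-1}(r_Z(\overline{V'}))$, which is Hausdorff, and subsets of Hausdorff sets are Hausdorff.
\item The identity $r_Z(K_2)W_1=W_1\cap r^{-1}(r_Z(K_2))$ is just notation, since a unit $u$ times $\gamma$ is defined exactly when $r(\gamma)=u$.
\end{itemize}
A point to verify is that $K_2$ is compact; this would follow from closedness of $W_1$ if we choose it closed. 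We do not need it, however, because $K_2\subset\overline{V'}$.

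\emph{Step 3.} This is property (3) and the main obstacle. Fix $\varepsilon>0$ and extend $\phi$ by zero. We argue by contradiction. Suppose that for every neighborhood $U\subset W_1$ of $G^{(0)}$ there are $\gamma_U\in U$ and $z_U$ with $|\phi(\gamma_U^{-1}\cdot z_U)-\phi(z_U)|\ge\varepsilon$. Then either $z_U\in K_1$ or $\gamma_U^{-1}z_U\in K_1$. Replacing $\gamma$ by $\gamma^{-1}$, where we work with $U\cap U^{-1}$, we may assume $y_U:=z_U\in K_1$ or its translate lies there, and that $\gamma_U^{-1}\in U$.

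By compactness of $K_1$, pass to a subnet with $y_U\to y\in K_1$. Since $U$ shrinks to $G^{(0)}$ and $r(\gamma_U)=r_Z(y_U)$ converges to $r_Z(y)$ inside the compact set $r_Z(K_1)$, the elements $\gamma_U$ eventually lie in the Hausdorff compact piece from Step 1. A subnet then gives $\gamma_U\to r_Z(y)$; to justify this we must be careful and use that the $U$ form a fundamental system together with local compactness near units. Continuity of the action gives $\gamma_U^{-1}\cdot y_U\to y$ inside $V'$. Continuity of $\phi$ on the Hausdorff set $V$ then yields $|\phi(\gamma_U^{-1}y_U)-\phi(y_U)|\to0$, a contradiction.

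Finally, shrink the resulting $U$ to a diagonally compact $U_1\subset W_1$ via Lemma~\ref{lem:diagcpt}. The delicate point is the convergence $\gamma_U\to$ unit in the non-Hausdorff $G$. If the net argument is awkward, we would instead argue pointwise: for each $y\in K_1$, use joint continuity of $(\gamma,z)\mapsto\phi(\gamma^{-1}z)$ on $G*V$ at $(r(y),y)$ to get a product neighborhood $N_y\times O_y$. Cover $K_1$ by finitely many $O_y$ and intersect the corresponding $N_y$, extended to neighborhoods of all of $G^{(0)}$ by taking their union with $G\setminus r^{-1}(r_Z(K_2))$-type open sets. This follows the proof of \cite[Claim~6.7]{MW}.
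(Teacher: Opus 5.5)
Your choice of $W_1$ and your verification of (1) and (2) are fine. Read $\overline{V'}$ as a compact subset of $V$ containing $V'$, since closures in the non-Hausdorff space $Z$ are not controlled. Replacing compactness of $K_2$ by the inclusion $K_2\subseteq\overline{V'}$ is a harmless variation on the paper's argument.

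\textbf{The gap is in (3).} It sits precisely at the step you call delicate and then leave open: you never prove that the limit $\gamma$ of the subnet $(\gamma_U)$ is a unit, and this is the heart of the lemma. Since $G$ need not be Hausdorff, $G^{(0)}$ need not be closed. So a net with $\gamma_U\in U$ and $U$ shrinking to $G^{(0)}$ can converge to a non-unit, for instance a net of units converging to a doubled point. Appealing to ``the $U$ form a fundamental system together with local compactness near units'' does not exclude this.

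Note also that the compact set containing the $\gamma_U$ is not produced in Step~1, which gives only the Hausdorff set $W''\cap r^{-1}(r_Z(\overline{V'}))$. It is $r_Z(K_1)W_1=W_1\cap r^{-1}(r_Z(K_1))$: compact by diagonal compactness of $W_1$, and Hausdorff by Step~1.

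The missing argument, as in the paper, runs as follows.
\begin{enumerate}
\item Fix a diagonally compact neighborhood $U_0\subseteq W_1$ of $G^{(0)}$. The net is eventually in $r_Z(K_1)U_0$.
\item This set is compact by diagonal compactness of $U_0$, and therefore closed in the Hausdorff set $r_Z(K_1)W_1$. Hence $\gamma\in U_0$.
\item Intersecting over the fundamental system of Lemma~\ref{lem:diagcpt}, and using that $G$ is $T_1$, gives $\gamma\in G^{(0)}$. So $\gamma=r_Z(y)$, and your continuity argument then finishes.
\end{enumerate}
This is exactly where the Hausdorffness in (2) is used.

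\textbf{Your fallback does not close the gap as written.} Each $N_{y}$ is a neighborhood of the single unit $r_Z(y)$, so intersecting the $N_{y_i}$ need not give a neighborhood of $G^{(0)}$. Also, cutting out one set $r^{-1}(r_Z(K_2))$ does not make the extended $N_{y_i}$ contain all of $G^{(0)}$.

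It can be repaired as follows.
\begin{enumerate}
\item Shrink each $O_{y_i}$ so that $r_Z(O_{y_i})\subseteq C_i$ for a compact $C_i\subseteq N_{y_i}\cap G^{(0)}$.
\item Form the open neighborhoods $N_{y_i}\cup\bigl(G\setminus r^{-1}(C_i)\bigr)$ of $G^{(0)}$, intersect these, and symmetrize.
\item Treat pairs with $z\notin\bigcup_i O_{y_i}$ but $\gamma^{-1}z\in K_1$ by applying the estimate to $(\gamma^{-1},\gamma^{-1}z)$.
\end{enumerate}
That corrected version is a genuinely different and more elementary route, and it does not even use (2). But it is not what your proposal currently contains.
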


\begin{proof}
By Lemma~\ref{lem:nbh1}, there is a neighborhood $W$ of $G^{(0)}$ in $G$ such
that
\[
  W\cdot K_1\subseteq V.
\]
Using Lemma~\ref{lem:diagcpt}, choose a diagonally compact neighborhood $W'$ of
$G^{(0)}$ such that $W'\subseteq W$.  The set
\[
  K'\coloneqq W'\cdot K_1
\]
is compact.  Indeed, $r_Z(K_1)$ is compact, and diagonal compactness of
$W'$ implies that
\[
  W'\cdot r_Z(K_1) = W'\cap s^{-1}(r_Z(K_1))
\]
is compact.  The space of composable pairs
\[
  \bigl(W'\cdot r_Z(K_1)\bigr)
  *_{s,r_Z} K_1 = \{(\gamma,z)\in W'\cdot r_Z(K_1)\times K_1:s(\gamma)=r_Z(z)\}
\]
is a closed subset of a compact space because it is the preimage of the diagonal $\Delta_{G^{(0)}}$ under the continuous map
\begin{align*}
\bigl(W'\cdot r_Z(K_1)\bigr) \times K_1 &\to G^{(0)} \times G^{(0)}, \\
(\gamma,z) &\mapsto (s(\gamma),r_Z(z)),
\end{align*} 
and $G^{(0)}$ is Hausdorff.
Its image under the action map is $K'$, and hence $K'$ is compact.
Moreover, $K'\subseteq V$.

Apply Lemma~\ref{lem:nbh} to the compact set $r_Z(K')\subseteq G^{(0)}$.
This gives a neighborhood $N$ of $G^{(0)}$ such that
\[
  N\cap r^{-1}(r_Z(K'))
\]
is Hausdorff.  Apply Lemma~\ref{lem:diagcpt} once more to choose a diagonally compact
neighborhood $W_1$ of $G^{(0)}$ such that
\[
  W_1\subseteq W'\cap N,
\]
and put
\[
  K_2\coloneqq W_1\cdot K_1.
\]
The argument above, with $W_1$ in place of $W'$, shows that $K_2$ is compact,
and
\[
  K_1\subseteq K_2\subseteq K'\subseteq V.
\]
This proves~(1).  

Since $r_Z(K_2)\subseteq r_Z(K')$, we have
\[
  r_Z(K_2)W_1
  =W_1\cap r^{-1}(r_Z(K_2))
  \subseteq N\cap r^{-1}(r_Z(K')).
\]
Thus $r_Z(K_2)W_1$ is Hausdorff, proving~(2).  

Notice also that it is
compact, because $W_1$ is diagonally compact and $r_Z(K_2)$ is compact.

Fix $\varepsilon>0$.  Let $\mathcal{U}$ be the set of all diagonally
compact neighborhoods $U$ of $G^{(0)}$ such that $U\subseteq W_1$,
directed by reverse inclusion.  It is a directed set and a fundamental
system of neighborhoods of $G^{(0)}$ by Lemma~\ref{lem:diagcpt}.  We claim that some
$U\in\mathcal{U}$ satisfies~(3).  Suppose, to the contrary, that this is
false.  For every $U\in\mathcal{U}$, choose $\gamma_U\in U$ and
$z_U\in Z$ such that
\[
  r(\gamma_U)=r_Z(z_U)
\]
and
\begin{equation}
  \left|\phi(\gamma_U^{-1}\cdot z_U)-\phi(z_U)\right|
  \geq \varepsilon.
  \label{eq:lemma46-bad}
\end{equation}
We may, and shall, take $z_U\in K_2$.  To see this, if the left-hand
side of~\eqref{eq:lemma46-bad} is nonzero, then either
$\phi(z_U)\neq 0$, in which case $z_U\in K_1\subseteq K_2$, or
$\phi(\gamma_U^{-1}\cdot z_U)\neq 0$.  In the latter case,
$\gamma_U^{-1}\cdot z_U\in K_1$, and hence
\[
  z_U\in U\cdot K_1\subseteq W_1\cdot K_1=K_2.
\]

Because $r(\gamma_U)=r_Z(z_U)\in r_Z(K_2)$, the net
$(\gamma_U,z_U)$ lies in the compact space
\[
  \bigl(r_Z(K_2)W_1\bigr)\times K_2.
\]
Both factors are Hausdorff: the first by~(2), and the second because it
is contained in the Hausdorff open set $V$.  Passing to a subnet and
relabeling, we may therefore assume that
\[
  \gamma_U\longrightarrow\gamma\in r_Z(K_2)W_1,
  \qquad
  z_U\longrightarrow z\in K_2.
\]

We claim that $\gamma\in G^{(0)}$.  Fix $U_0\in\mathcal{U}$.  The
net is eventually indexed by neighborhoods $U\subseteq U_0$, and hence
is eventually contained in
\[
  r_Z(K_2)U_0
  =U_0\cap r^{-1}(r_Z(K_2)).
\]
This set is compact by diagonal compactness of $U_0$.  It is contained
in the Hausdorff space $r_Z(K_2)W_1$, and is therefore closed there.
Consequently,
\[
  \gamma\in r_Z(K_2)U_0\subseteq U_0.
\]
Since $U_0$ was arbitrary and $\mathcal{U}$ is a fundamental system of
neighborhoods of $G^{(0)}$, it follows that
\[
  \gamma\in\bigcap_{U_0\in\mathcal{U}}U_0=G^{(0)},
\]
where the equality is due to a locally Hausdorff space being $T_1$: if $\alpha\notin G^{(0)}$, then
$G\setminus\{\alpha\}$ is a neighborhood of $G^{(0)}$, and some member
of $\mathcal{U}$ is contained in it.

Continuity of $r$ and $r_Z$, together with
$r(\gamma_U)=r_Z(z_U)$, gives
\[
  r(\gamma)=r_Z(z).
\]
Since $\gamma$ is a unit, $r(\gamma)=\gamma$, and therefore
\[
  \gamma=r_Z(z).
\]
It follows from continuity of inversion and of the action that
\[
  \gamma_U^{-1}\cdot z_U
  \longrightarrow
  \gamma^{-1}\cdot z=z.
\]
Now $z_U\in K_2\subseteq V$ for all $U$, while
$\gamma_U^{-1}\cdot z_U$ is eventually in $V$ because it converges to
$z\in V$ and $V$ is open.  Since $\phi|_V$ is continuous, we obtain
\[
  \phi(z_U)\longrightarrow\phi(z),
  \qquad
  \phi(\gamma_U^{-1}\cdot z_U)\longrightarrow\phi(z),
\]
contradicting~\eqref{eq:lemma46-bad}.  Hence some
$U_1\in\mathcal{U}$ satisfies~(3).  
\end{proof}

\begin{prop} (cf. \cite[Proposition 6.6]{MW}) \label{prop:approxID1} 
Suppose that $G$ is a locally compact, locally Hausdorff, \'{e}tale groupoid with $G^{(0)}$ paracompact, and that $Z$ is a locally compact, locally Hausdorff left $G$-space. 
Suppose that for each triple $(K,U,\varepsilon)$ consisting of a nonempty compact subset $K\subset G^{(0)}$, a conditionally compact neighborhood $U$ of $G^{(0)}$ in $G$, and $\varepsilon>0$, there is $e=e_{(K,U,\varepsilon)}\in\mathcal{C}(G)$ such that
\begin{enumerate}
\item $e(\gamma)=0$ whenever $\gamma\notin U$,
\item $e(\gamma)\geq 0$ for all $\gamma\in G$, 
\item $|\sum_{r(\gamma)=u} e(\gamma)-1|<\varepsilon$ for all $u\in K$. 
\end{enumerate}
Then the net $\{e_{(K,U,\varepsilon)}\}$ directed by increasing $K$, and decreasing $U$ and $\varepsilon$, is an approximate identity in the inductive limit topology for the left action of $\mathcal{C}(G)$ on $\mathcal{C}(Z)$.
\end{prop}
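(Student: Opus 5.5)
We must show: given $\phi\in\mathcal{C}(Z)$ and $\varepsilon_0>0$, there is an index $(K_0,U_0,\varepsilon_0')$ such that for all $(K,U,\varepsilon)\ge(K_0,U_0,\varepsilon_0')$, the functions $e\cdot\phi$ vanish outside a fixed compact set and satisfy $\|e\cdot\phi-\phi\|_\infty<\varepsilon_0$. By linearity and the definition of $\mathcal{C}(Z)$, we first reduce to $\phi\in C_c(V)$ for an open Hausdorff $V\subseteq Z$. A finite sum of such functions is handled by taking the maximum of the finitely many indices and the union of the finitely many compact sets.

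Fix such $\phi$ and put $K_1=\mathrm{supp}_V\phi$. Lemma~\ref{claim6.7} gives a diagonally compact $W_1$ with $K_2=W_1\cdot K_1\subseteq V$ compact. For a given $\varepsilon_0$, it also gives a diagonally compact $U_1\subseteq W_1$ such that $|\phi(\gamma^{-1}\cdot z)-\phi(z)|<\varepsilon_0/2$ whenever $\gamma\in U_1$ and $r(\gamma)=r_Z(z)$.

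\textbf{Support.} Take $U\subseteq U_1$. If $(e\cdot\phi)(z)=\sum_{r(\gamma)=r_Z(z)}e(\gamma)\phi(\gamma^{-1}\cdot z)\neq0$, then some $\gamma\in U$ has $\gamma^{-1}\cdot z\in K_1$. Hence $z\in U\cdot K_1\subseteq K_2$, so all the $e\cdot\phi$ vanish off the compact set $K_2$. Note that $e\cdot\phi$ is already in $\mathcal{C}(Z)$ by the module structure. So the net eventually lives in functions supported in $K_2\subseteq V$, and it suffices to get uniform convergence on $K_2$.

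\textbf{Uniform estimate.} Take $K\supseteq K_0:=r_Z(K_2)$, which is compact, $U\subseteq U_1$, and $\varepsilon\le\varepsilon_0'$. For $z\in K_2$ we have $u=r_Z(z)\in K$, and positivity of $e$ gives
\[
|(e\cdot\phi)(z)-\phi(z)|\le\sum_{r(\gamma)=u}e(\gamma)\,|\phi(\gamma^{-1}\cdot z)-\phi(z)|+|\phi(z)|\,\Bigl|\sum_{r(\gamma)=u}e(\gamma)-1\Bigr|.
\]
Since $e$ vanishes off $U\subseteq U_1$, the first term is at most $(1+\varepsilon)\varepsilon_0/2$. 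The second term is at most $\|\phi\|_\infty\varepsilon$. Choosing $\varepsilon_0'$ small (say $\varepsilon_0'\le\min\{1,\varepsilon_0/(4(\|\phi\|_\infty+1))\}$, after replacing $\varepsilon_0/2$ by $\varepsilon_0/4$ in Lemma~\ref{claim6.7}) yields $\|e\cdot\phi-\phi\|_\infty<\varepsilon_0$.

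Two directedness points need checking. First, conditionally compact neighborhoods are cofinal below $U_1$, which holds by Lemma~\ref{lem:diagcpt}. Second, ``eventually'' must make sense in the product order. This holds because the conditions $K\supseteq K_0$, $U\subseteq U_1$, $\varepsilon\le\varepsilon_0'$ are preserved under passing to larger indices.

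\textbf{Main obstacle.} The main obstacle is the non-Hausdorff bookkeeping: ensuring that $\phi(\gamma^{-1}\cdot z)$ is controlled uniformly, and that supports remain in a single Hausdorff compact set. This is exactly what Lemma~\ref{claim6.7} packages, so the remaining work is the elementary estimate above.
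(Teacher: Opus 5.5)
Your proposal is correct and follows essentially the same argument as the paper. You both reduce to $\phi\in C_c(V)$, use Lemma~\ref{claim6.7} to fix the compact set $K_2=W_1\cdot K_1\subseteq V$ and the neighborhood $U_1$ controlling $|\phi(\gamma^{-1}\cdot z)-\phi(z)|$, show that $e\cdot\phi$ vanishes off $K_2$ (which in fact only needs $U\subseteq W_1$, so $K_2$ and the threshold index do not depend on $\varepsilon_0$), and split $|e\cdot\phi(z)-\phi(z)|$ into an oscillation term and a row-sum error term, with the only difference being that you make the constants and directedness bookkeeping explicit.
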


\begin{proof}
Since $\mathcal{C}(Z)$ is spanned by the spaces $C_c(V)$, where $V\subset Z$ is open and Hausdorff, it suffices to show that $e\cdot\phi\rightarrow\phi$ in the inductive limit topology for $\phi\in C_c(V)$. 
Let $K_1 \coloneqq \mathrm{supp}_V \phi \subset V$. 
Apply Lemma~\ref{claim6.7} to obtain a diagonally compact neighborhood $W_1$ of $G^{(0)}$ such that $K_2 \coloneqq W_1\cdot K_1$ is a compact subset of $V$. Notice that $K_1\subseteq K_2$, since $G^{(0)}\subseteq W_1$.
Fix $\delta>0$. By Lemma~\ref{claim6.7}, there is a conditionally compact neighborhood $U_1$ of $G^{(0)}$, with $U_1\subseteq W_1$, such that 
$\left|\phi(\gamma^{-1}\cdot z)-\phi(z)\right|<\delta$
whenever $\gamma\in U_1$ and $r(\gamma)=r_Z(z)$.

Consider $U\subset U_1$, $K\supset r_Z(K_2)$, $\varepsilon<\delta$, and write $e=e_{(K,U,\varepsilon)}$.
For $U\subseteq U_1$, the function $e\cdot\phi$ vanishes outside
$K_2$. 
Indeed, if a summand $e(\gamma)\phi(\gamma^{-1}\cdot z)$ is nonzero, then $\gamma\in U\subset U_1$ while $\gamma^{-1}\cdot z \in K_1$. Consequently, $z\in U\cdot K_1\subseteq U_1\cdot K_1
\subseteq W_1\cdot K_1=K_2$.
Thus $e\cdot\phi$ vanishes outside $K_2$.
Since $\phi$ also vanishes outside $K_1\subseteq K_2$, the difference $e\cdot\phi-\phi$ vanishes outside the fixed compact set $K_2$.

On the other hand, if $z\in K_2$, then $r_Z(z)\in K$, and 
\begin{align*}
&\left|e\cdot\phi(z)-\phi(z)\right| \\ = &\left|\sum_{r(\gamma)=r_Z(z)}e(\gamma)\phi(\gamma^{-1}\cdot z)-\phi(z)\right| \\
\leq &\left|\sum_{r(\gamma)=r_Z(z)}e(\gamma)(\phi(\gamma^{-1}\cdot z)-\phi(z))\right| + \left|\sum_{r(\gamma)=r_Z(z)}e(\gamma)-1\right| |\phi(z)|.
\end{align*}
If $e(\gamma)\neq 0$, then $\gamma\in U\subset U_1$, so $\left|\phi(\gamma^{-1}\cdot z)-\phi(z)\right|<\delta$.
Also, $0\leq\sum_{r(\gamma)=r_Z(z)}e(\gamma)<1+\varepsilon<1+\delta$.
It follows that \[ \left|e\cdot\phi(z)-\phi(z)\right| < \delta(1+\delta)+\delta\|\phi\|_\infty. \]
Hence, $e\cdot\phi\to\phi$ in the inductive limit topology.
\end{proof}

For $\phi\in\mathcal{C}(Z)$, define $\tilde{\phi}\in\mathcal{C}(Z^{op})$ by $\tilde{\phi}(\bar{z})=\phi(z)$.

\begin{prop} (cf. \cite[Proposition 6.8]{MW}) \label{prop:ai} 
Suppose that $G$ and $H$ are locally compact, locally Hausdorff, \'{e}tale groupoids with $G^{(0)}$ paracompact, and that $Z$ is a $(G,H)$-equivalence.
There is a net $\{e_\lambda\}_{\lambda\in\Lambda}$ in $\mathcal{C}(G)$ that is an approximate identity with respect to the inductive limit topology for the left action of $\mathcal{C}(G)$ on $\mathcal{C}(Z)$ and for the right action of $\mathcal{C}(G)$ on $\mathcal{C}(Z^{op})$, where each $e_\lambda$ is a finite sum of elements of the form
${}_G\langle \phi_i^\lambda,\widetilde{\phi_i^\lambda} \rangle$ 
with $\phi_i^\lambda\in\mathcal{C}(Z)$.
\end{prop}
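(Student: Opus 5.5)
The plan is to verify the hypotheses of Proposition~\ref{prop:approxID1} for the left action on $\mathcal{C}(Z)$, using elements $e=\sum_i {}_G\langle\phi_i,\widetilde{\phi_i}\rangle$. The right action on $\mathcal{C}(Z^{op})$ will then be handled by symmetry.

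First compute ${}_G\langle\phi,\tilde\phi\rangle$ from \eqref{cZ_cZop-cG}:
\[
{}_G\langle\phi,\tilde\phi\rangle(\gamma)=\sum_{r(\eta)=s(z)}\phi(z\cdot\eta)\phi(\gamma^{-1}\cdot z\cdot\eta).
\]
For $\phi\geq0$ this is nonnegative, which gives condition (2). At a unit $u=r_Z(z)$ the sum over $r(\gamma)=u$ equals $\sum_{\eta}\sum_{\gamma}\phi(z\cdot\eta)\phi(\gamma^{-1}\cdot z\cdot\eta)$. Using $\rho$-type sums over the $G$-orbit, this is the product of orbit sums; cleaner is to take $\phi=\sqrt{\cdot}$ of a suitable function.

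Following \cite[Proposition 6.8]{MW}, fix $(K,U,\varepsilon)$ with $U$ conditionally compact. By Lemma~\ref{lem:nbh2}, cover the compact set $r_Z^{-1}$-relevant region by finitely many open Hausdorff sets $V_1,\dots,V_n\subset Z$ such that $(\gamma\cdot z,z)\in V_i\times V_i$ forces $\gamma\in U$. The sets must be small enough that $q(V_i)\subset Z/H\cong G^{(0)}$ is Hausdorff, and the images $q(V_i)$ should cover $K$. This uses that $r_Z$ is open and $K$ is compact, plus the homeomorphism $Z/H\cong G^{(0)}$.

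Next take a partition of unity $\{d_i\}$ on $K$ subordinate to $\{q(V_i)\}$, with $d_i\in C_c(q(V_i))^+$ and $\sum d_i=1$ on $K$; paracompactness is used here if needed. Proposition~\ref{prop:fullsys} gives $\psi_i\in C_c(V_i)^+$ with $\lambda(\psi_i)=d_i$. Then define $\phi_i$ via a square-root-type normalization so that the sum over $r(\gamma)=u$ of ${}_G\langle\phi_i,\widetilde{\phi_i}\rangle(\gamma)$ equals $d_i(u)$.

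A concrete route is the following. Since $\phi_i$ is supported in $V_i$, each nonzero term has $z\cdot\eta$ and $\gamma^{-1}\cdot z\cdot\eta$ both in $V_i$, so $\gamma\in U$; this gives condition (1). For the total mass, fix $u$ and sum over $\gamma$, substituting $w=\gamma^{-1}\cdot z\cdot\eta$, which ranges over the $G$-orbit of $z\cdot\eta$. One obtains
\[
\sum_{r(\gamma)=u}e(\gamma)=\sum_i\sum_{\eta}\phi_i(z\cdot\eta)\Big(\sum_{\gamma}\phi_i(\gamma^{-1}\cdot z\cdot\eta)\Big).
\]
The inner sum is a $G$-orbit sum, which corresponds to a function on $H^{(0)}\cong G\backslash Z$. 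To control the product, I would choose $\phi_i=\psi_i\cdot(c_i\circ s_Z)$, where $c_i(v)$ is $(\sum_\gamma\psi_i(\gamma^{-1}\cdot w))^{-1/2}$ on the relevant compact set, suitably cut off. Making this continuous and compactly supported is the main obstacle. An alternative is to use a function $\phi_i$ constructed in the form $\psi_i\cdot$ (an $H$-invariant normalization), as in \cite{MW}, where the normalization produces exactly $d_i$ up to $\varepsilon$.

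Approximate rather than exact equality, $|\sum e-1|<\varepsilon$ on $K$, suffices for condition (3). This gives room to cut off the normalization by a function equal to $1$ on the compact support, avoiding division by zero.

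With (1)--(3) in hand, Proposition~\ref{prop:approxID1} yields the approximate identity for the left action on $\mathcal{C}(Z)$. For the right action on $\mathcal{C}(Z^{op})$, note that $\psi\cdot a$ corresponds under $\bar z\mapsto z$ to the left action of $a^*$ on $\mathcal{C}(Z)$. Moreover, ${}_G\langle\phi,\tilde\phi\rangle^*={}_G\langle\phi,\tilde\phi\rangle$, which follows from the formula by a change of variables. So the same net works, and the same inductive-limit estimate transfers. The main obstacle is the construction step producing continuous compactly supported $\phi_i$ with controlled orbit sums in the locally Hausdorff setting.
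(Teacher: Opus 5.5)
\textbf{The gap: continuity of the normalized functions.} Your strategy is the right one, and once completed it is shorter than the paper's. As written, however, the proposal stops at the step you call the main obstacle, and the remedies you sketch there do not close it. The missing observation is that in the \'etale case no cut-off or approximation is needed.

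For $\psi_i\in C_c(V_i)^+$, let $\Lambda(\psi_i)(v)=\sum_{s_Z(w)=v}\psi_i(w)$ be the $G$-orbit sum; each fibre $s_Z^{-1}(v)$ is one free $G$-orbit. The left-action version of Proposition~\ref{prop:fullsys}, using $G\backslash Z\cong H^{(0)}$, shows that $\Lambda(\psi_i)$ is continuous. Since the orbit sum contains the term $\gamma=r_Z(w)$, one has $\Lambda(\psi_i)(s_Z(w))\geq\psi_i(w)$. Hence the function
\[
\phi_i(w)=\psi_i(w)\,\Lambda(\psi_i)(s_Z(w))^{-1/2}\quad\text{if }\psi_i(w)>0,\qquad \phi_i(w)=0\quad\text{otherwise},
\]
satisfies $0\leq\phi_i\leq\psi_i^{1/2}$. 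So it is continuous on $V_i$ with the same support as $\psi_i$, i.e.\ $\phi_i\in C_c(V_i)^+$. Since $\Lambda(\psi_i)\circ s_Z$ is constant on $G$-orbits, $\Lambda(\phi_i)=\Lambda(\psi_i)^{1/2}$ and $\phi_i\,(\Lambda(\phi_i)\circ s_Z)=\psi_i$. Your displayed formula then gives $\sum_{r(\gamma)=u}e(\gamma)=\sum_i\sum_{\eta}\psi_i(z\cdot\eta)=\sum_i d_i(u)$, which equals $1$ on $K$ exactly.

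Two of your side remarks should be dropped. First, the normalizing factor must be $G$-invariant (a function of $s_Z$) to come out of the orbit sum, so an ``$H$-invariant normalization'' would not work. Second, the proposed cut-off addresses division by zero. The real issue is continuity at the boundary of $\{\psi_i>0\}$, which the domination above settles.

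Two further points need tightening. The covering step should be stated precisely: for $u\in K$ choose $z_u$ with $r_Z(z_u)=u$ and an open Hausdorff $V_u\ni z_u$ from Lemma~\ref{lem:nbh2} with $W=U$; as $r_Z$ is open, finitely many $r_Z(V_u)$ cover $K$. For the right action, $\widetilde{\phi}\cdot a$ corresponds to the left action of $\gamma\mapsto a(\gamma^{-1})$, not of $a^*$. For the real symmetric $e$ this still gives $\widetilde{\phi}\cdot e=\widetilde{e\cdot\phi}$, which is what you need.

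\textbf{Comparison with the paper.} With this fix your route genuinely differs from the paper's. The paper uses the approximate factorization $\psi\approx\phi\cdot(\text{orbit sum of }\phi)$ from \cite[Lemma~2.14]{MRW}, which is designed for general Haar systems where orbit integrals do not dominate point values. To sum the resulting errors it needs the multiplicity bounds $M_i$ and a two-level partition of unity: the $h_i$ with the factors $\theta_i$, controlled along $W$ via Lemma~\ref{claim6.7}. It obtains condition (3) only up to $3n\delta$. Your exact normalization exploits that the Haar systems are counting measures, and makes all of that bookkeeping unnecessary.
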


\begin{proof}
Fix a triple $(K,U,\varepsilon)$ as in Proposition~\ref{prop:approxID1}.  Choose
relatively compact Hausdorff open subsets
$O_1,\ldots,O_n\subseteq Z$ such that
\[
    K\subseteq\bigcup_{i=1}^n r_Z(O_i).
\]
Since $G^{(0)}$ is paracompact, choose
$h_i\in C_c^+(G^{(0)})$ such that
\[
    \mathrm{supp}(h_i)\subseteq r_Z(O_i),\qquad
    \sum_{i=1}^n h_i=1\ \text{on }K,\qquad
    \sum_{i=1}^n h_i\leq 1.
\]
After discarding the indices for which
$K\cap\mathrm{supp}(h_i)=\emptyset$, we may assume that
$K\cap\mathrm{supp}(h_i)\neq\emptyset$ for every $i$.

We use the following observation.  If
$C\subseteq r_Z(O)$ is compact, where $O\subseteq Z$ is open
Hausdorff, then there is a compact set $D\subseteq O$ such that
$r_Z(D)=C$.  Indeed, for each $u\in C$ choose
$z_u\in O$ with $r_Z(z_u)=u$ and a compact neighborhood
$N_u\subseteq O$ of $z_u$.  Since $r_Z$ is open, finitely many of
the sets $r_Z(\operatorname{int}N_u)$ cover $C$.  Intersecting the
corresponding finite union of the $N_u$ with $r_Z^{-1}(C)$ gives
the desired compact set.

For each $i$, apply this observation to obtain a nonempty compact
set
\[
    C_i\subseteq O_i,\qquad
    r_Z(C_i)=K\cap\mathrm{supp}(h_i).
\]
Choose a compact neighborhood $D_i$ of $C_i$ contained in $O_i$,
and choose $\chi_i\in C_c^+(O_i)$ such that
\[
    0\leq\chi_i\leq1,\qquad \chi_i=1\ \text{on }D_i.
\]
Put
\[
    \theta_i(z)
    =
    \bigl(\chi_i(z)h_i(r_Z(z))\bigr)^{1/2}
    \qquad (z\in O_i),
\]
extended by zero outside $O_i$.  Then
$\theta_i\in C_c^+(O_i)$,
\[
    0\leq\theta_i\leq1,
    \qquad
    \theta_i(z)^2=h_i(r_Z(z))
    \quad (z\in D_i).
\]

Set
\[
    \delta=\min\left\{\frac12,\frac{\varepsilon}{3n}\right\}.
\]
Applying Lemma~\ref{claim6.7} to $\theta_i$, and shrinking the resulting
neighborhood by intersecting with $U$ if necessary, choose a
conditionally compact neighborhood $W_i$ of $G^{(0)}$ such that
$W_i\subseteq U$ and
\[
    |\theta_i(\gamma^{-1}\cdot z)-\theta_i(z)|<\delta
\]
whenever $\gamma\in W_i$ and $r(\gamma)=r_Z(z)$.  Put
\[
    W=\bigcap_{i=1}^n W_i.
\]
Then $W$ is a conditionally compact neighborhood of $G^{(0)}$
contained in $U$.

By Lemma~\ref{lem:nbh2}, for each $i$ we may choose relatively compact open
subsets
\[
    V_1^i,\ldots,V_{k_i}^i\subseteq D_i
\]
covering $C_i$ such that
\[
    (\gamma\cdot z,z)\in V_j^i\times V_j^i
    \quad\Longrightarrow\quad
    \gamma\in W.
\]
Choose $d_j^i\in C_c^+(G^{(0)})$ satisfying
\[
    \mathrm{supp}(d_j^i)\subseteq r_Z(V_j^i),\qquad
    \sum_{j=1}^{k_i}d_j^i=1\ \text{on }r_Z(C_i),\qquad
    \sum_{j=1}^{k_i}d_j^i\leq1.
\]
By Proposition~\ref{prop:fullsys}, there are
$\psi_j^i\in C_c^+(V_j^i)$ such that
\[
    \sum_{r(\eta)=s_Z(z)}
        \psi_j^i(z\cdot\eta)
    =
    d_j^i(r_Z(z)).
\]

Put
\[
    M_i
    =
    \sup_{z\in Z}
    \sum_{j=1}^{k_i}
    \sum_{r(\eta)=s_Z(z)}
        \chi_{V_j^i}(z\cdot\eta).
\]
Then $1\leq M_i<\infty$.  The lower bound follows because
$C_i\neq\emptyset$ and the $V_j^i$ cover $C_i$.  For the upper
bound, recall that $\chi_i=1$ on $D_i$ and
$V_j^i\subseteq D_i$.  Hence
\[
    \sum_{j=1}^{k_i}
    \sum_{r(\eta)=s_Z(z)}
        \chi_{V_j^i}(z\cdot\eta)
    \leq
    k_i\sum_{r(\eta)=s_Z(z)}
        \chi_i(z\cdot\eta),
\]
and the expression on the right is bounded by Proposition~\ref{prop:fullsys}.

By \cite[Lemma~2.14]{MRW}, choose
$\phi_j^i\in C_c^+(O_i)$, with
$\mathrm{supp}(\phi_j^i)\subseteq V_j^i$, such that
\[
    \left|
      \psi_j^i(z)
      -
      \phi_j^i(z)
      \sum_{r(\gamma)=r_Z(z)}
          \phi_j^i(\gamma^{-1}\cdot z)
    \right|
    <
    \frac{\delta}{M_i}.
\]
For $z\in Z$, put
\[
    P_i(z)
    =
    \sum_{r(\gamma)=r_Z(z)}
    \sum_{r(\eta)=s_Z(z)}
    \sum_{j=1}^{k_i}
      \phi_j^i(z\cdot\eta)
      \phi_j^i(\gamma^{-1}\cdot z\cdot\eta).
\]
Summing the preceding estimate over $j$ and $\eta$ gives
\[
    \left|
      P_i(z)
      -
      \sum_{j=1}^{k_i}d_j^i(r_Z(z))
    \right|
    \leq\delta.
\]
Consequently,
\[
    0\leq P_i(z)<1+\delta
    \qquad (z\in Z),
\]
and
\[
    |P_i(z)-1|<\delta
    \qquad
    \bigl(r_Z(z)\in r_Z(C_i)\bigr).
\]

Now define
\[
    e
    =
    \sum_{i=1}^n\sum_{j=1}^{k_i}
    {}_G\!\left\langle
       \phi_j^i\theta_i,
       \widetilde{\phi_j^i\theta_i}
    \right\rangle.
\]
Clearly $e\geq 0$.  Moreover, $e$ vanishes outside $W$.  Indeed, a
nonzero summand at $\gamma$ forces, for some $i,j,\eta$,
\[
    z\cdot\eta,\;
    \gamma^{-1}\cdot z\cdot\eta
    \in V_j^i.
\]
Taking $w=\gamma^{-1}\cdot z\cdot\eta$, we have
$(\gamma\cdot w,w)\in V_j^i\times V_j^i$, and hence
$\gamma\in W$.  Since $W\subseteq U$, it follows that
\[
    e(\gamma)=0\qquad(\gamma\notin U).
\]

For $u\in G^{(0)}$, let $\rho_i(u)$ denote the contribution of the
$i$th block to the row sum:
\[
    \rho_i(u)
    =
    \sum_{r(\gamma)=u}
    \sum_{r(\eta)=s_Z(z)}
    \sum_{j=1}^{k_i}
      \phi_j^i(z\cdot\eta)
      \phi_j^i(\gamma^{-1}\cdot z\cdot\eta)
      \theta_i(z\cdot\eta)
      \theta_i(\gamma^{-1}\cdot z\cdot\eta),
\]
where $z$ is any point with $r_Z(z)=u$.  Thus
\[
    \sum_{r(\gamma)=u}e(\gamma)
    =
    \sum_{i=1}^n\rho_i(u).
\]

We claim that
\[
    |\rho_i(u)-h_i(u)|<3\delta
    \qquad (u\in K).
\]
Suppose first that
$u\in K\cap\mathrm{supp}(h_i)=r_Z(C_i)$, and choose
$z\in C_i$ with $r_Z(z)=u$.  Write
\[
    a_{\gamma,\eta,j}
    =
    \phi_j^i(z\cdot\eta)
    \phi_j^i(\gamma^{-1}\cdot z\cdot\eta).
\]
For every nonzero term,
$z\cdot\eta\in V_j^i\subseteq D_i$, so
\[
    \theta_i(z\cdot\eta)^2=h_i(u).
\]
Moreover,
$z\cdot\eta,\gamma^{-1}\cdot z\cdot\eta\in V_j^i$ implies
$\gamma\in W\subseteq W_i$, and hence
\[
    \left|
      \theta_i(\gamma^{-1}\cdot z\cdot\eta)
      -
      \theta_i(z\cdot\eta)
    \right|
    <\delta.
\]
Since $\sum a_{\gamma,\eta,j}=P_i(z)$ and $0\leq\theta_i\leq1$,
we obtain
\[
\begin{aligned}
    |\rho_i(u)-h_i(u)|
    &\leq
      \sum a_{\gamma,\eta,j}\,
      \theta_i(z\cdot\eta)
      \left|
        \theta_i(\gamma^{-1}\cdot z\cdot\eta)
        -
        \theta_i(z\cdot\eta)
      \right|                                      \\
    &\quad+
      h_i(u)|P_i(z)-1|                              \\
    &<
      \delta(1+\delta)+\delta
      <3\delta.
\end{aligned}
\]

If instead $u\in K\setminus\mathrm{supp}(h_i)$, choose any
$z\in Z$ with $r_Z(z)=u$.  Every potentially nonzero summand in
$\rho_i(u)$ has
$z\cdot\eta\in V_j^i\subseteq D_i$, and therefore
\[
    \theta_i(z\cdot\eta)^2=h_i(u)=0.
\]
Thus $\rho_i(u)=0=h_i(u)$, proving the claim.

Finally, for $u\in K$,
\[
\begin{aligned}
    \left|
      \sum_{r(\gamma)=u}e(\gamma)-1
    \right|
    &=
      \left|
        \sum_{i=1}^n\bigl(\rho_i(u)-h_i(u)\bigr)
      \right|                                      \\
    &<
      3n\delta
      \leq\varepsilon.
\end{aligned}
\]
Thus $e=e(K,U,\varepsilon)$ satisfies all the hypotheses of
Proposition~\ref{prop:approxID1}.  The resulting net is therefore an approximate
identity in the inductive limit topology for the left action of
$\mathcal{C}(G)$ on $\mathcal{C}(Z)$.

It remains to consider $\mathcal{C}(Z^{op})$.  Each summand of $e$ has the
form
\[
    e_\xi={}_G\langle\xi,\widetilde{\xi}\rangle,
    \qquad
    \xi\in \mathcal{C}(Z),
\]
with $\xi$ nonnegative.  Formula~\eqref{cZ_cZop-cG}, using
$\gamma^{-1}\cdot z$ as the base point when evaluating
$e_\xi(\gamma^{-1})$, gives
\[
    e_\xi(\gamma^{-1})=e_\xi(\gamma).
\]
Hence $e(\gamma^{-1})=e(\gamma)$ for every $\gamma\in G$.
Using the right action formula~\eqref{cH_Zop_cG}, it follows that
\[
    \widetilde{\psi}\cdot e
    =
    \widetilde{e\cdot\psi}
    \qquad(\psi\in \mathcal{C}(Z)).
\]
Since $e\cdot\psi\to\psi$ in the inductive limit topology, we obtain
$\widetilde{\psi}\cdot e\to\widetilde{\psi}$.  Thus the same net is
an approximate identity for the right action of $\mathcal{C}(G)$ on
$\mathcal{C}(Z^{op})$.
\end{proof}

Note that we may apply Proposition~\ref{prop:ai} to the case $Z=H=G$ since $G$ is a $(G,G)$-equivalence, getting an approximate identity for the convolution product on $\mathcal{C}(G)$.

\begin{cor} \label{approxid}
In the context of Proposition~\ref{prop:ai}, the net $\{e_\lambda\}_{\lambda\in\Lambda}$ is an approximate identity with respect to the $I$-norm for the left action of $\mathcal{C}(G)$ on $\mathcal{C}(Z)$ and for the right action of $\mathcal{C}(G)$ on $\mathcal{C}(Z^{op})$, and thus also in the norms $\Vert\cdot\Vert_{I,r}$, $\Vert\cdot\Vert_{I,s}$, $\Vert\cdot\Vert_{p,red}$, and $\Vert\cdot\Vert_{L^p}$ for $1\leq p\leq\infty$, as well as $\Vert\cdot\Vert_{P,red}$ and $\Vert\cdot\Vert_{P}$ for nonempty $P\subseteq[1,\infty]$. 
\end{cor}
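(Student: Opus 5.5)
The plan is to deduce everything from inductive-limit convergence (Proposition~\ref{prop:ai}) by showing that this convergence controls the $I$-norm. All the other norms are then dominated by the $I$-norm. Concretely, for $\phi\in\mathcal{C}(Z)$ the quantity $\Vert e_\lambda\cdot\phi-\phi\Vert_I$ is meant as the $I$-norm of $\begin{pmatrix}0&e_\lambda\cdot\phi-\phi\\0&0\end{pmatrix}$ in $\mathcal{C}(L)$, i.e.\ the maximum of the sup over units of the sums along range and source fibres of $L$ that meet $Z$. Similarly, for $\psi\in\mathcal{C}(Z^{op})$ we use the lower-left corner.

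First I would prove a general lemma. Let $X$ be one of $G$, $Z$, $Z^{op}$ (or $L$), and suppose $f_i\to f$ in the inductive limit topology with supports eventually in a fixed compact $K$. The key claim is
\[
\sup_{w}\#\bigl(K\cap r^{-1}(w)\bigr)<\infty \quad\text{and}\quad \sup_w\#\bigl(K\cap s^{-1}(w)\bigr)<\infty,
\]
where $r,s$ are the structure maps of $L$ restricted to $X$. To see this, cover $K$ by finitely many open Hausdorff sets $O_1,\dots,O_m$ on which $r$ (respectively $s$) is injective. For $L$ étale these are open bisections; for $Z$ one uses that $r_Z,s_Z$ are local homeomorphisms, since they are the range and source of the étale groupoid $L$. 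Each fibre then meets $K$ in at most $m$ points. Consequently
\[
\Vert f_i-f\Vert_I\le m\,\Vert f_i-f\Vert_\infty\to0 .
\]
Applying this to $f_i=e_\lambda\cdot\phi$ and $f=\phi$, and to $\psi\cdot e_\lambda$ and $\psi$, gives $I$-norm convergence. The case $Z=H=G$ gives the statement for convolution in $\mathcal{C}(G)$.

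Next I would transfer the result to the other norms. By definition $\Vert\cdot\Vert_{I,r},\Vert\cdot\Vert_{I,s}\le\Vert\cdot\Vert_I$. The preliminaries give
\[
\Vert F\Vert_{p,red}\le\Vert F\Vert_{L^p}\le\Vert F\Vert_I \quad\text{for every } p\in[1,\infty]
\]
on $\mathcal{C}(L)$ (via \cite[Theorem 5.5]{BKM1}). Taking the supremum over $p\in P$ preserves the bound by $\Vert\cdot\Vert_I$. So each of these norms of $e_\lambda\cdot\phi-\phi$ tends to $0$, and likewise on the $Z^{op}$ side and on $\mathcal{C}(G)$. For the $\Vert\cdot\Vert_L$ norms of the off-diagonal corners, this is exactly the required statement. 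It also matches the norms on $\mathcal{C}(G)$ by Theorem~\ref{thm:mainF_red^p} and the remark that $\Vert a\Vert_{F_I(G)}=\Vert F_a\Vert_{F_I(L)}$.

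The main obstacle is the uniform fibre-cardinality bound in the non-Hausdorff setting. In that setting the compact set $K$ need not be closed, and the supports live in possibly different Hausdorff open sets. The inductive limit topology fixes one compact $K$, though, and compactness alone suffices for a finite subcover by bisections, so no closedness is needed. Care is needed only in choosing the open sets so that $r$ and $s$ of $L$ are simultaneously injective on them. This is possible because $L$ is étale (the étale lemma in Section~\ref{sect_LinkGp}), so every point of $L$, in particular of $Z\subset L$, has an open bisection neighbourhood.
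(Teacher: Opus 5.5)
Your proof is correct and takes the same route as the paper: inductive-limit convergence gives $I$-norm convergence, and every other listed norm is dominated by the $I$-norm. The only difference is that the paper cites \cite[Proposition 2.2.2]{Pat} for the first step. You instead prove it directly by covering the fixed compact set with finitely many open bisections of the \'{e}tale linking groupoid $L$, which also makes explicit that the norms on $\mathcal{C}(Z)$ and $\mathcal{C}(Z^{op})$ are the corner norms inherited from $\mathcal{C}(L)$.
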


\begin{proof}
Convergence in the inductive limit topology implies convergence in the $I$-norm \cite[Proposition 2.2.2]{Pat}. 
The other norms are dominated by the $I$-norm.
\end{proof}

\begin{thm} \label{thm:main}
If $G$ and $H$ are locally compact, locally Hausdorff, \'{e}tale groupoids that are equivalent and have paracompact unit spaces, then for each $p\in[1,\infty]$, $F^p_{red}(G)$ and $F^p_{red}(H)$ are Morita equivalent.
\end{thm}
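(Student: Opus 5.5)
Starting from Lemma~\ref{preMor}, we already have a pre-Morita equivalence between $\mathcal{C}(G)\subseteq F^p_{red}(G)$ and $\mathcal{C}(H)\subseteq F^p_{red}(H)$. It is given by the pair $(\mathcal{C}(Z^{op}),\mathcal{C}(Z))$ with the $\Vert\cdot\Vert_L$ norms. By Remark~\ref{Rem-CompletionPM}, the completions $\X=\overline{\mathcal{C}(Z^{op})}$ and $\Y=\overline{\mathcal{C}(Z)}$ give a Morita equivalence once fullness and nondegeneracy hold on both sides. So the plan is to verify four density statements: $\mathrm{span}\,{}_G\langle\mathcal{C}(Z),\mathcal{C}(Z^{op})\rangle$ is dense in $F^p_{red}(G)$; $\mathrm{span}\,\langle\mathcal{C}(Z^{op}),\mathcal{C}(Z)\rangle_H$ is dense in $F^p_{red}(H)$; and $\mathcal{C}(G)\cdot\mathcal{C}(Z)$, $\mathcal{C}(Z)\cdot\mathcal{C}(H)$ (and the analogous spans for $Z^{op}$) are dense in the $\Vert\cdot\Vert_L$-completions. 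We cannot expect the exact algebraic equalities of Remark~\ref{Rem:CompletionBP}, so we argue with approximate identities in norm.

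\textbf{Nondegeneracy.} Let $\{e_\lambda\}$ be the net from Proposition~\ref{prop:ai}. By Corollary~\ref{approxid}, $e_\lambda\cdot\phi\to\phi$ and $\psi\cdot e_\lambda\to\psi$ in the $I$-norm. The subtlety is that the module norm is $\Vert\cdot\Vert_L$, the reduced norm of the linking groupoid, not an $I$-norm on $G$. Convergence in the inductive limit topology on $\mathcal{C}(Z)$ is the same as convergence of $\begin{pmatrix}0&e_\lambda\cdot\phi\\0&0\end{pmatrix}$ in the inductive limit topology on $\mathcal{C}(L)$, because $Z$ is clopen in $L$. That gives convergence in $\Vert\cdot\Vert_{I}$ on $L$ \cite[Proposition 2.2.2]{Pat}, and hence in $\Vert\cdot\Vert_{F^p_{red}(L)}=\Vert\cdot\Vert_L$, since the reduced norm is dominated by the $I$-norm. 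So $\mathrm{span}(\mathcal{C}(G)\mathcal{C}(Z))$ is $\Vert\cdot\Vert_L$-dense in $\mathcal{C}(Z)$, and hence in $\Y$; the same argument handles $\mathcal{C}(Z^{op})\mathcal{C}(G)$. For the $H$-actions we use the symmetry of the setup: $Z^{op}$ is an $(H,G)$-equivalence, and $H^{(0)}\cong G\backslash Z$ is paracompact. I expect to need this, either by assuming it or by deducing it from paracompactness of $G^{(0)}$ via the equivalence; it is implicitly part of the hypotheses ("have paracompact unit spaces"). Applying Proposition~\ref{prop:ai} to $Z^{op}$ then gives an approximate identity in $\mathcal{C}(H)$ for both actions on $Z^{op}$ and $Z$.

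\textbf{Fullness.} Each $e_\lambda$ lies in $\mathrm{span}\,{}_G\langle\mathcal{C}(Z),\mathcal{C}(Z^{op})\rangle$, and this span is a two-sided ideal in $\mathcal{C}(G)$ by identities (1)--(2) in the proof of Lemma~\ref{preMor}. Applying Proposition~\ref{prop:ai} with $Z=H=G$ gives, on the $I$-norm side, $e_\lambda * a\to a$ for every $a\in\mathcal{C}(G)$. That net is built from $G$-valued pairings of $G$ with itself, not from $Z$, so a small adjustment is needed. Using the net for $Z$ itself, we have $e_\lambda*a={}$ a sum of ${}_G\langle\phi,\psi\cdot a\rangle$. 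So we must show that $e_\lambda * a\to a$ in the inductive limit topology on $\mathcal{C}(G)$ for the $Z$-built net. The cleanest route is to observe that the proof of Proposition~\ref{prop:approxID1} only used properties (1)--(3) of $e$ and a left $G$-space. Applied to the left $G$-space $G$, it shows that any net with properties (1)--(3) (which the $Z$-built net has) is an approximate identity for convolution on $\mathcal{C}(G)$. Hence $a=\lim e_\lambda*a$ in $I$-norm and therefore in $\Vert\cdot\Vert_{p,red}$, which gives density of the pairings in $\mathcal{C}(G)$ and hence in $F^p_{red}(G)$. The $H$-side is symmetric.

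The main obstacle is this transfer of inductive-limit convergence to the non-unconditional norms $\Vert\cdot\Vert_L$ and $\Vert\cdot\Vert_{p,red}$. It goes through only because every relevant norm is dominated by an $I$-norm on $L$ or $G$, through $\Vert f\Vert_{p,red}\le\Vert f\Vert_{I,s}^{1/p}\Vert f\Vert_{I,r}^{1/q}$. The other point to watch is justifying paracompactness on the $H$ side. With these in place, the completion of the pre-Morita equivalence is the required Morita equivalence.
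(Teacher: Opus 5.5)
Your proposal is correct and follows the paper's proof. You complete the pre-Morita equivalence of Lemma~\ref{preMor} and obtain fullness and nondegeneracy from the pairing-valued approximate identity of Proposition~\ref{prop:ai} and its symmetric counterpart for $Z^{op}$, adding two justifications the paper leaves implicit: the $Z$-built net is also a convolution approximate identity by Proposition~\ref{prop:approxID1} applied to the left $G$-space $G$, and inductive-limit convergence in the clopen corner $\mathcal{C}(Z)\subseteq\mathcal{C}(L)$ controls $\Vert\cdot\Vert_L$ through the $I$-norm of $L$; also, paracompactness of $H^{(0)}$ is explicitly part of the hypotheses, so nothing needs to be deduced there.
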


\begin{proof}
Let $\X_Z$ and $\Y_Z$ be the completions of
$\mathcal C(Z^{op})$ and $\mathcal C(Z)$, respectively, in the norms
of Lemma~\ref{preMor}.  By that lemma and
Remark~\ref{Rem-CompletionPM}, it remains to verify fullness and
nondegeneracy.

Let $(e_\lambda)$ be the approximate identity from
Corollary~\ref{approxid}.  Since each $e_\lambda$ is a finite sum of
pairings, for $a\in\mathcal C(G)$ we have
\[
e_\lambda a
 =
\sum_i {}_G\langle
\phi_i^\lambda,\widetilde{\phi_i^\lambda}\cdot a\rangle
 \longrightarrow a.
\]
Thus the $G$-valued pairing is full.  The $H$-valued pairing is full
by symmetry.  The convergence
\[
e_\lambda\cdot\phi\longrightarrow\phi,
\qquad \phi\in\mathcal C(Z),
\]
and its symmetric counterpart give nondegeneracy.
\end{proof}

\begin{cor}
Let $G$ be a second-countable, locally compact Hausdorff, \'{e}tale groupoid.
If $G$ is transitive, and $H=\{\gamma\in G:s(\gamma)=u=r(\gamma)\}$ is any isotropy group, then $F^p_{red}(G)$ and $F^p_{red}(H)$ are Morita equivalent.
\end{cor}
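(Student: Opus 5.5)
The plan is to apply Theorem~\ref{thm:main} with the equivalence $Z=G_u=s^{-1}(u)$, the source fiber over the fixed unit $u$. Let $G$ act on $Z$ on the left by multiplication, and let $H=G_u^u$ act on the right by multiplication. We equip $Z$ with the relative topology from $G$; since $G$ is étale, $Z$ is discrete, and this is the natural topology for the argument.

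We check the axioms of a $(G,H)$-equivalence in order.
\begin{enumerate}
\item The actions are free, since groupoid multiplication is cancellative.
\item The actions commute by associativity.
\item The map $s_Z$ is constant equal to $u$. Hence $G\backslash Z$ is a single point, because $G$ acts transitively on $G_u$ (if $x,y\in G_u$ then $(yx^{-1})x=y$). Thus $s_Z$ induces a homeomorphism onto $H^{(0)}=\{u\}$.
\item The map $r_Z=r|_{G_u}$ is onto $G^{(0)}$ by transitivity of $G$, and its fibers are exactly the $H$-orbits $xH$. So $r_Z$ induces a continuous bijection $Z/H\to G^{(0)}$.
\end{enumerate}

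The main obstacle is topological: we need this bijection to be a homeomorphism, and we need both actions to be proper. This is where second countability enters. I would invoke the standard result for transitive second-countable locally compact Hausdorff groupoids (Muhly--Renault--Williams, Theorem 2.2A): the map $r|_{G_u}\colon G_u\to G^{(0)}$ is open. Here $G_u$ is discrete and countable, and $G^{(0)}$ is locally compact Hausdorff, hence Baire. Then $G^{(0)}=\bigcup_{x\in G_u} r(\{x\})$ is a countable union of points, so some point is isolated. By homogeneity under the transitive action, every point is isolated and $G^{(0)}$ is discrete. This makes openness immediate, and the bijection $Z/H\to G^{(0)}$ is a homeomorphism of discrete spaces.

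Once $G^{(0)}$ is known to be discrete, everything is discrete. Properness then reduces to finiteness of the preimages of points under $(\gamma,z)\mapsto(\gamma z,z)$. A pair $(y,z)$ determines $\gamma=yz^{-1}$ uniquely, so each preimage is a singleton. The same holds for $H$, with $\eta=z^{-1}y$.

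Paracompactness of the unit spaces is automatic, since $G^{(0)}$ is discrete and $H^{(0)}$ is a point. Theorem~\ref{thm:main} then gives the Morita equivalence of $F^p_{red}(G)$ and $F^p_{red}(H)$ for every $p\in[1,\infty]$. Here $H$ is a discrete group, so $F^p_{red}(H)$ is the reduced group $L^p$-operator algebra.
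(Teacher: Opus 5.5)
Your proposal is correct, and it uses the same equivalence $Z=G_u$ as the source the paper relies on, but you establish the equivalence differently. The paper's proof is a single citation, \cite[Example 2.32]{Will}, which treats arbitrary transitive second-countable locally compact Hausdorff groupoids, étale or not. In that generality, openness of $r|_{G_u}$ is a genuine Effros-type open mapping result (as in \cite{MRW}). Theorem~\ref{thm:main} then applies.

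You instead check the equivalence axioms by hand and handle the topology with an elementary Baire argument. Since $G_u$ is countable, so is $G^{(0)}=r(G_u)$. Hence $G^{(0)}$ has an isolated point and is therefore discrete, after which openness, properness and paracompactness are trivial. Two small remarks:
\begin{itemize}
\item The appeal to MRW Theorem 2.2A is superfluous in your argument, because the Baire step alone gives discreteness.
\item The \emph{homogeneity} step needs étaleness, not just transitivity. If $\{v\}$ is open, then $G_v=s^{-1}(v)$ is open and discrete in $G$, so each $\{\gamma\}$ with $\gamma\in G_v$ is open in $G$. Since $r$ is an open map and $r(G_v)=G^{(0)}$ by transitivity, every point of $G^{(0)}$ is isolated. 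An open bisection through an arrow from $v$ to $w$ gives the same conclusion.
\end{itemize}

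What each route buys: yours is self-contained. It also shows that, under the étale and second-countability hypotheses, the corollary concerns only countable discrete transitive groupoids. The paper's citation is shorter and points to a result that does not need étaleness, although in the étale case its topological content is trivial.
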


\begin{proof}
Such a groupoid $G$ is equivalent to any of its isotropy groups \cite[Example 2.32]{Will}.
\end{proof}

\begin{rem} \label{rem:L^P}
Lemma~\ref{preMor} remains valid with $p$ replaced by any nonempty
$P\subseteq[1,\infty]$.  Hence $F^P_{\mathrm{red}}(G)$ and
$F^P_{\mathrm{red}}(H)$ are Morita equivalent.  These equivalences
are compatible with inclusions of exponent sets as follows.
\end{rem}

\begin{prop} \label{prop:commKT}
If $\emptyset\neq P_1\subseteq P_2\subseteq [1,\infty]$, then there is a contractive homomorphism \[ \iota_G \colon F^{P_2}_{red}(G)\to F^{P_1}_{red}(G) \] that extends the identity map on $\mathcal{C}(G)$, and similarly for $H$. Further, we then have the following commutative diagram on $K$-theory, where the horizontal isomorphisms are induced by the respective Morita equivalences:
\[
\begin{CD}
K_*(F^{P_2}_{red}(G))	@>\cong>>	K_*(F^{P_2}_{red}(H)) \\
@V(\iota_G)_*VV  @VV(\iota_H)_*V \\
K_*(F^{P_1}_{red}(G))	@>\cong>>	K_*(F^{P_1}_{red}(H))
\end{CD}
\]
\end{prop}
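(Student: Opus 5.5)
The plan has two parts: first build the contractive homomorphisms $\iota_G,\iota_H$, then show that the two Morita equivalences of Remark~\ref{rem:L^P} are related by a morphism of Morita equivalences, and use naturality of the $K$-theory isomorphism in \cite{Par09}.

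\emph{Construction of $\iota_G$.} For $f\in\mathcal{C}(G)$ we have
\[
\Vert f\Vert_{P_1,red}=\sup_{p\in P_1}\Vert f\Vert_{p,red}\leq \sup_{p\in P_2}\Vert f\Vert_{p,red}=\Vert f\Vert_{P_2,red}.
\]
Hence the identity of $\mathcal{C}(G)$ is contractive from $\Vert\cdot\Vert_{P_2,red}$ to $\Vert\cdot\Vert_{P_1,red}$ and extends by continuity. Multiplicativity holds on the dense subalgebra, so it passes to the completions. The same argument applies on $L$, $Z$ and $Z^{op}$: the module norms are restrictions of $\Vert\cdot\Vert_{P,red}$ on $\mathcal{C}(L)$. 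This gives contractive linear maps $\iota_Z\colon \Y_Z^{P_2}\to\Y_Z^{P_1}$ and $\iota_{Z^{op}}\colon\X_Z^{P_2}\to\X_Z^{P_1}$, each extending the identity on $\mathcal{C}(Z)$, resp.\ $\mathcal{C}(Z^{op})$.

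\emph{Compatibility.} On the dense pre-modules all actions and both pairings are given by the same convolution formulas \eqref{cG_left_cZ}--\eqref{cZ_cZop-cG}, whatever $P$ is. So on dense subspaces
\[
\iota_Z(a\cdot\phi\cdot b)=\iota_G(a)\cdot\iota_Z(\phi)\cdot\iota_H(b),\qquad
\iota_H(\langle\psi,\phi\rangle_H)=\langle\iota_{Z^{op}}\psi,\iota_Z\phi\rangle_H,
\]
and similarly for the $G$-valued pairing and the actions on $Z^{op}$. By continuity these identities hold on the completions. Thus $(\iota_{Z^{op}},\iota_Z)$ is a morphism of Morita equivalences covering $(\iota_G,\iota_H)$. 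Equivalently, the $P_1$-equivalence is the base change of the $P_2$-equivalence along $\iota_G$ and $\iota_H$; the fullness and nondegeneracy established in Theorem~\ref{thm:main} and Remark~\ref{rem:L^P} identify the image with it.

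\emph{$K$-theory.} Paravicini's isomorphism $K_*(A)\to K_*(B)$ induced by a Morita equivalence is realized through the linking algebra, or equivalently through the $KK^{ban}$-class of the pair. The step I expect to be the main obstacle is justifying that this isomorphism is natural with respect to morphisms of Morita equivalences; I would cite or adapt Paravicini's naturality of the Morita isomorphism. Concretely, I would use the concrete linking algebras. The linking algebra $\mathcal{L}^{P}$ is the closure of $\mathcal{C}(L)$ in $\Vert\cdot\Vert_{P,red}$, and $\iota_L\colon \mathcal{L}^{P_2}\to\mathcal{L}^{P_1}$ restricts to $\iota_G$ and $\iota_H$ on the corners. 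The Morita isomorphism is $(j_H)_*^{-1}\circ(j_G)_*$, where the corner inclusions $j_G,j_H$ are full corners and so induce isomorphisms. Commutativity then reduces to the obvious commuting squares $\iota_L\circ j_G=j_G\circ\iota_G$ and $\iota_L\circ j_H=j_H\circ\iota_H$, together with functoriality of $K_*$.
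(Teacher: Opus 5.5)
Your main line is the paper's argument. The identity maps on $\mathcal{C}(G)$, $\mathcal{C}(H)$, $\mathcal{C}(Z)$ and $\mathcal{C}(Z^{op})$ extend contractively because $\|\cdot\|_{P_1,red}\le\|\cdot\|_{P_2,red}$. The intertwining identities hold on the dense convolution algebras and pass to the completions, so you obtain a concurrent homomorphism in the sense of \cite[Definition~1.13]{Par15}. The square then commutes by Paravicini's naturality result \cite[Lemma~1.19]{Par15}. When you cite it, record its standing hypotheses as the paper does: Banach algebra $K$-theory is homotopy invariant and Morita invariant \cite[Theorem~1.2]{Par15}, and the algebras are nondegenerate (Corollary~\ref{approxid}).

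The ``concrete'' alternative is not yet a proof, because two of its steps are asserted rather than justified.

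First, to compute the horizontal isomorphisms through $F^P_{red}(L)$, you must show that this algebra is topologically isomorphic, compatibly with the corner embeddings, to the linking algebra through which the Morita isomorphism is defined. For a linking algebra normed by its entries this holds via the compression estimate
\[
\max_{i,j}\|F_{ij}\|_{P,red}\le\|F\|_{P,red}\le\sum_{i,j}\|F_{ij}\|_{P,red}\qquad(F\in\mathcal{C}(L)),
\]
together with Theorem~\ref{thm:mainF_red^p}. The estimate holds because each block is the compression of $(\mathrm{Ind}\,\delta_w)(F)$ by norm-one coordinate projections of $\ell^p(L_w)$. You need to state and prove this. For a linking algebra defined by module operators, the corresponding identification is of the type studied in Theorem~\ref{thm:LpLink}, where for $p\neq2$ only a contractive, injective, dense-range map is known.

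Second, ``full corners induce isomorphisms'' is not a formal consequence of fullness in Banach algebra $K$-theory. It is exactly the Morita invariance you would be importing from Paravicini, so the concrete route does not avoid his framework.

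Finally, drop the ``base change'' sentence. A concurrent homomorphism does not by itself identify the $P_1$-equivalence with a module induced, via a balanced tensor product of Banach modules, from the $P_2$-equivalence. Nothing in the argument needs that claim.
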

\begin{proof}
Since $P_1\subseteq P_2\subseteq [1,\infty]$, it is clear that we have a contractive homomorphism \[ \iota_G \colon F^{P_2}_{red}(G)\to F^{P_1}_{red}(G) \] that extends the identity map on $\mathcal{C}(G)$, and similarly for $H$.
Now let $(\X_Z,\Y_Z)^{P_i}$ be the respective Morita equivalences for $i=1,2$.
The identity maps on $\mathcal{C}(Z)$ and $\mathcal{C}(Z^{op})$ extend continuously to give a concurrent homomorphism from $(\X_Z,\Y_Z)^{P_2}$ to $(\X_Z,\Y_Z)^{P_1}$ with coefficient maps $\iota_G$ and $\iota_H$ in the sense of \cite[Definition 1.13]{Par15}.
The six intertwining identities in \cite[Definition 1.13]{Par15} hold on the dense subspaces $\mathcal{C}(Z)$, $\mathcal{C}(Z^{op})$, $\mathcal{C}(G)$, and $\mathcal{C}(H)$ by Lemma~\ref{preMor}. The module actions and pairings are jointly continuous, and $\iota_G$, $\iota_H$ together with the continuous extensions of the identity maps are continuous, so the identities pass to the completions. Also, Banach algebra $K$-theory is homotopy invariant and Morita invariant in the sense of \cite[Definition 1.15]{Par15} (see \cite[Theorem 1.2]{Par15}), which is the standing hypothesis of \cite[Lemma 1.19]{Par15}; our algebras are nondegenerate by Corollary~\ref{approxid}, so this applies. 
Thus, by \cite[Lemma 1.19]{Par15}, we have the desired commutative diagram on $K$-theory. 
\end{proof}

\section{Morita equivalence for the full $L^p$-operator algebras} \label{sec:full-Lp}

The main result of this section is a full-clopen reduction theorem for
the full $L^p$-operator algebra.  For $1<p<\infty$, it is proved by
dilating a spatial representation of the reduction; the cases
$p=1,\infty$ follow from the source- and range-$I$-norm formulas.
Corollary~\ref{cor:full-morita} then gives the full Morita equivalence.

\subsection{Representation-theoretic input for the full norm}

We use the notation and conventions of Section~\ref{sect_Prelim}.  Recall that, in the non-Hausdorff setting, a set is \emph{relatively compact} if it is contained in a compact subset; compact closure is not required.

For an \'{e}tale groupoid $K$, write $\Bis(K)$ for the inverse semigroup of open bisections of $K$; see \cite[Example~2.19]{BKM1}.  If $U\subseteq K^{(0)}$, write
\[
K|_U=r^{-1}(U)\cap s^{-1}(U)
\]
for the reduction.  We call $U$ \emph{full} if every $K$-orbit in $K^{(0)}$ meets $U$, equivalently
\[
r\bigl(s^{-1}(U)\bigr)=K^{(0)}.
\]

We shall use repeatedly the $I$-norm and the full norm from Section~\ref{sect_Prelim}.  In the complex untwisted case, \cite[Theorem~5.13(2),(4)]{BKM1} gives
\[
\|f\|_{F^p(K)}
=
\sup_{\rho\in\mathcal{R}}\bigl\{\|\rho(f)\|\bigr\},
\]
where $\mathcal{R}$ consists of all $\|\cdot\|_I$-contractive homomorphisms $\rho:\cC(K)\to B(L^p(\mu))$,
and hence
\[
\|f\|_{F^p(K)}\leq \|f\|_I.
\]
For $1<p<\infty$, in the second-countable case this agrees with Gardella--Lupini \cite[Definition~6.4]{GL}; see \cite[Remark~5.14]{BKM1}.

\begin{lem}[Bisection decomposition]\label{lem:bisection-decomposition}
Every $f\in\cC(K)$ is a finite sum $f=\sum_{m=1}^n f_m$ such that, for each $m$, there is a relatively compact open bisection $A_m\subseteq K$ with $f_m\in C_c(A_m)$.
\end{lem}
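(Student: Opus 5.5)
By definition of $\cC(K)$, every $f\in\cC(K)$ is a finite sum $f=\sum_k g_k$ with $g_k\in C_c(V_k)$ for open Hausdorff sets $V_k\subseteq K$. Since sums of decompositions are decompositions, it suffices to treat a single $g\in C_c(V)$ with $V$ open and Hausdorff. The plan is to cover the support of $g$ by finitely many small open bisections and to split $g$ with a partition of unity on the Hausdorff space $V$.

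Let $C=\supp_V g$, which is compact in $V$. Since $K$ is étale, each $\gamma\in C$ has an open bisection neighbourhood $B_\gamma$. Because $K$ is locally compact and $V$ is open, we may shrink $B_\gamma$ so that $B_\gamma\subseteq V$. We may also arrange that $B_\gamma$ is contained in a compact neighbourhood $N_\gamma\subseteq V$; this makes $B_\gamma$ relatively compact in the paper's sense (contained in a compact set), and open subsets of bisections are still bisections. By compactness of $C$, finitely many of these sets, say $A_1,\dots,A_n$, cover $C$.

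Now $V$ is a locally compact Hausdorff space and $C\subseteq\bigcup_m A_m$ is compact. The standard partition-of-unity lemma for locally compact Hausdorff spaces therefore gives $\chi_m\in C_c(A_m)$ with $0\le\chi_m\le1$ and $\sum_m\chi_m=1$ on $C$. Set $f_m=\chi_m g$. Then $f_m\in C_c(A_m)$: the support of $f_m$ is compact in $A_m$, and extending by zero gives the convention-consistent element of $\cC(K)$. Moreover $\sum_m f_m=g$ on $V$, and both sides vanish off $V$.

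The only delicate point is the non-Hausdorff setting. Extension by zero and compactness must be interpreted inside $V$ rather than $K$. Working entirely within the Hausdorff open set $V$, where supports are taken in $V$, sidesteps this.
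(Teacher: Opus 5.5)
Your proof is correct and follows essentially the same route as the paper: you cover by relatively compact open bisections and then split the function with a partition of unity. The one point worth making explicit is the compact neighbourhood $N_\gamma\subseteq V$. The paper obtains it by pulling back a compact-closure neighbourhood in the locally compact Hausdorff space $K^{(0)}$ along the homeomorphism $r|_A$, which is the cleanest justification in the non-Hausdorff setting. The only other difference is that you carry out the partition-of-unity step by hand inside the Hausdorff open set $V$, whereas the paper cites Exel's bisection-cover decomposition \cite[Proposition~3.10]{Exel}.
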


\begin{proof}
The relatively compact open bisections cover $K$. Indeed, let $\gamma\in K$ lie in an open bisection $A$. Since $K^{(0)}$ is locally compact Hausdorff, it is regular. Hence, because $r(A)$ is an open neighborhood of $r(\gamma)$, there is an open neighborhood $V$ of $r(\gamma)$ such that $\overline V$ is compact and $\overline V\subseteq r(A)$. As $r|_A:A\to r(A)$ is a homeomorphism, $(r|_A)^{-1}(V)$ is contained in the compact set $(r|_A)^{-1}(\overline V)$; thus it is a relatively compact open bisection containing $\gamma$. Since $K^{(0)}$ is Hausdorff, every open bisection is Hausdorff. The assertion now follows from Exel's bisection-cover decomposition \cite[Proposition~3.10]{Exel}.
\end{proof}

\subsubsection{Spatial covariant representations}

Let $S\subseteq\Bis(K)$ be a unital inverse semigroup of open bisections covering $K$.  For $A\in S$, put
\[
h_A:s(A)\longrightarrow r(A),
\qquad
h_A=r\circ(s|_A)^{-1}.
\]
The associated partial automorphism is
\[
\alpha_A:C_0(s(A))\longrightarrow C_0(r(A)),
\qquad
\alpha_A(a)=a\circ h_A^{-1},
\]
where these $C_0$-spaces are regarded as ideals of $C_0(K^{(0)})$ by extension by zero.

Following \cite[Definition~3.17]{BKM1}, a covariant representation $(\pi,v)$ on a Banach space $E$ consists of a representation
\[
\pi:C_0(K^{(0)})\to B(E)
\]
and a map $A\mapsto v_A$ into the contractive operators such that $v_{A^{-1}}$ is a contractive generalized inverse of $v_A$ and
\begin{enumerate}[label=(SCR\arabic*)]
\item $v_A\pi(a)v_{A^{-1}}=\pi(a\circ h_A^{-1})$ for $a\in C_0(s(A))$;
\item the initial and final spaces of $v_A$ are the closed subspaces generated by $\pi(C_0(s(A)))E$ and $\pi(C_0(r(A)))E$, respectively;
\item $v_Av_B=v_{AB}$.
\end{enumerate}
Bardadyn-Kwa\'{s}niewski-McKee formulate (SCR3) for a twisted action.  For the canonical untwisted action the twisting multiplier is the identity on the relevant range ideal, so their relation reduces to the displayed equality $v_Av_B=v_{AB}$.

A covariant representation on $L^p(\mu)$ is \emph{spatial} if $\pi$ acts by multiplication operators and $A\mapsto v_A$ takes values in the inverse semigroup $\SPIso(L^p(\mu))$ of spatial partial isometries; see \cite[Definition~5.15]{BKM1}.  Recall that $S$ is \emph{wide} if it covers $K$ and, for every $A,B\in S$, the intersection $A\cap B$ is a union of bisections belonging to $S$; see the discussion in \cite[Example~2.20]{BKM1}.  In this case the canonical action models $K$ in the sense of \cite[Definition~4.20]{BKM1}: by \cite[Lemma~4.21]{BKM1}, the germ groupoid is canonically isomorphic to $K$ through
\[
S\ltimes K^{(0)}\longrightarrow K,
\qquad
[A,x]\longmapsto (s|_A)^{-1}(x).
\]
In particular, $\Bis(K)$ is wide: it covers $K$ by \'{e}taleness, and the intersection of two open bisections is again an open bisection.

\begin{prop} (\cite[Theorem~5.19(1)]{BKM1}) \label{prop:BKM-isometric}
Let $p\in[1,\infty]$ and suppose that the canonical action of $S$ models $K$. There is an isometric representation
\[
\pi\rtimes v:F^p(K)\longrightarrow B(L^p(\mu))
\]
with $(\pi,v)$ spatial, with the localizable measure space convention of \cite[Section~5]{BKM1}. If $p<\infty$, it may be chosen nondegenerate.
\end{prop}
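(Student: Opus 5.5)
The plan is to realize the full norm of $F^p(K)$ as a supremum over spatial covariant representations, and then form a single $\ell^p$-direct sum of them.

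\emph{Step 1: disintegration.} By \cite[Theorem~5.13]{BKM1}, $\|f\|_{F^p(K)}$ is the supremum of $\|\rho(f)\|$ over $\|\cdot\|_I$-contractive representations $\rho\colon\cC(K)\to B(L^p(\mu))$. Using that the canonical action of $S$ models $K$ (\cite[Lemma~4.21]{BKM1}), I would show that each such $\rho$ is the integrated form $\pi\rtimes v$ of a covariant representation $(\pi,v)$ of the partial action $(\alpha_A)_{A\in S}$. The representation $\pi$ comes from restricting $\rho$ to $C_0(K^{(0)})\subseteq\cC(K)$, and $v_A$ is obtained as a strong limit of $\rho(f_i)$, where the $f_i\in C_c(A)$ approximate the characteristic function of $A$ from below. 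Relations (SCR1)–(SCR3) then follow from the convolution identities on bisections, and Lemma~\ref{lem:bisection-decomposition} shows that $\pi\rtimes v$ recovers $\rho$ on all of $\cC(K)$.

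\emph{Step 2: spatiality.} For $p\neq 2$ I would use Lamperti-type rigidity. A contractive representation of the commutative $C^*$-algebra $C_0(K^{(0)})$ on $L^p(\mu)$ acts by multiplication operators, and a contraction with a contractive generalized inverse intertwining such multiplications is a spatial partial isometry. This puts $(\pi,v)$ in $\SPIso(L^p(\mu))$ directly.

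For $p=2$ this argument fails, since general partial isometries are not spatial. The cases $p=1,\infty$ also need a separate argument. Here I would not use arbitrary representations. Instead I would exhibit enough spatial ones to attain the norm:
\begin{itemize}
\item For $p=1,\infty$, the $\ell^1$ and $\ell^\infty$ sums of the regular representations $\mathrm{Ind}\,\delta_u$ are spatial. By the $I$-norm computations of Section~\ref{sect_Prelim}, these already attain $\|f\|_{L^1}=\|f\|_{I,s}$ and $\|f\|_{L^\infty}=\|f\|_{I,r}$.
\item For $p=2$, I would dilate a given representation to a spatial one on a larger measure space without decreasing norms. Concretely, I would realize $\pi$ via its spectral measure on a direct integral over $K^{(0)}$ and induce from the resulting quasi-invariant measure. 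This is where the Borel extension machinery of \cite{BKM1} enters.
\end{itemize}

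\emph{Step 3: direct sum.} Choose a set $\mathcal{R}_0$ of spatial covariant representations whose integrated forms attain $\sup\|\rho(f)\|$ for every $f\in\cC(K)$. A set, rather than a proper class, suffices because the relevant cardinalities are bounded. Form the $\ell^p$-direct sum (or $\ell^\infty$-sum when $p=\infty$) on the disjoint union of the measure spaces. This sum is localizable, which is why the localizable convention is needed. A direct sum of spatial partial isometries is spatial, so $\pi\rtimes v$ is isometric on $\cC(K)$ and hence extends isometrically to $F^p(K)$.

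\emph{Step 4: nondegeneracy for $p<\infty$.} Let $E_0$ be the essential subspace $\overline{\pi(C_0(K^{(0)}))L^p(\mu)}$. Since $\pi$ is by multiplication operators, $E_0=L^p(\mu|_Y)$ for the measurable support $Y$ of $\pi$. This subspace is invariant under all $v_A$ by (SCR2), and compressing to it does not lower norms because $\cC(K)$ has an approximate unit from $C_c(K^{(0)})$.

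The main obstacle I expect is Step 2 in the case $p=2$: producing spatial dilations that attain the full norm, as opposed to the automatic Lamperti argument available for $p\neq 2$.
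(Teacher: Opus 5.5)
\textbf{Verdict: there is a genuine gap at $p=2$.} The paper gives no proof of its own here; the statement is quoted from \cite[Theorem~5.19(1)]{BKM1}. Judged on its own, your outline works away from $p=2$.
\begin{itemize}
\item For $p\in\{1,\infty\}$, the $\ell^1$- and $\ell^\infty$-sums of the regular representations are spatial (partial translations on counting measure). They attain $\|\cdot\|_{I,s}=\|\cdot\|_{L^1}$ and $\|\cdot\|_{I,r}=\|\cdot\|_{L^\infty}$, which is a clean shortcut.
\item For $1<p<\infty$, $p\neq 2$, three facts make each $v_A$ spatial: (SCR2), the uniqueness of a contractive projection onto a band in the smooth space $L^p(\mu)$, and Lamperti's theorem. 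After that, your direct-sum and compression steps are fine.
\end{itemize}
One correction: a degenerate contractive representation of $C_0(K^{(0)})$ on $L^p(\mu)$ need not act by multiplication operators. An example is $a\mapsto a(x_0)P$ with $P$ a conditional expectation. So you must first pass to the essential part, which for reflexive $L^p(\mu)$ is the range of a contractive idempotent and is again an $L^p$-space.

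The gap is at $p=2$, which is exactly where the statement has content beyond Lamperti.
\begin{itemize}
\item Inducing from a quasi-invariant measure $\mu$ on $K^{(0)}$ gives $\mathrm{Ind}\,\mu$. This is a direct integral of the $\mathrm{Ind}\,\delta_u$, so $\|\mathrm{Ind}\,\mu(f)\|\leq\|f\|_{2,red}$. Such representations therefore cannot attain the full norm whenever the full and reduced norms differ.
\item Concretely, take $K=\mathbb F_2$. The only quasi-invariant measure is the point mass, and $\mathrm{Ind}\,\delta$ is the left regular representation. For $f=\delta_a+\delta_{a^{-1}}+\delta_b+\delta_{b^{-1}}$ one has $\|f\|_{F^2(K)}=4$ (trivial representation) but $\|f\|_{2,red}=2\sqrt3$ (Kesten).
\item Keeping the Hilbert bundle of the disintegration instead does not help. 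Its fibre unitaries $L_\gamma$ are arbitrary and need not be spatial in any measure-space model. For instance, a $2$-dimensional irreducible representation of the perfect group $SL_2(\mathbb F_5)$ cannot be monomial, since it admits no nontrivial homomorphism to $S_2$ and no nontrivial characters.
\end{itemize}

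What is missing is a mechanism that embeds an arbitrary unitary representation into a spatial one without loss of norm. One option is a fibrewise Gaussian (second-quantization) construction: its Koopman representation is spatial and contains $\mathcal{H}\oplus\overline{\mathcal{H}}$ as the first chaos. A further problem is that your direct-integral picture presupposes Renault's disintegration theorem, which needs second countability. That is not assumed here; for example, the coarse groupoids in Section~\ref{sect_app} are not second countable. As it stands, your argument proves the statement only for $p\neq 2$. For $p=2$ you need such an ingredient, or the citation to \cite{BKM1} that the paper relies on.
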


\subsubsection{Borel extension}

We use the Borel extension theorem from \cite[Section~4.5]{BKM1}.  For an open bisection $A$, let $\Bop(A)$ denote the bounded Borel functions on $A$, extended by zero outside $A$, and set
\[
\Bop^{\Bis(K)}(K)
\coloneqq
\Span\{\Bop(A):A\in\Bis(K)\}.
\]
This is an algebra under convolution; see the discussion preceding \cite[Proposition~4.29]{BKM1}.  No compact-support condition is imposed.

\begin{prop}[Spatial Borel extension]\label{prop:Borel-extension}
Let $1<p<\infty$.  Let $(\pi,v)$ be a nondegenerate spatial covariant representation on $E=L^p(\mu)$, with $\mu$ localizable, of the canonical inverse semigroup action of $\Bis(K)$ on $C_0(K^{(0)})$, and let
\[
\psi=\pi\rtimes v:\cC(K)\longrightarrow B(E)
\]
be its integrated form.  Then $\psi$ is $\|\cdot\|_I$-contractive and extends to a homomorphism
\[
\widetilde\psi:\Bop^{\Bis(K)}(K)\longrightarrow B(E).
\]
Its restriction
\[
\widetilde\pi \coloneqq \widetilde\psi|_{\Bop(K^{(0)})}
\]
to bounded Borel functions on the unit space is a multiplication representation.  More precisely, there is a unital $*$-homomorphism
\[
\widetilde\pi_0:\Bop(K^{(0)})\longrightarrow L^\infty(\mu)
\]
such that $\widetilde\pi(b)=M_{\widetilde\pi_0(b)}$.

If $A\in\Bis(K)$ and $b$ is a bounded Borel function on $r(A)$, write $b\delta_A$ for the Borel function on $A$ given by $(b\delta_A)(\gamma)=b(r(\gamma))$.  Then
\begin{equation}\label{eq:Borel-slice}
\widetilde\psi(b\delta_A)=\widetilde\pi(b)v_A.
\end{equation}
For every Borel set $D\subseteq K^{(0)}$ there is, modulo null sets, a measurable subset $Y_D$ of the underlying measure space such that
\[
\widetilde\pi(1_D)=M_{1_{Y_D}}.
\]
Thus $\widetilde\pi(1_D)E$ is a measurable band of $E$; the sets $D$ and $Y_D$ belong to different measure spaces and will not be identified.
\end{prop}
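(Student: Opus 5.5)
The plan is to derive everything from the Borel extension machinery of \cite[Section~4.5]{BKM1}, specialised to the complex untwisted case and to spatial representations. There are four steps.

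First, $I$-norm contractivity. For $f\in C_c(A)$ with $A$ an open bisection, (SCR1)--(SCR2) give $\psi(f)=\pi(f\circ(r|_A)^{-1})v_A$. Since $\pi$ acts by multiplication operators and $v_A$ is contractive, $\|\psi(f)\|\leq\|f\|_\infty$. For general $f$, use Lemma~\ref{lem:bisection-decomposition} together with spatiality. Spatial partial isometries $v_A$ are weighted composition operators between the bands $\pi(C_0(s(A)))E$ and $\pi(C_0(r(A)))E$. So $\psi(f)$ is a sum of weighted composition operators whose $\ell^1$-type row and column sums are controlled by $\|f\|_{I,s}$ and $\|f\|_{I,r}$. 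The Riesz--Thorin estimate then yields $\|\psi(f)\|\leq\|f\|_{I,s}^{1/p}\|f\|_{I,r}^{1/q}\leq\|f\|_I$. This is exactly \cite[Theorem~5.5]{BKM1} and the discussion after \cite[Definition~5.15]{BKM1}, and I would cite it.

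Second, the extension $\widetilde\psi$. Nondegeneracy of $\pi$ on $L^p(\mu)$ with $\mu$ localizable lets us identify $\pi$ with a $*$-homomorphism $\pi_0:C_0(K^{(0)})\to L^\infty(\mu)$, $\pi(a)=M_{\pi_0(a)}$. Because $L^\infty(\mu)$ is a monotone-complete (indeed von Neumann) algebra when $\mu$ is localizable, $\pi_0$ extends uniquely to a unital $*$-homomorphism $\widetilde\pi_0:\Bop(K^{(0)})\to L^\infty(\mu)$ that is sequentially continuous for bounded pointwise convergence. This extension comes from the Riesz representation of the functionals $\xi\mapsto\int\pi_0(a)\xi\,d\mu$ against $L^1$, i.e.\ the usual Borel functional calculus, with nondegeneracy giving unitality. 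It is multiplicative on indicators, so $\widetilde\pi_0(1_D)$ is a projection in $L^\infty(\mu)$, hence of the form $1_{Y_D}$ modulo null sets. This gives the statement about bands.

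Third, define $\widetilde\psi(b\delta_A)=\widetilde\pi(b)v_A$ and extend linearly. The main obstacle is well-definedness on $\Span\{\Bop(A)\}$, together with multiplicativity. For a single bisection, $b\mapsto\widetilde\pi(b)v_A$ is determined on $C_c(r(A))$ by $\psi$. Both sides are bounded-pointwise sequentially continuous, so the formula is consistent whenever $A\subseteq B$, using $v_A=\pi(1_{r(A)})v_B$-type relations derived from (SCR2)--(SCR3). For a relation $\sum_m b_m\delta_{A_m}=0$, refine the bisections. Intersections of the $A_m$ are again open bisections, and one can partition into Borel pieces on which the functions agree. Via the Borel version of (SCR1), namely $v_A\widetilde\pi(b)=\widetilde\pi(b\circ h_A^{-1})v_A$, which is obtained by bounded pointwise limits from (SCR1), this reduces to the one-bisection case. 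Alternatively, I would appeal directly to \cite[Proposition~4.29]{BKM1}, which constructs this extension for covariant representations of modelling actions. The point to check is that their hypotheses (in particular their Borel-regularity condition) hold for nondegenerate spatial representations on localizable $L^p$-spaces. That is where $1<p<\infty$ and spatiality enter.

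Fourth, multiplicativity. On products $(b\delta_A)*(c\delta_B)=(b\cdot c\circ h_A^{-1})\delta_{AB}$, the identity follows from the Borel covariance relation and (SCR3). Restricting $\widetilde\psi$ to $\Bop(K^{(0)})$ (taking $A=K^{(0)}$, $v_{K^{(0)}}=1$ by nondegeneracy) gives $\widetilde\pi=M_{\widetilde\pi_0(\cdot)}$, which completes the proof.
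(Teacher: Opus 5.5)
Your proposal is correct in outline, but it takes a genuinely different route from the paper.

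\textbf{The paper's route.} The paper takes $\widetilde\psi$ ready-made from \cite[Proposition~4.30]{BKM1}. The hypotheses there are the estimate $\|\psi(f)\|\le\|f\|_I\le\|f\|_{\max}^{\Bis(K)}$ and the existence of a predual for $E$; reflexivity of $L^p(\mu)$ is where $1<p<\infty$ is used. Well-definedness, multiplicativity and Borel covariance then come from \cite[Proposition~4.29, Theorem~4.31]{BKM1}. What remains is to recover two facts a posteriori:
\begin{itemize}
\item $\widetilde\psi(1_A)=v_A$, via an approximate identity of $C_0(r(A))$ and the Radon-measure formula;
\item the abstractly obtained diagonal part $\widetilde\pi$ consists of multiplication operators. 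This is the substantive step. For $p\neq2$ it applies the Lamperti-type result \cite[Theorem~2.13]{BKM1} to the unital contractive representation of $\Bop(K^{(0)})\cong C(X)$. For $p=2$ it uses a separate commutant argument based on maximal abelianness of $L^\infty(\mu)$. The $*$-property of $\widetilde\pi_0$ comes last.
\end{itemize}

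\textbf{Your route.} You build the diagonal extension directly inside $L^\infty(\mu)=L^1(\mu)^*$, which is exactly where localizability enters. As a result, the multiplication property, the $*$-property, the bands $Y_D$ and $\widetilde\psi(1_A)=v_A$ are automatic and uniform in $p$. You need neither the $p=2$ versus $p\neq2$ split nor reflexivity, so your remark that ``this is where $1<p<\infty$ enters'' does not apply to your own construction.

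The price is that you must verify by hand what the paper outsources:
\begin{itemize}
\item nested consistency $v_{A'}=\widetilde\pi(1_{r(A')})v_A$ for $A'\subseteq A$. This needs $\widetilde\pi(1_D)$ to be the projection onto $\overline{\pi(C_0(D))E}$ for open $D$, which follows from inner regularity;
\item well-definedness on $\Span\{\Bop(A)\}$. Your refinement works: split $\bigcup_m A_m$ into the Borel atoms lying in exactly the $A_m$ with $m\in\sigma$, each contained in the open bisection $\bigcap_{m\in\sigma}A_m$. (SCR1) is not needed for this step;
\item Borel covariance and multiplicativity.
\end{itemize}

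\textbf{One correction.} $K^{(0)}$ is only assumed paracompact; for coarse groupoids it is $\beta X$. Bounded pointwise \emph{sequential} limits of $C_0$-functions therefore only reach Baire functions. So two of your claims are not justified as stated: that $\widetilde\pi_0$ is the \emph{unique} extension that is sequentially continuous for bounded pointwise convergence, and that Borel covariance follows from (SCR1) ``by bounded pointwise limits.''

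Both should instead rest on uniqueness of Radon measures. For $\xi\in L^p(\mu)$ and $\eta$ in the dual space, the two functionals
\begin{itemize}
\item $c\mapsto\langle\eta,v_A\widetilde\pi(c)\xi\rangle$, and
\item $c\mapsto\langle\eta,\widetilde\pi(c\circ h_A^{-1})v_A\xi\rangle$
\end{itemize}
are both integration against finite Radon measures on $s(A)$. The second is a push-forward along the homeomorphism $h_A^{-1}$. The two measures agree on $C_c(s(A))$ by the $C_0$-level relation $v_A\pi(a)=\pi(a\circ h_A^{-1})v_A$, and hence agree on all bounded Borel $c$. Deriving that $C_0$-level relation needs $v_{A^{-1}}v_A=\widetilde\pi(1_{s(A)})$, which again comes from inner regularity.

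With this repair, and with the uniqueness claim dropped (the statement does not need it), your argument goes through.
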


\begin{proof}
By \cite[Remark~5.7]{BKM1}, the integrated form is $\|\cdot\|_I$-contractive.  Hence for $f\in\cC(K)$,
\[
\|\psi(f)\|\leq \|f\|_I\leq \|f\|_{\max}^{\Bis(K)},
\]
where the second inequality is \cite[Remark~4.7]{BKM1}.  This is exactly the contractivity hypothesis in \cite[Proposition~4.30]{BKM1}.  Since $1<p<\infty$, $E$ is reflexive and has a predual, so that proposition gives the Borel extension; \cite[Proposition~4.29 and Theorem~4.31]{BKM1} give its Borel covariant description.

Let $(e_\lambda)\subseteq C_c(r(A))$ be a positive contractive approximate identity of $C_0(r(A))$.  On continuous compactly supported coefficients, the Borel extension agrees with the original integrated form, and \cite[Proposition~4.23]{BKM1} gives
\[
\widetilde\psi(e_\lambda\delta_A)
=
\psi(e_\lambda\delta_A)
=
\pi(e_\lambda)v_A.
\]
By (SCR2), the last term converges strongly to $v_A$.  The Radon measure formula in \cite[Proposition~4.30]{BKM1} identifies the weak limit of the first term with $\widetilde\psi(1_A)$, hence $\widetilde\psi(1_A)=v_A$.  Proposition~4.29 of \cite{BKM1} then gives \eqref{eq:Borel-slice}.

We next consider the restriction to bounded Borel functions on $K^{(0)}$. By the construction in
\cite[Proposition~4.30]{BKM1},
\[
\|\widetilde\pi(b)\|\leq\|b\|_\infty
\qquad (b\in\Bop(K^{(0)})).
\]
Since $\pi$ is nondegenerate, the same
approximate identity argument on the unit space part gives
\[
\widetilde\pi(1)=I_E.
\]
Thus
\[
\widetilde\pi:\Bop(K^{(0)})\longrightarrow B(L^p(\mu))
\]
is a unital, hence nondegenerate, representation of the commutative
$C^*$-algebra $\Bop(K^{(0)})$.

Suppose first that $p\neq2$.  By the Gelfand representation theorem,
$\Bop(K^{(0)})$ is isomorphic to $C(X)$ for some compact Hausdorff
space $X$.  Hence \cite[Theorem~2.13]{BKM1}, applied to
$\widetilde\pi$, shows that $\widetilde\pi$ is a multiplication
representation on $L^p(\mu)$.

It remains to consider $p=2$.  Let $P=M_{1_Y}$ be an arbitrary
multiplication projection on $E=L^2(\mu)$.  For
$\xi\in E'$ and $\eta\in E$, let $\lambda_{\xi,\eta}$ be the Radon
measure on $K^{(0)}$ representing
\[
a\longmapsto \langle\xi,\pi(a)\eta\rangle,
\qquad a\in C_c(K^{(0)}).
\]
Since every $\pi(a)$ is a multiplication operator,
$P\pi(a)=\pi(a)P$, and therefore
\[
\lambda_{P'\xi,\eta}=\lambda_{\xi,P\eta}.
\]
For bounded Borel $b$, \cite[Proposition~4.30]{BKM1} now gives
\[
\begin{aligned}
\langle\xi,P\widetilde\pi(b)\eta\rangle
&=\int b\,d\lambda_{P'\xi,\eta}
 =\int b\,d\lambda_{\xi,P\eta} \\
&=\langle\xi,\widetilde\pi(b)P\eta\rangle.
\end{aligned}
\]
Thus $\widetilde\pi(b)$ commutes with every multiplication projection.
It therefore commutes with the multiplication algebra
$L^\infty(\mu)\subseteq \mathcal B(L^2(\mu))$.  Since this algebra is maximal
abelian, $\widetilde\pi(b)$ is itself a multiplication operator.
Consequently $\widetilde\pi$ is a multiplication representation also
when $p=2$.

We have therefore shown, for every $1<p<\infty$, that there is a
unital homomorphism
\[
\widetilde\pi_0:\Bop(K^{(0)})\longrightarrow L^\infty(\mu)
\]
such that
\[
\widetilde\pi(b)=M_{\widetilde\pi_0(b)}
\qquad
(b\in\Bop(K^{(0)})).
\]
Since $\widetilde\pi$ is contractive, so is $\widetilde\pi_0$.
A unital contractive homomorphism between commutative $C^*$-algebras
is a $*$-homomorphism.  Finally, for every Borel set
$D\subseteq K^{(0)}$, the function $\widetilde\pi_0(1_D)$ is an
idempotent in $L^\infty(\mu)$, and hence is the characteristic
function of a measurable set modulo null sets.  This gives the last
assertion.
\end{proof}

\begin{cor}[Borel restriction of a spatial bisection]\label{cor:Borel-restriction}
In the setting of Proposition~\ref{prop:Borel-extension}, let $A\in\Bis(K)$ and let $D\subseteq A$ be Borel.  Then
\[
W_D\coloneqq\widetilde\psi(1_D)
\]
is the restriction of $v_A$ from the measurable source band $\widetilde\pi(1_{s(D)})E$ onto the measurable range band $\widetilde\pi(1_{r(D)})E$.  More precisely,
\[
W_D=\widetilde\pi(1_{r(D)})v_A
=v_A\widetilde\pi(1_{s(D)}),
\]
its generalized inverse is $W_{D^{-1}}$, and
\[
W_DW_{D^{-1}}=\widetilde\pi(1_{r(D)}),
\qquad
W_{D^{-1}}W_D=\widetilde\pi(1_{s(D)}).
\]
\end{cor}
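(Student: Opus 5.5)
The plan is to derive everything from equation~\eqref{eq:Borel-slice} together with the multiplicativity of $\widetilde\psi$ on $\Bop^{\Bis(K)}(K)$ and the fact, from Proposition~\ref{prop:Borel-extension}, that $\widetilde\pi$ is a $*$-homomorphism into multiplication operators.

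\emph{Step 1: the range formula.} Since $D\subseteq A$ and $r|_A$ is a homeomorphism onto $r(A)$, the set $r(D)$ is Borel in $r(A)$. The function $1_D$ on $A$ equals $b\delta_A$ with $b=1_{r(D)}$, because $\gamma\in D$ iff $r(\gamma)\in r(D)$ for $\gamma\in A$. Hence \eqref{eq:Borel-slice} gives directly
\[
W_D=\widetilde\pi(1_{r(D)})v_A .
\]

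\emph{Step 2: the source formula.} In $\Bop^{\Bis(K)}(K)$ we have the convolution identity $1_A*1_{s(D)}=1_D$, where $1_{s(D)}\in\Bop(K^{(0)})$ and $K^{(0)}$ is an open bisection. Indeed, $(1_A*1_{s(D)})(\gamma)=\sum_{r(\eta)=r(\gamma)}1_A(\eta)1_{s(D)}(\eta^{-1}\gamma)$. A nonzero term forces $\eta^{-1}\gamma$ to be a unit, so $\eta=\gamma$ and the sum is $1_A(\gamma)1_{s(D)}(s(\gamma))$. Since $s|_A$ is injective, this equals $1_D(\gamma)$. Applying the homomorphism $\widetilde\psi$, and using $\widetilde\psi(1_A)=v_A$ (the case $b=1_{r(A)}$ of \eqref{eq:Borel-slice}, also shown in the proof of Proposition~\ref{prop:Borel-extension}), we get
\[
W_D=v_A\widetilde\pi(1_{s(D)}).
\]
The delicate point is that $\widetilde\psi$ is genuinely multiplicative on the Borel algebra $\Bop^{\Bis(K)}(K)$, including the factor $1_A$, which does not have compact support. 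This is part of the statement of Proposition~\ref{prop:Borel-extension}, so I will rely on it.

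\emph{Step 3: generalized inverse and the products.} Applying Steps 1 and 2 to $D^{-1}\subseteq A^{-1}$ gives
\[
W_{D^{-1}}=\widetilde\pi(1_{s(D)})v_{A^{-1}}=v_{A^{-1}}\widetilde\pi(1_{r(D)}).
\]
For the products, compute
\[
W_DW_{D^{-1}}=\widetilde\psi(1_D*1_{D^{-1}}),
\]
and check pointwise that $1_D*1_{D^{-1}}=1_{r(D)}$ on $K^{(0)}$. For a unit $u$, the sum runs over $\eta\in D$ with $r(\eta)=u$; there is at most one such $\eta$ because $D$ is a bisection. For non-units, any nonzero term would need $\eta,\gamma^{-1}\eta\in D$ with the same source, which forces $\gamma$ to be a unit. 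Symmetrically, $1_{D^{-1}}*1_D=1_{s(D)}$. This yields
\[
W_DW_{D^{-1}}=\widetilde\pi(1_{r(D)}),\qquad W_{D^{-1}}W_D=\widetilde\pi(1_{s(D)}).
\]
Since $\widetilde\pi(1_{r(D)})$ and $\widetilde\pi(1_{s(D)})$ are idempotents, we then get $W_DW_{D^{-1}}W_D=\widetilde\pi(1_{r(D)})W_D=W_D$, using Step 1 and idempotence, and similarly $W_{D^{-1}}W_DW_{D^{-1}}=W_{D^{-1}}$. So $W_{D^{-1}}$ is a generalized inverse of $W_D$.

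\emph{Step 4: identification as a restriction.} By Proposition~\ref{prop:Borel-extension}, $\widetilde\pi(1_{s(D)})=M_{1_{Y_{s(D)}}}$ and $\widetilde\pi(1_{r(D)})=M_{1_{Y_{r(D)}}}$ are band projections. The formulas of Step 2 and Step 1 show that $W_D$ vanishes off the source band and has range in the range band, and that $W_D$ agrees with $v_A$ on $\widetilde\pi(1_{s(D)})E$. The product identities of Step 3 show that $W_D$ maps the source band bijectively onto the range band, with inverse $W_{D^{-1}}$. Contractivity of $W_D$ is inherited from $v_A$, since band projections are contractive, which gives the asserted description of $W_D$ as a restriction of $v_A$.
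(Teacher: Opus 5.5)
Your proof is correct. Step 1 is exactly the paper's argument, but from Step 2 on you take a more algebraic route.

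\textbf{What the paper does.} It obtains $W_D=v_A\widetilde\pi(1_{s(D)})$ from Borel covariance. It then gets the product and generalized-inverse statements by cutting the spatial partial isometry $v_A$ down to the bands $\widetilde\pi(1_{s(D)})E$ and $\widetilde\pi(1_{r(D)})E$, and repeating the argument for $D^{-1}$.

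\textbf{What you do instead.} You check the pointwise convolution identities $1_A*1_{s(D)}=1_D$, $1_D*1_{D^{-1}}=1_{r(D)}$ and $1_{D^{-1}}*1_D=1_{s(D)}$ in $\Bop^{\Bis(K)}(K)$, and push them through the homomorphism $\widetilde\psi$. The first identity is in effect a direct proof of the needed instance of Borel covariance. Your version is self-contained: it uses only multiplicativity of $\widetilde\psi$ and idempotence of $\widetilde\pi(1_{r(D)})$ and $\widetilde\pi(1_{s(D)})$. It never has to compute $v_Av_{A^{-1}}$ or $v_{A^{-1}}v_A$, nor relate them to band projections.

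\textbf{What the paper's route gives for free.} It makes visible that $W_D$ is itself a spatial partial isometry, and that is how the corollary is later used in Lemma~\ref{lem:vhat-spatial}. In your version this needs one more remark: composing the spatial partial isometry $v_A$ with the multiplication projection $\widetilde\pi(1_{s(D)})$ stays inside the inverse semigroup $\SPIso(L^p(\mu))$.

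\textbf{One small imprecision.} The case $b=1_{r(A)}$ of \eqref{eq:Borel-slice} literally gives $\widetilde\psi(1_A)=\widetilde\pi(1_{r(A)})v_A$, not $v_A$. The equality $\widetilde\psi(1_A)=v_A$ comes from the approximate-identity step in the proof of Proposition~\ref{prop:Borel-extension}. You also cite that step, so nothing breaks.
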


\begin{proof}
Since $A$ is a bisection, $1_D=1_{r(D)}\delta_A$, so the first formula follows from \eqref{eq:Borel-slice}; Borel covariance gives the second.  The remaining assertions follow by restricting the spatial partial isometry $v_A$ to the displayed source and range bands and applying the same argument to $D^{-1}$.
\end{proof}

\subsection{A locally finite coordinate system}

Let $K$ be a locally compact, locally Hausdorff, \'{e}tale groupoid with paracompact locally compact Hausdorff unit space, let $U\subseteq K^{(0)}$ be full and clopen, and put
\[
G=K|_U.
\]
Write
\[
j_U:\cC(G)\longrightarrow\cC(K)
\]
for extension by zero. Since $G$ is a clopen subgroupoid, $j_U$ is an algebra homomorphism, and it is isometric for each of $\|\cdot\|_{I,r}$, $\|\cdot\|_{I,s}$, and $\|\cdot\|_I$.

\begin{lem}[Locally finite bisection coordinates]\label{lem:coordinates}
There exist an index set $I$, relatively compact open sets $V_i\subseteq K^{(0)}\setminus U$, and relatively compact open bisections $B_i\subseteq K$ such that
\begin{enumerate}[label=(\roman*)]
\item $(V_i)_{i\in I}$ is a locally finite cover of $K^{(0)}\setminus U$;
\item $r(B_i)=V_i$ and $s(B_i)\subseteq U$ for every $i$;
\item after choosing a well-order on $I$ and setting
\[
R_i=V_i\setminus\bigcup_{j<i}V_j,
\]
the sets $R_i$ form a pairwise disjoint Borel partition of $K^{(0)}\setminus U$, and the family $(R_i)_{i\in I}$ is locally finite;
\item if
\[
\widetilde B_i=B_i\cap r^{-1}(R_i),
\qquad
T_i=s(\widetilde B_i),
\]
then $\widetilde B_i$ is a Borel bisection from $T_i\subseteq U$ onto $R_i$, and $T_i$ is Borel.
\end{enumerate}
We also set $V_0=R_0=U$, $B_0=\widetilde B_0=U$, and $T_0=U$.
\end{lem}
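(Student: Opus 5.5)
The plan is to build the cover point by point, using fullness of $U$, and then pass to a locally finite refinement using paracompactness. First I would record that $K^{(0)}\setminus U$ is closed in the paracompact space $K^{(0)}$. Hence it is itself paracompact, locally compact and Hausdorff.

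\emph{Local bisections.} For each $x\in K^{(0)}\setminus U$, fullness gives $r(s^{-1}(U))=K^{(0)}$, so there is $\gamma_x\in K$ with $r(\gamma_x)=x$ and $s(\gamma_x)\in U$. Choose an open bisection $A_x\ni\gamma_x$. Because $U$ is open, $s^{-1}(U)$ is open, and because $U$ is closed, $r^{-1}(K^{(0)}\setminus U)$ is open. Intersecting $A_x$ with both sets gives an open bisection with $s(A_x)\subseteq U$ and $r(A_x)\subseteq K^{(0)}\setminus U$. By regularity of $K^{(0)}$, as in the proof of Lemma~\ref{lem:bisection-decomposition}, I would shrink this to an open neighbourhood $W_x$ of $x$ with $\overline{W_x}$ compact and $\overline{W_x}\subseteq r(A_x)$.

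\emph{Locally finite refinement.} The sets $W_x$ form an open cover of the paracompact space $K^{(0)}\setminus U$ (they are open in $K^{(0)}$ and contained in that clopen set), so it has a locally finite open refinement $(V_i)_{i\in I}$. For each $i$, pick $x(i)$ with $V_i\subseteq W_{x(i)}$ and set $B_i=(r|_{A_{x(i)}})^{-1}(V_i)$. Then $r(B_i)=V_i$ and $s(B_i)\subseteq U$. Both $V_i$ and $B_i$ are relatively compact, since they sit inside the compact sets $\overline{W_{x(i)}}$ and $(r|_{A_{x(i)}})^{-1}(\overline{W_{x(i)}})$. Local finiteness holds in $K^{(0)}\setminus U$, and since $U$ is open this is the same as local finiteness in $K^{(0)}$. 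This gives (i) and (ii).

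\emph{Borel partition.} Well-order $I$ and set $R_i=V_i\setminus\bigcup_{j<i}V_j$. The union $\bigcup_{j<i}V_j$ is open, so each $R_i$ is the difference of two open sets and hence Borel. The $R_i$ are pairwise disjoint by construction. They cover $K^{(0)}\setminus U$: for any $x$, the least $i$ with $x\in V_i$ exists by the well-order, and then $x\in R_i$. Local finiteness of $(R_i)$ follows from $R_i\subseteq V_i$. This gives (iii).

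\emph{Borel bisections.} $\widetilde B_i=B_i\cap r^{-1}(R_i)=(r|_{B_i})^{-1}(R_i)$ is Borel in the Hausdorff open set $B_i$, and it is a bisection as a subset of one. Since $s|_{B_i}$ is a homeomorphism onto the open set $s(B_i)\subseteq U$, the image $T_i=s(\widetilde B_i)$ is Borel. Composing $s|_{\widetilde B_i}$ with the inverse of $r|_{\widetilde B_i}$ gives a Borel isomorphism $T_i\to R_i$, which is (iv).

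The only genuinely delicate point is the refinement step. One has to make sure paracompactness of the closed subspace is used, and that relative compactness in the non-Hausdorff sense survives. Both are handled by taking preimages under the homeomorphisms $r|_{A_x}$. The remaining steps are routine.
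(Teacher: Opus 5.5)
Your proof is correct and follows essentially the same route as the paper: fullness gives local bisections from $U$ into $K^{(0)}\setminus U$, regularity gives relatively compact $W_x$ with $\overline{W_x}\subseteq r(A_x)$, paracompactness gives a locally finite refinement that you pull back along $r|_{A_{x(i)}}$, and a well-ordered disjointification yields the Borel pieces. The only cosmetic difference is that the paper uses the shrinking lemma to arrange $\overline{V_i}\subseteq W_{x(i)}$; you note correctly that $V_i\subseteq W_{x(i)}$ together with compactness of $\overline{W_{x(i)}}$ already gives the needed relative compactness.
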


\begin{proof}
Fix $x\in K^{(0)}\setminus U$.  Since $U$ is full, there is $\gamma\in K$ with
\[
r(\gamma)=x,
\qquad
s(\gamma)\in U.
\]
Choose an open bisection neighborhood $C_x$ of $\gamma$.  Replacing it by
\[
C_x\cap s^{-1}(U)\cap r^{-1}(K^{(0)}\setminus U),
\]
which is still an open bisection containing $\gamma$, we may assume
\[
s(C_x)\subseteq U,
\qquad
r(C_x)\subseteq K^{(0)}\setminus U.
\]
Choose a relatively compact open neighborhood $W_x$ of $x$ such that $\overline{W_x}\subseteq r(C_x)$.  The sets $W_x$ cover the paracompact locally compact Hausdorff space $K^{(0)}\setminus U$.  By paracompactness and the shrinking lemma, there is a locally finite open refinement $(V_i)_{i\in I}$ and, for each $i$, an index $x(i)$ such that
\[
\overline{V_i}\subseteq W_{x(i)}.
\]
Define
\[
B_i \coloneqq (r|_{C_{x(i)}})^{-1}(V_i)
=C_{x(i)}\cap r^{-1}(V_i).
\]
Then $r(B_i)=V_i$ and $s(B_i)\subseteq U$.  Moreover,
\[
B_i\subseteq(r|_{C_{x(i)}})^{-1}(\overline{V_i}),
\]
and the set on the right is compact; hence $B_i$ is relatively compact.  This proves (i) and (ii).

Choose a well-order on $I$ and define $R_i$ as in (iii).  Since
\[
R_i
=
V_i\cap\left(K^{(0)}\setminus\bigcup_{j<i}V_j\right),
\]
each $R_i$ is Borel.  If $i<k$, then the definition of $R_k$ removes $V_i$, while $R_i\subseteq V_i$; hence $R_i\cap R_k=\varnothing$.  Since every pair of indices is comparable, the $R_i$ are pairwise disjoint.  They cover the same set as the $V_i$, and local finiteness follows from $R_i\subseteq V_i$.

Finally, $r|_{B_i}:B_i\to V_i$ is a homeomorphism, so
\[
\widetilde B_i=(r|_{B_i})^{-1}(R_i)
\]
is Borel.  Moreover,
\[
T_i=s(\widetilde B_i)
\]
is Borel because $s\circ(r|_{B_i})^{-1}:V_i\to s(B_i)$ is a homeomorphism.  The restrictions of $r$ and $s$ are one-to-one, so $\widetilde B_i$ is a Borel bisection from $T_i$ onto $R_i$.
\end{proof}

Let $\Bisrc(K)$ denote the relatively compact open bisections of $K$.

\begin{lem}\label{lem:SK}
The set
\[
S_K=\Bisrc(K)\cup\{K^{(0)}\}
\]
is a unital wide inverse subsemigroup of $\Bis(K)$ and covers $K$.
\end{lem}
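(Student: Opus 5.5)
The claim splits into four checks: $S_K\subseteq\Bis(K)$, closure under the inverse-semigroup operations with $K^{(0)}$ as unit, covering, and wideness. Each should reduce to elementary facts about \'etale groupoids together with the convention that ``relatively compact'' means ``contained in a compact set''.

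\emph{Membership and algebraic closure.} First, $K^{(0)}$ is an open bisection because $K$ is \'etale, and it acts as a unit: $K^{(0)}A=A=AK^{(0)}$ for every bisection $A$. Next, inversion preserves $\Bisrc(K)$. If $A\subseteq C$ with $C$ compact, then $A^{-1}\subseteq C^{-1}$, and $C^{-1}$ is compact since inversion is a homeomorphism.

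For products, take $A,B\in\Bisrc(K)$ with $A\subseteq C$ and $B\subseteq D$, where $C,D$ are compact. The set $AB$ is an open bisection by \'etaleness. I would show $AB$ is relatively compact by arguing as in the proof of Lemma~\ref{claim6.7}. The set
\[
\{(\gamma,\eta)\in C\times D: s(\gamma)=r(\eta)\}
\]
is closed in $C\times D$, because it is the preimage of the diagonal of the Hausdorff space $K^{(0)}$. It is therefore compact. Its image under the continuous multiplication map is a compact set containing $AB$. Products with $K^{(0)}$ are trivial.

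\emph{Covering.} This is exactly the first paragraph of the proof of Lemma~\ref{lem:bisection-decomposition}. Given $\gamma$ in an open bisection $A$, regularity of $K^{(0)}$ gives an open $V\ni r(\gamma)$ with $\overline V$ compact and $\overline V\subseteq r(A)$. Then $(r|_A)^{-1}(V)\in\Bisrc(K)$ contains $\gamma$.

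\emph{Wideness.} We must show that for $A,B\in S_K$, the set $A\cap B$ is a union of members of $S_K$. The set $A\cap B$ is an open bisection. Every open bisection $W$ is the union of the relatively compact open bisections it contains, by the covering argument applied inside $W$. The cases involving $K^{(0)}$ are covered as well: $K^{(0)}\cap A$ is an open bisection, so the same argument applies. If $A=B=K^{(0)}$, the intersection is $K^{(0)}$ itself, which lies in $S_K$.

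The main obstacle is the compactness of $AB$ in the possibly non-Hausdorff $K$. The point to watch is that closedness of the composable-pairs set uses only Hausdorffness of $K^{(0)}$, not of $C$ or $D$. This is the same device already used in Lemma~\ref{claim6.7}, so I expect it to go through.
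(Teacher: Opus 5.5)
Your proof is correct and follows the paper's argument: inversion, compactness of the composable pairs inside a product of two compact sets (closed because $K^{(0)}$ is Hausdorff) for products, and the shrinking argument of Lemma~\ref{lem:bisection-decomposition} for covering. The only variation is in wideness: the paper notes that $A\cap C$ is itself relatively compact as soon as one of $A,C$ is, so $A\cap C\in S_K$ directly, whereas you cover $A\cap B$ by relatively compact sub-bisections, and either argument satisfies the definition.
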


\begin{proof}
Inversion preserves relative compactness.  If $A,C\in\Bisrc(K)$, choose compact subsets $M,N\subseteq K$ containing them.  Since $K^{(0)}$ is Hausdorff, its diagonal is closed and therefore
\[
K^{(2)}=(s\times r)^{-1}(\Delta_{K^{(0)}})
\]
is closed in $K\times K$.  Hence
\[
L_{M,N} \coloneqq (M\times N)\cap K^{(2)}
\]
is compact.  Multiplication $m$ is continuous on $K^{(2)}$, so $m(L_{M,N})$ is compact and contains $AC$.  Thus $AC$ is relatively compact.  Together with $K^{(0)}$, this proves that $S_K$ is a unital inverse subsemigroup.

It covers $K$ by the shrinking argument in Lemma~\ref{lem:bisection-decomposition}.  It is wide as well.  If $A,C\in S_K$ and at least one is relatively compact, then $A\cap C$ is an open sub-bisection contained in a compact subset and hence belongs to $\Bisrc(K)$; the remaining case is $A=C=K^{(0)}$.  Thus $A\cap C$ itself belongs to $S_K$, which is stronger than the requirement that it be a union of bisections belonging to $S_K$.
\end{proof}

\subsection{Spatial matrix dilation and the full-clopen reduction theorem}

Fix $1<p<\infty$ and put $G=K|_U$.  The canonical action of $\Bis(G)$ on $C_0(G^{(0)})$ models $G$ by \cite[Lemma~4.21]{BKM1}.  Applying Proposition~\ref{prop:BKM-isometric}, choose an isometric nondegenerate spatial representation
\begin{equation}\label{eq:psi}
\psi=\pi\rtimes v \colon F^p(G)\longrightarrow B(E),
\qquad
E=L^p(\mu).
\end{equation}
By Proposition~\ref{prop:Borel-extension}, its restriction to $\cC(G)$ has a Borel extension
\[
\widetilde\psi \colon \Bop^{\Bis(G)}(G)\longrightarrow B(E),
\qquad
\widetilde\pi \coloneqq \widetilde\psi|_{\Bop(G^{(0)})},
\]
where $\widetilde\pi$ is a multiplication representation.

Let $R_i,\widetilde B_i,T_i$ be the coordinate system from Lemma~\ref{lem:coordinates}.  Put
\[
P_i=\widetilde\pi(1_{T_i}),
\qquad
E_i=P_iE.
\]
By Proposition~\ref{prop:Borel-extension}, there is a measurable subset $Y_i$ of the underlying measure space such that $P_i=M_{1_{Y_i}}$.  Hence restriction and extension by zero identify
\[
E_i\cong L^p(Y_i,\mu|_{Y_i}).
\]
Here $T_i\subseteq U$ is a subset of the groupoid unit space, whereas $Y_i$ belongs to the measure space representing $E$.

Define
\[
\widehat E=\bigoplus_{i\in I\cup\{0\}}^{p}E_i.
\]
Since $T_0=U$ and $\pi$ is nondegenerate, $P_0=I_E$ and hence $E_0=E$.  Taking the disjoint union of the measure spaces representing the bands $E_i$ identifies $\widehat E$ isometrically with an $L^p$-space.  Measurable subspaces and arbitrary direct sums preserve localizability; see \cite[214K(a)--(b), 214L]{Fremlin}.  Thus the resulting measure may again be taken localizable.

For each $i$, the Borel bisection $\widetilde B_i$ gives a Borel bijection
\[
\theta_i:T_i\longrightarrow R_i,
\qquad
\theta_i(u)=r\bigl((s|_{\widetilde B_i})^{-1}(u)\bigr),
\]
with $\theta_0=\mathrm{id}_U$.

\subsubsection{The transported diagonal}

The $i$th summand $E_i$ is a band associated with $T_i\subseteq U$, but it will serve as a coordinate copy of $R_i\subseteq K^{(0)}$ through $\theta_i:T_i\to R_i$.  Accordingly, a bounded Borel function on $K^{(0)}$ acts on $E_i$ by restriction to $R_i$, pullback along $\theta_i$, and then the Borel diagonal representation $\widetilde\pi$.

For a bounded Borel function $b$ on $K^{(0)}$, define
\[
\widehat{\widetilde\pi}(b)|_{E_i}
=
\widetilde\pi\bigl(1_{T_i}((b|_{R_i})\circ\theta_i)\bigr)|_{E_i}.
\]
For fixed $i$, all operations in this formula are $*$-homomorphisms.  Moreover,
\[
\bigl\|1_{T_i}((b|_{R_i})\circ\theta_i)\bigr\|_\infty\leq\|b\|_\infty.
\]
Although extension by zero sends the constant function $1$ to $1_{T_i}$ rather than $1_U$, on $E_i=P_iE$ one has
\[
\widetilde\pi(1_{T_i})|_{E_i}=I_{E_i}.
\]
Each block in the definition above is a unital contractive $*$-representation, and the $\ell^p$-direct sum gives a unital contractive multiplication representation
\[
\widehat{\widetilde\pi}:\Bop(K^{(0)})\longrightarrow B(\widehat E).
\]
Set
\[
\widehat\pi \coloneqq \widehat{\widetilde\pi}|_{C_0(K^{(0)})}.
\]
Thus, for $a\in C_0(K^{(0)})$,
\[
\widehat\pi(a)|_{E_i}
=
\widetilde\pi\bigl(1_{T_i}((a|_{R_i})\circ\theta_i)\bigr)|_{E_i}.
\]

\begin{lem}[Support projections]\label{lem:support-projections}
For every open set $D\subseteq K^{(0)}$, put
\[
Q_D \coloneqq \widehat{\widetilde\pi}(1_D).
\]
Then
\[
Q_D\widehat E
=
\overline{\Span\{\widehat\pi(a)\xi:a\in C_0(D),\ \xi\in\widehat E\}}.
\]
In particular, $Q_{K^{(0)}}=I_{\widehat E}$ and $\widehat\pi$ is nondegenerate.
\end{lem}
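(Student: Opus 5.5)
The plan is to reduce the statement block by block, using that $\widehat{\widetilde\pi}$ is a multiplication representation on each summand $E_i$. On $E_i$ we have $Q_D|_{E_i}=\widetilde\pi(1_{T_i\cap\theta_i^{-1}(D\cap R_i)})|_{E_i}$, and $\widehat\pi(a)|_{E_i}=\widetilde\pi(1_{T_i}\,(a|_{R_i})\circ\theta_i)|_{E_i}$. Both sides of the claimed identity are closed subspaces, and both are compatible with the $\ell^p$-decomposition. For the left side this is clear. For the right side, note that $\widehat\pi(a)$ preserves each $E_i$, and $\ell^p$-direct sums of closed spans equal the closed span of the blockwise pieces; this uses $p<\infty$, since finitely supported vectors are dense. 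So it suffices to prove, for each fixed $i$, that
\[
\widetilde\pi(1_{D_i})E_i=\overline{\Span}\{\widetilde\pi(1_{T_i}\,a\circ\theta_i)\xi : a\in C_0(D),\ \xi\in E_i\},\qquad D_i\coloneqq \theta_i^{-1}(D\cap R_i)\subseteq T_i.
\]

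The inclusion $\supseteq$ is immediate. If $a\in C_0(D)$, then $1_{T_i}\,a\circ\theta_i$ vanishes off $D_i$. Since $\widetilde\pi$ is a homomorphism on $\Bop(G^{(0)})$, we get $\widetilde\pi(1_{T_i}a\circ\theta_i)=\widetilde\pi(1_{D_i})\widetilde\pi(1_{T_i}a\circ\theta_i)$.

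For $\subseteq$, choose an increasing net $a_\lambda\in C_c(D)$ with $0\le a_\lambda\le 1$ and $a_\lambda\uparrow 1_D$ pointwise; paracompactness of $K^{(0)}$ is not even needed, since we can take all $a\in C_c(D)^+_{\le1}$ directed upward. Then $b_\lambda\coloneqq 1_{T_i}\,a_\lambda\circ\theta_i$ increases pointwise to $1_{D_i}$. We then want
\[
\widetilde\pi(b_\lambda)\xi\to\widetilde\pi(1_{D_i})\xi\quad\text{in norm for }\xi\in E_i.
\]
Write $\widetilde\pi(b)=M_{\widetilde\pi_0(b)}$ with $\widetilde\pi_0$ a $*$-homomorphism into $L^\infty(\mu)$. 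If norm convergence holds, dominated convergence in $L^p$ with $p<\infty$ gives the conclusion.

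\textbf{Main obstacle.} A $*$-homomorphism on $\Bop$ need not be sequentially or monotonely continuous. We must therefore use the specific construction of $\widetilde\pi$ in Proposition~\ref{prop:Borel-extension}, where via \cite[Proposition~4.30]{BKM1} the pairings $\langle\zeta,\widetilde\pi(b)\xi\rangle=\int b\,d\lambda_{\zeta,\xi}$ are given by Radon measures. Monotone (or dominated) convergence for these complex measures gives weak convergence $\widetilde\pi(b_\lambda)\xi\to\widetilde\pi(1_{D_i})\xi$. Since $D$ is open, $D_i$ need not be open in $U$ and $\theta_i$ is only Borel; but for nets, monotone convergence of Radon measures holds on open sets, and $b_\lambda$ pulls back continuous functions along the continuous map $\theta_i$ restricted to $s(B_i)$. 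If nets cause trouble, I would use a sequence instead: $K^{(0)}$ is locally compact paracompact, so $D$ is a disjoint union of $\sigma$-compact clopen pieces, and we pick $a_n\uparrow 1_D$ on each $\sigma$-compact piece. Weak convergence of an increasing, uniformly bounded net of multiplication operators, with multipliers $\widetilde\pi_0(b_\lambda)$ increasing and bounded by $\widetilde\pi_0(1_{D_i})$, then forces a.e. monotone convergence, hence norm convergence. The closed convex span contains the weak limit anyway, by Mazur. Either way $\widetilde\pi(1_{D_i})\xi$ lies in the closed span, which gives $\subseteq$.

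Finally, take $D=K^{(0)}$. Then $D_i=T_i$ and $\widetilde\pi(1_{T_i})|_{E_i}=I_{E_i}$, so $Q_{K^{(0)}}=I_{\widehat E}$. The displayed identity then says that $\widehat\pi(C_0(K^{(0)}))\widehat E$ is dense, i.e.\ $\widehat\pi$ is nondegenerate.
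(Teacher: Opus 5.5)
Your proposal is correct. It runs on the same mechanism as the paper's proof: the Radon-measure formula $\langle\zeta,\widetilde\pi(b)\xi\rangle=\int b\,d\lambda_{\zeta,\xi}$ from \cite[Proposition~4.30]{BKM1}, approximation of $1_D$ by functions in $C_c(D)$ at the level of these measures, and weak closedness of a norm-closed subspace. The organization is different.

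\textbf{The paper's route.} It works globally on $\widehat E$. For each functional it assembles one transported measure $\nu_k$ on $K^{(0)}$; this needs the H\"older summation over blocks and a uniqueness-of-representing-measure step. It then uses $Q_D\xi=\xi$ to see that $\nu_k$ is concentrated on $D$. Finally, inner regularity yields a single bump $e\in C_c(D)$ that works for finitely many functionals at once.

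\textbf{Your route.} You first split both sides over the $\ell^p$-blocks, which is legitimate because $p<\infty$. You then stay on $G^{(0)}$, where $\lambda_{\zeta,\xi}$ is Radon by construction. What makes your monotone-net step work is that $\theta_i$ is the restriction to $T_i$ of the homeomorphism $h_{B_i}\colon s(B_i)\to V_i$. Hence $a_\lambda\circ h_{B_i}$ is continuous on the open set $s(B_i)$, and lower semicontinuous after extension by zero. The restriction of the finite Radon measure $\lambda_{\zeta,\xi}$ to the Borel set $T_i$ is again Radon. So $\tau$-smoothness, applied to its positive parts, gives $\int_{T_i}a_\lambda\circ h_{B_i}\,d\lambda_{\zeta,\xi}\to\lambda_{\zeta,\xi}(D_i)$. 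This route never needs to know that a transported global measure on $K^{(0)}$ is Radon; the price is the easy block reduction.

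\textbf{Two side remarks that should be dropped.} Neither is needed, but both are wrong as stated.
\begin{itemize}
\item An increasing net in $L^\infty(\mu)$ does not ``converge a.e.'', so that inference is unjustified for nets. Your Mazur/Hahn--Banach remark is what actually closes the argument. If you want norm convergence, set $h_\lambda=\widetilde\pi_0(1_{D_i})-\widetilde\pi_0(b_\lambda)$, which takes values in $[0,1]$ by positivity of $\widetilde\pi_0$. Pair $h_\lambda\xi$ against $\overline{\operatorname{sgn}\xi}\,|\xi|^{p-1}$ to get $\int h_\lambda|\xi|^p\to0$, and use $h_\lambda^p\le h_\lambda$.
\item The sequential fallback fails. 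Paracompactness of $K^{(0)}$ does not pass to the open set $D$; for example, $[0,\omega_1)$ is an open subset of $[0,\omega_1]$ that is not paracompact, nor even $\sigma$-compact. Even with a decomposition into $\sigma$-compact pieces, there could be uncountably many of them, so you would be back to a net. Since the net argument works, omit this fallback.
\end{itemize}
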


\begin{proof}
Let
\[
M_D=\overline{\Span\{\widehat\pi(a)\xi:a\in C_0(D),\ \xi\in\widehat E\}}.
\]
If $a\in C_0(D)$, then $1_Da=a$, and multiplicativity gives
\[
Q_D\widehat\pi(a)
=\widehat{\widetilde\pi}(1_Da)
=\widehat\pi(a).
\]
Hence $M_D\subseteq Q_D\widehat E$.

For the reverse inclusion, fix $\xi\in Q_D\widehat E$.  Since $M_D$ is a norm-closed linear subspace of the Banach space $\widehat E$, it is weakly closed by Hahn--Banach.  It therefore suffices to put $\xi$ in the weak closure of $M_D$.  Let $\eta^{(1)},\ldots,\eta^{(n)}\in\widehat E'$ and $\varepsilon>0$.  For each $k$, the functional
\[
a\longmapsto\langle\eta^{(k)},\widehat\pi(a)\xi\rangle,
\qquad a\in C_0(K^{(0)}),
\]
is represented by a finite complex Radon measure $\nu_k$.

Write $q$ for the conjugate exponent to $p$, $\xi=(\xi_i)_i\in\bigoplus_i^pE_i$, and $\eta^{(k)}=(\eta_i^{(k)})_i\in\bigoplus_i^qE_i'$.  If $\Phi_i(b)$ denotes the $i$th block of $\widehat{\widetilde\pi}(b)$, then
\[
\begin{aligned}
\sum_i\bigl|\langle\eta_i^{(k)},\Phi_i(b)\xi_i\rangle\bigr|
&\leq \|b\|_\infty\sum_i\|\eta_i^{(k)}\|\,\|\xi_i\|\\
&\leq \|b\|_\infty\|\eta^{(k)}\|\,\|\xi\|.
\end{aligned}
\]
For the corresponding transported scalar measures, the $i$th total variation is at most $\|\eta_i^{(k)}\|\,\|\xi_i\|$.  Hence the same H\"older estimate shows that these measures sum in total variation.  By uniqueness of the representing measure, the blockwise formulas from \cite[Proposition~4.30]{BKM1} give
\[
\langle\eta^{(k)},\widehat{\widetilde\pi}(b)\xi\rangle
=\int b\,d\nu_k
\]
for every bounded Borel function $b$ on $K^{(0)}$.

If $b$ is supported in $K^{(0)}\setminus D$, then, since $Q_D\xi=\xi$,
\[
\widehat{\widetilde\pi}(b)\xi
=\widehat{\widetilde\pi}(b)Q_D\xi
=\widehat{\widetilde\pi}(b1_D)\xi
=0.
\]
Hence $\int b\,d\nu_k=0$ for every such $b$, so each $\nu_k$ is concentrated on $D$.

By inner regularity, and taking a finite union, choose a compact set $K_0\subseteq D$ such that
\[
|\nu_k|(D\setminus K_0)<\varepsilon
\qquad(k=1,\ldots,n).
\]
Choose $e\in C_c(D)$ with $0\leq e\leq1$ and $e=1$ on $K_0$.  Then $\widehat\pi(e)\xi\in M_D$ and
\[
\begin{aligned}
\bigl|\langle\eta^{(k)},\xi-\widehat\pi(e)\xi\rangle\bigr|
&=\left|\int_D(1-e)\,d\nu_k\right|\\
&\leq\int_{D\setminus K_0}|1-e|\,d|\nu_k|\\
&\leq|\nu_k|(D\setminus K_0)<\varepsilon.
\end{aligned}
\]
Thus every basic weak neighborhood of $\xi$ meets $M_D$, so $\xi\in M_D$.  This proves $Q_D\widehat E\subseteq M_D$.

Finally, unitality of $\widehat{\widetilde\pi}$ gives $Q_{K^{(0)}}=I_{\widehat E}$.  Taking $D=K^{(0)}$ in the displayed equality proves nondegeneracy of $\widehat\pi$.
\end{proof}

\subsubsection{The matrix associated with a bisection}

Set
\[
\widehat v_{K^{(0)}}=I_{\widehat E}.
\]
Every other element of $S_K$ is relatively compact.  Let $A\in S_K\setminus\{K^{(0)}\}$.  For $i,j\in I\cup\{0\}$, define
\[
D_{ij}(A)=\widetilde B_i^{-1}A\widetilde B_j.
\]
This is contained in the open $G$-bisection
\[
C_{ij}(A)=B_i^{-1}AB_j\subseteq G,
\]
and
\[
D_{ij}(A)
=
C_{ij}(A)\cap r^{-1}(T_i)\cap s^{-1}(T_j).
\]
Indeed, if $c=b_i^{-1}ab_j\in C_{ij}(A)$, then $r(c)=s(b_i)$ and $s(c)=s(b_j)$.  Since $s|_{B_i}$ and $s|_{B_j}$ are injective,
\[
r(c)\in T_i\iff b_i\in\widetilde B_i,
\qquad
s(c)\in T_j\iff b_j\in\widetilde B_j.
\]
Thus $D_{ij}(A)$ is a Borel bisection with source in $T_j$ and range in $T_i$.

Only finitely many $D_{ij}(A)$ are nonempty.  Indeed, nonemptiness implies
\[
R_i\cap r(A)\neq\varnothing,
\qquad
R_j\cap s(A)\neq\varnothing.
\]
Since $A$ is relatively compact, $r(A)$ and $s(A)$ are contained in compact subsets of $K^{(0)}$, and a compact set meets only finitely many members of the locally finite family $(R_i)$.  Thus only finitely many pairs $(i,j)$ occur.

Define
\[
W_{ij}(A)=\widetilde\psi(1_{D_{ij}(A)})
\]
and
\[
\widehat v_A=(W_{ij}(A))_{i,j}\in B(\widehat E).
\]
By Corollary~\ref{cor:Borel-restriction}, $W_{ij}(A)$ maps the initial band
\[
\widetilde\pi(1_{s(D_{ij}(A))})E\subseteq E_j
\]
onto the final band
\[
\widetilde\pi(1_{r(D_{ij}(A))})E\subseteq E_i.
\]
We regard $W_{ij}(A)$ as the corresponding operator
$E_j\to E_i$, which vanishes on the complement of its initial band.
The preceding finiteness observation therefore makes $\widehat v_A$ a finite operator matrix.

\begin{lem}[Spatiality and support]\label{lem:vhat-spatial}
For every $A\in S_K$, $\widehat v_A$ is a spatial partial isometry.  Its initial and final spaces are the closed linear spans of
\[
\widehat\pi(C_0(s(A)))\widehat E
\quad\text{and}\quad
\widehat\pi(C_0(r(A)))\widehat E,
\]
respectively, and its generalized inverse is $\widehat v_{A^{-1}}$.
\end{lem}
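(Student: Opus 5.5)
The case $A=K^{(0)}$ is immediate: $\widehat v_{K^{(0)}}=I_{\widehat E}$ is trivially a spatial partial isometry, and Lemma~\ref{lem:support-projections} with $D=K^{(0)}$ identifies both its initial and final spaces with $\widehat E$. For the rest of the argument, fix a relatively compact $A\in S_K\setminus\{K^{(0)}\}$.

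The key structural observation is a row/column disjointness property. The Borel bisections $D_{ij}(A)$, $i,j\in I\cup\{0\}$, have pairwise disjoint ranges within each column and pairwise disjoint sources within each row. Indeed, since the $R_i$ are disjoint, a point $x\in s(A)$ lies in exactly one $R_j$. The unique $a\in A$ with $s(a)=x$ has $r(a)$ lying in exactly one $R_i$. Hence the map $c=b_i^{-1}ab_j\mapsto(i,j,a)$ is injective, and for fixed $j$ the sets $r(D_{ij}(A))\subseteq T_i$ live in distinct summands $E_i$. It follows that the finite matrix $\widehat v_A$ has at most one nonzero entry acting on each measurable piece of each $E_j$.

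This lets us identify the initial and final bands. Put
\[
Y_j^{\mathrm{init}}=\textstyle\bigsqcup_i s(D_{ij}(A))\subseteq T_j,
\]
which corresponds under $\theta_j$ to $R_j\cap s(A)$. Corollary~\ref{cor:Borel-restriction} shows that each $W_{ij}(A)$ is a spatial partial isometry from the band $\widetilde\pi(1_{s(D_{ij})})E$ onto $\widetilde\pi(1_{r(D_{ij})})E$. By the disjointness above, $\widehat v_A$ restricted to the band $\bigoplus_j\widetilde\pi(1_{Y_j^{\mathrm{init}}})E_j$ is a direct sum of spatial isometries between disjoint bands, and it vanishes on the complement. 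Since direct sums of spatial partial isometries with disjoint initial and final bands are spatial partial isometries, $\widehat v_A$ is one.

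Next we match these bands with support projections. By the definition of $\widehat{\widetilde\pi}$, the initial band equals $Q_{s(A)}\widehat E$, because $1_{T_j}(1_{s(A)}|_{R_j}\circ\theta_j)=1_{\theta_j^{-1}(R_j\cap s(A))}$. Similarly, the final band is $Q_{r(A)}\widehat E$. Lemma~\ref{lem:support-projections} then turns these into the stated closed spans $\overline{\widehat\pi(C_0(s(A)))\widehat E}$ and $\overline{\widehat\pi(C_0(r(A)))\widehat E}$.

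For the generalized inverse, note that $D_{ij}(A)^{-1}=D_{ji}(A^{-1})$, and Corollary~\ref{cor:Borel-restriction} gives $W_{ij}(A)^{\dagger}=W_{ji}(A^{-1})$. Therefore $\widehat v_{A^{-1}}$ is the transpose-inverse matrix of $\widehat v_A$. Using the disjointness once more, the products $\widehat v_A\widehat v_{A^{-1}}$ and $\widehat v_{A^{-1}}\widehat v_A$ reduce to the sums $\sum_{i,j}\widetilde\pi(1_{r(D_{ij})})$ and $\sum_{i,j}\widetilde\pi(1_{s(D_{ij})})$, which are exactly $Q_{r(A)}$ and $Q_{s(A)}$.

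The main obstacle is the band identification. Proving $\bigsqcup_i s(D_{ij}(A))=\theta_j^{-1}(R_j\cap s(A))$ exactly, and not just up to null sets, requires careful bookkeeping with $\widetilde B_j$. The case $j=0$ ($U$) must be handled alongside the $R_j$ pieces, and we also need to check that the cross terms vanish because the sources and ranges are disjoint in $G^{(0)}$. I would first verify this pointwise in the groupoid, and only then push it through $\widetilde\pi$ using that $\widetilde\pi$ is a $*$-homomorphism on $\Bop(G^{(0)})$.
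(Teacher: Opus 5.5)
Your proposal is correct and follows the paper's proof essentially step for step: disjointness of the pieces $s(D_{ij}(A))$ within a fixed column (and of $r(D_{ij}(A))$ within a fixed row) lets the finitely many restrictions from Corollary~\ref{cor:Borel-restriction} assemble into a spatial partial isometry; the pointwise identity $\bigsqcup_i s(D_{ij}(A))=\theta_j^{-1}(R_j\cap s(A))$ identifies the initial projection with $Q_{s(A)}$, and symmetrically the final projection with $Q_{r(A)}$; Lemma~\ref{lem:support-projections} converts these into the closed spans; and $D_{ij}(A)^{-1}=D_{ji}(A^{-1})$ gives the generalized inverse.

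One small slip: the disjointness you state first (ranges within a column, sources within a row) is automatic, because those pieces lie in different summands. The substantive facts are disjointness of sources within a column, which follows from source injectivity of $A$ and which your argument does prove, and disjointness of ranges within a row, which needs the symmetric argument using range injectivity of $A$.
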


\begin{proof}
The case $A=K^{(0)}$ is immediate.  Otherwise $A$ is relatively compact.  By Corollary~\ref{cor:Borel-restriction}, each $W_{ij}(A)$ is a spatial restriction of the partial isometry associated with the open $G$-bisection $C_{ij}(A)$, with initial projection $\widetilde\pi(1_{s(D_{ij}(A))})$ and final projection $\widetilde\pi(1_{r(D_{ij}(A))})$.

Fix $j$.  If a point $u\in T_j$ belonged to both $s(D_{ij}(A))$ and $s(D_{i'j}(A))$, then the point $\theta_j(u)\in R_j$ would be the source of two arrows of the bisection $A$, whose ranges lie in $R_i$ and $R_{i'}$.  Source injectivity of $A$, followed by disjointness of the $R_i$, forces $i=i'$.  Hence these source sets are pairwise disjoint as $i$ varies.  The symmetric argument, using range injectivity of $A$, shows that for fixed $i$ the range sets $r(D_{ij}(A))$ are pairwise disjoint as $j$ varies.  Consequently the finitely many compatible restrictions in $\widehat v_A$ piece together to form a spatial partial isometry on $\widehat E$.

For fixed $j$, the sets $s(D_{ij}(A))$ partition precisely the pullback under $\theta_j$ of $R_j\cap s(A)$.  Thus the sum of their initial projections is the $j$th block of $Q_{s(A)}$.  Summing over $j$ gives initial projection $Q_{s(A)}$; symmetrically, the final projection is $Q_{r(A)}$.  By Lemma~\ref{lem:support-projections}, the ranges of these projections are the closed spans stated in this lemma.  Finally,
\[
D_{ji}(A^{-1})=D_{ij}(A)^{-1},
\]
so Corollary~\ref{cor:Borel-restriction} gives the generalized inverse $\widehat v_{A^{-1}}$.
\end{proof}

\subsubsection{Covariance and multiplication}

\begin{lem}[Covariance]\label{lem:SCR1}
For $A\in S_K$ and $a\in C_0(s(A))$,
\[
\widehat v_A\widehat\pi(a)\widehat v_{A^{-1}}
=
\widehat\pi(a\circ h_A^{-1}).
\]
\end{lem}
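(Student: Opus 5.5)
The plan is to verify the identity blockwise, using the finite matrix description of $\widehat v_A$ and the block-diagonal form of $\widehat\pi$. The case $A=K^{(0)}$ is trivial: then $\widehat v_A=I$, $h_A=\mathrm{id}$ and $s(A)=K^{(0)}$. So assume $A\in\Bisrc(K)$. Since $\widehat\pi(a)$ is block diagonal with $i$th block $\widetilde\pi(a_i)$, where
\[
a_i\coloneqq 1_{T_i}\bigl((a|_{R_i})\circ\theta_i\bigr),
\]
the $(i,k)$ entry of the left-hand side is
\[
\sum_j W_{ij}(A)\,\widetilde\pi(a_j)\,W_{jk}(A^{-1}).
\]
This is a finite sum, and $W_{jk}(A^{-1})=W_{kj}(A)^{-1}$ (generalized inverse) by Lemma~\ref{lem:vhat-spatial}.

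First I will use Lemma~\ref{lem:vhat-spatial} to reduce to diagonal terms. For $j$ fixed and $i\neq k$, the final bands of $W_{ij}(A)$ and $W_{kj}(A)$ are disjoint, because the range sets $r(D_{ij}(A))\subseteq T_i$ and $r(D_{kj}(A))\subseteq T_k$ lie in distinct summands. Block $(i,k)$ therefore only receives contributions that map $E_k$ into $E_i$ through $E_j$. I will then compute each product with the Borel extension: by \eqref{eq:Borel-slice} and Corollary~\ref{cor:Borel-restriction},
\[
W_{ij}(A)\,\widetilde\pi(a_j)\,W_{ij}(A)^{-1}
=\widetilde\pi\bigl((a_j\circ h_{D_{ij}(A)}^{-1})\,1_{r(D_{ij}(A))}\bigr),
\]
where $h_D$ is the partial Borel bijection induced by the Borel bisection $D$. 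To justify this, I will apply the Borel covariance of $(\widetilde\pi,\widetilde\psi)$ on $G$ (\cite[Proposition~4.29, Theorem~4.31]{BKM1}) to the open bisection $C_{ij}(A)$, and then cut down by the Borel projections. The off-diagonal products with $i\neq k$ vanish, since the initial band of $W_{kj}(A)^{-1}$ lies in $E_k$ and the target lies in $E_i$, so the computation lands in the correct block automatically.

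The key identity is that on $r(D_{ij}(A))\subseteq T_i$,
\[
\theta_i\circ h_{D_{ij}(A)}=h_A\circ\theta_j .
\]
This follows from $D_{ij}(A)=\widetilde B_i^{-1}A\widetilde B_j$: an arrow $b_i^{-1}\alpha b_j$ with $s=u$ has $\theta_j(u)=s(\alpha)$ and $\theta_i(\text{range})=r(\alpha)$. Consequently
\[
a_j\circ h_{D_{ij}}^{-1}=(a\circ h_A^{-1})\circ\theta_i \quad\text{on } r(D_{ij}(A)).
\]
Summing over $j$, the sets $r(D_{ij}(A))$ partition $\theta_i^{-1}(R_i\cap r(A))$; this is the same partition fact used in Lemma~\ref{lem:vhat-spatial}. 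So the $(i,i)$ block equals
\[
\widetilde\pi\bigl(1_{T_i}\bigl(((a\circ h_A^{-1})1_{r(A)})|_{R_i}\circ\theta_i\bigr)\bigr),
\]
which is the $i$th block of $\widehat\pi(a\circ h_A^{-1})$, because $a\circ h_A^{-1}\in C_0(r(A))$.

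I expect the main obstacle to be the Borel covariance step for restricted bisections. Proposition~\ref{prop:Borel-extension} only gives $\widetilde\psi(b\delta_C)=\widetilde\pi(b)v_C$, so I first need the formula
\[
v_C\,\widetilde\pi(b)\,v_{C^{-1}}=\widetilde\pi(b\circ h_C^{-1})
\]
for bounded Borel $b$. I plan to obtain it from (SCR1) for continuous $b$ and then pass to Borel $b$ by a monotone-class argument, using the weak-measure representation of $\widetilde\pi$ from \cite[Proposition~4.30]{BKM1}. Multiplying by the projections $\widetilde\pi(1_{r(D)})$ and $\widetilde\pi(1_{s(D)})$ then yields the restricted version.
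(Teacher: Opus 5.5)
Your overall route is the paper's: work block by block, use $h_{D_{ij}(A)}=\theta_i^{-1}\circ h_A\circ\theta_j$, transport the $j$th-block multiplier by Borel covariance, and sum over the finitely many disjoint pieces. (That identity holds on $s(D_{ij}(A))\subseteq T_j$, the domain of $h_{D_{ij}(A)}$, not on $r(D_{ij}(A))$ as you wrote.)

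\textbf{The off-diagonal step is misjustified.} The $(i,k)$ block of $\widehat v_A\widehat\pi(a)\widehat v_{A^{-1}}$ is by construction an operator $E_k\to E_i$. So saying that each summand $W_{ij}(A)\widetilde\pi(a_j)W_{kj}(A)^{-1}$ ``lands in the correct block'' says nothing about whether it is zero. Disjointness of the final bands of $W_{ij}(A)$ and $W_{kj}(A)$ is also irrelevant: they lie in different summands of $\widehat E$, while inside $E$ the bands over $T_i,T_k\subseteq U$ may overlap. The disjointness that matters is in the middle summand $E_j$. For $i\neq k$ one has $s(D_{ij}(A))\cap s(D_{kj}(A))=\varnothing$. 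Indeed, a common source $u\in T_j$ forces the same $b_j=(s|_{\widetilde B_j})^{-1}(u)$. By source injectivity of $A$ this gives the same arrow $\alpha\in A$ with $s(\alpha)=\theta_j(u)$, whose range would then lie in $R_i\cap R_k=\varnothing$. Since $\widetilde\pi(a_j)$ commutes with the multiplication projections $\widetilde\pi(1_{s(D_{ij}(A))})$ and $\widetilde\pi(1_{s(D_{kj}(A))})$, this yields $W_{ij}(A)\widetilde\pi(a_j)W_{kj}(A)^{-1}=0$. This source-disjointness is already recorded in Lemma~\ref{lem:vhat-spatial}, so the repair is immediate, but as written your argument invokes the wrong disjointness.

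\textbf{Your anticipated main obstacle is not really one.} Proposition~\ref{prop:Borel-extension} asserts that $\widetilde\psi$ is a homomorphism on all of $\Bop^{\Bis(G)}(G)$. For a Borel sub-bisection $D$ of an open bisection of $G$ and $b\in\Bop(G^{(0)})$, a direct convolution computation gives $1_D*b*1_{D^{-1}}=(b\circ h_D^{-1})1_{r(D)}$. Hence $W_D\widetilde\pi(b)W_{D^{-1}}=\widetilde\pi\bigl((b\circ h_D^{-1})1_{r(D)}\bigr)$ with no approximation needed; this is the Borel covariance the paper takes from \cite[Proposition~4.29]{BKM1}. If you nevertheless pass from (SCR1) to Borel functions by approximation, be careful. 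Closing $C_c$ under bounded pointwise limits only yields Baire functions when $G^{(0)}$ is not second countable (e.g.\ $\beta X$ in the coarse-groupoid application). The step must therefore go through uniqueness of the representing Radon measures from \cite[Proposition~4.30]{BKM1}, not a bare monotone-class argument.
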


\begin{proof}
The case $A=K^{(0)}$ is immediate.  Suppose $A$ is relatively compact.  For each $i$, write
\[
\beta_i=(s|_{\widetilde B_i})^{-1}:T_i\longrightarrow\widetilde B_i.
\]
Take $u\in s(D_{ij}(A))$, put $b_j=\beta_j(u)$, and write the unique arrow of $D_{ij}(A)$ with source $u$ as
\[
d=b_i^{-1}\gamma b_j,
\qquad \gamma\in A,
\quad b_i\in\widetilde B_i.
\]
Then
\[
s(\gamma)=r(b_j)=\theta_j(u),
\qquad
r(\gamma)=h_A(\theta_j(u))=r(b_i)=\theta_i(s(b_i)).
\]
Since $r(d)=s(b_i)$, the partial map implemented by $D_{ij}(A)$ is therefore
\[
h_{D_{ij}(A)}
=
\theta_i^{-1}\circ h_A\circ\theta_j
\]
on its source.

On the $j$th block, $a$ acts by the pullback $a\circ\theta_j$.  By Borel covariance \cite[Proposition~4.29]{BKM1}, $W_{ij}(A)$ transports this multiplication operator to multiplication on the range piece by
\[
(a\circ\theta_j)\circ h_{D_{ij}(A)}^{-1}
=
a\circ h_A^{-1}\circ\theta_i,
\]
which is exactly the $i$th-block pullback of $a\circ h_A^{-1}$.  The relevant source and range pieces are pairwise disjoint and only finitely many occur, so summing the block identities gives the stated covariance relation.
\end{proof}

\begin{lem}[Multiplication]\label{lem:SCR3}
For all $A,C\in S_K$,
\[
\widehat v_A\widehat v_C=\widehat v_{AC}.
\]
\end{lem}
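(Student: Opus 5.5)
We first dispose of the trivial cases and then reduce to a blockwise identity for the Borel extension $\widetilde\psi$ of the original representation of $G$. If $A=K^{(0)}$ or $C=K^{(0)}$, the claim is immediate because $\widehat v_{K^{(0)}}=I_{\widehat E}$ and $K^{(0)}A=A=AK^{(0)}$. So assume $A,C\in\Bisrc(K)$. Then $AC\in\Bisrc(K)$ by Lemma~\ref{lem:SK}, and all three matrices $\widehat v_A$, $\widehat v_C$, $\widehat v_{AC}$ are finite operator matrices. The $(i,k)$ entry of $\widehat v_A\widehat v_C$ is
\[
\sum_j W_{ij}(A)W_{jk}(C),
\]
which is a finite sum. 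We must show that it equals $W_{ik}(AC)=\widetilde\psi(1_{D_{ik}(AC)})$.

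The key set-theoretic step is the identity
\[
D_{ik}(AC)=\bigsqcup_j D_{ij}(A)D_{jk}(C),
\]
with the union disjoint. To see it, take $d=b_i^{-1}\gamma\delta b_k$ with $\gamma\in A$, $\delta\in C$, $b_i\in\widetilde B_i$ and $b_k\in\widetilde B_k$. The point $s(\gamma)=r(\delta)$ lies in exactly one $R_j$ by Lemma~\ref{lem:coordinates}(iii). Let $b_j\in\widetilde B_j$ be the unique arrow with $r(b_j)=s(\gamma)$. Then
\[
d=(b_i^{-1}\gamma b_j)(b_j^{-1}\delta b_k)\in D_{ij}(A)D_{jk}(C).
\]
Conversely, any product of elements of $D_{ij}(A)$ and $D_{jk}(C)$ collapses, since $b_jb_j^{-1}=r(b_j)$, to an element of $\widetilde B_i^{-1}AC\widetilde B_k$. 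Disjointness in $j$ holds because the middle unit determines $j$: distinct $j$ give disjoint source sets $s(D_{ij}(A))\subseteq T_j$ after transport, as in the proof of Lemma~\ref{lem:vhat-spatial}. Only finitely many $j$ contribute.

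We then show that $\widetilde\psi$ is multiplicative on indicator functions of Borel sub-bisections of open bisections of $G$. Here $\widetilde\psi(1_D)\widetilde\psi(1_{D'})=\widetilde\psi(1_{DD'})$ whenever $D\subseteq A'$ and $D'\subseteq C'$ are Borel with $A',C'\in\Bis(G)$. First, $1_D*1_{D'}=1_{DD'}$ as elements of $\Bop^{\Bis(G)}(G)$, because the products are computed inside the bisection $A'C'$. Since $\widetilde\psi$ is a homomorphism on $\Bop^{\Bis(G)}(G)$ by Proposition~\ref{prop:Borel-extension}, the identity follows. Alternatively, Corollary~\ref{cor:Borel-restriction} and the relation $v_{A'}v_{C'}=v_{A'C'}$ give
\[
\widetilde\pi(1_{r(D)})v_{A'}\widetilde\pi(1_{r(D')})v_{C'}=\widetilde\pi(1_{r(DD')})v_{A'C'},
\]
using Borel covariance to move the middle projection across $v_{A'}$.

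Applying this to $D=D_{ij}(A)\subseteq C_{ij}(A)$ and $D'=D_{jk}(C)\subseteq C_{jk}(C)$, and then summing over the finitely many $j$, gives
\[
\sum_jW_{ij}(A)W_{jk}(C)=\widetilde\psi\Bigl(\sum_j1_{D_{ij}(A)D_{jk}(C)}\Bigr)=\widetilde\psi(1_{D_{ik}(AC)}).
\]
The first equality uses multiplicativity of $\widetilde\psi$ on indicators, and the second uses additivity of $\widetilde\psi$ on the disjoint union above. This proves $\widehat v_A\widehat v_C=\widehat v_{AC}$.

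We expect the main obstacle to be bookkeeping: we must check that the block conventions are consistent. Each $W_{ij}$ is regarded as a map $E_j\to E_i$ that vanishes off its initial band. The composition $W_{ij}(A)W_{jk}(C)$, computed in $B(E)$, must agree with the composition of these block operators. This holds because $P_j=\widetilde\pi(1_{T_j})$ dominates both the final projection of $W_{jk}(C)$ and the initial projection of $W_{ij}(A)$.
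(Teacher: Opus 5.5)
Your proof follows the paper's route: the same disjoint decomposition $D_{ik}(AC)=\bigsqcup_j D_{ij}(A)D_{jk}(C)$, indexed by the set $R_j$ containing the middle unit, followed by multiplicativity of $\widetilde\psi$ on $\Bop^{\Bis(G)}(G)$ and finite additivity. However, you skip a case that the paper treats explicitly.

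After disposing of $A=K^{(0)}$ or $C=K^{(0)}$, you take the $(i,k)$ entry of $\widehat v_{AC}$ to be $W_{ik}(AC)$. This is not available when $AC=K^{(0)}$, because $\widehat v_{K^{(0)}}$ is \emph{defined} to be $I_{\widehat E}$, not given by the block formula. The case does occur with $A,C\neq K^{(0)}$ relatively compact: take $K^{(0)}$ compact, $A\neq K^{(0)}$ a compact open bisection with $r(A)=s(A)=K^{(0)}$, and $C=A^{-1}$. There your computation only shows $\widehat v_A\widehat v_C=\bigl(\widetilde\psi(1_{D_{ik}(K^{(0)})})\bigr)_{i,k}$, and you still have to identify this with the identity.

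The fix is one line. For $i\neq k$, $D_{ik}(K^{(0)})=\widetilde B_i^{-1}\widetilde B_k$ is empty, since $r(\widetilde B_i)=R_i$ and $r(\widetilde B_k)=R_k$ are disjoint. For $i=k$, $D_{ii}(K^{(0)})=T_i$ by injectivity of $r|_{\widetilde B_i}$. So the matrix is diagonal with entries $\widetilde\pi(1_{T_i})|_{E_i}=I_{E_i}$, i.e.\ it is $I_{\widehat E}$. The paper handles this case differently: $AC=K^{(0)}$ forces $C=A^{-1}$ and $r(A)=K^{(0)}$, and then Lemmas~\ref{lem:vhat-spatial} and~\ref{lem:support-projections} give $\widehat v_A\widehat v_{A^{-1}}=Q_{r(A)}=Q_{K^{(0)}}=I_{\widehat E}$.

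Two smaller points in the decomposition:
\begin{itemize}
\item In the converse inclusion you silently use the same $b_j$ in both factors. For composable $d_1=b_i^{-1}ab_j$ and $d_2=(b_j')^{-1}cb_k$, you need injectivity of $s|_{\widetilde B_j}$ to conclude $b_j=b_j'$.
\item For disjointness, the appeal to Lemma~\ref{lem:vhat-spatial} is misplaced. That lemma varies $i$ with $j$ fixed, and the $T_j$ may overlap inside $U$. What you actually need is that $d$ determines the middle unit: $b_i$ and $b_k$ are fixed by $r(d)$ and $s(d)$, so $ac=b_idb_k^{-1}$ is fixed. Then $a$ is fixed because $A$ is a bisection, and $s(a)$ lies in exactly one $R_j$.
\end{itemize}
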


\begin{proof}
If one of $A,C$ equals $K^{(0)}$, the claim is immediate.  Assume that both are relatively compact.

Suppose first that $AC=K^{(0)}$.  Then $ACA=A$ and $CAC=C$, so $C$ is a generalized inverse of $A$ in the inverse semigroup $\Bis(K)$.  Hence $C=A^{-1}$, and $AA^{-1}=K^{(0)}$ gives $r(A)=K^{(0)}$.  By Lemma~\ref{lem:vhat-spatial}, $\widehat v_A\widehat v_{A^{-1}}$ is the final projection $Q_{r(A)}$; by Lemma~\ref{lem:support-projections}, $Q_{K^{(0)}}=I_{\widehat E}$.  Thus
\[
\widehat v_A\widehat v_C
=Q_{r(A)}
=Q_{K^{(0)}}
=I_{\widehat E}
=\widehat v_{AC}.
\]
We may therefore assume $AC\neq K^{(0)}$.

For every $i,k$ one has the disjoint decomposition
\[
D_{ik}(AC)
=
\bigsqcup_j D_{ij}(A)D_{jk}(C).
\]
To see the nontrivial inclusion, write an element of $D_{ik}(AC)$ as
\[
d=b_i^{-1}acb_k,
\qquad a\in A,\ c\in C.
\]
The intermediate unit $x=s(a)=r(c)$ lies in a unique $R_j$.  Since $r:\widetilde B_j\to R_j$ is bijective, there is a unique $b_j\in\widetilde B_j$ with $r(b_j)=x$.  Inserting the unit $x=b_jb_j^{-1}$ gives
\[
d=(b_i^{-1}ab_j)(b_j^{-1}cb_k)
\in D_{ij}(A)D_{jk}(C).
\]
Conversely, if $d_1=b_i^{-1}ab_j\in D_{ij}(A)$ and $d_2=(b_j')^{-1}cb_k\in D_{jk}(C)$ are composable, then $s(b_j)=s(b_j')$.  Injectivity of $s|_{\widetilde B_j}$ gives $b_j=b_j'$, so $d_1d_2=b_i^{-1}acb_k\in D_{ik}(AC)$.  The intermediate unit belongs to a unique $R_j$, which also proves disjointness.

Only finitely many $j$ occur.  Indeed, every intermediate unit belongs to $s(A)\cap r(C)$, which is contained in a compact subset of $K^{(0)}$, while $(R_j)$ is locally finite.  Since $\widetilde\psi$ is a homomorphism on $\Bop^{\Bis(G)}(G)$,
\[
W_{ij}(A)W_{jk}(C)
=
\widetilde\psi\bigl(1_{D_{ij}(A)D_{jk}(C)}\bigr).
\]
Finite additivity over the disjoint decomposition gives
\[
\sum_jW_{ij}(A)W_{jk}(C)=W_{ik}(AC),
\]
which is precisely the $(i,k)$-entry of $\widehat v_{AC}$.  Hence $\widehat v_A\widehat v_C=\widehat v_{AC}$.
\end{proof}

\begin{prop}[Spatial matrix dilation]
The pair $(\widehat\pi,\widehat v)$ is a nondegenerate spatial covariant representation of the inverse-semigroup action of $S_K$ on $C_0(K^{(0)})$ on the $L^p$-space $\widehat E$.
\end{prop}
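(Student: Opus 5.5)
The proposition is essentially an assembly of the preceding lemmas: I will check each clause of the definition of a spatial covariant representation against what has already been proved for $(\widehat\pi,\widehat v)$. First, $\widehat E$ is an $L^p$-space over a localizable measure: it is the $\ell^p$-direct sum of the bands $E_i\cong L^p(Y_i,\mu|_{Y_i})$, realized on the disjoint union of the $Y_i$. Second, $\widehat\pi$ acts by multiplication operators, since it is the restriction to $C_0(K^{(0)})$ of the multiplication representation $\widehat{\widetilde\pi}$. Third, $\widehat\pi$ is nondegenerate by Lemma~\ref{lem:support-projections}.

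Next come the axioms for $A\mapsto\widehat v_A$ on $S_K$, which by Lemma~\ref{lem:SK} is a unital wide inverse subsemigroup covering $K$.

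\begin{itemize}
\item By Lemma~\ref{lem:vhat-spatial}, each $\widehat v_A$ is a spatial partial isometry, hence contractive and an element of $\SPIso(\widehat E)$, and $\widehat v_{A^{-1}}$ is its generalized inverse, which is also contractive.
\item (SCR1) is Lemma~\ref{lem:SCR1}.
\item (SCR2) holds because the initial and final spaces computed in Lemma~\ref{lem:vhat-spatial} are exactly the closed spans of $\widehat\pi(C_0(s(A)))\widehat E$ and $\widehat\pi(C_0(r(A)))\widehat E$.
\item (SCR3) is Lemma~\ref{lem:SCR3}.
\item The unit $K^{(0)}$ goes to $I_{\widehat E}$. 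This is consistent with (SCR2), since $Q_{K^{(0)}}=I$.
\end{itemize}

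Before concluding, I will confirm that each $\widehat v_A$ is a bounded operator on $\widehat E$. For relatively compact $A$ only finitely many blocks $W_{ij}(A)$ are nonzero, and Lemma~\ref{lem:vhat-spatial} shows these blocks have pairwise disjoint source pieces and pairwise disjoint range pieces, so $\widehat v_A$ is a well-defined contraction.

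The main obstacle is the compatibility of conventions, not any new estimate. I must check that the definition of spatial covariant representation in \cite[Definition~5.15]{BKM1} is stated for an arbitrary unital wide subsemigroup $S_K$, rather than all of $\Bis(K)$. I must also check that the generalized-inverse requirement there is the same as the one verified in Lemma~\ref{lem:vhat-spatial}. If the cited framework demands the semigroup to model $K$, I will invoke \cite[Lemma~4.21]{BKM1} via the wideness in Lemma~\ref{lem:SK}. With these points settled, the proposition follows by collecting the lemmas.
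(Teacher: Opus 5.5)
Your proposal is correct and matches the paper's proof, which likewise takes nondegeneracy and the multiplication property of $\widehat\pi$ from Lemma~\ref{lem:support-projections}, spatiality and (SCR2) from Lemma~\ref{lem:vhat-spatial}, and (SCR1) and (SCR3) from Lemmas~\ref{lem:SCR1} and~\ref{lem:SCR3}, and then appeals to \cite[Definitions~3.17 and 5.15]{BKM1}. The convention check you flag is already settled in the paper: the covariant-representation definition is recalled there for any unital inverse semigroup of open bisections covering $K$, and the modeling statement \cite[Lemma~4.21]{BKM1} is only needed afterwards, to integrate the representation.
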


\begin{proof}
By Lemma~\ref{lem:support-projections}, $\widehat\pi$ is a nondegenerate multiplication representation. By Lemma~\ref{lem:vhat-spatial}, $A\mapsto\widehat v_A$ takes values in spatial partial isometries and satisfies (SCR2). The covariance identity (SCR1) is Lemma~\ref{lem:SCR1}, and the multiplicativity identity (SCR3) is Lemma~\ref{lem:SCR3}. Thus all conditions in \cite[Definitions~3.17 and 5.15]{BKM1} hold.
\end{proof}

Since $S_K$ is wide by Lemma~\ref{lem:SK}, \cite[Lemma~4.21]{BKM1} identifies the germ groupoid of its canonical action with $K$ via
\[
[A,x]\longmapsto(s|_A)^{-1}(x).
\]
Thus the canonical $S_K$-action models $K$ in the sense of \cite[Definition~4.20]{BKM1}, and \cite[Proposition~4.23]{BKM1} integrates $(\widehat\pi,\widehat v)$ to a representation of $\cC(K)$.  In the complex case, \cite[Remark~5.7]{BKM1}, following from \cite[Theorem~5.5]{BKM1}, makes this integrated representation $\|\cdot\|_I$-contractive.  Hence \cite[Theorem~5.13(4)]{BKM1} gives a unique contractive extension to $F^p(K)$.  Denote it by
\begin{equation}\label{eq:psihat}
\widehat\psi:F^p(K)\longrightarrow B(\widehat E).
\end{equation}

\begin{lem}[Exact corner recovery]\label{lem:corner-recovery}
For every $f\in\cC(G)$,
\[
\widehat\psi(j_U(f))=\psi(f)\oplus0
\]
with respect to $\widehat E=E_0\oplus_p\bigoplus_{i\neq0}E_i$.
\end{lem}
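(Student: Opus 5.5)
By linearity and Lemma~\ref{lem:bisection-decomposition} (applied to $G$), it suffices to treat $f\in C_c(A)$ for a relatively compact open bisection $A\subseteq G$. Then $j_U(f)\in C_c(A)$ with $A$ regarded as an element of $S_K\setminus\{K^{(0)}\}$. Write $f=b\delta_A$, where $b=f\circ(r|_A)^{-1}\in C_c(r(A))$. The integrated form from \cite[Proposition~4.23]{BKM1} gives
\[
\widehat\psi(j_U(f))=\widehat\pi(b)\widehat v_A,
\qquad
\psi(f)=\pi(b)v_A .
\]
The plan is therefore to compute the block matrix of $\widehat\pi(b)\widehat v_A$ explicitly.

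The first step is to locate the nonzero blocks. Since $r(A),s(A)\subseteq U=R_0$, and the sets $R_i$ with $i\neq 0$ lie in $K^{(0)}\setminus U$, we have $D_{ij}(A)=\emptyset$ unless $i=j=0$. Indeed, nonemptiness of $D_{ij}(A)$ forces $R_i\cap r(A)\neq\emptyset$ and $R_j\cap s(A)\neq\emptyset$. So $\widehat v_A$ has a single entry, namely $W_{00}(A)=\widetilde\psi(1_{D_{00}(A)})$. Here $\widetilde B_0=U$, so $D_{00}(A)=U A U=A$, and hence $W_{00}(A)=\widetilde\psi(1_A)$. The proof of Proposition~\ref{prop:Borel-extension} showed that $\widetilde\psi(1_A)=v_A$. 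This gives
\[
\widehat v_A=v_A\oplus 0 .
\]

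The second step handles the diagonal factor. On $E_0$ we have $\theta_0=\mathrm{id}_U$ and $T_0=U$, so
\[
\widehat\pi(b)|_{E_0}=\widetilde\pi(1_U\, b|_U)=\widetilde\pi(b)=\pi(b),
\]
where the last equality holds because the Borel extension agrees with $\pi$ on continuous functions. On the blocks with $i\neq0$, the operator $\widehat\pi(b)$ acts by $b|_{R_i}\circ\theta_i$, which vanishes because $b$ is supported in $U$; we will not even need this, since $\widehat v_A$ is already zero off $E_0$. Multiplying the two factors gives
\[
\widehat\pi(b)\widehat v_A=\pi(b)v_A\oplus 0=\psi(f)\oplus 0 .
\]

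The main point needing care is the integrated-form identification. We must check that the convention of \cite[Proposition~4.23]{BKM1} for $S_K$ acting on $K$ assigns $\widehat\pi(b)\widehat v_A$ to $b\delta_A$, using the same $b=f\circ(r|_A)^{-1}$ as for $\Bis(G)$ acting on $G$. We must also check that $A$, a relatively compact bisection of $G$, remains relatively compact in $K$, which holds because $G$ is clopen. A further subtlety is that $E_0=P_0E=E$ uses nondegeneracy, which is already recorded in the paper. Finally, additivity over the decomposition, together with the boundedness of both sides, gives the identity for all $f\in\cC(G)$.
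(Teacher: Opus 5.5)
Your proposal is correct and follows essentially the same route as the paper: reduce to $f\in C_c(A)$ for a relatively compact open bisection $A\subseteq G$ via Lemma~\ref{lem:bisection-decomposition}, observe that $D_{00}(A)=A$ and $D_{ij}(A)=\varnothing$ otherwise, so that $\widehat v_A=v_A\oplus0$, note that $\widehat\pi(b)$ restricts to $\pi(b)$ on $E_0=E$, and conclude with the integrated-form formula of \cite[Proposition~4.23]{BKM1}. One small remark: since the bisection decomposition is a finite sum, linearity alone finishes the argument, and the appeal to boundedness of both sides in your last sentence is unnecessary.
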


\begin{proof}
If $U=K^{(0)}$, then $G=K$, $I=\varnothing$, $\widehat E=E$, and the assertion is immediate. Assume $U\neq K^{(0)}$.

Let $A\subseteq G$ be a relatively compact open bisection. Because $G$ is clopen in $K$, $A$ is also a relatively compact open bisection of $K$, hence $A\in S_K\setminus\{K^{(0)}\}$. Since every arrow of $A$ has source and range in $U=R_0$,
\[
D_{00}(A)=A,
\qquad
D_{ij}(A)=\varnothing\quad((i,j)\neq(0,0)).
\]
Thus
\[
\widehat v_A=v_A\oplus0.
\]
Likewise, for $a\in C_0(U)$ extended by zero to $K^{(0)}$,
\[
\widehat\pi(a)=\pi(a)\oplus0.
\]
If $f\in C_c(A)$, write $f=a\delta_A$ with $a\in C_c(r(A))\subseteq C_0(U)$. The integrated-form formula of \cite[Proposition~4.23]{BKM1} gives
\[
\widehat\psi(j_U(f))
=
\widehat\pi(a)\widehat v_A
=
(\pi(a)v_A)\oplus0
=
\psi(f)\oplus0.
\]
By Lemma~\ref{lem:bisection-decomposition}, every element of $\cC(G)$ is a finite sum of functions of this form, so the assertion follows by linearity.
\end{proof}

\begin{thm}[Full-clopen reduction]\label{thm:full-clopen-all-p}
Let $K$ be a locally compact, locally Hausdorff, \'{e}tale groupoid with paracompact locally compact Hausdorff unit space, let $U\subseteq K^{(0)}$ be full and clopen, and put $G=K|_U$. Then
\[
\|j_U(f)\|_{F^p(K)}=\|f\|_{F^p(G)}
\qquad
(f\in\cC(G),\ p\in[1,\infty]).
\]
Hence $j_U$ extends to an isometric homomorphism from $F^p(G)$ onto
\[
\overline{j_U(\cC(G))}^{\,F^p(K)}.
\]
\end{thm}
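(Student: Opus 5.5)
The plan is to prove the two inequalities separately and to split off the endpoint exponents first.

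\emph{Endpoint cases.} For $p\in\{1,\infty\}$ the section on preliminaries gives $\|\cdot\|_{L^1}=\|\cdot\|_{I,s}$ and $\|\cdot\|_{L^\infty}=\|\cdot\|_{I,r}$ on both $\cC(G)$ and $\cC(K)$. Since $j_U$ is isometric for $\|\cdot\|_{I,s}$ and for $\|\cdot\|_{I,r}$ (as noted when $j_U$ was introduced), the identity $\|j_U(f)\|_{F^p(K)}=\|f\|_{F^p(G)}$ is immediate at these two exponents. From now on fix $1<p<\infty$.

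\emph{The inequality $\|j_U(f)\|_{F^p(K)}\le\|f\|_{F^p(G)}$.} Let $\rho$ be any $\|\cdot\|_I$-contractive representation of $\cC(K)$ on an $L^p$-space. Then $\rho\circ j_U$ is a representation of $\cC(G)$ on the same $L^p$-space. It is $\|\cdot\|_I$-contractive because $j_U$ is a homomorphism that is isometric for $\|\cdot\|_I$. By the description of the full norm via \cite[Theorem~5.13]{BKM1}, this gives
\[
\|\rho(j_U(f))\|\le\|f\|_{F^p(G)}.
\]
Taking the supremum over all such $\rho$ yields the inequality.

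\emph{The inequality $\|j_U(f)\|_{F^p(K)}\ge\|f\|_{F^p(G)}$.} This is where the dilation is used. Let $\psi=\pi\rtimes v$ be the isometric nondegenerate spatial representation of $F^p(G)$ fixed in \eqref{eq:psi}; it exists by Proposition~\ref{prop:BKM-isometric}. Let $\widehat\psi$ be the contractive representation of $F^p(K)$ built from the spatial matrix dilation in \eqref{eq:psihat}. By Lemma~\ref{lem:corner-recovery},
\[
\widehat\psi(j_U(f))=\psi(f)\oplus0 .
\]
An $\ell^p$-direct sum with a zero block has the same norm as $\psi(f)$. Hence
\[
\|f\|_{F^p(G)}=\|\psi(f)\|=\|\widehat\psi(j_U(f))\|\le\|j_U(f)\|_{F^p(K)},
\]
where the last step uses that $\widehat\psi$ is contractive on $F^p(K)$.

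\emph{Extension to the completion.} Since $j_U$ is an isometric homomorphism on the dense subalgebra $\cC(G)$, it extends by continuity to an isometric homomorphism on $F^p(G)$. Its range is complete, hence closed, and contains $j_U(\cC(G))$ densely. So the range equals $\overline{j_U(\cC(G))}^{\,F^p(K)}$.

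\emph{Main obstacle.} The real difficulty is the lower bound: every representation of $G$ must be dilated to a representation of $K$. That work is already carried out in the preceding lemmas, namely the matrix construction, its covariance (Lemma~\ref{lem:SCR1}) and multiplicativity (Lemma~\ref{lem:SCR3}), and exact corner recovery (Lemma~\ref{lem:corner-recovery}). The one point still to check is that $\widehat\psi$ really is contractive for the full norm of $K$. This follows from its $\|\cdot\|_I$-contractivity, by \cite[Remark~5.7]{BKM1}, combined with \cite[Theorem~5.13(4)]{BKM1}. It also relies on the localizability of the measure on $\widehat E$, which was arranged during the construction.
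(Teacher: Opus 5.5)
Your proposal is correct and follows the paper's proof essentially step for step. It obtains the upper bound by restricting $\|\cdot\|_I$-contractive representations of $\cC(K)$ along the $\|\cdot\|_I$-isometric homomorphism $j_U$, handles the endpoints via the $\|\cdot\|_{I,s}$ and $\|\cdot\|_{I,r}$ formulas, gets the lower bound for $1<p<\infty$ from the isometric spatial representation $\psi$, the contractive dilation $\widehat\psi$, and exact corner recovery (Lemma~\ref{lem:corner-recovery}), and treats the completion the same way as the paper.
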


\begin{proof}
For every $p\in[1,\infty]$, the easy inequality
\[
\|j_U(f)\|_{F^p(K)}\leq\|f\|_{F^p(G)}
\]
follows because $j_U$ is $\|\cdot\|_I$-isometric: every $\|\cdot\|_I$-contractive representation of $\cC(K)$ restricts along $j_U$ to an $\|\cdot\|_I$-contractive representation of $\cC(G)$.

At $p=1$ and $p=\infty$, \cite[Theorem~5.13(1)]{BKM1} identifies the full norms with $\|\cdot\|_{I,s}$ and $\|\cdot\|_{I,r}$, respectively, and $j_U$ preserves the corresponding fiber sums. Hence equality holds at the endpoints.

Now let $1<p<\infty$. Choose the isometric spatial representation \eqref{eq:psi}; the dilation above gives the contractive representation \eqref{eq:psihat} of $F^p(K)$. By Lemma~\ref{lem:corner-recovery},
\[
\|j_U(f)\|_{F^p(K)}
\geq
\|\widehat\psi(j_U(f))\|
=
\|\psi(f)\|
=
\|f\|_{F^p(G)}.
\]
Together with the easy inequality this proves equality. The assertion about the completion is immediate.
\end{proof}

\begin{cor}[Full Morita equivalence]\label{cor:full-morita}
Let $G$ and $H$ be locally compact, locally Hausdorff, \'{e}tale groupoids with paracompact unit spaces.  If $G$ and $H$ are equivalent, then $F^p(G)$ and $F^p(H)$ are Morita equivalent for every $p\in[1,\infty]$.
\end{cor}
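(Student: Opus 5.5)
The strategy is to run the linking-groupoid argument of Theorem~\ref{thm:main} again, with the full norm in place of the reduced one. Let $Z$ be a $(G,H)$-equivalence and let $L=G\sqcup Z\sqcup Z^{op}\sqcup H$ be its linking groupoid. Then $L$ is locally compact, locally Hausdorff and \'etale. Its unit space $L^{(0)}=G^{(0)}\sqcup H^{(0)}$ is a disjoint union of two paracompact locally compact Hausdorff spaces, so it is paracompact.

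Put $U=G^{(0)}$, which is clopen in $L^{(0)}$. It is full because every $v\in H^{(0)}$ is the source of some $z\in Z$, and then $r(z)\in U$. Moreover $L|_U=G$. Symmetrically, $H^{(0)}$ is full and clopen with $L|_{H^{(0)}}=H$. Theorem~\ref{thm:full-clopen-all-p} therefore gives
\[
\|F_a\|_{F^p(L)}=\|a\|_{F^p(G)}\qquad(a\in\mathcal{C}(G)),
\]
and the analogous identity for $b\in\mathcal{C}(H)$. This is the full-norm replacement for Theorem~\ref{thm:mainF_red^p}.

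Next, norm $\mathcal{C}(Z)$ and $\mathcal{C}(Z^{op})$ by their full $L^p$-norms as corners of $\mathcal{C}(L)$:
\[
\|\phi\|_L=\left\|\begin{pmatrix}0&\phi\\0&0\end{pmatrix}\right\|_{F^p(L)},
\]
and similarly for $\psi\in\mathcal{C}(Z^{op})$. The proof of Lemma~\ref{preMor} then goes through verbatim.

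The eight algebraic identities come from associativity of convolution in $\mathcal{C}(L)$, which does not depend on the norm. The six norm inequalities come from submultiplicativity of $\|\cdot\|_{F^p(L)}$ together with the corner identity above. For instance,
\[
\|{}_G\langle\phi,\psi\rangle\|_{F^p(G)}=\|F_{{}_G\langle\phi,\psi\rangle}\|_{F^p(L)}\le\|\phi\|_L\|\psi\|_L .
\]
This gives a pre-Morita equivalence between $\mathcal{C}(G)\subseteq F^p(G)$ and $\mathcal{C}(H)\subseteq F^p(H)$.

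Fullness and nondegeneracy follow exactly as in Theorem~\ref{thm:main}. By Proposition~\ref{prop:ai} and Corollary~\ref{approxid}, the net $e_\lambda=\sum_i{}_G\langle\phi_i^\lambda,\widetilde{\phi_i^\lambda}\rangle$ is an approximate identity in the inductive limit topology, and hence in the $I$-norm. Since $\|\cdot\|_{L^p}\le\|\cdot\|_I$, it is also an approximate identity in $\|\cdot\|_{F^p(G)}$.

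The one point needing a separate check is convergence in the module norm $\|\cdot\|_L$. Because $\|\cdot\|_L$ is itself the restriction of $\|\cdot\|_{F^p(L)}\le\|\cdot\|_{I}$ (the $I$-norm on $L$), inductive-limit convergence $e_\lambda\cdot\phi\to\phi$ in $\mathcal{C}(Z)\subseteq\mathcal{C}(L)$ implies $\|e_\lambda\cdot\phi-\phi\|_L\to0$. The same holds for $\widetilde\psi\cdot e_\lambda$ and for the $H$-side, by symmetry using paracompactness of $H^{(0)}$.

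Consequently $e_\lambda a=\sum_i{}_G\langle\phi_i^\lambda,\widetilde{\phi_i^\lambda}\cdot a\rangle\to a$, which gives fullness, and the module convergences give nondegeneracy. Remark~\ref{Rem-CompletionPM} then completes the pre-Morita equivalence to a Morita equivalence between $F^p(G)$ and $F^p(H)$.

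I expect no real obstacle here: the analytic difficulty is entirely absorbed by Theorem~\ref{thm:full-clopen-all-p}. The only steps to verify with care are that $L^{(0)}$ is paracompact, so that the theorem applies, and that the units $G^{(0)}$ and $H^{(0)}$ are full in $L$.
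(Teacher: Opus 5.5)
Your proposal is correct and follows the paper's proof essentially step for step. Both arguments pass to the linking groupoid and apply Theorem~\ref{thm:full-clopen-all-p} to the full clopen sets $G^{(0)}$ and $H^{(0)}$ to get isometric corners in $F^p(L)$. Both then reuse the convolution identities of Lemma~\ref{preMor} with submultiplicativity in $F^p(L)$, and obtain fullness and nondegeneracy from the pairing-valued approximate identity of Proposition~\ref{prop:ai} via $\|\cdot\|_{F^p(L)}\le\|\cdot\|_I$. Your explicit verification that $L^{(0)}$ is paracompact and that $G^{(0)}$ is full (via surjectivity of $s_Z$) simply spells out the hypotheses the paper invokes in one line.
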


\begin{proof}
Let $Z$ be a $G$--$H$ equivalence and
\[
L=G\sqcup Z\sqcup Z^{\op}\sqcup H
\]
its linking groupoid.  Since
\[
L^{(0)}=G^{(0)}\sqcup H^{(0)},
\]
the unit space is paracompact.  Both $G^{(0)}$ and $H^{(0)}$ are full and clopen in $L^{(0)}$, so Theorem~\ref{thm:full-clopen-all-p} identifies the closures of the two diagonal convolution algebras in $F^p(L)$ isometrically with $F^p(G)$ and $F^p(H)$.

Set
\[
\X_Z^\mathrm{full}=\overline{\cC(Z^{\op})}^{\,F^p(L)},
\qquad
\Y_Z^\mathrm{full}=\overline{\cC(Z)}^{\,F^p(L)}.
\]
The convolution formulas of Lemma~\ref{preMor} give the module actions, pairings, and Morita compatibility identities.  Their norm estimates now follow from submultiplicativity in $F^p(L)$ and the diagonal identifications above.

It remains to verify fullness and nondegeneracy.  Let $(e_\lambda)$ be the pairing-valued approximate identity from Proposition~\ref{prop:ai}, so that each $e_\lambda\in\cC(G)$ is a finite sum
\[
e_\lambda=\sum_i {}_G\!\langle\varphi_i^\lambda,\widetilde\varphi_i^\lambda\rangle,
\qquad \varphi_i^\lambda\in\cC(Z).
\]
By Corollary~\ref{approxid},
$e_\lambda*a\to a$ for $a\in\cC(G)$ and
$e_\lambda\cdot\varphi\to\varphi$ for $\varphi\in\cC(Z)$
in the corresponding $I$-norms.  Since the full $L^p$-norm is
dominated by the $I$-norm, these convergences also hold in the
norms inherited from $F^p(L)$. Moreover,
\[
e_\lambda*a
=\sum_i {}_G\!\langle\varphi_i^\lambda,
       \widetilde\varphi_i^\lambda\cdot a\rangle.
\]
It follows that the linear span of the $G$-valued pairing is dense in $F^p(G)$ and that the left action of $F^p(G)$ on $\Y_Z^\mathrm{full}$ is nondegenerate.  The $H$-valued fullness statement and the remaining nondegeneracy statements follow symmetrically.  Hence the completed Banach pair is full and nondegenerate, and we have a Morita equivalence.
\end{proof}

\section{Concrete $L^p$-modules and linking algebras} \label{sect_likVSFp}

\subsection{Concrete $L^p$-modules and a factorization criterion}

\begin{defn} (\cite[Definition 3.1]{Delfin26})
Let $p \in [1, \infty)$, let $(\Omega, \Sigma, \mu)$ 
and $(\Gamma, \Upsilon, \nu)$ be measure spaces, 
and let $B \subseteq \mathcal{B}(L^p(\nu))$
be an $L^p$-operator algebra. 
We say $(\X, \Y)$ is a concrete $L^p$-module over $B$ 
if $\X \subseteq \mathcal{B}(L^p(\mu), L^p(\nu))$ and $\Y \subseteq  \mathcal{B}(L^p(\nu), L^p(\mu))$
are closed subspaces satisfying 
\begin{enumerate}
\item $yb \in \Y$ for all $y \in \Y$, $b \in B$,
\item $bx \in \X$ for all $x \in \X$, $b \in B$, 
\item $xy \in B$ for all $x \in \X$, $y \in \Y$.   
\end{enumerate}
\end{defn}

\begin{lem}\label{lem:(1/2)ineq}
Let $p \in [1, \infty)$. Then any concrete $L^p$-module $(\X, \Y)$ over 
an $L^p$-operator algebra $B \subseteq \mathcal{B}(L^p(\nu))$ is a Banach $B$-pair. 
Moreover, if we set $A_0 \coloneqq \mathrm{span}(\Y\X)  \subseteq \mathcal{B}(L^p(\mu))$, then the map
\begin{align*}
A_0 &\to \mathcal{K}_B(\X,\Y) \\
\sum_{j=1}^n y_j x_j &\mapsto \sum_{j=1}^n \theta_{y_j, x_j}
\end{align*}
is a contractive homomorphism with dense range. 
\end{lem}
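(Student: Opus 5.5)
The first assertion is a direct check. We let $B$ act on $\X$ on the left by composition, $b\cdot x\coloneqq bx$, and on $\Y$ on the right, $y\cdot b\coloneqq yb$. The pairing is $\langle x,y\rangle_B\coloneqq xy\in B$. Conditions (1)--(3) of the definition of a concrete $L^p$-module say exactly that these maps land in the right spaces. Bilinearity, $\langle bx,y\rangle_B=b\langle x,y\rangle_B$ and $\langle x,yb\rangle_B=\langle x,y\rangle_B b$ are associativity of operator composition. The norm estimates $\|bx\|\le\|b\|\|x\|$, $\|yb\|\le\|y\|\|b\|$ and $\|xy\|\le\|x\|\|y\|$ are submultiplicativity of the operator norm. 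Since $\X,\Y$ are closed subspaces of Banach spaces and $B$ is a closed subalgebra of $\mathcal{B}(L^p(\nu))$, everything is complete, so $(\X,\Y)$ is a Banach $B$-pair.

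For the second assertion, the key is to realize each $a\in A_0$ directly as an element of $\mathcal{L}_B(\X,\Y)$. We set $a^l(x)\coloneqq xa$ for $x\in\X$ and $a^r(y)\coloneqq ay$ for $y\in\Y$. We must check $xa\in\X$ and $ay\in\Y$. For $a=y_1x_1$ we have $x y_1 x_1=(xy_1)x_1\in B\X\subseteq\X$, and $y_1x_1y=y_1(x_1y)\in\Y B\subseteq\Y$; linearity handles general $a$. Associativity then shows that $a^l$ is a left $B$-module map, $a^r$ is a right $B$-module map, and $\langle a^l x,y\rangle_B=xay=\langle x,a^r y\rangle_B$. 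We also have $\|a^l\|,\|a^r\|\le\|a\|_{\mathcal{B}(L^p(\mu))}$, so $a\mapsto(a^l,a^r)$ is a well-defined (no choice of representation $\sum y_jx_j$ is involved), linear, contractive map into $\mathcal{L}_B(\X,\Y)$.

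Next we identify this map with the stated formula. For $a=yx$ we get $a^l(v)=vyx=\langle v,y\rangle_B\cdot x=\theta^l_{y,x}(v)$ and $a^r(w)=yxw=y\cdot\langle x,w\rangle_B=\theta^r_{y,x}(w)$. Hence $\sum y_jx_j\mapsto\sum\theta_{y_j,x_j}$, which in particular shows that this formula is well defined independently of the representation chosen. Multiplicativity follows from the composition convention $TS=(S^lT^l,T^rS^r)$ on $\mathcal{L}_B$. For $a,a'\in A_0$, the product of the images is $((a')^la^l,\,a^r(a')^r)$, which sends $x\mapsto xaa'$ and $y\mapsto aa'y$; this is exactly the image of $aa'$. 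The range consists of all finite sums of $\theta_{y,x}$, which is dense in $\mathcal{K}_B(\X,\Y)$ by the definition of the latter.

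The only delicate point is well-definedness: a naive definition via the $\theta$'s would require showing that $\sum y_jx_j=0$ forces $\sum\theta_{y_j,x_j}=0$. Defining the map intrinsically through composition by $a$ sidesteps this, and we expect that to be the main thing to get right.
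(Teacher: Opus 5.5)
Your proof is correct and follows essentially the same route as the paper: both identify $\sum_j\theta_{y_j,x_j}$ with right composition by $\sum_j y_jx_j$ on $\X$ and left composition by it on $\Y$, and obtain contractivity from submultiplicativity of the operator norm. Your intrinsic definition via $a^l(x)=xa$, $a^r(y)=ay$ makes well-definedness (independence of the chosen representation $\sum_j y_jx_j$) explicit, which the paper leaves implicit in its norm estimate.
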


\begin{proof}
The pairing $\langle x, y \rangle_B \coloneqq xy \in B$ makes $(\X, \Y)$ a Banach $B$-pair by submultiplicativity of the operator norm.  The displayed map is multiplicative and has dense range by definition, so we only need to show it is contractive. For any $y_0 \in \Y$,
\[
\Big\| \sum_{j=1}^n \theta^r_{y_j,x_j }(y_0)\Big\|_\Y = \Big\| \Big(\sum_{j=1}^n y_jx_j \Big) y_0\Big\|_\Y  \leq \Big\| \sum_{j=1}^n y_jx_j \Big\|_{\mathcal{B}(L^p(\mu))}\|y_0\|_\Y,
\]
and similarly, for any $x_0 \in \X$,  
\[
\Big\| \sum_{j=1}^n \theta^l_{y_j,x_j }(x_0)\Big\|_\X \leq \| x_0\|_\X  \Big\| \sum_{j=1}^n y_jx_j \Big\|_{\mathcal{B}(L^p(\mu))}.
\]
It now follows at once that 
\[
\Big\|  \sum_{j=1}^n \theta_{y_j, x_j} \Big\|_{\mathcal{K}_B(\X,\Y)}\leq  \Big\| \sum_{j=1}^n y_jx_j\Big\|_{\mathcal{B}(L^p(\mu))}.
\] 
\end{proof}

\begin{rem}
Let $\overline{A_0}$ denote the closure of $A_0=\mathrm{span}(\Y\X)$ in its concrete operator norm in $\mathcal{B}(L^p(\mu))$. Lemma~\ref{lem:(1/2)ineq} gives a unique contractive homomorphism
\[
\overline{\kappa}\colon \overline{A_0}\longrightarrow \mathcal{K}_B(\X,\Y)
\]
extending the algebraic map $\kappa$, and the range of $\overline{\kappa}$ is dense. In general there is no reason for this dense range to be closed, so no surjectivity is claimed at this level. A reverse estimate
\[
\|t\|_{\mathcal{B}(L^p(\mu))}\leq C\|\kappa(t)\|_{\mathcal{K}_B(\X,\Y)},\qquad t\in A_0,
\]
forces the extension to have closed range and hence, by density, to be onto. Assumption~\ref{assumption:concrete} gives such a reverse estimate: Theorem~\ref{thm_modnorm=repnorm} gives the isometric case, while Corollary~\ref{cor_modnorm=repnorm} gives a bounded inverse under a finite factorization bound.

In \cite[Definition 3.20]{Delfin26}, the compact-module morphisms of the concrete $L^p$-module $(\X,\Y)$ are instead defined to be precisely $\overline{A_0}$, so that they are automatically concrete $L^p$-operator algebras. Even when $p=2$, where $B$ is a $C^*$-algebra and $\Y$ is a concrete Hilbert $B$-module, the comparison with $\mathcal{K}_B(\Y)$ requires the relevant nondegeneracy hypotheses (see \cite[Proposition 2.7]{Delfin24}).
\end{rem}

We next give a sufficient condition for the comparison map in
Lemma~\ref{lem:(1/2)ineq} to have a bounded inverse.

\begin{assump}\label{assumption:concrete}
Fix $p\in[1,\infty)$. Let
\[
A\subseteq\mathcal B(L^p(\mu)),\qquad
B\subseteq\mathcal B(L^p(\nu))
\]
be concrete $L^p$-operator algebras, and let
\[
\X\subseteq\mathcal B(L^p(\mu),L^p(\nu)),\qquad
\Y\subseteq\mathcal B(L^p(\nu),L^p(\mu))
\]
be closed subspaces such that $(\X,\Y)$ is a concrete
$L^p$-module over $B$ and $(\Y,\X)$ is a concrete
$L^p$-module over $A$.
Assume in addition that there are dense subalgebras
$\mathcal A_0\subseteq A$ and $\mathcal B_0\subseteq B$
and complex vector spaces $\X_0\subseteq\X$ and
$\Y_0\subseteq\Y$ such that:
\begin{enumerate}
\item\label{assumption:concretepreM}
$(\X_0,\Y_0)$, with the norms inherited from $\X$ and $\Y$,
forms a pre-Morita equivalence between
$\mathcal A_0\subseteq A$ and $\mathcal B_0\subseteq B$,
with pairings
\[
\langle x,y\rangle_{\mathcal B_0}=xy,\qquad
{}_{\mathcal A_0}\langle y,x\rangle=yx.
\]
\item\label{assumption:L1nondeg}
The linear span of $A L^p(\mu)$ is dense in $L^p(\mu)$.
\item\label{assumption:Apnondeg}
The linear span of $\Y\X$ is dense in $A$.
\item\label{assumption:concretecai}
There are nets $(n_\lambda)_{\lambda\in\Lambda}$ in
$\mathbb Z_{>0}$,
$x_\lambda=(x_{\lambda,1},\ldots,x_{\lambda,n_\lambda})
\in\X_0^{n_\lambda}$, and
$y_\lambda=(y_{\lambda,1},\ldots,y_{\lambda,n_\lambda})
\in\Y_0^{n_\lambda}$ such that:
\begin{enumerate}
\item\label{assump_a_cai}
\[
e_\lambda \coloneqq \sum_{j=1}^{n_\lambda}
y_{\lambda,j}x_{\lambda,j}\in\mathcal A_0
\]
is an approximate identity for $A$;
\item\label{assump_b_p}
for every $\lambda$, the operator
\[
x_\lambda:L^p(\mu)\to L^p(\nu)^{n_\lambda},\qquad
x_\lambda\xi=(x_{\lambda,1}\xi,\ldots,
x_{\lambda,n_\lambda}\xi),
\]
is contractive;
\item\label{assump_c_q}
there is a constant $K>0$, independent of $\lambda$, such that
\[
\big\|(\|y_{\lambda,1}\|_\Y,\ldots,
\|y_{\lambda,n_\lambda}\|_\Y)\big\|_{p'}\le K
\qquad(\lambda\in\Lambda),
\]
where $p'$ is the H\"{o}lder conjugate of $p$.
\end{enumerate}
In particular, $\|e_\lambda\|_A\le K$ for every $\lambda$.
If $K\le1$, then $(e_\lambda)_\lambda$ is a contractive
approximate identity for $A$.
\end{enumerate}
\end{assump}

\begin{thm}\label{thm_modnorm=repnorm}
Under Assumption~\ref{assumption:concrete}, suppose $K\leq1$.
Then the extension of the map in Lemma~\ref{lem:(1/2)ineq}
is an isometric isomorphism
\[
A\cong\mathcal K_B(\X,\Y).
\]
In particular, $\mathcal K_B(\X,\Y)$ is an $L^p$-operator
algebra isometrically represented on $L^p(\mu)$.
\end{thm}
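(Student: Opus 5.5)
Since Lemma~\ref{lem:(1/2)ineq} already gives $\|\kappa(t)\|_{\mathcal K_B(\X,\Y)}\le\|t\|_{\mathcal B(L^p(\mu))}$ for $t\in A_0=\mathrm{span}(\Y\X)$, the plan is to prove the reverse inequality $\|t\|\le\|\kappa(t)\|$ on $A_0$. Once both inequalities hold, $\kappa$ is isometric on $A_0$. By Assumption~\ref{assumption:concrete}\eqref{assumption:Apnondeg}, $A_0$ is dense in $A$, so $\kappa$ extends to an isometry $A\to\mathcal K_B(\X,\Y)$. Its range is closed and, by Lemma~\ref{lem:(1/2)ineq}, dense, hence the extension is onto. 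It is multiplicative because $\kappa$ is. This yields the isometric isomorphism $A\cong\mathcal K_B(\X,\Y)$, and the final assertion follows since $A\subseteq\mathcal B(L^p(\mu))$.

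For the reverse estimate I will work with the left part $\kappa(t)^l$ acting on $\X$, together with the factorization net. Fix $t\in A_0$ and $\xi\in L^p(\mu)$. By Assumption~\ref{assumption:concrete}\eqref{assump_a_cai}, $e_\lambda$ is an approximate identity for $A$, and $te_\lambda\to t$ in norm, so it suffices to bound $\|te_\lambda\xi\|$. Write
\[
te_\lambda\xi=\sum_{j=1}^{n_\lambda}(ty_{\lambda,j})(x_{\lambda,j}\xi).
\]
Each $ty_{\lambda,j}$ lies in $\Y$ (concrete module over $A$). The key identity is that the operator $\Y$-part $y\mapsto ty$ equals $\kappa(t)^r$: for $t=\sum y_kx_k$ one has $\theta^r_{y_k,x_k}(y)=y_k(x_ky)=(y_kx_k)y$.

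Estimating $\|\sum_j (ty_{\lambda,j})(x_{\lambda,j}\xi)\|$ naively with the H\"older bound gives
\[
\sum_j\|ty_{\lambda,j}\|\,\|x_{\lambda,j}\xi\|\le\|\kappa(t)\|\,\big\|(\|y_{\lambda,j}\|)_j\big\|_{p'}\,\big\|(\|x_{\lambda,j}\xi\|)_j\big\|_p.
\]
By \eqref{assump_b_p} and \eqref{assump_c_q} this is at most $\|\kappa(t)\|\,K\,\|\xi\|\le\|\kappa(t)\|\,\|\xi\|$. Here $\|ty_{\lambda,j}\|_\Y=\|\kappa(t)^r(y_{\lambda,j})\|\le\|\kappa(t)\|\,\|y_{\lambda,j}\|$. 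Letting $\lambda\to\infty$ gives $\|t\xi\|\le\|\kappa(t)\|\,\|\xi\|$, hence $\|t\|\le\|\kappa(t)\|$.

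The step needing the most care is the well-definedness of $\kappa$ itself, that is, that $\sum\theta_{y_j,x_j}$ depends only on $\sum y_jx_j$. This is resolved by the identities $\theta^r(y)=(\sum y_jx_j)y$ and $\theta^l(x)=x(\sum y_jx_j)$, which express both components through $t$. I also expect to use nondegeneracy \eqref{assumption:L1nondeg} only implicitly, since the estimate above already holds for every $\xi$. Finally, I must check that the H\"older pairing with exponents $p,p'$ is legitimate at $p=1$ (where $p'=\infty$); the sup-norm bound handles that case.
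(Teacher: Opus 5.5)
Your proof is correct. Its core estimate is the paper's: $\|ty_{\lambda,j}\|_\Y=\|\kappa(t)^r(y_{\lambda,j})\|_\Y\le\|\kappa(t)\|\,\|y_{\lambda,j}\|_\Y$, followed by H\"older's inequality against the column contraction $x_\lambda$. The difference is in how you pass to the limit.

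The paper uses only the left approximate-identity property $e_\lambda a\to a$. It first proves strong convergence $e_\lambda\xi\to\xi$ on all of $L^p(\mu)$, which needs the nondegeneracy hypothesis \eqref{assumption:L1nondeg} and the uniform bound $\sup_\lambda\|e_\lambda\|\le 1$. It then phrases the estimate through $L^p$--$L^{p'}$ duality.

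You instead use the right approximate-identity property $te_\lambda\to t$ in norm, so that $\|t\xi\|\le\|t-te_\lambda\|\,\|\xi\|+\|te_\lambda\xi\|$ gives the bound directly. This needs neither \eqref{assumption:L1nondeg}, nor any bound on $(e_\lambda)$, nor the duality step; the triangle inequality already does the work. Your route therefore shows that \eqref{assumption:L1nondeg} is superfluous here, provided $(e_\lambda)$ is a two-sided (or at least right) approximate identity, which is the standard reading of Assumption~\ref{assumption:concrete}\eqref{assump_a_cai}. The paper's route is the one that still works if only the left property $e_\lambda a\to a$ is available, paying for it with boundedness and nondegeneracy.

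Two small slips in your write-up. You announce that you will work with $\kappa(t)^l$ on $\X$, but the argument actually uses $\kappa(t)^r$ on $\Y$. Also, \eqref{assumption:L1nondeg} is not used at all, not even implicitly.
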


\begin{proof}
Let $\kappa \colon\mathrm{span}(\Y\X) \to \mathcal{K}_{B}(\X, \Y)$ be the map from Lemma~\ref{lem:(1/2)ineq}.
That is, for each $n \in \mathbb{Z}_{\geq 1}$ and  $x_1, \ldots, x_n \in \X$, $y_1, \ldots, y_n \in \Y$, we
put
\[
\kappa\Big( \sum_{j=1}^n y_jx_j\Big) \coloneqq  \sum_{j=1}^n \theta_{y_j,x_j}.
\]
Since $\mathrm{span}(\Y\X)$ is dense in $A$ by Assumption~\ref{assumption:concrete} \eqref{assumption:Apnondeg} and $\kappa$ has dense range, it suffices to show that $\kappa$ is isometric, that is $\| \kappa(t)\|_{\mathcal{K}_{B}(\X, \Y)} = \| t \|_{A}$ for every $t \in \mathrm{span}(\Y\X)$. Let
\[
t \coloneqq \sum_{j=1}^n y_jx_j \in \mathrm{span}(\Y\X) \subseteq A. 
\]
Lemma~\ref{lem:(1/2)ineq} already gives that $\kappa$ is a contraction, independently of Assumption~\ref{assumption:concrete}, so it suffices to show that
\[
\| t \|_{A} \leq \| \kappa(t) \|_{\mathcal{K}_{B}(\X, \Y)}.
\]

For every measure space and every $p\in[1,\infty)$, $L^{p'}(\mu)$ is norming for $L^p(\mu)$:
\[
\|\zeta\|_p=\sup_{\|\eta\|_{p'}\leq1}|(\eta\mid\zeta)|,\qquad \zeta\in L^p(\mu),
\]
where $(-\mid -)$ is the usual integral pairing. Hence
\[
\| t \|_{A} = \sup_{\| \xi \|_p \leq 1} \|t\xi\|_p = \sup_{\| \xi \|_p \leq 1, \| \eta\|_{p'} \leq 1} |(\eta \mid t\xi)|.
\]
It therefore suffices to show that 
\[
|(\eta \mid t\xi)| \leq \| \kappa(t) \|_{\mathcal{K}_{B}(\X, \Y)} \| \xi \|_p \| \eta \|_{p'}
\]
for arbitrary $\xi \in L^p(\mu)$ and $\eta \in L^{p'}(\mu)$. To do so, we fix  $\xi \in L^p(\mu)$ and use Assumption~\ref{assumption:concrete} \eqref{assump_a_cai} to define a net $(\xi_\lambda)_{\lambda \in \Lambda}$ in $ L^p(\mu)$ 
by letting
\[
\xi_\lambda \coloneqq e_\lambda\xi = \sum_{j=1}^{n_\lambda} y_{\lambda,j}x_{\lambda,j}\xi \in L^p(\mu). 
\]
Since $K\leq1$, Assumption~\ref{assumption:concrete}\eqref{assumption:concretecai} gives $\sup_\lambda\|e_\lambda\|_{A}\leq1$. We claim that $e_\lambda\xi\to\xi$ for every $\xi\in L^p(\mu)$. Indeed, if $\xi=a\zeta$ with $a\in A$, then
\[
\|e_\lambda\xi-\xi\|_p\leq\|e_\lambda a-a\|_{A}\|\zeta\|_p\longrightarrow0.
\]
The linear span of such vectors is dense by Assumption~\ref{assumption:concrete}\eqref{assumption:L1nondeg}, and the uniform bound on $(e_\lambda)_\lambda$ extends the convergence to all of $L^p(\mu)$. Thus $\xi_\lambda\to\xi$, and hence $(\eta\mid t\xi_\lambda)\to(\eta\mid t\xi)$ for every $\eta\in L^{p'}(\mu)$. Next, for each $\lambda \in \Lambda$ and each $j \in \{1, \ldots, n_\lambda\}$, we have
\[
 \|ty_{\lambda,j}\|_{\Y} \leq \| \kappa(t)^r\|\|y_{\lambda,j}\|_{\Y} \leq \| \kappa(t)\|_{\mathcal{K}_{B}(\X, \Y)} \|y_{\lambda,j}\|_{\Y}.
 \]
Therefore, 
\begin{align*}
|(\eta \mid t\xi_\lambda)| & = \Big| \sum_{j=1}^{n_\lambda} \big( \eta \mid t(y_{\lambda,j}x_{\lambda,j}\xi ) \big) \Big|\\
& \leq\| \eta\|_{p'}  \sum_{j=1}^{n_\lambda} \|t(y_{\lambda,j}x_{\lambda,j}\xi )\|_p\\
& \leq \| \eta\|_{p'}\sum_{j=1}^{n_\lambda}  \|ty_{\lambda,j}\|_{\Y} \|x_{\lambda,j}\xi\|_p\\
& \leq \| \eta\|_{p'}\| \kappa(t)\|_{\mathcal{K}_{B}(\X, \Y)} \sum_{j=1}^{n_\lambda}  \|y_{\lambda,j}\|_{\Y} \|x_{\lambda,j}\xi\|_{p}.
\end{align*}
Next, notice that Hölder's inequality in $\mathbb{R}^{n_\lambda}$ together with Assumption~\ref{assumption:concrete}\eqref{assump_b_p} and \eqref{assump_c_q}  (here $K \leq 1$) give
\[
\sum_{j=1}^{n_\lambda}  \|y_{\lambda,j}\|_{\Y} \|x_{\lambda,j}\xi\|_{p} \leq 
\big\| (\| y_{\lambda,1} \|_{\Y},  \ldots, \| y_{\lambda,n_\lambda} \|_{\Y}) \big\|_{p'}\Big( \sum_{j=1}^{n_\lambda}  \|x_{\lambda,j}\xi\|_{p}^p \Big)^{1/p} \leq \|\xi\|_p.
\]
Combining these estimates gives 
\[
|(\eta \mid t\xi_\lambda)| \leq \| \eta\|_{p'}\| \kappa(t)\|_{\mathcal{K}_{B}(\X, \Y)} \|\xi\|_p,
\]
whence, taking the limit over $\lambda \in \Lambda$, we get that the preceding inequality holds, as desired. 
\end{proof}

\begin{cor}\label{cor_modnorm=repnorm}
Under Assumption~\ref{assumption:concrete}, the extension
\[
\overline\kappa:A\longrightarrow\mathcal K_B(\X,\Y)
\]
is a Banach algebra isomorphism. More precisely,
\[
\|\overline\kappa(a)\|\leq\|a\|_A
\leq K\|\overline\kappa(a)\|,
\qquad a\in A.
\]
\end{cor}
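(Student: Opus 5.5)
The plan is to rerun the proof of Theorem~\ref{thm_modnorm=repnorm} while tracking the constant $K$, and then deduce surjectivity from the resulting two-sided estimate. The left-hand inequality $\|\overline\kappa(a)\|\le\|a\|_A$ is contractivity of the map in Lemma~\ref{lem:(1/2)ineq}, extended by continuity; it uses no part of Assumption~\ref{assumption:concrete}. It therefore suffices to prove
\[
\|t\|_A\le K\|\kappa(t)\|_{\mathcal K_B(\X,\Y)},\qquad t\in\mathrm{span}(\Y\X),
\]
and pass to the closure, since $\mathrm{span}(\Y\X)$ is dense in $A$ by Assumption~\ref{assumption:concrete}\eqref{assumption:Apnondeg}.

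For the reverse estimate, fix $t=\sum_j y_jx_j$, $\xi\in L^p(\mu)$ and $\eta\in L^{p'}(\mu)$, and put $\xi_\lambda=e_\lambda\xi$. The chain of estimates in the proof of Theorem~\ref{thm_modnorm=repnorm} applies verbatim. It uses $\|ty_{\lambda,j}\|_\Y\le\|\kappa(t)\|\,\|y_{\lambda,j}\|_\Y$, then Hölder's inequality in $\mathbb R^{n_\lambda}$, then \eqref{assump_b_p} and \eqref{assump_c_q}. The only change is that the final bound becomes
\[
|(\eta\mid t\xi_\lambda)|\le K\,\|\eta\|_{p'}\,\|\kappa(t)\|\,\|\xi\|_p.
\]
It remains to let $\lambda\to\infty$, which requires $\xi_\lambda\to\xi$ in $L^p(\mu)$.

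This convergence is the one place where $K\le1$ was used before, and I expect it to be the main point needing care. The required uniform bound is still available, since $\|e_\lambda\|_A\le K$ as noted in Assumption~\ref{assumption:concrete}. On vectors $a\zeta$ with $a\in A$, we have $\|e_\lambda a\zeta-a\zeta\|\le\|e_\lambda a-a\|_A\|\zeta\|_p\to0$. The span of such vectors is dense by \eqref{assumption:L1nondeg}. The standard $\varepsilon/3$ argument with the bound $\sup_\lambda\|e_\lambda\|\le K$ then gives $e_\lambda\xi\to\xi$ for all $\xi$. Hence $(\eta\mid t\xi_\lambda)\to(\eta\mid t\xi)$, and taking suprema over the unit balls via the norming duality gives $\|t\|_A\le K\|\kappa(t)\|$.

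Finally, the two-sided estimate shows that $\overline\kappa$ is injective with closed range. Its range is also dense, because the range of $\kappa$ contains all finite sums of $\theta_{y,x}$, which are dense in $\mathcal K_B(\X,\Y)$ by definition. Therefore $\overline\kappa$ is a bijective bounded homomorphism with bounded inverse, that is, a Banach algebra isomorphism.
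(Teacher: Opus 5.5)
Your proposal is correct and follows the paper's proof. The paper reruns the argument of Theorem~\ref{thm_modnorm=repnorm} with $K$ in place of $1$ to get $\|t\|_A\le K\|\kappa(t)\|$ on $\mathrm{span}(\Y\X)$, then concludes that $\overline\kappa$ is bounded below with closed and dense range, hence onto. Your explicit check that the convergence $e_\lambda\xi\to\xi$ only needs the uniform bound $\sup_\lambda\|e_\lambda\|\le K$ fills in a detail the paper leaves implicit.
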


\begin{proof}
Repeating the proof of Theorem~\ref{thm_modnorm=repnorm}
with the bound $K$ in place of $1$ gives
\[
\|t\|_A\leq K\|\kappa(t)\|_{\mathcal K_B(\X,\Y)},
\qquad t\in\operatorname{span}(\Y\X).
\]
Hence $\overline\kappa$ is bounded below, so its range is
closed. Lemma~\ref{lem:(1/2)ineq} gives dense range, and
therefore $\overline\kappa$ is onto.
\end{proof}

\subsection{Concrete linking algebras}

Let $A$ and $B$ be Morita equivalent Banach algebras via the pair $({}_B\X_{A}, {}_A\Y_B)$. 
The \textit{linking algebra of the equivalence} is, algebraically, 
\[
\mathbb{L} = 
\begin{pmatrix}
A & \Y \\
\X & B
\end{pmatrix}.
\]
 We now describe a general way to put norms on $\mathbb{L}$. Consider the Banach $B$-pair direct sum:
\[ 
(\X,\Y) \oplus_{\alpha} B  =( ({}_B\X \oplus {}_B B, \Y_B \oplus  B_B), \alpha ),
\]
where $\alpha=(\alpha_l, \alpha_r)$ consists of a pair of norms
\[
\alpha_l \colon {}_B\X \oplus {}_B B \to \mathbb{R}_{\geq 0}, \ 
\alpha_r \colon \Y_B \oplus B_B \to \mathbb{R}_{\geq 0},
\]
satisfying,
for all $x \in \X, y \in \Y, b, b_0 \in B$,
\begin{enumerate}[label=($A_\arabic*$)]
\item $\alpha_l(b_0x,b_0b) \leq \|b_0\|\alpha_l(x,b)$, \label{cond_alpha1}
\item $ \max\{\|x\|, \|b\|\} \leq \alpha_l(x,b) \leq \|x\| + \| b \|$, 
\item $ \alpha_r(yb_0,bb_0) \leq \alpha_r(y,b)\|b_0\|$, \label{cond_alpha3}
\item $\max\{\|y\|, \|b\|\} \leq \alpha_r(y,b) \leq \|y\| + \| b \|$, 
\item $\| \langle x,y\rangle_B + b_0b \| \leq \alpha_l(x,b_0)\alpha_r(y,b)$.  \label{cond_alpha5}  
\end{enumerate}
\begin{rem}
Note that an example that satisfies the above conditions is to let
$1<p<\infty$, let $q$ be the Hölder conjugate of $p$, and put
\[
\alpha_l(x,b) = (\|x\|^q + \| b\|^q )^{1/q}, \ \alpha_r(y,b) = (\|y\|^p + \| b\|^p )^{1/p}.
\]
We will see in Equations~\eqref{Eq:alpha_p_l} and \eqref{Eq:alpha_p_r} that, with some $p$-operator space representability conditions imposed on the pair $(\X, \Y)$, there
is another natural option for $\alpha$ that generalizes the $C^*$-norm of the classic linking algebra. 
\end{rem}
Observe that Conditions \ref{cond_alpha1}, \ref{cond_alpha3}, and  \ref{cond_alpha5} make $(\X,\Y) \oplus_{\alpha} B$ a Banach $B$-pair in its own right when equipped with the 
bilinear pairing
\[
\langle (x,b_1), (y,b_2) \rangle \coloneqq \langle x,y\rangle_B + b_1b_2.
\]
Notice that elements of $\mathbb{L}$ naturally act on 
the pair $(\X, \Y) \oplus_{\alpha} B$ as linear operators (see Definition \ref{defn_LinOP}). 
Indeed, fix
\[
T = \begin{pmatrix}
a & y\\
x & b
\end{pmatrix} \in \mathbb{L},
\]
so that the formal matrix multiplication 
\[
\begin{pmatrix}
x_0 & b_0
\end{pmatrix}
\begin{pmatrix}
a & y \\
x & b
\end{pmatrix}
= 
\begin{pmatrix}
x_0\cdot a+b_0\cdot x  & \langle x_0,y\rangle_B +  b_0b
\end{pmatrix}
\]
gives a map 
$T^l \colon \X \oplus B \to \X \oplus B$ given by 
\[
T^l(x_0, b_0) =  (
x_0\cdot a+b_0\cdot x,    \langle x_0,y\rangle_B +  b_0b).
\]
Similarly, since 
\[
\begin{pmatrix}
a & y\\
x & b
\end{pmatrix}
\begin{pmatrix}
y_0\\
b_0
\end{pmatrix}
=
\begin{pmatrix}
a\cdot y_0 + y\cdot b_0 \\
\langle x,y_0\rangle_B + bb_0
\end{pmatrix},
\]
we define $T^r \colon \Y \oplus B \to \Y \oplus B$
\[
T^r(y_0,b_0) = (
a\cdot y_0 + y\cdot b_0, 
\langle x,y_0\rangle_B + bb_0
).
\]

Let
\[
\iota_\alpha\colon \mathbb{L}\longrightarrow \mathcal{L}_B((\X,\Y)\oplus_\alpha B)
\]
denote this concrete block representation. We write $\mathbb{L}^{\mathrm{alg}}_\alpha=\iota_\alpha(\mathbb{L})$ with the inherited operator norm, and define the \emph{concrete linking algebra associated with $\alpha$} by
\[
\mathbb{L}_\alpha\coloneqq \overline{\mathbb{L}^{\mathrm{alg}}_\alpha}^{\,\mathcal{L}_B((\X,\Y)\oplus_\alpha B)}.
\]

\begin{lem}\label{lem_Lclosed}
For every choice of $\alpha$ satisfying Conditions \ref{cond_alpha1}--\ref{cond_alpha5}, $\mathbb{L}_\alpha$ is a norm-closed Banach subalgebra of $\mathcal{L}_B((\X,\Y)\oplus_\alpha B)$. Moreover, for every algebraic block
\[
T=\begin{pmatrix}a&y\\x&b\end{pmatrix}\in\mathbb{L}
\]
one has
\[
\|\iota_\alpha(T)\|\leq \|a\|+\|y\|+\|x\|+\|b\|.
\]
No converse estimate on the four entry norms is asserted in this generality.
\end{lem}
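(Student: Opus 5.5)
The plan is to establish three things in order. First, that each $\iota_\alpha(T)$ is a bounded operator pair in $\mathcal{L}_B((\X,\Y)\oplus_\alpha B)$ satisfying the stated estimate. Second, that $\iota_\alpha$ is multiplicative on $\mathbb{L}$, so that $\mathbb{L}^{\mathrm{alg}}_\alpha$ is a subalgebra. Third, that the norm closure of a subalgebra of the Banach algebra $\mathcal{L}_B((\X,\Y)\oplus_\alpha B)$ is a closed Banach subalgebra. The last point is standard: multiplication in $\mathcal{L}_B$ (with the convention $TS=(S^lT^l,T^rS^r)$) is jointly continuous, being submultiplicative in $\max(\|T^l\|,\|T^r\|)$, so limits of products are products of limits.

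For the first step, fix $T=\begin{pmatrix}a&y\\x&b\end{pmatrix}$. I will check that $T^l$ is a left $B$-module map. For $b_1\in B$,
\[
T^l(b_1x_0,b_1b_0)=(b_1x_0\cdot a+b_1b_0\cdot x,\ \langle b_1x_0,y\rangle_B+b_1b_0b)=b_1T^l(x_0,b_0),
\]
using that the right $A$-action on $\X$ commutes with the left $B$-action (bimodule structure from the correspondence) and that the pairing satisfies $\langle b_1x_0,y\rangle_B=b_1\langle x_0,y\rangle_B$. Symmetrically, $T^r$ is a right $B$-module map.

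The formal adjoint identity $\langle T^l(x_0,b_0),(y_0,b_2)\rangle=\langle(x_0,b_0),T^r(y_0,b_2)\rangle$ expands into four kinds of terms:
\begin{itemize}
\item $\langle x_0\cdot a,y_0\rangle_B=\langle x_0,a\cdot y_0\rangle_B$, by the correspondence property;
\item $\langle b_0x,y_0\rangle_B=b_0\langle x,y_0\rangle_B$;
\item $\langle x_0,y\rangle_Bb_2=\langle x_0,y b_2\rangle_B$;
\item $b_0bb_2$ on both sides, by associativity.
\end{itemize}
This is routine.

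For the norm bound, I use the upper estimates in conditions ($A_2$) and ($A_4$), and the lower estimates $\|x_0\|,\|b_0\|\le\alpha_l(x_0,b_0)$. These give
\[
\alpha_l(T^l(x_0,b_0))\le\|x_0\cdot a+b_0x\|+\|\langle x_0,y\rangle_B+b_0b\|\le(\|a\|+\|x\|+\|y\|+\|b\|)\,\alpha_l(x_0,b_0),
\]
by contractivity of $\pi_A$, the module estimates, and $\|\langle x_0,y\rangle\|\le\|x_0\|\|y\|$. The same argument handles $T^r$. Taking the max yields $\|\iota_\alpha(T)\|\le\|a\|+\|y\|+\|x\|+\|b\|$.

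For multiplicativity, I need to compare $\iota_\alpha(TS)$ with $\iota_\alpha(T)\iota_\alpha(S)=(S^lT^l,\,T^rS^r)$, where $TS$ is the formal matrix product in $\mathbb{L}$. This product uses the actions and both pairings, e.g.\ the $(1,1)$ entry $aa'+{}_A\langle y,x'\rangle$. Evaluating on $(y_0,b_0)$ reduces to associativity identities such as
\[
{}_A\langle y,x'\rangle\cdot y_0=y\cdot\langle x',y_0\rangle_B,
\]
which is condition (4) of Definition~\ref{def:Morita}, together with its companion (3) for the left side. Note the order reversal: $T^l$ is a right action, so $(TS)^l=S^lT^l$, which matches the composition convention. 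I expect this bookkeeping of the eight entry identities (using conditions (3), (4), the pairing linearity and the bimodule laws) to be the only real obstacle.

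Finally, the remark that no converse estimate holds is simply a non-claim and requires no proof.
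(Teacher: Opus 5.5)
Your proposal is correct and follows the paper's route. The paper's proof is a one-line assertion that the block formulas and Conditions $(A_1)$--$(A_5)$ make $\iota_\alpha(\mathbb{L})$ a subalgebra obeying the displayed bound, after which the operator-norm closure is taken; you fill in what it leaves implicit, namely that multiplicativity of $\iota_\alpha$ rests on the compatibility identities (3) and (4) of Definition~\ref{def:Morita}, and that the estimate comes from combining the upper and lower bounds in $(A_2)$ and $(A_4)$.
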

\begin{proof}
The formulas above and Conditions \ref{cond_alpha1}--\ref{cond_alpha5} show that $\iota_\alpha(\mathbb{L})$ is a subalgebra of $\mathcal{L}_B((\X,\Y)\oplus_\alpha B)$ and give the displayed upper bound. Its operator-norm closure is therefore a norm-closed subalgebra.
\end{proof}

The following is the analogue of \cite[Corollary 3.21]{RaWi98}.
\begin{prop}\label{Prop:L_alpha=K_B}
Let $(\X,\Y)$ be a Morita equivalence between the Banach algebras $A$ and $B$, let $\alpha$ satisfy Conditions \ref{cond_alpha1}--\ref{cond_alpha5}, and let $\mathbb{L}_\alpha$ be the concrete operator-norm closure defined above. Then
\[
\mathcal{K}_B((\X,\Y)\oplus_\alpha B)=\mathbb{L}_\alpha.
\]
For any $x\in\X$, $y\in\Y$, and $b_1,b_2\in B$,
\[
\theta_{(y,b_1),(x,b_2)}=
\begin{pmatrix}
{}_A\langle y,x\rangle&y\cdot b_2\\
b_1\cdot x&b_1b_2
\end{pmatrix}.
\]
\end{prop}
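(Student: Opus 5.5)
The plan has two parts: first verify the displayed formula for $\theta_{(y,b_1),(x,b_2)}$ by a direct computation, then get the equality $\mathcal{K}_B((\X,\Y)\oplus_\alpha B)=\mathbb{L}_\alpha$ from two inclusions. For the formula, write $v=(x_0,b_0)\in\X\oplus B$. By definition,
\[
\theta^l_{(y,b_1),(x,b_2)}(x_0,b_0)=\bigl(\langle x_0,y\rangle_B+b_0b_1\bigr)\cdot(x,b_2)
=\bigl(\langle x_0,y\rangle_B\cdot x+b_0b_1\cdot x,\ \langle x_0,y\rangle_Bb_2+b_0b_1b_2\bigr).
\]
Put $T=\begin{pmatrix}{}_A\langle y,x\rangle&y\cdot b_2\\ b_1\cdot x&b_1b_2\end{pmatrix}$. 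Its left operator is
\[
T^l(x_0,b_0)=\bigl(x_0\cdot{}_A\langle y,x\rangle+b_0b_1\cdot x,\ \langle x_0,y\cdot b_2\rangle_B+b_0b_1b_2\bigr).
\]
The two expressions agree by condition~(3) of Definition~\ref{def:Morita} and right $B$-linearity of the pairing. The right components $\theta^r(y_0,b_0)=(y,b_1)\cdot(\langle x,y_0\rangle_B+b_2b_0)$ and $T^r(y_0,b_0)$ agree in the same way, now using condition~(4). Hence $\theta_{(y,b_1),(x,b_2)}=\iota_\alpha(T)$.

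The inclusion $\mathcal{K}_B\subseteq\mathbb{L}_\alpha$ is then immediate. Every generator $\theta$ lies in $\iota_\alpha(\mathbb{L})$, and $\mathbb{L}_\alpha$ is a norm-closed subspace by Lemma~\ref{lem_Lclosed}. Taking closed spans gives the inclusion.

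For the reverse inclusion $\mathbb{L}_\alpha\subseteq\mathcal{K}_B$, it suffices to show $\iota_\alpha(\mathbb{L})\subseteq\mathcal{K}_B$, since $\mathcal{K}_B$ is closed. By the formula, and by taking pieces with only one nonzero entry (for instance $b_1=b_2=0$), $\mathcal{K}_B$ contains every block with a single entry taken from one of the following sets:
\begin{itemize}
\item $\mathrm{span}\,{}_A\langle\Y,\X\rangle$ in the $A$-corner;
\item $\mathrm{span}(\Y B)$ in the $\Y$-corner;
\item $\mathrm{span}(B\X)$ in the $\X$-corner;
\item $\mathrm{span}(BB)$ in the $B$-corner.
\end{itemize}
The estimate $\|\iota_\alpha(T)\|\le\|a\|+\|y\|+\|x\|+\|b\|$ of Lemma~\ref{lem_Lclosed} makes $\iota_\alpha$ continuous in each entry separately. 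So it is enough that each of these four spans is dense in its corner.

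The $A$-corner is dense by fullness of the $A$-pairing, and the two off-diagonal corners are dense by nondegeneracy of $\Y$ and $\X$ as $B$-modules. The only step needing an argument is density of $\mathrm{span}(BB)$ in $B$. Given $\langle x,y\rangle_B$, nondegeneracy lets us approximate $x$ by $\sum_i b_i\cdot x_i$. Then $\langle x,y\rangle_B$ is approximated by $\sum_i b_i\langle x_i,y\rangle_B\in\mathrm{span}(BB)$, using continuity of the pairing. Fullness of the $B$-pairing then gives density in $B$.

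I expect this last density point, together with the bookkeeping of the composition convention $TS=(S^lT^l,T^rS^r)$ when matching components, to be the only delicate part. The rest is formal.
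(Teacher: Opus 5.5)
Your proposal is correct and follows essentially the same route as the paper. You verify the rank-one formula componentwise using compatibility conditions (3)--(4) of Definition~\ref{def:Morita} and the module-linearity of the pairing, deduce $\mathcal{K}_B((\X,\Y)\oplus_\alpha B)\subseteq\mathbb{L}_\alpha$, and get the reverse inclusion from density of the four corner spans together with the entrywise upper estimate of Lemma~\ref{lem_Lclosed}. Your explicit argument that $\mathrm{span}(BB)$ is dense in $B$ (approximate $x$ by $\sum_i b_i\cdot x_i$, then use fullness) fills in a step the paper only asserts as a consequence of fullness and nondegeneracy.
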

\begin{proof}
Direct computations using the equivalence compatibility conditions give
\[
\theta_{(y,b_1),(x,b_2)}^{l}(x_0,b_0)=
\begin{pmatrix}x_0&b_0\end{pmatrix}
\begin{pmatrix}
{}_A\langle y,x\rangle&y\cdot b_2\\
b_1\cdot x&b_1b_2
\end{pmatrix}
\]
and
\[
\theta_{(y,b_1),(x,b_2)}^{r}(y_0,b_0)=
\begin{pmatrix}
{}_A\langle y,x\rangle&y\cdot b_2\\
b_1\cdot x&b_1b_2
\end{pmatrix}
\begin{pmatrix}y_0\\b_0\end{pmatrix}.
\]
Hence every rank-one operator belongs to $\mathbb{L}^{\mathrm{alg}}_\alpha$, and therefore
\[
\mathcal{K}_B((\X,\Y)\oplus_\alpha B)\subseteq\mathbb{L}_\alpha.
\]

Conversely, fullness and nondegeneracy give
\[
\overline{\mathrm{span}}\,{}_A\langle\Y,\X\rangle=A,\qquad
\overline{\mathrm{span}}(\Y B)=\Y,\qquad
\overline{\mathrm{span}}(B\X)=\X,\qquad
\overline{\mathrm{span}}(BB)=B.
\]
The corresponding four kinds of corner matrices are finite sums of rank-one operators:
\begin{align*}
\begin{pmatrix}{}_A\langle y,x\rangle&0\\0&0\end{pmatrix}
&=\theta_{(y,0),(x,0)},&
\begin{pmatrix}0&yb\\0&0\end{pmatrix}
&=\theta_{(y,0),(0,b)},\\
\begin{pmatrix}0&0\\bx&0\end{pmatrix}
&=\theta_{(0,b),(x,0)},&
\begin{pmatrix}0&0\\0&b_1b_2\end{pmatrix}
&=\theta_{(0,b_1),(0,b_2)}.
\end{align*}
By the upper estimate in Lemma~\ref{lem_Lclosed}, convergence in the four corner norms implies convergence in the concrete block-operator norm. Thus $\mathbb{L}^{\mathrm{alg}}_\alpha\subseteq\mathcal{K}_B((\X,\Y)\oplus_\alpha B)$, and taking operator-norm closures gives the reverse inclusion.
\end{proof}

We now specialize to concrete $L^p$-modules.  We use the following
bimodule version of the setting in \cite{Delfin26}.

\begin{assump}\label{assumption:repn} 
 Assume that $A \subseteq \mathcal{B}(L^p(\mu))$ 
 and $B \subseteq \mathcal{B}(L^p(\nu))$ are concrete $L^p$-operator 
 algebras and that 
 \[
 \X \subseteq \mathcal{B}(L^p(\mu), L^p(\nu)), \Y \subseteq  \mathcal{B}(L^p(\nu), L^p(\mu)),
 \]
are closed subspaces satisfying
\begin{enumerate}
\item the linear spans of $B\X$ and $\X A$ are dense in $\X$,
\item the linear spans of $A\Y$ and $\Y B$ are dense in $\Y$,
\item the linear span of $\Y\X$ is dense in $A$,
\item the linear span of $\X\Y$ is dense in $B$.
\end{enumerate}
\end{assump}
The conditions in Assumption~\ref{assumption:repn} make $(\X,\Y)$ a Morita equivalence between $A$ and $B$, with module actions given by composition and pairings ${}_A\langle y, x \rangle=yx$ and $\langle x, y \rangle_B=xy$. 
The concrete nature also allows us to put a row $p$-operator norm on 
$ {}_B\X \oplus {}_B B $ as follows: For each $x \in \X$ and $b \in B$,
we think of $(x,b)$ as a row operator   
\begin{align*}
(x,b)  \colon L^p(\mu) \oplus_p L^p(\nu) & \to L^p(\nu)\\
 (\xi, \eta) & \mapsto (x,b)(\xi, \eta)  \coloneqq x\xi + b\eta. 
\end{align*}
The $p$-operator norm on ${}_B\X \oplus {}_B B$ is then
given by
\begin{equation}\label{Eq:alpha_p_l}
\alpha_{p,l}(x,b) = \sup\{ \|x\xi + b\eta\|_{p} \colon \xi \in L^p(\mu), \eta \in L^p(\nu), \|\xi\|_p^p+\|\eta\|_p^p \leq 1\}.
\end{equation}
Similarly, we put a column $p$-operator norm on 
$\Y_B\oplus  B_B $ by thinking of the pair $(y,b)$, 
for each $y \in \Y$ and $b\in B$, 
as a column operator 
\begin{align*}
(y,b) \colon L^p(\nu)  & \to L^p(\mu) \oplus_p L^p(\nu)\\
\eta & \mapsto (y,b)\eta \coloneqq (y\eta, b\eta). 
\end{align*}
The $p$-operator norm on $\Y_B\oplus  B_B$ becomes 
\begin{equation}\label{Eq:alpha_p_r}
\alpha_{p,r}(y,b) = \sup\{ (\| y\eta\|_p^p + \| b\eta\|_p^p)^{1/p} \colon \eta \in L^p(\nu), \| \eta\|_p\leq 1\}.
\end{equation}
Furthermore, the work in \cite[Section 3.2]{Delfin26} shows that the pair $\alpha_p = (\alpha_{p,l}, \alpha_{p,r})$ satisfies 
Conditions \ref{cond_alpha1}-\ref{cond_alpha5} and therefore it makes sense 
to talk about the $p$-linking algebra $\mathbb{L}_{\alpha_p}$.

\begin{prop}\label{prop_pnormLink}
Let $A\subseteq\mathcal{B}(\cH_A)$ and
$B\subseteq\mathcal{B}(\cH_B)$ be nondegenerately represented
$C^*$-algebras, and let
\[
\Y\subseteq\mathcal{B}(\cH_B,\cH_A)
\]
be a concrete realization of an $A$-$B$ imprimitivity bimodule. Put
\[
\X=\Y^*
=\{y^*:y\in\Y\}
\subseteq\mathcal{B}(\cH_A,\cH_B).
\]
Then the concrete linking algebra $\mathbb{L}_{\alpha_2}$ associated with
$(\X,\Y)$ is $*$-isomorphic to the usual linking $C^*$-algebra of the
imprimitivity bimodule. In particular, every $C^*$-imprimitivity
bimodule admits a concrete realization for which this conclusion holds.
\end{prop}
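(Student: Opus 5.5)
The plan is to realize everything inside the single Hilbert space $\cH=\cH_A\oplus\cH_B$ and compare $\mathbb{L}_{\alpha_2}$ with the concrete $C^*$-algebra
\[
\mathcal{L}_0=\left\{\begin{pmatrix} a & y\\ y_1^* & b\end{pmatrix}: a\in A,\ y,y_1\in\Y,\ b\in B\right\}\subseteq\mathcal{B}(\cH_A\oplus\cH_B).
\]
It is standard (e.g.\ \cite{RaWi98}) that for a concrete imprimitivity bimodule this set is a closed $*$-subalgebra isomorphic to the linking $C^*$-algebra. Here $\Y\Y^*$ is dense in $A$, $\Y^*\Y$ is dense in $B$, and these inner products are $yz^*$ and $y^*z$ by concreteness.

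The key identity will be that, for $p=2$, $\alpha_{2,l}(x,b)$ is the operator norm of the row $(x\ b)\colon\cH_A\oplus\cH_B\to\cH_B$. Similarly, $\alpha_{2,r}(y,b)$ is the norm of the column $\binom{y}{b}\colon\cH_B\to\cH_A\oplus\cH_B$. So $(\X,\Y)\oplus_{\alpha_2}B$ consists of the bottom row and the right column of $\mathcal{B}(\cH)$.

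First I check that Assumption~\ref{assumption:repn} holds with $\X=\Y^*$. Nondegeneracy and fullness of the imprimitivity bimodule give density of $A\Y$, $\Y B$, $\Y\Y^*$ in $A$, and $\Y^*\Y$ in $B$; taking adjoints handles $\X$. So $\mathbb{L}_{\alpha_2}$ is defined. Next I define $\Phi\colon\mathbb{L}\to\mathcal{L}_0$ by sending the algebraic block $T$ to the same $2\times2$ operator matrix on $\cH$. This is an algebra homomorphism, since both products are formal matrix multiplication with pairings given by composition.

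The main step is to show that $\|\iota_{\alpha_2}(T)\|=\|\Phi(T)\|_{\mathcal{B}(\cH)}$. For the inequality $\le$: $T^r$ acts on the column $\binom{y_0}{b_0}$ by left multiplication by $\Phi(T)$. The column norm is the operator norm, so $\|T^r\|\le\|\Phi(T)\|$, and the same holds for $T^l$ via right multiplication on rows. For $\ge$: choose columns $c_\lambda=\binom{y_\lambda}{b_\lambda}$ in the right column space so that $\sum$-type products approximate the identity. Concretely, I use the nondegeneracy of $\mathcal{L}_0$ on $\cH$, which follows from the nondegeneracy of $A$ and $B$. I take an approximate unit of $\mathcal{L}_0$ of the form $\sum_k c_kc_k^*$ with $\|\sum_k c_kc_k^*\|\le1$, built from an approximate unit of $B$ together with $A$-approximate units $\sum y_ky_k^*$.

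This gives $\|\Phi(T)\|=\sup\|\Phi(T)c\,\eta\|$ over columns $c$ with $\alpha_{2,r}(c)\le1$ and unit vectors $\eta$. The reason is a $C^*$-identity: for $\xi\in\cH$, the vector $\xi$ is approximated by $\sum c_k c_k^*\xi$. The cleaner route is to set $R=(c_1,\dots,c_n)$, a contractive row of columns. Then $\|\Phi(T)RR^*\|\le\max_k$ estimates fail in general, so I instead use the single column obtained by stacking, via the $C^*$-algebra $M_n$ trick. If this gets messy, I fall back to Proposition~\ref{Prop:L_alpha=K_B}: both $\mathbb{L}_{\alpha_2}$ and the linking $C^*$-algebra are the compacts $\mathcal{K}_B$ on the Hilbert module $\Y\oplus B$. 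For Hilbert modules, $\mathcal{K}_B$ computed with the $\alpha_2$ module norms (which are the Hilbert module norms $\|\langle c,c\rangle\|^{1/2}$) is exactly the linking algebra with its $C^*$-norm, with $\|T\|=\|T^r\|$.

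Finally, $\Phi$ is then isometric with dense range, so it extends to an isometric isomorphism $\mathbb{L}_{\alpha_2}\cong\mathcal{L}_0$. It is $*$-preserving for the involution $\begin{pmatrix}a&y\\x&b\end{pmatrix}^*=\begin{pmatrix}a^*&x^*\\y^*&b^*\end{pmatrix}$, which is well defined because $\X=\Y^*$. The last sentence of the statement follows by realizing any imprimitivity bimodule as the corner of its linking algebra, faithfully and nondegenerately represented on $\cH_A\oplus\cH_B$. The main obstacle is the lower norm bound, which I expect to handle via the Hilbert module identification.
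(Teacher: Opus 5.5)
Your proposal is correct once you commit to the fallback, and that fallback is essentially the paper's own argument. The paper computes $\alpha_{2,l}(y^*,b)=\|y^*y+bb^*\|^{1/2}$ and $\alpha_{2,r}(y,b)=\|y^*y+b^*b\|^{1/2}$. It then identifies the block action with the standard linking representation on $\cH_A\oplus\cH_B$ and invokes the linking-algebra theorem \cite[Corollary~3.21]{RaWi98}, the $C^*$-prototype of $\mathcal{K}_B((\X,\Y)\oplus_\alpha B)=\mathbb{L}_\alpha$. For the last sentence the paper cites \cite[Proposition~4.8]{Exel93}. Your alternative, cutting a faithful nondegenerate representation of the linking algebra by its two diagonal projections, works just as well.

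Drop the approximate-unit and stacking detour. As you suspected, it does not close: stacking lands in a matrix amplification, not in $\Y\oplus B$. It is also unnecessary, because the lower bound follows directly:
\begin{itemize}
\item The map $M\mapsto(c\mapsto Mc)$ is a $*$-homomorphism from the $C^*$-algebra $\mathcal{L}_0$ into the $C^*$-algebra of adjointable operators on the Hilbert $B$-module $\Y\oplus B$. The module norm $\|c^*c\|^{1/2}$ is exactly $\alpha_{2,r}(c)$.
\item It is injective, because $\overline{\mathrm{span}}\,\Y\cH_B\supseteq\overline{\mathrm{span}}\,\Y\Y^*\cH_A=\cH_A$ and $\overline{\mathrm{span}}\,B\cH_B=\cH_B$.
\item An injective $*$-homomorphism between $C^*$-algebras is isometric, so $\|T^r\|=\|\Phi(T)\|$.
\end{itemize}
Together with your bound $\|T^l\|\le\|\Phi(T)\|$, this gives $\max\{\|T^l\|,\|T^r\|\}=\|\Phi(T)\|$.

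This equality says the module-pair norm defining $\mathbb{L}_{\alpha_2}$ agrees with the norm of $\mathcal{B}(\cH_A\oplus\cH_B)$. The paper's proof takes it for granted when it calls $\mathbb{L}_{\alpha_2}$ ``this concrete operator-norm completion.'' With this step added, your write-up is the more explicit of the two.
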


\begin{proof}
For $y\in\Y$ and $b\in B$, the row and column norms defining
$\alpha_2$ are
\[
\alpha_{2,l}(y^*,b)
 =\|y^*y+bb^*\|_B^{1/2},
\qquad
\alpha_{2,r}(y,b)
 =\|y^*y+b^*b\|_B^{1/2}.
\]
Thus the concrete representation of the algebraic block matrix
\[
\begin{pmatrix}
a&y\\
y'^*&b
\end{pmatrix}
\]
on $\cH_A\oplus\cH_B$ is exactly the standard linking representation.
By the standard linking-algebra theorem
(see \cite[Corollary~3.21]{RaWi98}), the operator-norm closure of these
algebraic blocks is the usual linking $C^*$-algebra. Since
$\mathbb{L}_{\alpha_2}$ is precisely this concrete operator-norm
completion, the first assertion follows.

Finally, every imprimitivity bimodule admits a concrete realization of
the above form by \cite[Proposition~4.8]{Exel93}.
\end{proof}

\subsection{The linking groupoid algebra}

For $p\in[1,\infty)$, we compare the concrete linking algebra $\mathbb L_{\alpha_p}$ with $F^p_{\mathrm{red}}(L)$.  

To do so, we begin by showing that the conditions in Assumption~\ref{assumption:repn} are met 
for the pair $(\X_Z, \Y_Z)$ defined in the proof of Theorem~\ref{thm:main}. 
For notational convenience, let $(\Omega_{0}, \Sigma_{0}, \mu_{0})$ be the measure space defined so that 
\[
L^p(\mu_{0}) = \bigoplus_{u\in G^{(0)}} \ell^p(G_u) \oplus_p \bigoplus_{v\in H^{(0)}} \ell^p(Z_v).
\]
Similarly, we let $(\Gamma_{0}, \Upsilon_{0},\nu_{0})$ be the measure space such that 
\[
L^p(\nu_{0}) = \bigoplus_{u\in G^{(0)}} \ell^p(Z^{op}_u) \oplus_p \bigoplus_{v\in H^{(0)}} \ell^p(H_v).
\]

\begin{lem} \label{lem:repn}
Consider the pre-Morita equivalence $(\mathcal{C}(Z^{op}), \mathcal{C}(Z))$ between $\mathcal{C}(G) \subseteq F_{red}^p(G)$ and $\mathcal{C}(H) \subseteq F_{red}^p(H)$ provided by Lemma~\ref{preMor}. Then there are natural embeddings
\[
\mathcal{C}(Z^{op}) \hookrightarrow \mathcal{B}\left(L^p(\mu_0) , L^p(\nu_0) \right), \ 
\mathcal{C}(Z) \hookrightarrow \mathcal{B}\left(L^p(\nu_0) ,L^p(\mu_0) \right), 
\]
and 
 \[
 \mathcal{C}(G) \hookrightarrow \mathcal{B}\left( L^p(\mu_0) \right),  \ 
  \mathcal{C}(H) \hookrightarrow \mathcal{B}\left( L^p(\nu_0)  \right)
 \]
 such that the induced $p$-operator norms coincide with the norms used in Lemma~\ref{preMor}. 
\end{lem}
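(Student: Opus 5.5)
The plan is to realize everything inside the single representation of $\mathcal{C}(L)$ on
\[
\mathcal{E}\coloneqq\bigoplus_{w\in L^{(0)}}{}^{p}\,\ell^p(L_w),
\]
namely the $\ell^p$-direct sum of the regular representations $\mathrm{Ind}^L\,\delta_w$. By definition of the reduced norm this representation is isometric on $\mathcal{C}(L)$ for $\|\cdot\|_{F^p_{red}(L)}$.

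First, decompose $\mathcal{E}$ according to the partition $L^{(0)}=G^{(0)}\sqcup H^{(0)}$ and the range of the arrows. For $u\in G^{(0)}$ we have $L_u=G_u\sqcup Z^{op}_u$, and for $v\in H^{(0)}$ we have $L_v=Z_v\sqcup H_v$. Let $\mathcal{E}_G$ be the span of the pieces consisting of arrows with range in $G^{(0)}$, namely the $\ell^p(G_u)$ and the $\ell^p(Z_v)$. Let $\mathcal{E}_H$ be the span of the pieces with range in $H^{(0)}$, namely the $\ell^p(Z^{op}_u)$ and the $\ell^p(H_v)$. Then
\[
\mathcal{E}_G=L^p(\mu_0),\qquad \mathcal{E}_H=L^p(\nu_0),\qquad \mathcal{E}=\mathcal{E}_G\oplus_p\mathcal{E}_H .
\]
The measures on the $Z$- and $Z^{op}$-fibres are the counting measures $\rho^v_Z$ and $\rho^u_{Z^{op}}$ from the proof of Theorem~\ref{thm:mainF_red^p}.

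Second, check that convolution by $F\in\mathcal{C}(L)$ respects this grading. Since $(F*\xi)(l)=\sum_{r(k)=r(l)}F(k)\xi(k^{-1}l)$, a function supported on $G$ (range and source in $G^{(0)}$) sends $\mathcal{E}_G\to\mathcal{E}_G$ and kills $\mathcal{E}_H$. Likewise, $\mathcal{C}(Z)$ sends $\mathcal{E}_H\to\mathcal{E}_G$ and kills $\mathcal{E}_G$, $\mathcal{C}(Z^{op})$ sends $\mathcal{E}_G\to\mathcal{E}_H$, and $\mathcal{C}(H)$ acts on $\mathcal{E}_H$. This is the only bookkeeping step: one tracks which block each arrow lies in, using $r(z)\in G^{(0)}$ and $s(z)\in H^{(0)}$. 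Hence, in the $2\times2$ block decomposition of $\mathcal{E}$, the operator of $\begin{pmatrix}0&\phi\\0&0\end{pmatrix}$ has a single nonzero block $T_\phi\colon L^p(\nu_0)\to L^p(\mu_0)$. Similarly $\psi\in\mathcal{C}(Z^{op})$ gives $S_\psi\colon L^p(\mu_0)\to L^p(\nu_0)$, $a\in\mathcal{C}(G)$ gives $R_a\in\mathcal{B}(L^p(\mu_0))$, and $b\in\mathcal{C}(H)$ gives an operator in $\mathcal{B}(L^p(\nu_0))$. These are the natural embeddings.

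Third, compare norms. Because only one block is nonzero, the operator norm on $\mathcal{E}=\mathcal{E}_G\oplus_p\mathcal{E}_H$ equals the norm of that block. Hence $\|T_\phi\|=\|\phi\|_L$ and $\|S_\psi\|=\|\psi\|_L$ by the definition of $\|\cdot\|_L$ and the isometry of the sum of regular representations. For $a\in\mathcal{C}(G)$ we get $\|R_a\|=\|F_a\|_{F^p_{red}(L)}=\|a\|_{F^p_{red}(G)}$ by Theorem~\ref{thm:mainF_red^p}, and the same holds for $H$.

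Injectivity of these maps follows from these norm equalities, or directly by evaluating on the point masses $\delta_w$. Multiplicativity of the block operators, meaning that they compose according to the pairings and actions of Lemma~\ref{preMor}, is immediate because the representation of $\mathcal{C}(L)$ is a homomorphism and the pairings were defined as corner products of convolution. I expect no real obstacle. The only care needed is the identification of the $\ell^p$-fibres with correct measures in the non-Hausdorff case, and this is already handled as in Theorem~\ref{thm:mainF_red^p}.
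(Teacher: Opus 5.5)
Your proposal is correct and follows the paper's proof. You realize $\mathcal{C}(L)$ as $2\times2$ block operators on $L^p(\mu_0)\oplus_p L^p(\nu_0)$ through the direct sum of the regular representations of $L$, and you read off the four corner embeddings. The off-diagonal corners are isometric by the definition of $\|\cdot\|_L$, and the diagonal ones by Theorem~\ref{thm:mainF_red^p}. You also spell out two points the paper leaves implicit: the grading check showing which corner maps which summand to which, and the fact that a single-block operator on an $\ell^p$-direct sum has the norm of that block.
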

\begin{proof}
Recall that the Haar system from Lemma~\ref{lem:haarL} realizes $\mathcal{C}(L)\subseteq F^p_{red}(L)$ as a $2\times2$ block algebra acting on $L^p(\mu_0)\oplus_pL^p(\nu_0)$ via the decomposition 
\[
\mathcal{C}(L) = 
\begin{pmatrix}
\mathcal{C}(G) & \mathcal{C}(Z) \\
\mathcal{C}(Z^{op}) & \mathcal{C}(H)
\end{pmatrix} \subseteq \begin{pmatrix}
\mathcal{B}\left(L^p(\mu_0) \right)& \mathcal{B}\left(L^p(\nu_0) , L^p(\mu_0) \right)\\
\mathcal{B}\left(L^p(\mu_0) , L^p(\nu_0) \right)& \mathcal{B}\left(L^p(\nu_0) \right)
\end{pmatrix}.
\]
The desired embeddings come from looking at the appropriate corner actions, which coincide isometrically with 
the norms used in Lemma~\ref{preMor} thanks to Theorem~\ref{thm:mainF_red^p}. 
\end{proof}

Let $\X_Z$ and $\Y_Z$ be the completions of $\mathcal{C}(Z^{op})$ and $\mathcal{C}(Z)$, respectively, as defined in the proof of Theorem~\ref{thm:main}. Thanks to Lemma~\ref{lem:repn}, we
regard
\[
\X_Z\subseteq\mathcal{B}(L^p(\mu_0),L^p(\nu_0)),
\qquad
\Y_Z\subseteq\mathcal{B}(L^p(\nu_0),L^p(\mu_0)),
\]
and $A=F^p_{\mathrm{red}}(G)$ and $B=F^p_{\mathrm{red}}(H)$ as concrete
$L^p$-operator algebras on $L^p(\mu_0)$ and $L^p(\nu_0)$, respectively. 

Theorem~\ref{thm:main} says that $(\X_Z,\Y_Z)$ implements a Morita
equivalence between $A$ and $B$. Hence
\[
\begin{aligned}
\overline{\mathrm{span}}(B\X_Z)
&=\overline{\mathrm{span}}(\X_ZA)=\X_Z,\\
\overline{\mathrm{span}}(A\Y_Z)
&=\overline{\mathrm{span}}(\Y_ZB)=\Y_Z,\\
\overline{\mathrm{span}}(\Y_Z\X_Z)&=A,
\qquad
\overline{\mathrm{span}}(\X_Z\Y_Z)=B.
\end{aligned}
\]
Thus Assumption~\ref{assumption:repn} is satisfied, and the concrete
algebra $\mathbb{L}_{\alpha_p}$ is defined by
Equations~\eqref{Eq:alpha_p_l} and \eqref{Eq:alpha_p_r}.

We will also use spatial nondegeneracy. The regular representations of $A$ and $B$ are nondegenerate. Combining this with the last two density
relations above gives
\begin{enumerate}
\item $AL^p(\mu_0)$ and $\Y_ZL^p(\nu_0)$ are dense in $L^p(\mu_0)$;
\item $BL^p(\nu_0)$ and $\X_ZL^p(\mu_0)$ are dense in $L^p(\nu_0)$.
\end{enumerate}
For example, density of $\Y_ZL^p(\nu_0)$ follows from
\[
AL^p(\mu_0)
\subseteq
\overline{\mathrm{span}}\,\Y_Z(\X_ZL^p(\mu_0)).
\]

\begin{thm} \label{thm:LpLink}
For every $p\in[1,\infty)$ there is a canonical contractive injective homomorphism
\[
\Phi_p\colon F_{red}^p(L)\longrightarrow\mathbb{L}_{\alpha_p}
\]
with dense range. For $p=2$, $\Phi_2$ is an isometric isomorphism.

More generally, fix $p\in[1,\infty)$ and put
\[
\X'_p=\X_Z\oplus_{\alpha_{p,l}}B,
\qquad
\Y'_p=\Y_Z\oplus_{\alpha_{p,r}}B.
\]
If $(\X'_p,\Y'_p)$ satisfies Assumption
\ref{assumption:concrete} with
\[
A=F^p_{\mathrm{red}}(L),\qquad
B=F^p_{\mathrm{red}}(H),
\]
then $\Phi_p$ is a Banach algebra isomorphism. It is
isometric when the constant $K$ in Assumption
\ref{assumption:concrete} satisfies $K\leq1$.
\end{thm}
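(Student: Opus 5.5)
Set $\mathcal H_0=L^p(\mu_0)\oplus_pL^p(\nu_0)$. The plan is to realize $F^p_{\mathrm{red}}(L)$ concretely on $\mathcal H_0$, let a block operator act on $(\X_Z,\Y_Z)\oplus_{\alpha_p}B$ by composition with rows and columns, and read off contractivity, injectivity and density from the spatial facts established just before the statement.

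\emph{Concrete picture and definition of $\Phi_p$.} For $w=u\in G^{(0)}$ we have $L_u=G_u\sqcup Z^{op}_u$, and for $w=v\in H^{(0)}$ we have $L_v=Z_v\sqcup H_v$. Hence the $\ell^p$-direct sum of the spaces $\ell^p(L_w)$ is, after regrouping, exactly $\mathcal H_0$. Therefore $\bigoplus_w\mathrm{Ind}^L\delta_w$ is an isometric representation of $F^p_{\mathrm{red}}(L)$ on $\mathcal H_0$. For $F\in\mathcal C(L)$ it is the $2\times2$ block operator of Lemma~\ref{lem:repn}, with entries $F_{11},F_{12},F_{21},F_{22}$ acting as in that lemma. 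Define $\Phi_p(F)=\iota_{\alpha_p}(F)$. In the concrete picture, $\Phi_p(F)^r$ sends a column $(y_0,b_0)\colon L^p(\nu_0)\to\mathcal H_0$ to $F\circ(y_0,b_0)$. Likewise $\Phi_p(F)^l$ sends a row $(x_0,b_0)\colon\mathcal H_0\to L^p(\nu_0)$ to $(x_0,b_0)\circ F$. By \eqref{Eq:alpha_p_l} and \eqref{Eq:alpha_p_r}, $\alpha_{p,r}$ and $\alpha_{p,l}$ are exactly the operator norms of these columns and rows. So
\[
\alpha_{p,r}\bigl(F\circ(y_0,b_0)\bigr)\le\|F\|_{F^p_{\mathrm{red}}(L)}\,\alpha_{p,r}(y_0,b_0),
\]
and the analogous bound holds on the left. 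Thus $\Phi_p$ is a contractive homomorphism on $\mathcal C(L)$. It extends to $F^p_{\mathrm{red}}(L)$, and the extension is still given by composition with the concrete operator $F$, by continuity. Its range contains $\iota_{\alpha_p}$ of the algebraic blocks with entries in $\mathcal C(G),\mathcal C(Z),\mathcal C(Z^{op}),\mathcal C(H)$. By the estimate in Lemma~\ref{lem_Lclosed} and density of these spaces in $A,\Y_Z,\X_Z,B$, this range is dense in $\mathbb L_{\alpha_p}$.

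\emph{Injectivity.} This is where the spatial nondegeneracy listed before the statement enters. Suppose $\Phi_p(F)=0$. Then $F\circ(y_0,b_0)=0$ for all $y_0\in\Y_Z$ and $b_0\in B$. Taking $y_0=0$ gives $F(0,b_0\eta)=0$, and taking $b_0=0$ gives $F(y_0\eta,0)=0$. Since $BL^p(\nu_0)$ is dense in $L^p(\nu_0)$ and $\Y_ZL^p(\nu_0)$ is dense in $L^p(\mu_0)$, it follows that $F=0$ on $\mathcal H_0$. Because the representation of $F^p_{\mathrm{red}}(L)$ on $\mathcal H_0$ is isometric, $F=0$.

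\emph{The case $p=2$.} Here $\Phi_2$ is an injective $*$-homomorphism into $\mathbb L_{\alpha_2}$, which is a $C^*$-algebra by Proposition~\ref{prop_pnormLink}. An injective $*$-homomorphism between $C^*$-algebras is isometric, so its range is closed. Being also dense, $\Phi_2$ is onto.

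\emph{General $p$ under the extra hypothesis.} I would view $\X'_p$ as rows $\mathcal H_0\to L^p(\nu_0)$ and $\Y'_p$ as columns $L^p(\nu_0)\to\mathcal H_0$. With $A=F^p_{\mathrm{red}}(L)$ acting on $\mathcal H_0$, the pair $(\Y'_p,\X'_p)$ is then a concrete $L^p$-module over $A$. Corollary~\ref{cor_modnorm=repnorm}, or Theorem~\ref{thm_modnorm=repnorm} when $K\le1$, then makes $\overline\kappa\colon F^p_{\mathrm{red}}(L)\to\mathcal K_B(\X'_p,\Y'_p)$ a Banach algebra isomorphism, isometric when $K\le1$. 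Proposition~\ref{Prop:L_alpha=K_B} identifies $\mathcal K_B(\X'_p,\Y'_p)$ with $\mathbb L_{\alpha_p}$.

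The main obstacle is checking that $\overline\kappa$ coincides with $\Phi_p$. On a product $(y,b_1)(x,b_2)$ of a column and a row, $\kappa$ gives $\theta_{(y,b_1),(x,b_2)}$. That is the block matrix of Proposition~\ref{Prop:L_alpha=K_B}, and it equals $\Phi_p$ of the same concrete block operator on $\mathcal H_0$. Both maps are continuous, and these products span a dense subspace of $A$ (condition~\eqref{assumption:Apnondeg} of Assumption~\ref{assumption:concrete}), so $\overline\kappa=\Phi_p$. Hence $\Phi_p$ is a Banach algebra isomorphism, isometric when $K\le1$.
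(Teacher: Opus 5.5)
Your proposal is correct. Outside the case $p=2$ it is essentially the paper's argument. The injectivity step, using density of $\Y_ZL^p(\nu_0)$ and $BL^p(\nu_0)$, is identical. The final part is the same appeal to Corollary~\ref{cor_modnorm=repnorm} (or Theorem~\ref{thm_modnorm=repnorm}) combined with Proposition~\ref{Prop:L_alpha=K_B}.

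There is one organizational difference. You define $\Phi_p$ directly as $\iota_{\alpha_p}$, i.e.\ composition with rows and columns, and get dense range from Lemma~\ref{lem_Lclosed}. The paper instead defines $\Phi_p$ as the extension of the map of Lemma~\ref{lem:(1/2)ineq} on $\operatorname{span}(\Y'_p\X'_p)$, and reaches $\mathbb L_{\alpha_p}$ through Proposition~\ref{Prop:L_alpha=K_B}. This is why you need your closing check $\overline\kappa=\Phi_p$, which the paper has by definition; the contractivity computation is the same submultiplicativity estimate in both.

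The genuine difference is at $p=2$.
\begin{itemize}
\item The paper identifies $\mathbb L_{\alpha_2}$ with the linking $C^*$-algebra by Proposition~\ref{prop_pnormLink}. It identifies $F^2_{\mathrm{red}}(L)=C^*_r(L)$ with the same algebra by \cite[Remark~2.4 and Theorem~4.1]{SW}, and compares the two on $\mathcal C(L)$.
\item You instead use that an injective $*$-homomorphism between $C^*$-algebras is isometric. The injectivity and dense range you already proved for all $p$ then force surjectivity.
\end{itemize}

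Your route avoids the Sims--Williams input, but you should state two checks that you leave implicit.
\begin{itemize}
\item Proposition~\ref{prop_pnormLink} applies only once you verify $\X_Z=\Y_Z^*$ in the Hilbert realization on $\mathcal H_0$. This holds because the involution of $\mathcal C(L)$ swaps $\mathcal C(Z)$ and $\mathcal C(Z^{op})$, and the regular representation is a $*$-representation.
\item You assert that $\Phi_2$ is $*$-preserving without argument. The involution that Proposition~\ref{prop_pnormLink} transports to $\mathbb L_{\alpha_2}$ is the Hilbert-space adjoint of block operators on $\mathcal H_0$. Since the regular representation of $L$ is a $*$-representation, $\Phi_2$ intertwines the two involutions.
\end{itemize}
With these two points made explicit, your $p=2$ argument is complete.
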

\begin{proof}
With $\X'_p$ and $\Y'_p$ as above, Equations~\eqref{Eq:alpha_p_l} and \eqref{Eq:alpha_p_r} show that $(\X'_p,\Y'_p)$ is a concrete $L^p$-module over $B$, represented as row and column operators between $L^p(\mu_0)\oplus_pL^p(\nu_0)$ and $L^p(\nu_0)$. Fullness and nondegeneracy of the Morita equivalence from Theorem~\ref{thm:main} imply that the linear span of $\Y'_p\X'_p$ is dense, in the concrete operator norm, in $F_{red}^p(L)$. Indeed, the four corners of such products are dense in $A$, $\Y_Z$, $\X_Z$, and $B$, respectively. Approximating the four corners separately and using the elementary upper estimate for a $2\times2$ block operator (the same one-sided estimate recorded in Lemma~\ref{lem_Lclosed}) gives convergence in the ambient block-operator norm.

Lemma~\ref{lem:(1/2)ineq} therefore gives a contractive homomorphism
\[
\Phi_p\colon F_{red}^p(L)\longrightarrow\mathcal{K}_B(\X'_p,\Y'_p)
\]
with dense range. Proposition~\ref{Prop:L_alpha=K_B}, applied with the specific operator norms $\alpha_p$, identifies the target with the concrete operator-norm completion $\mathbb{L}_{\alpha_p}$.

For injectivity, suppose that $\Phi_p(T)=0$. Then the right component of the module operator is zero, so
\[
T\begin{pmatrix}y\\b\end{pmatrix}=0
\]
for every $y\in\Y_Z$ and $b\in B$. Hence $T$ vanishes on the linear span of
\[
\left\{(y\eta,b\eta):y\in\Y_Z,\ b\in B,\ \eta\in L^p(\nu_0)\right\}.
\]
By the spatial nondegeneracy established above, that span is dense in $L^p(\mu_0)\oplus_pL^p(\nu_0)$. Since $F_{red}^p(L)$ is concretely represented on this space, $T=0$.

When $p=2$, the involution on $\mathcal{C}(L)$ interchanges
$\mathcal{C}(Z)$ and $\mathcal{C}(Z^{op})$. Hence, in the regular
Hilbert space realization above,
\[
\X_Z=\Y_Z^*.
\]
Proposition~\ref{prop_pnormLink} therefore identifies
$\mathbb{L}_{\alpha_2}$ with the usual linking $C^*$-algebra of the
reduced imprimitivity bimodule. On the other hand,
\cite[Remark~2.4 and Theorem~4.1]{SW} identifies
$F_{\mathrm{red}}^2(L)=C_r^*(L)$
with the same linking $C^*$-algebra. These identifications agree on the
dense algebraic block algebra $\mathcal{C}(L)$, so $\Phi_2$ is an
isometric isomorphism.

Finally, if the hypotheses of Assumption~\ref{assumption:concrete} hold for $(\X'_p,\Y'_p)$, Corollary~\ref{cor_modnorm=repnorm} supplies the reverse norm estimate. The range of $\Phi_p$ is then closed as well as dense, and hence $\Phi_p$ is onto; if $K\leq1$, Theorem~\ref{thm_modnorm=repnorm} makes the comparison isometric.
\end{proof}

\begin{rem}
Let $\Phi_p$ be the map from Theorem~\ref{thm:LpLink}. The contractive estimate
\[
\|\Phi_p(T)\|_{\mathbb L_{\alpha_p}}\leq \|T\|_{F^p_{\mathrm{red}}(L)}
\]
follows immediately from Lemma~\ref{lem:(1/2)ineq}. We remark that the reverse estimate requires an additional uniform factorization
estimate such as that in Assumption~\ref{assumption:concrete}, or a
separate norm argument.
\end{rem}

\section{Proper groupoid correspondences and Morita cycles}\label{sect_corresp}

We next consider Morita cycles associated with \'{e}tale groupoid
correspondences.  Proper correspondences give Morita cycles for the
full $L^p$-operator algebras.  For the reduced algebras, we construct
the associated Banach pair, characterize compactness when the left
action extends to the reduced completion, and give sufficient
conditions for this extension.

For correspondences we use the convention of \cite[Definition~2.3]{AKM}: the anchor map of
a groupoid action is continuous, but need not be open.

\begin{defn} \cite[Definitions~2.3, 3.1 and 3.3]{AKM} \label{def:groupoid-correspondence}
Let $G$ and $H$ be locally compact, locally Hausdorff, \'{e}tale groupoids.
An (\'{e}tale) groupoid correspondence from $G$ to $H$ is a topological
space $Z$ equipped with a continuous left $G$-action with anchor map
$r_Z:Z\to G^{(0)}$ and a continuous right $H$-action with anchor map
$s_Z:Z\to H^{(0)}$ such that
\begin{enumerate}
\item the actions of $G$ and $H$ on $Z$ commute;
\item $s_Z:Z\to H^{(0)}$ is a local homeomorphism;
\item the right $H$-action is free and proper.
\end{enumerate}
The correspondence is said to be proper if $r_Z$ induces a proper map
\[
r_*:Z/H\longrightarrow G^{(0)}.
\]
\end{defn}

We refer to properness of $r_*$ as \emph{orbit properness}, to distinguish it from properness of the
left $G$-action.

Note that since $H^{(0)}$ is locally compact Hausdorff, and $s_Z$ is a local homeomorphism, $Z$ is locally compact and locally Hausdorff.
Also, since the $H$-action is free and proper, the orbit space $Z/H$ is Hausdorff \cite[Proposition 2.16]{AKM}.
Let $q:Z\to Z/H$ denote the quotient map.

\begin{defn} \cite[Definition 7.2]{AKM}
A slice of a groupoid correspondence $Z$ from $G$ to $H$ is an open subset $V\subseteq Z$ such that $s_Z|_V:V\to H^{(0)}$ and $q|_V:V\to Z/H$ are injective.
\end{defn}

If $Z$ is a groupoid correspondence from $G$ to $H$, then $s_Z:Z\to H^{(0)}$ is a local homeomorphism, and so is $q:Z\to Z/H$ by \cite[Lemma 2.10]{AKM}, so any point in $Z$ has an open neighborhood that is a slice.
By \cite[Proposition 7.1]{AKM}, we may write any element in $\mathcal{C}(Z)$ as a finite sum $\sum_{i=1}^nf_i$, where $f_i\in C_c(V_i)$ and each $V_i$ is a slice.

\begin{defn} (\cite[Definition 5.7]{Par09})
Let $A$ and $B$ be nondegenerate Banach algebras. A Morita cycle from $A$ to $B$ is a nondegenerate Banach $(A,B)$-correspondence $((\X,\Y), \pi_A)$ such that $\pi_A(A)\subseteq\mathcal{K}_B(\X,\Y)$.
\end{defn}

The significance of Morita cycles from $A$ to $B$ is that they represent classes in $KK^{ban}(A,B)$, and act on the right of $KK^{ban}(C,A)$ for any nondegenerate Banach algebra $C$ (see \cite[Section 5]{Par09} for details).
Moreover, Morita equivalences are Morita cycles (cf. \cite[Proposition 5.21]{Par09}).

\subsection{Full Morita cycles from proper correspondences}

Assume in this subsection that $G^{(0)}$ and $H^{(0)}$ are
paracompact.

\begin{lem}[The imprimitivity groupoid]
\label{lem:correspondence-imprimitivity-groupoid}
Let $Z$ be a groupoid correspondence from $G$ to $H$, and put
\[
   Q=Z/H.
\]
There is a locally compact, locally Hausdorff, \'etale groupoid
\[
   K_Z
   =\bigl(Z*_s Z\bigr)/H
\]
with unit space naturally identified with $Q$.  Writing $[z,w]$ for the
class of $(z,w)$ under the diagonal right $H$-action, its range, source,
and inverse maps are
\[
   r([z,w])=q(z),\qquad
   s([z,w])=q(w),\qquad
   [z,w]^{-1}=[w,z].
\]
With its original right $H$-action and the left action determined by
\[
   [z,w]\cdot(wh)=zh,
\]
the space $Z$ is a $K_Z$--$H$ equivalence.
\end{lem}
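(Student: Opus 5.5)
We will build $K_Z$ as the quotient of the fibered product $Z*_sZ=\{(z,w):s_Z(z)=s_Z(w)\}$ by the diagonal right $H$-action $(z,w)\cdot h=(zh,wh)$. This is the standard imprimitivity-groupoid construction (cf.\ \cite{MRW} and \cite[Section~2]{AKM}). The extra points to check are that the result is \'etale, and that local Hausdorffness survives without assuming $Z$ Hausdorff.

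First, $Z*_sZ$ is locally compact and locally Hausdorff. It is a subspace of $Z\times Z$ cut out by an equation in the Hausdorff space $H^{(0)}$, hence locally closed in the right sense. The diagonal $H$-action on it is free and proper, because the $H$-action on the first factor already is. Freeness of $(z,w)h=(z,w)$ forces $h=s_Z(z)$. For properness, if $(z_i h_i,w_ih_i)$ and $(z_i,w_i)$ converge, then so do $(z_ih_i,z_i)$, and properness on $Z$ lets us extract convergent $h_i$. Hence $K_Z$ is locally compact, and its quotient map is open.

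Next we show the quotient is \'etale. The coordinate projection $(z,w)\mapsto w$ is a local homeomorphism $Z*_sZ\to Z$, since it is a pullback of the local homeomorphism $s_Z$. This map is $H$-equivariant, so it descends to a local homeomorphism $s\colon K_Z\to Z/H$, $[z,w]\mapsto q(w)$. The same argument applies to $r$. Here we use \cite[Lemma~2.10]{AKM}, that $q$ is a local homeomorphism, and work with slices, which give explicit local sections.

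The groupoid structure is defined by the composition
\[
[z,w][w',x]=[z\cdot[w,w']_H^{-1},\,x]
\quad\text{whenever } q(w)=q(w'),
\]
which is normalised by choosing representatives with $w=w'$, so that $[z,w][w,x]=[z,x]$. Freeness guarantees that $h$ with $w'=wh$ is unique, and $h$ depends continuously on $(w,w')$ by properness. This gives well-definedness and continuity of multiplication, and associativity is immediate on normalised representatives. The unit space is $\{[z,z]\}\cong Z/H$.

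Local Hausdorffness is the main obstacle. A slice $V\subseteq Z$ is locally Hausdorff, since $s_Z|_V$ is injective and continuous into the Hausdorff space $H^{(0)}$, hence Hausdorff. The image of $(V\times V)\cap(Z*_sZ)$ in $K_Z$ is open, and it is homeomorphic to a subset of $V\times V$ via the injective section. These images therefore give Hausdorff open neighborhoods covering $K_Z$. Openness of these images uses the openness of $q$ on $Z*_sZ$.

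Finally, we check that $Z$ is a $K_Z$--$H$ equivalence. The left action is defined by $[z,w]\cdot w'=z\cdot h$, where $w'=wh$, with anchor $q$. This is well defined because $h$ is unique, and continuous because $h$ depends continuously on $(w,w')$. It commutes with the right $H$-action.

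For freeness of the left action, suppose $[z,w]\cdot w=w$. Then $z=w$, so $[z,w]$ is a unit. For properness, if $([z_i,w_i]\cdot w_i,w_i)$ converges, then $(z_i,w_i)$ converges, and the classes converge.

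The anchor $q$ induces the tautological homeomorphism $Z/H\to Q=K_Z^{(0)}$. The map $s_Z$ induces a bijection $K_Z\backslash Z\to H^{(0)}$: two points with the same $s_Z$-value $u$ are related by $[z,w]$, and $s_Z$ is surjective onto its image. This bijection is a homeomorphism because $s_Z$ is open, being a local homeomorphism. One caveat: if $s_Z$ is not surjective, we replace $H^{(0)}$ by the open saturated set $s_Z(Z)$, or else assume surjectivity as implicit in the statement.
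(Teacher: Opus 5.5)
Most of your plan is a sound sketch of the standard construction. Your route to étaleness is a valid alternative to the paper's. You descend the local homeomorphism $(z,w)\mapsto w$ through the quotient maps, using that $q$ is a local homeomorphism. The paper instead exhibits an explicit basis of open bisections $[U,V]$, built from two slices $U\ni z$, $V\ni w$ with $s_Z(U)=s_Z(V)$, and cites Muhly--Williams for the remaining groupoid and equivalence properties.

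Your caveat about surjectivity of $s_Z$ is well taken. Definition~\ref{def:groupoid-correspondence} does not force $s_Z$ to be onto. Without surjectivity, condition (5) of an equivalence holds only after replacing $H$ by its restriction to the invariant open set $s_Z(Z)$, and the statement glosses over this.

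\textbf{The gap is your local Hausdorffness paragraph.} On a slice $V$ the map $s_Z|_V$ is injective, so
$(V\times V)\cap(Z*_sZ)$ is just the diagonal $\{(v,v):v\in V\}$. Its image in $K_Z$ therefore consists only of units $[v,v]$. An arrow $[z,w]$ with $q(z)\neq q(w)$ never lies in such a set, since $[z,w]=[v,v]$ forces $z=w$. So these sets do not cover $K_Z$; you have only produced Hausdorff neighborhoods of the unit space.

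\textbf{How to repair it.} One fix is to use two slices. For $[z,w]$, take slices $U\ni z$ and $V\ni w$. Then $U*_sV$ is open in $Z*_sZ$, and the quotient map is injective on it, by injectivity of $q|_V$ and freeness. Since that map is also open, its image $[U,V]$ is an open neighborhood of $[z,w]$ homeomorphic to the Hausdorff space $U*_sV\subseteq U\times V$.

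More simply, the step you call the main obstacle follows directly from your own étaleness argument. The map $s\colon K_Z\to Q$ is a local homeomorphism into the locally compact Hausdorff space $Q$. Hence every arrow has an open neighborhood homeomorphic to an open subset of $Q$, which gives local compactness and local Hausdorffness at once. This is exactly how the paper concludes.

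\textbf{Minor slip.} You fix $h$ by $w'=wh$, i.e.\ the paper's convention $w[w,w']_H=w'$. With that convention the product is $[z,w][w',x]=[z\cdot[w,w']_H,\,x]$. As you wrote it, $z\cdot[w,w']_H^{-1}$ is in general not even defined, because $s_Z(z)=s_Z(w)\neq s_Z(w')$.
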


\begin{proof}
This is the standard imprimitivity groupoid construction associated with a free and proper right $H$-space; see
\cite[Theorem~3.5(1)]{MuhlyWilliamsImprimitivity}.
For background on principal groupoid spaces in the locally compact,
locally Hausdorff setting used here, see also
\cite[Section~2]{MW}. We only record why the construction is \'etale in
the present, possibly non-Hausdorff, setting.

Let $z,w\in Z$ satisfy $s_Z(z)=s_Z(w)$.  Choose slices
$U_0,V_0\subseteq Z$ containing $z,w$, respectively.  Replacing them
by
\[
 U=U_0\cap s_Z^{-1}(W),
 \qquad
 V=V_0\cap s_Z^{-1}(W),
 \qquad
 W=s_Z(U_0)\cap s_Z(V_0),
\]
we may assume that
\[
 s_Z(U)=s_Z(V)=W.
\]
Then
\[
 [U,V]
 \coloneqq
 \{[u,v]:u\in U,\ v\in V,\ s_Z(u)=s_Z(v)\}
\]
is open in $K_Z$: indeed,
$U*_{s_Z}V$ is open in $P \coloneqq Z*_{s_Z}Z$, and the orbit map for the
diagonal right $H$-action is open since $H$ is
\'etale and the action map $P*H\to P$ is a local homeomorphism
\cite[Lemma~2.9]{AKM}. The source and range maps restrict to homeomorphisms
\[
 s:[U,V]\longrightarrow q(V),
 \qquad
 r:[U,V]\longrightarrow q(U).
\]
Thus the sets $[U,V]$ form a basis of open bisections of $K_Z$.
Hence $K_Z$ is \'etale.  Since each $[U,V]$ is homeomorphic to an
open subset of the locally compact Hausdorff space $Q=Z/H$, it is
also locally compact and locally Hausdorff.
The remaining assertions are the usual properties of the
imprimitivity groupoid construction.
\end{proof}

The left $G$-action on $Z$ descends to an action on $Q=Z/H$,
\[
 \gamma\cdot q(z) \coloneqq q(\gamma z),
\]
with anchor $r_*:Q\to G^{(0)}$.  For
$x=q(z)\in Q$ and $\gamma\in G$ with $s(\gamma)=r_*(x)$, define
\[
 \beta_Z(\gamma,x) \coloneqq [\gamma z,z].
\]
This is independent of the representative $z$ of $x$: if $z'=zh$,
then, since the two actions commute,
\[
 [\gamma z',z']
 =[(\gamma z)h,zh]
 =[\gamma z,z].
\]

The map $\beta_Z$ is compatible with multiplication.  Namely, if
$x\in Q$, $\gamma\in G$ satisfies
\[
    s(\gamma)=r_*(x),
\]
and $\alpha\in G$ satisfies $s(\alpha)=r(\gamma)$, then
\begin{equation}
\label{eq:betaZ-multiplication}
    \beta_Z(\alpha,\gamma\cdot x)\,
    \beta_Z(\gamma,x)
    =
    \beta_Z(\alpha\gamma,x).
\end{equation}
Indeed, if $x=q(z)$, then
\[
    [\alpha\gamma z,\gamma z]\,[\gamma z,z]
    =
    [\alpha\gamma z,z].
\]

\begin{prop}[The homomorphism associated with a proper correspondence]
\label{prop:ThetaZ}
Let $Z$ be a proper groupoid correspondence from $G$ to $H$.  For
$a\in\mathcal C(G)$ and $[z,w]\in K_Z$, define
\begin{equation}
\label{eq:ThetaZ-definition}
   \Theta_Z(a)([z,w])
   \coloneqq
   \sum_{\substack{\gamma\in G\\ \gamma w=z}} a(\gamma).
\end{equation}
Then the sum is finite,
\[
   \Theta_Z(a)\in\mathcal C(K_Z),
\]
and
\[
   \Theta_Z:\mathcal C(G)\longrightarrow\mathcal C(K_Z)
\]
is an algebra homomorphism satisfying
\[
   \|\Theta_Z(a)\|_{I,s}\leq \|a\|_{I,s},
   \qquad
   \|\Theta_Z(a)\|_{I,r}\leq \|a\|_{I,r}.
\]
Consequently, for every $p\in[1,\infty]$, it extends uniquely to a
contractive homomorphism
\[
   \Theta_Z:F^p(G)\longrightarrow F^p(K_Z).
\]
\end{prop}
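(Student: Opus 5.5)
The plan is to reduce everything to bisections and to two simple facts: freeness of the $H$-action, and orbit properness of $r_*$.

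\emph{Finiteness.} Fix $[z,w]\in K_Z$, so $s_Z(z)=s_Z(w)$. If $\gamma w=z$, then $\gamma$ lies in $G^{r_Z(z)}$ with $s(\gamma)=r_Z(w)$. Since $a$ has compact support, only finitely many arrows of $G^{r_Z(z)}$ meet $\supp a$, because fibres of an étale groupoid are discrete and closed. Hence the sum in \eqref{eq:ThetaZ-definition} is finite. Independence of the representative is immediate: $\gamma w=z$ if and only if $\gamma wh=zh$, because the two actions commute and $H$ acts freely.

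\emph{Support and continuity.} By Lemma~\ref{lem:bisection-decomposition} and linearity, it suffices to treat $a\in C_c(A)$ with $A$ a relatively compact open bisection of $G$. For such $a$ I will write
\[
\Theta_Z(a)([z,w])=a(\gamma_{z,w})
\]
whenever there is $\gamma_{z,w}\in A$ with $\gamma_{z,w}w=z$; this arrow is unique because $s|_A$ is injective. Set $\mathcal{A}=\{[\gamma w,w]:\gamma\in A\}$, which is the image of the map $(\gamma,x)\mapsto\beta_Z(\gamma,x)$ on $A*Q$. This map is continuous and injective, hence a local bisection, and $\mathcal{A}$ is open because the relevant maps are local homeomorphisms, using \cite[Lemma~2.9]{AKM} as in the proof of Lemma~\ref{lem:correspondence-imprimitivity-groupoid}. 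On $\mathcal{A}$, $\Theta_Z(a)$ is $a\circ\gamma$, which is continuous.

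For compact support, note that the support lies in $\beta_Z(\supp a * Q)$. The set $\{x\in Q: r_*(x)\in s(\supp a)\}=r_*^{-1}(s(\supp a))$ is compact by orbit properness, and this is exactly the point where properness is used. It remains to cover this compact set by finitely many sets $[U,V]$ and use a partition of unity on $Q$ (available since $Q$ is Hausdorff and locally compact, with $r_*^{-1}$ of a compact set compact). This writes $\Theta_Z(a)$ as a finite sum of functions in $C_c$ of open Hausdorff bisections, so $\Theta_Z(a)\in\mathcal C(K_Z)$. I expect this to be the main obstacle in the non-Hausdorff setting: one must check that the image of a compact piece of $A*Q$ is a compact subset of a Hausdorff open bisection. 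I would first try to shrink $A$ so that $\beta_Z$ restricted to $A*r_*^{-1}(s(\overline{A}))$ lands in a single $[U,V]$ locally, and then patch the pieces together.

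\emph{Homomorphism.} The unit-space correspondence between $K_Z$ and $Q$ means the convolution $\Theta_Z(a)*\Theta_Z(b)$ at $[z,w]$ is a sum over $[z,y]$ with $q(y)$ varying. Using \eqref{eq:betaZ-multiplication}, the terms reindex to $\sum_{\alpha\gamma w=z}a(\alpha)b(\gamma)$: each factorisation $\alpha\gamma$ of an arrow $\eta$ with $\eta w=z$ corresponds bijectively to the intermediate point $y=\gamma w$. This gives $\Theta_Z(a*b)$.

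\emph{Norm estimates and extension.} For a fixed unit $q(w)$,
\[
\sum_{s([z,w])=q(w)}|\Theta_Z(a)([z,w])|\le\sum_{\gamma\in G_{r_Z(w)}}|a(\gamma)|.
\]
This holds because distinct $[z,w]$ in the sum come from disjoint sets of arrows $\gamma$ (each $\gamma$ determines $z=\gamma w$). Hence $\|\Theta_Z(a)\|_{I,s}\le\|a\|_{I,s}$. Similarly, for fixed $q(z)$, distinct classes $[z,w]$ with $\gamma w=z$ have $\gamma^{-1}z=w$, so $\gamma\mapsto[z,\gamma^{-1}z]$ covers the classes. The same reasoning applied to $G^{r_Z(z)}$ gives the $I,r$ bound. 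Therefore $\Theta_Z$ is $I$-contractive. Composing any $I$-contractive $L^p$-representation of $\mathcal C(K_Z)$ with $\Theta_Z$ yields an $I$-contractive representation of $\mathcal C(G)$, so $\|\Theta_Z(a)\|_{F^p(K_Z)}\le\|a\|_{F^p(G)}$. At the endpoints $p=1,\infty$ this follows directly from the two estimates. Uniqueness of the extension follows from density of $\mathcal C(G)$.
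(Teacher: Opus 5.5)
Your overall route is the paper's. You reduce to $a\in C_c(A)$ for an open bisection $A$ and work with $\mathcal A=\{[\gamma w,w]:\gamma\in A\}$, which is the paper's $\beta_A$. You use orbit properness of $r_*$ for compact support, reindex via \eqref{eq:betaZ-multiplication}, group arrows fibrewise for the $I$-norm bounds, and finish with the universal property of the full norm. Finiteness, multiplicativity, the two $I$-norm estimates and the extension are all correct. (Freeness of $H$ is not needed for independence of the representative, but that is harmless.)

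\textbf{The gap.} It lies in showing $\Theta_Z(a)\in\cC(K_Z)$, which is the one place where real work is required.
\begin{itemize}
\item ``Continuous and injective, hence a local bisection'' does not follow. Injectivity of a continuous map gives neither openness of its image nor injectivity of $r$ on it.
\item ``Open because the relevant maps are local homeomorphisms'' is not an argument.
\item The finish is left as a tentative plan.
\end{itemize}

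\textbf{The missing slice construction.} For $x=q(z)$ with $r_*(x)\in s(A)$, choose a slice $V\ni z$ with $r_Z(V)\subseteq s(A)$. Put $U=A\cdot V=\{\gamma_v v:v\in V\}$, where $\gamma_v\in A$ is the arrow with $s(\gamma_v)=r_Z(v)$.
\begin{itemize}
\item $U$ is again a slice with $s_Z(U)=s_Z(V)$. The $G$-action preserves $s_Z$, and injectivity of $q|_U$ follows from injectivity of $r|_A$ and of $q|_V$.
\item Hence $[U,V]$ is an open bisection by Lemma~\ref{lem:correspondence-imprimitivity-groupoid}.
\item Injectivity of $s_Z|_V$ shows that every $[u,v]\in[U,V]$ equals $[\gamma_v v,v]$. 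So $[\gamma_A(x)z,z]\in[U,V]\subseteq\mathcal A$, where $\gamma_A(x)=(s|_A)^{-1}(r_*(x))$, and $\mathcal A$ is open.
\item Range-injectivity on $\mathcal A$ holds because $\gamma_1x_1=\gamma_2x_2$ in $Q$ forces $r(\gamma_1)=r(\gamma_2)$, hence $\gamma_1=\gamma_2$ and $x_1=x_2$.
\end{itemize}

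\textbf{Why this finishes the step.} Once $\mathcal A$ is an open bisection, $s|_{\mathcal A}$ is a homeomorphism onto the open subset $r_*^{-1}(s(A))$ of the Hausdorff space $Q$.
\begin{itemize}
\item So $\mathcal A$ is itself Hausdorff, and the inverse $\tau_A\colon x\mapsto[\gamma_A(x)z,z]$ (with $q(z)=x$) is continuous.
\item On $\mathcal A$ we have $\Theta_Z(a)=a\circ(s|_A)^{-1}\circ r_*\circ s$, which is continuous.
\item $\Theta_Z(a)$ vanishes outside the set $\tau_A\bigl(r_*^{-1}(s(\supp a))\bigr)\subseteq\mathcal A$, which is compact by orbit properness.
\end{itemize}
Thus $\Theta_Z(a)\in C_c(\mathcal A)$ directly. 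No partition of unity or shrinking of $A$ is needed, and the non-Hausdorff obstacle you anticipate does not arise.
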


\begin{proof}
First note that the definition is independent of the representative of
$[z,w]$.  Indeed, replacing $(z,w)$ by $(zh,wh)$ does not change the
condition $\gamma w=z$, since the left and right actions commute:
\[
   \gamma(wh)=(\gamma w)h.
\]
The sum in \eqref{eq:ThetaZ-definition} is finite.  To see this, write
$a$ as a finite sum of functions supported in open bisections of $G$.
For each such bisection and each $w\in Z$, there is at most one
$\gamma$ in the bisection with $s(\gamma)=r_Z(w)$.

To prove that $\Theta_Z(a)\in\mathcal C(K_Z)$,  by the bisection
decomposition lemma from Section~\ref{sec:full-Lp}, it suffices to
suppose that $a\in C_c(A)$ for an open bisection $A\subseteq G$.  Put
\[
   D_A=r_*^{-1}(s(A))\subseteq Q.
\]
For $x\in D_A$, let
\[
   \gamma_A(x)=(s|_A)^{-1}(r_*(x))
\]
and define
\[
   \tau_A(x)=\beta_Z(\gamma_A(x),x).
\]
Set
\[
   \beta_A=\tau_A(D_A)
   =
   \{\beta_Z(\gamma,x):
       \gamma\in A,\ s(\gamma)=r_*(x)\}.
\]

We claim that $\beta_A$ is an open bisection of $K_Z$.  Fix
$x=q(z)\in D_A$.  Choose a slice $V\subseteq Z$ containing $z$ and,
after replacing $V$ by
$V\cap r_Z^{-1}(s(A))$, assume that
\[
   r_Z(V)\subseteq s(A).
\]
The bisection $A$ acts by a homeomorphism
\[
   r_Z^{-1}(s(A))
   \longrightarrow
   r_Z^{-1}(r(A)),
   \qquad
   v\longmapsto \gamma_v v,
\]
where $\gamma_v$ is the unique element of $A$ satisfying
$s(\gamma_v)=r_Z(v)$.  Put
\[
   U=A\cdot V=\{\gamma_vv:v\in V\}.
\]
Then $U$ is again a slice.  Indeed, the $G$-action preserves $s_Z$, so
injectivity of $s_Z|_V$ gives injectivity of $s_Z|_U$.  If
$q(\gamma_{v_1}v_1)=q(\gamma_{v_2}v_2)$, then applying $r_*$ gives
\[
   r(\gamma_{v_1})=r(\gamma_{v_2}),
\]
and hence $\gamma_{v_1}=\gamma_{v_2}$ by injectivity of $r|_A$.
Applying the inverse action then gives $q(v_1)=q(v_2)$, so $v_1=v_2$
by injectivity of $q|_V$.  Thus $q|_U$ is also injective.  Moreover,
\[
   s_Z(U)=s_Z(V).
\]

By Lemma~\ref{lem:correspondence-imprimitivity-groupoid},
\[
   [U,V]
   =
   \{[u,v]:u\in U,\ v\in V,\ s_Z(u)=s_Z(v)\}
\]
is an open bisection of $K_Z$.  It contains
$\tau_A(x)=[\gamma_A(x)z,z]$, and in fact
\[
   [U,V]\subseteq\beta_A.
\]
Indeed, if $[u,v]\in[U,V]$, write $u=\gamma_{v_0}v_0$ with
$v_0\in V$.  Since
\[
   s_Z(v_0)=s_Z(u)=s_Z(v),
\]
injectivity of $s_Z|_V$ gives $v_0=v$, and hence
\[
   [u,v]=[\gamma_vv,v]
   =\beta_Z(\gamma_v,q(v)).
\]
Thus every point of $\beta_A$ has an open neighborhood contained in
$\beta_A$, so $\beta_A$ is open.

The source map restricts to a bijection
\[
   s|_{\beta_A}:\beta_A\longrightarrow D_A,
   \qquad
   s(\tau_A(x))=x.
\]
Since $K_Z$ is \'{e}tale, this restriction is a homeomorphism, with
inverse $\tau_A$.  The range map is also injective on $\beta_A$:
if
\[
   r(\tau_A(x_1))=r(\tau_A(x_2)),
\]
then, writing $\gamma_i=\gamma_A(x_i)$, we have
\[
   \gamma_1\cdot x_1=\gamma_2\cdot x_2.
\]
Applying $r_*$ gives $r(\gamma_1)=r(\gamma_2)$, so
$\gamma_1=\gamma_2$ because $A$ is a bisection, and applying the
inverse action gives $x_1=x_2$.  Hence $\beta_A$ is an open
bisection.

Choose $z\in Z$ with $q(z)=x$.  Then
\[
   \tau_A(x)=[\gamma_A(x)z,z],
\]
so \eqref{eq:ThetaZ-definition} gives
\[
   \Theta_Z(a)(\tau_A(x))
   =
   \sum_{\delta z=\gamma_A(x)z}a(\delta).
\]
Since $a$ is supported in $A$, only $\delta\in A$ can contribute.
For such a $\delta$,
\[
   s(\delta)=r_Z(z)=s(\gamma_A(x)),
\]
and injectivity of $s|_A$ therefore gives
$\delta=\gamma_A(x)$.  Thus
\[
   \Theta_Z(a)(\tau_A(x))
   =a(\gamma_A(x)).
\]
Since $\tau_A$ is a homeomorphism and $\gamma_A$ is continuous,
$\Theta_Z(a)$ is continuous on $\beta_A$, and it vanishes outside
$\beta_A$.  Moreover,
\[
   \supp(\Theta_Z(a))
   \subseteq
   \tau_A\!\left(
      r_*^{-1}\bigl(s(\supp a)\bigr)
   \right).
\]
The set $s(\supp a)$ is compact, and $r_*$ is proper, so the set on
the right is compact.  Hence
\[
   \Theta_Z(a)\in C_c(\beta_A)
   \subseteq\mathcal C(K_Z).
\]
A finite bisection decomposition now gives
$\Theta_Z(a)\in\mathcal C(K_Z)$ for arbitrary
$a\in\mathcal C(G)$.

For multiplicativity, it is convenient to rewrite
\eqref{eq:ThetaZ-definition} intrinsically as
\[
   \Theta_Z(a)(k)
   =
   \sum_{\beta_Z(\gamma,s(k))=k}a(\gamma).
\]
Indeed, if $k=[z,w]$, then
\[
   \beta_Z(\gamma,s(k))=k
   \quad\Longleftrightarrow\quad
   [\gamma w,w]=[z,w]
   \quad\Longleftrightarrow\quad
   \gamma w=z,
\]
where the last equivalence uses freeness of the right $H$-action.

Fix $k\in K_Z$ and put $x=s(k)$.  On expanding
$(\Theta_Z(a)*\Theta_Z(b))(k)$, a summand is determined by composable
$\alpha,\gamma\in G$ such that
\[
   \beta_Z(\gamma,x)=k_2,
   \qquad
   \beta_Z(\alpha,\gamma\cdot x)=k_1,
   \qquad
   k_1k_2=k.
\]
By \eqref{eq:betaZ-multiplication}, this is equivalent to
\[
   \beta_Z(\alpha\gamma,x)=k.
\]
Conversely, every such pair $(\alpha,\gamma)$ determines the
corresponding factorization of $k$.  Therefore
\[
\begin{aligned}
   (\Theta_Z(a)*\Theta_Z(b))(k)
   &=
   \sum_{\beta_Z(\alpha\gamma,x)=k}
      a(\alpha)b(\gamma)\\
   &=
   \sum_{\beta_Z(\delta,x)=k}
      \sum_{\alpha\gamma=\delta}
      a(\alpha)b(\gamma)\\
   &=
   \sum_{\beta_Z(\delta,x)=k}
      (a*b)(\delta)\\
   &=
   \Theta_Z(a*b)(k).
\end{aligned}
\]
Thus $\Theta_Z$ is an algebra homomorphism.

For the $I$-norm estimates, fix $x\in Q$.  The map
\[
   \{\gamma\in G:s(\gamma)=r_*(x)\}
   \longrightarrow
   \{k\in K_Z:s(k)=x\},
   \qquad
   \gamma\longmapsto\beta_Z(\gamma,x),
\]
groups the arrows in the source fibre of $G$ according to the arrow
of $K_Z$ that they induce.  Hence
\[
\begin{aligned}
   \sum_{s(k)=x}|\Theta_Z(a)(k)|
   &=
   \sum_{s(k)=x}
      \left|
         \sum_{\beta_Z(\gamma,x)=k}a(\gamma)
      \right|\\
   &\leq
   \sum_{s(\gamma)=r_*(x)}|a(\gamma)|.
\end{aligned}
\]
Taking the supremum over $x\in Q$ gives
\[
   \|\Theta_Z(a)\|_{I,s}
   \leq
   \|a\|_{I,s}.
\]

Similarly, for $y\in Q$, arrows with
$r(\gamma)=r_*(y)$ are grouped by
\[
   \gamma
   \longmapsto
   \beta_Z(\gamma,\gamma^{-1}\cdot y),
\]
which has range $y$.  Thus
\[
   \sum_{r(k)=y}|\Theta_Z(a)(k)|
   \leq
   \sum_{r(\gamma)=r_*(y)}|a(\gamma)|,
\]
and therefore
\[
   \|\Theta_Z(a)\|_{I,r}
   \leq
   \|a\|_{I,r}.
\]

Finally, let
\[
   \rho:\mathcal C(K_Z)\longrightarrow B(L^p(\mu))
\]
be any $\|\cdot\|_I$-contractive representation.  By
the preceding $I$-norm estimates, the composition
$\rho\circ\Theta_Z$ is an $\|\cdot\|_I$-contractive representation of
$\mathcal C(G)$.  Hence, by the definition of the full norm,
\[
   \|\rho(\Theta_Z(a))\|
   \leq
   \|a\|_{F^p(G)}.
\]
Taking the supremum over all such $\rho$ gives
\[
   \|\Theta_Z(a)\|_{F^p(K_Z)}
   \leq
   \|a\|_{F^p(G)}.
\]
Thus $\Theta_Z$ extends uniquely to the contractive homomorphism
\[
   \Theta_Z:F^p(G)\longrightarrow F^p(K_Z).
\]
\end{proof}

The algebra map $\Theta_Z$ recovers exactly the original left action on
$Z$.  Indeed, for $a\in\mathcal C(G)$, $\varphi\in\mathcal C(Z)$ and
$z\in Z$,
\begin{equation}
\label{eq:ThetaZ-module-action}
\begin{aligned}
   (\Theta_Z(a)*\varphi)(z)
   &=\sum_{w\in Z_{s_Z(z)}} \Theta_Z(a)([z,w])\varphi(w)\\
   &=\sum_{w\in Z_{s_Z(z)}}\left(\sum_{\gamma w=z}a(\gamma)\right)\varphi(w)\\
   &=\sum_{r(\gamma)=r_Z(z)}
        a(\gamma)\varphi(\gamma^{-1}z)
    =(a\cdot\varphi)(z).
\end{aligned}
\end{equation}
Here, as usual, the sums are finite after bisection decomposition.  The
corresponding identity on $\mathcal C(Z^{\mathrm{op}})$ follows in the same
way.

\begin{thm}[Proper correspondences give full Morita cycles]
Let $G$ and $H$ be locally compact, locally Hausdorff, \'etale groupoids
with paracompact unit spaces, let $p\in[1,\infty]$, and let $Z$ be a
proper groupoid correspondence from $G$ to $H$.  Then $Z$ canonically
determines a Morita cycle from $F^p(G)$ to $F^p(H)$.
\end{thm}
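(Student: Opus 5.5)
The plan is to factor the cycle through the imprimitivity groupoid. By Lemma~\ref{lem:correspondence-imprimitivity-groupoid}, $Z$ is a $K_Z$--$H$ equivalence. First I will check that the unit space $Q=Z/H$ of $K_Z$ is paracompact. Orbit properness of $r_*\colon Q\to G^{(0)}$, together with paracompactness of $G^{(0)}$, should give this: a proper map into a paracompact locally compact Hausdorff space has paracompact domain, since preimages of a locally finite cover by relatively compact open sets are relatively compact and locally finite. Corollary~\ref{cor:full-morita} then applies to $K_Z$ and $H$ and gives a full Morita equivalence
\[
(\X_Z^{\mathrm{full}},\Y_Z^{\mathrm{full}})
\]
between $F^p(K_Z)$ and $F^p(H)$. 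Here $\X_Z^{\mathrm{full}}$ and $\Y_Z^{\mathrm{full}}$ are the completions of $\cC(Z^{\op})$ and $\cC(Z)$ in the norms inherited from $F^p$ of the linking groupoid. By \cite[Proposition~5.21]{Par09}, this Morita equivalence is a Morita cycle from $F^p(K_Z)$ to $F^p(H)$. Its left action is the canonical map
\[
F^p(K_Z)\to\mathcal K_{F^p(H)}(\X_Z^{\mathrm{full}},\Y_Z^{\mathrm{full}}),
\]
which sends ${}_{K_Z}\langle\phi,\psi\rangle$ to $\theta_{\phi,\psi}$ by the compatibility identities (7)--(8) of Lemma~\ref{preMor}.

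Next I will pull this cycle back along the contractive homomorphism $\Theta_Z\colon F^p(G)\to F^p(K_Z)$ of Proposition~\ref{prop:ThetaZ}. The left action is $\pi_{F^p(G)}=\pi_{F^p(K_Z)}\circ\Theta_Z$. This is a contractive homomorphism into $\mathcal K_{F^p(H)}$, because the compacts form an ideal and, more simply, because the image already lies in $\mathcal K$. By \eqref{eq:ThetaZ-module-action} and its $Z^{\op}$ analogue, the action is the original convolution action $a\cdot\varphi$ on $\cC(Z)$ and $\psi\cdot a$ on $\cC(Z^{\op})$. So the cycle is canonically determined by $Z$, and the identification with the $G$-action is checked on dense subspaces and extended by continuity. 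The $F^p(H)$-pair is unchanged, so it remains full and nondegenerate as a Banach $F^p(H)$-pair.

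The main obstacle is nondegeneracy of the pulled-back correspondence on the $F^p(G)$ side. A Morita cycle must be nondegenerate as a Banach $(A,B)$-correspondence, so $\cC(G)\cdot\cC(Z)$ must be dense in $\Y_Z^{\mathrm{full}}$, and $\cC(Z^{\op})\cdot\cC(G)$ dense in $\X_Z^{\mathrm{full}}$. $\Theta_Z$ need not have dense range, so I cannot simply inherit this from $K_Z$. Instead I will prove it directly, as in Proposition~\ref{prop:approxID1}.

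For this I take a net $e_{(K,U,\varepsilon)}\in\cC(G)$ of nonnegative functions supported in a conditionally compact neighbourhood $U$ of $G^{(0)}$ with row sums within $\varepsilon$ of $1$ on $K$. Such functions are built from a partition of unity on $G^{(0)}$ via functions $h\circ r$ restricted to $G^{(0)}$; for instance $e=\sum h_i$ supported on $G^{(0)}$ itself works since $G^{(0)}$ is open. The argument of Proposition~\ref{prop:approxID1} uses only that $Z$ is a left $G$-space, not that the anchor is open. It then gives $e\cdot\varphi\to\varphi$ in the inductive limit topology. By \cite[Proposition~2.2.2]{Pat} this implies convergence in the $I$-norm of the linking groupoid, and hence in the full norm, which is dominated by the $I$-norm. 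The same holds for the right action on $\cC(Z^{\op})$ via $\tilde\varphi$. I expect the care to lie in verifying that Lemma~\ref{claim6.7} still applies when $r_Z$ is not open, and in checking that $Q$ is paracompact.
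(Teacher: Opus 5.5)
Your proposal is correct and follows essentially the paper's route. Both arguments establish paracompactness of $Q=Z/H$ from orbit properness; the paper cites Engelking's theorem on perfect maps, where you give a direct locally finite cover argument. Both then apply Corollary~\ref{cor:full-morita} to the $K_Z$--$H$ equivalence, pull the left action back along $\Theta_Z$ using \eqref{eq:ThetaZ-module-action}, and check nondegeneracy on the $F^p(G)$ side directly.

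For that last step the approximate-identity machinery is unnecessary. As the paper does, take $e\in C_c(G^{(0)})$ with $e=1$ on $r_Z(C)$, where $C\subseteq Z$ is a compact set outside which $\varphi$ vanishes. Then $e\cdot\varphi=\varphi$ exactly, so your concern about Lemma~\ref{claim6.7} with non-open anchor maps does not arise.
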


\begin{proof}
Put $Q=Z/H$.  Since $Q$ is locally compact Hausdorff and
$r_*:Q\to G^{(0)}$ is proper, its image $r_*(Q)$ is closed in
$G^{(0)}$, and the corestriction
\[
    r_*:Q\longrightarrow r_*(Q)
\]
is a perfect map.  The space $r_*(Q)$ is paracompact, being a closed
subspace of the paracompact Hausdorff space $G^{(0)}$.  It follows from
\cite[Theorem~5.1.35]{EngelkingGeneralTopology} that $Q$ is paracompact.

By Lemma~\ref{lem:correspondence-imprimitivity-groupoid}, $Z$ is a
$K_Z$--$H$ equivalence, with paracompact unit spaces $Q$ and $H^{(0)}$.
Corollary~\ref{cor:full-morita} therefore gives a Morita equivalence
between $F^p(K_Z)$ and $F^p(H)$.  More concretely, if
\[
   L_Z=K_Z\sqcup Z\sqcup Z^{\mathrm{op}}\sqcup H
\]
is the linking groupoid of this equivalence, set
\[
   \X_Z^{\mathrm{full}}
      =\overline{\mathcal C(Z^{\mathrm{op}})}^{\,F^p(L_Z)},
   \qquad
   \Y_Z^{\mathrm{full}}
      =\overline{\mathcal C(Z)}^{\,F^p(L_Z)}.
\]
The resulting $F^p(K_Z)$--$F^p(H)$ Morita equivalence is, in particular,
a Morita cycle.  Compose its left action with the contractive homomorphism
from Proposition~\ref{prop:ThetaZ}:
\[
   F^p(G)
   \xrightarrow{\ \Theta_Z\ }
   F^p(K_Z)
   \longrightarrow
   \mathcal K_{F^p(H)}
      (\X_Z^{\mathrm{full}},\Y_Z^{\mathrm{full}}).
\]
By \eqref{eq:ThetaZ-module-action}, this composite agrees on the dense
algebraic corner $\mathcal C(Z)$ with the original left
$\mathcal C(G)$-action associated with the groupoid correspondence.

It remains only to verify nondegeneracy of the composite action.
Every $\varphi\in\mathcal C(Z)$ vanishes outside some compact subset
$C\subseteq Z$.  Choose $e\in C_c(G^{(0)})$ such that
\[
    e=1\quad\text{on }r_Z(C).
\]
Then
\[
    (e\cdot\varphi)(z)
      =e(r_Z(z))\varphi(z)
      =\varphi(z),
\]
and hence $\mathcal C(Z)\subseteq
F^p(G)\Y_Z^{\mathrm{full}}$.  Since $\mathcal C(Z)$ is dense in
$\Y_Z^{\mathrm{full}}$, the left action is nondegenerate on
$\Y_Z^{\mathrm{full}}$.  The same argument on
$\mathcal C(Z^{\mathrm{op}})$ gives the corresponding
nondegeneracy on $\X_Z^{\mathrm{full}}$.
\end{proof}

\begin{rem}[The equivalence case]
If $Z$ is a $G$--$H$ equivalence, then the preceding construction reduces
exactly to the full Morita equivalence of
Corollary~\ref{cor:full-morita}.  Indeed, the bracket map for the left
$G$-action gives a canonical groupoid isomorphism
\[
   K_Z\longrightarrow G,
   \qquad
   [z,w]\longmapsto {}_G[z,w],
\]
where ${}_G[z,w]$ is the unique element of $G$ satisfying
${}_G[z,w]\,w=z$; cf.~\cite[Lemma~2.20]{MW}. Under this identification, freeness of the left action
reduces \eqref{eq:ThetaZ-definition} to a single term, so that
$\Theta_Z$ becomes the identity map on $F^p(G)$.  The linking groupoid
$K_Z\sqcup Z\sqcup Z^{\mathrm{op}}\sqcup H$ therefore identifies with the
usual linking groupoid
$G\sqcup Z\sqcup Z^{\mathrm{op}}\sqcup H$.
\end{rem}

\subsection{The associated reduced Banach pair and left action} \label{sect:BanCorresp}

 We now turn to the reduced $L^p$-operator algebras.  Fix
 $p\in[1,\infty]$ and put
 \[
    B=F^p_{\mathrm{red}}(H).
 \]
 The algebraic module actions are those defined in
Section~\ref{sect_LinkGp}.  These formulas only use the given actions of
 $G$ and $H$ on $Z$ and the fact that they commute.  We first construct a
 canonical Banach $B$-pair.  The left $\mathcal C(G)$-action will be added
 afterwards; it gives a Banach
 $F^p_{\mathrm{red}}(G)$--$B$ correspondence precisely when it is
 contractive for the reduced norm.

We begin with the $B$-valued pairing.  The formula used in the equivalence
setting cannot, in general, be used for a groupoid correspondence.  Recall
that, for a groupoid equivalence, $\psi\in\mathcal{C}(Z^{\mathrm{op}})$,
$\phi\in\mathcal{C}(Z)$ and $\eta\in H$, the pairing can be written as
\begin{equation}
  \langle\psi,\phi\rangle_H(\eta)
  =\sum_{r(\gamma)=r_Z(z)}
    \psi(\overline{\gamma^{-1}\cdot z})
    \phi(\gamma^{-1}\cdot z\cdot \eta),
  \label{eq:equivalence-pairing}
\end{equation}
where $z\in Z$ is any point satisfying $s_Z(z)=r(\eta)$.  Let $v=s_Z(z)$.
For a groupoid equivalence, every fibre $Z_v=s_Z^{-1}(v)$ is a single $G$-orbit and the
left $G$-action is free.  Thus
\[
  G^{r_Z(z)}\longrightarrow Z_v,
  \qquad \gamma\longmapsto\gamma^{-1}\cdot z,
\]
is a bijection.  Making the change of variables $w=\gamma^{-1}\cdot z$ in
\eqref{eq:equivalence-pairing} gives
\begin{equation}
  \langle\psi,\phi\rangle_H(\eta)
  =\sum_{s_Z(w)=r(\eta)}\psi(\overline w)\phi(w\cdot\eta).
  \label{eq:correspondence-pairing}
\end{equation}

For a general groupoid correspondence, the orbit map need not be
bijective, so \eqref{eq:equivalence-pairing} need not be independent
of the choice of $z$.  For example, if $G$ and $H$ are both the one-point unit
groupoid and $Z=\{z_1,z_2\}$, the two choices give
\[
\psi(\overline{z_1})\phi(z_1)
\quad\text{and}\quad
\psi(\overline{z_2})\phi(z_2).
\]  
We therefore use
\eqref{eq:correspondence-pairing} as the definition of the pairing for an arbitrary
groupoid correspondence.

For $v\in H^{(0)}$, put
\[
  H_v\coloneqq s^{-1}(v),
  \qquad
  Z_v\coloneqq s_Z^{-1}(v),
\]
and define
\[
  E_H^p\coloneqq\bigoplus_{v\in H^{(0)}}^p\ell^p(H_v),
  \qquad
  E_Z^p\coloneqq\bigoplus_{v\in H^{(0)}}^p\ell^p(Z_v).
\]
For $p=\infty$, these are the corresponding $\ell^\infty$-direct sums.
Each $Z_v$ is discrete because $s_Z$ is a local homeomorphism.
The space $E_H^p$ carries the direct sum $\lambda_H^p$ of the regular
representations of $H$.

For $\phi\in\mathcal{C}(Z)$, define
$T_\phi:E_H^p\to E_Z^p$ by
\[
  (T_\phi\xi)(z)
  =\sum_{h\in H_{s_Z(z)}}\phi(z\cdot h^{-1})\xi(h).
\]
Equivalently, this is
\[
  (T_\phi\xi)(z)
  =\sum_{r(\eta)=s_Z(z)}\phi(z\cdot \eta)\xi(\eta^{-1}).
\]
For $\psi\in\mathcal{C}(Z^{\mathrm{op}})$, define
$S_\psi:E_Z^p\to E_H^p$ by
\[
  (S_\psi\zeta)(h)
  =\sum_{z\in Z_{s(h)}}
    \psi(\overline{z\cdot h^{-1}})\zeta(z).
\]

\begin{lem} \label{lem:slicenorm}
The operators $T_\phi$ and $S_\psi$ defined above are bounded.  If
$V\subseteq Z$ is a slice and $\phi\in C_c(V)$, extended by zero
outside $V$, then
\[
  \|T_\phi\|=\|\phi\|_\infty.
\]
If $\psi\in C_c(V^{\mathrm{op}})$, then
\[
  \|S_\psi\|=\|\psi\|_\infty.
\]
These statements hold for every $p\in[1,\infty]$.
\end{lem}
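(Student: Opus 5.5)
The plan is to show that, for a slice, $T_\phi$ and $S_\psi$ are weighted partial composition operators with injective underlying maps, so their norms equal the sup of the weight. Boundedness in general then follows by linearity: by \cite[Proposition~7.1]{AKM} every $\phi\in\mathcal C(Z)$ is a finite sum of functions in $C_c(V_i)$ with $V_i$ slices, and similarly for $\mathcal C(Z^{\mathrm{op}})$. So it suffices to treat a single slice $V$, and there the bound $\|T_\phi\|\le\|\phi\|_\infty$ gives boundedness. Both operators map the $v$-th summand of one direct sum to the $v$-th summand of the other. So they are block diagonal over $v\in H^{(0)}$, and their norms are the supremum of the blockwise norms, for every $p\in[1,\infty]$ including the $\ell^\infty$-sum case.

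For $T_\phi$ with $\phi\in C_c(V)$, fix $v$ and $z\in Z_v$. First I claim there is at most one $\eta$ with $r(\eta)=v$ and $z\cdot\eta\in V$. Indeed, if $z\cdot\eta$ and $z\cdot\eta'$ both lie in $V$, they lie in the same $H$-orbit. Injectivity of $q|_V$ forces $z\cdot\eta=z\cdot\eta'$, and freeness of the right action gives $\eta=\eta'$. Write $h(z)=\eta^{-1}\in H_v$ and $w(z)=z\cdot\eta$ when such $\eta$ exists. Then $(T_\phi\xi)(z)=\phi(w(z))\,\xi(h(z))$, and $(T_\phi\xi)(z)=0$ otherwise. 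Next, $z\mapsto h(z)$ is injective: if $z\cdot\eta,\ z'\cdot\eta\in V$, then $s_Z(z\cdot\eta)=s(\eta)=s_Z(z'\cdot\eta)$. Injectivity of $s_Z|_V$ gives $z\cdot\eta=z'\cdot\eta$, hence $z=z'$. An operator of the form $\xi\mapsto(\text{weight})\cdot(\xi\circ h)$ with $h$ injective has norm at most the sup of the weight on $\ell^p$ for all $p\in[1,\infty]$. So $\|T_\phi\|\le\|\phi\|_\infty$.

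For the reverse inequality, take $w\in V$ and put $v=s_Z(w)$. Then $T_\phi\delta_v$ evaluated at $w$ (with $\eta=v$) equals $\phi(w)$. So $\|T_\phi\|\ge|\phi(w)|$, and taking the supremum over $w$ gives equality.

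The case of $S_\psi$, with $\psi\in C_c(V^{\mathrm{op}})$, is symmetric. Fix $h\in H$. A nonzero term at $z\in Z_{s(h)}$ requires $z\cdot h^{-1}\in V$, and $s_Z(z\cdot h^{-1})=r(h)$. By injectivity of $s_Z|_V$ this determines $z\cdot h^{-1}$, hence $z$, uniquely. Next, $h\mapsto z$ is injective within each block: if $z\cdot h^{-1}$ and $z\cdot h'^{-1}$ are both in $V$, then injectivity of $q|_V$ makes them equal, and freeness gives $h=h'$. This again yields a weighted partial composition operator, so $\|S_\psi\|\le\|\psi\|_\infty$. Equality follows by testing on $\delta_w$ for $w\in V$ at the arrow $h=s_Z(w)$.

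The only delicate step is checking these two injectivity facts, and it is exactly where both slice conditions and freeness are used.
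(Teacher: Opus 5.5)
Your proposal is correct and follows essentially the same route as the paper. You realize $T_\phi$ and $S_\psi$ as weighted partial permutations, with at most one nonzero coefficient per row (from injectivity of $q|_V$ plus freeness) and per column (from injectivity of $s_Z|_V$ plus freeness). You then obtain the reverse inequality by testing at the unit $s_Z(w)$, and pass to general elements by finite slice decompositions.
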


\begin{proof}
Relative to the canonical bases, the matrix coefficient of $T_\phi$
in the row indexed by $z\in Z_v$ and the column indexed by $h\in H_v$
is $\phi(zh^{-1})$.  There is at most one nonzero coefficient in each
row.  Indeed, if $zh_1^{-1},zh_2^{-1}\in V$, then these two points have
the same image in $Z/H$.  Injectivity of the quotient map on $V$ gives
$zh_1^{-1}=zh_2^{-1}$, and freeness of the right $H$-action gives
$h_1=h_2$.  There is also at most one nonzero coefficient in each column:
if $z_1h^{-1},z_2h^{-1}\in V$, then these points have the same
$s_Z$-value, so injectivity of $s_Z|_V$ gives $z_1h^{-1}=z_2h^{-1}$ and
hence $z_1=z_2$.  Thus $T_\phi$ is a weighted partial permutation.
Its norm on every $\ell^p$-space, including $p=1$ and $p=\infty$, is the
supremum of the absolute values of its matrix coefficients.  Every value
$\phi(w)$ occurs as a coefficient by taking $z=w$ and
$h=s_Z(w)\in H^{(0)}$.  Therefore
$\|T_\phi\|=\|\phi\|_\infty$.

The proof for $S_\psi$ is the same.  Since every element of $\mathcal{C}(Z)$
and $\mathcal{C}(Z^{\mathrm{op}})$ is a finite sum of functions supported in compact subsets of slices, the general operators are finite sums of bounded operators.  
\end{proof}

The maps $\phi\mapsto T_\phi$ and $\psi\mapsto S_\psi$ are
injective.  For example, if $v=s_Z(z)$ and $\delta_v$ is the point mass at
the unit $v\in H_v$, then
\[
  (T_\phi\delta_v)(z)=\phi(z),
\]
and the analogous observation applies to $S_\psi$.  We may therefore
define norms by
\[
  \|\phi\|_{\Y_Z}\coloneqq\|T_\phi\|,
  \qquad
  \|\psi\|_{\X_Z}\coloneqq\|S_\psi\|,
\]
and let $\Y_Z$ and $\X_Z$ be the respective completions.  Equivalently,
\[
  \Y_Z=\overline{\{T_\phi:\phi\in\mathcal{C}(Z)\}}
  \subseteq\mathcal{B}(E_H^p,E_Z^p),
\]
\[
  \X_Z=\overline{\{S_\psi:\psi\in\mathcal{C}(Z^{\mathrm{op}})\}}
  \subseteq\mathcal{B}(E_Z^p,E_H^p).
\]

\begin{prop}
The $\mathcal C(H)$-actions on $\mathcal C(Z^{\mathrm{op}})$ and $\mathcal C(Z)$ defined in Section~\ref{sect_LinkGp} extend to make $(\X_Z,\Y_Z)$ a nondegenerate Banach $F_{\mathrm{red}}^p(H)$-pair.  On the dense subspaces, its pairing is given by \eqref{eq:correspondence-pairing}; equivalently,
\[
  \langle S_\psi,T_\phi\rangle_B
  =S_\psi T_\phi
  =\lambda_H^p(\langle\psi,\phi\rangle_H).
\]
\end{prop}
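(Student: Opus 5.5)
The approach is to work entirely with the concrete operators $T_\phi$, $S_\psi$ and $\lambda_H^p$, so that every algebraic identity becomes an identity of operators between $E_Z^p$ and $E_H^p$. The norm estimates then reduce to submultiplicativity of the operator norm, exactly as in Lemma~\ref{lem:(1/2)ineq}.

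\textbf{Step 1: module actions.} I will verify on the dense subspaces that
\[
T_\phi\,\lambda_H^p(b)=T_{\phi\cdot b},\qquad \lambda_H^p(b)\,S_\psi=S_{b\cdot\psi}
\]
for $b\in\mathcal C(H)$, with the actions \eqref{cZ_right_cH} and \eqref{cH_Zop_cG}. This is a direct computation: evaluate both sides on a basis vector $\delta_h$ (respectively $\delta_z$) and reindex the finite sums over $H_v$ using $\eta\mapsto\eta h$. It should be routine, since only commutation of the actions and the formula for the regular representation on $\ell^p(H_v)$ enter. It then follows that
\[
\|\phi\cdot b\|_{\Y_Z}\le\|\phi\|_{\Y_Z}\|b\|_{F^p_{\mathrm{red}}(H)},
\]
because $\|\lambda_H^p(b)\|=\|b\|_{p,\mathrm{red}}$ (the direct sum of the $\mathrm{Ind}\,\delta_v$). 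The analogous bound holds for $S_\psi$. Hence both actions extend by continuity to the completions.

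\textbf{Step 2: pairing.} I will compute $S_\psi T_\phi$ on $\delta_{h'}\in\ell^p(H_v)$:
\[
(S_\psi T_\phi\delta_{h'})(h)=\sum_{z\in Z_{s(h)}}\psi(\overline{z h^{-1}})\,\phi(z h'^{-1}).
\]
Substituting $w=zh^{-1}$, which ranges over $Z_{r(h)}$, gives $\sum_{s_Z(w)=r(h)}\psi(\overline w)\phi(w\,h h'^{-1})$. This is $\langle\psi,\phi\rangle_H(hh'^{-1})$ by \eqref{eq:correspondence-pairing}, and that is exactly the matrix coefficient of $\lambda_H^p(\langle\psi,\phi\rangle_H)$. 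One must also check that $\langle\psi,\phi\rangle_H\in\mathcal C(H)$. I would reduce to $\phi\in C_c(V)$ and $\psi\in C_c(W^{\mathrm{op}})$ with $V,W$ slices. Then the sum has at most one term, and the support lies in the set of $\eta$ with $w\in W$ and $w\eta\in V$. This set is the image of the compact set $\{(w,w')\in\supp\psi\times\supp\phi:q(w)=q(w')\}$ under $(w,w')\mapsto[w,w']_H$, which is continuous by freeness and properness, and it lies in an open bisection of $H$. Since $\lambda_H^p$ is isometric for the reduced norm, the bilinear identities and the estimate
\[
\|\langle\psi,\phi\rangle\|_B\le\|S_\psi\|\,\|T_\phi\|
\]
follow from associativity and submultiplicativity. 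The map therefore extends to $\X_Z\times\Y_Z\to B$. I expect this support and continuity bookkeeping in the non-Hausdorff setting to be the main obstacle. If the direct argument gets tangled, I would fall back on the slice decomposition \cite[Proposition 7.1]{AKM} together with local homeomorphism of $q$.

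\textbf{Step 3: nondegeneracy.} Given $\phi\in C_c(V)$ with $V$ a slice, choose $e\in C_c(H^{(0)})\subseteq\mathcal C(H)$ with $e=1$ on the compact set $s_Z(\supp\phi)$. Then $(\phi\cdot e)(z)=\phi(z)e(s_Z(z))=\phi(z)$, so $\mathcal C(Z)\subseteq\operatorname{span}\mathcal C(Z)\mathcal C(H)$, and density follows. The same choice works for $b\cdot\psi$, using $(e\cdot\psi)(\bar z)=e(s_Z(z))\psi(\bar z)$.
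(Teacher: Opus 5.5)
Your proposal is correct and follows essentially the same route as the paper. Both arguments use the concrete operators $T_\phi$ and $S_\psi$, the module identities $T_{\phi\cdot b}=T_\phi\lambda_H^p(b)$ and $S_{b\cdot\psi}=\lambda_H^p(b)S_\psi$, the changes of variables identifying $S_\psi T_\phi$ with $\lambda_H^p(\langle\psi,\phi\rangle_H)$, a slice reduction together with the bracket map to show $\langle\psi,\phi\rangle_H\in\mathcal C(H)$, and a cutoff in $C_c(H^{(0)})$ for nondegeneracy. The one point to tidy is $p=\infty$: basis vectors are not dense there, so you should note, as the paper does, that all operators involved are given by row-finite matrix formulas, and hence equality of matrix coefficients already gives equality of operators.
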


\begin{proof}
By finite slice
decompositions, it suffices to take
$\psi\in C_c(U^{\mathrm{op}})$ and $\phi\in C_c(V)$ for slices
$U,V\subseteq Z$.  Put $Q=Z/H$ and
\[
  D_{U,V}\coloneqq
  \{(w,y)\in U\times V:q(w)=q(y)\},
\]
where $q:Z\to Q$ is the quotient map.
Write $\check\psi(w)\coloneqq\psi(\overline w)$ for $w\in U$.
For $(w,y)\in D_{U,V}$, there is a unique element
$[w,y]_H\in H$ such that
\[
  w[w,y]_H=y.
\]
The bracket map is continuous because the right $H$-action is principal;
equivalently, the map
\[
  Z*H\longrightarrow Z\times_Q Z,
  \qquad (w,h)\longmapsto(w,wh),
\]
is a homeomorphism.  The restriction
\[
  \vartheta_{U,V}:D_{U,V}\longrightarrow H,
  \qquad (w,y)\longmapsto[w,y]_H,
\]
is a homeomorphism onto an open Hausdorff subset $O_{U,V}$ of $H$.  To
see the openness explicitly, use that $s_Z|_U$ is a homeomorphism onto an
open subset of $H^{(0)}$.  For $h$ with $r(h)\in s_Z(U)$, put
\[
  w(h)=(s_Z|_U)^{-1}(r(h)).
\]
Then
\[
  O_{U,V}
  =\{h:r(h)\in s_Z(U),\ w(h)h\in V\},
\]
which is open, and $h\mapsto(w(h),w(h)h)$ is the inverse of
$\vartheta_{U,V}$.  The set $D_{U,V}$ is Hausdorff because it is a
subspace of $U\times V$, so $O_{U,V}$ is Hausdorff as well.

On $O_{U,V}$ the function in \eqref{eq:correspondence-pairing} is
\[
  h\longmapsto
  \psi(\overline{w(h)})\phi(w(h)h),
\]
and there is at most one nonzero summand.  It is continuous there, and it vanishes outside the compact set
\[
  \vartheta_{U,V}
  \bigl(D_{U,V}\cap
    (\mathrm{supp}_U\check\psi\times\mathrm{supp}_V\phi)\bigr).
\]
Indeed, $Q$ is Hausdorff, so the set inside the parentheses is a closed
subset of a compact Hausdorff space.  Hence the pairing is an element of
$C_c(O_{U,V})$, extended by zero in the sense used to define
$\mathcal{C}(H)$.  This proves that
$\langle\psi,\phi\rangle_H\in\mathcal{C}(H)$ in general.

Let $\lambda_H^p$ be the direct sum of the regular representations, so
that
\[
  (\lambda_H^p(f)\xi)(h)
  =\sum_{r(\eta)=r(h)}f(\eta)\xi(\eta^{-1}\cdot h).
\]
For $h\in H_v$, a direct calculation gives
\begin{align*}
  (S_\psi T_\phi\xi)(h)
  &=\sum_{z\in Z_v}\sum_{k\in H_v}
    \psi(\overline{z\cdot h^{-1}})\phi(z\cdot k^{-1})\xi(k)\\
  &=\sum_{r(\eta)=r(h)}
    \left(
      \sum_{s_Z(w)=r(\eta)}
        \psi(\overline w)\phi(w\cdot\eta)
    \right)
    \xi(\eta^{-1}\cdot h)\\
  &=\bigl(
      \lambda_H^p(\langle\psi,\phi\rangle_H)\xi
    \bigr)(h).
\end{align*}
Here the second equality uses the bijective changes of variables
\[
  w=z\cdot h^{-1},
  \qquad
  \eta=hk^{-1}.
\]
All sums involved are finite after decomposing $\psi$ and $\phi$ into functions with compact support contained in slices, so the calculation is valid also for
$p=\infty$.  

The module formulas give, for $b\in\mathcal{C}(H)$,
\[
  T_{\phi\cdot b}=T_\phi\lambda_H^p(b),
  \qquad
  S_{b\cdot\psi}=\lambda_H^p(b)S_\psi.
\]
These identities follow either by direct changes of variables in the
finite sums or from associativity of the right $H$-action.  Since
$\lambda_H^p$ is the defining isometric representation of
$F_{\mathrm{red}}^p(H)$, these identities extend the
actions to the completions.  If $S\in \X_Z$ and $T\in \Y_Z$, then $ST$ is
an operator-norm limit of operators of the form
$\lambda_H^p(\langle\psi,\phi\rangle_H)$, and therefore belongs to
$B=F_{\mathrm{red}}^p(H)$.  Thus
\[
  \langle S,T\rangle_B\coloneqq ST
\]
defines the completed pairing, and
\[
  \|\langle S,T\rangle_B\|\leq\|S\|\,\|T\|.
\]
The remaining Banach-pair compatibility identities follow from
associativity of operator composition.

Finally, the pair is nondegenerate.  If $\phi$ vanishes outside a compact
set $K\subseteq Z$, choose $u\in C_c(H^{(0)})$ with
$u=1$ on $s_Z(K)$.  Then
\[
  (\phi\cdot u)(z)
  =\phi(z)u(s_Z(z)),
\]
so $\phi\cdot u=\phi$.  The same argument gives
$u\cdot\psi=\psi$ for every $\psi\in\mathcal{C}(Z^{\mathrm{op}})$ vanishing outside $K$.  Thus the dense algebraic subspaces already lie in
$\Y_Z\mathcal{C}(H)$ and $\mathcal{C}(H)\X_Z$, respectively, which proves
nondegeneracy after completion.
\end{proof}

For the left action, define
\begin{equation}
  (\Pi_Z(a)\zeta)(z)
  =\sum_{r(\gamma)=r_Z(z)}
    a(\gamma)\zeta(\gamma^{-1}\cdot z).
  \label{eq:PiZ}
\end{equation}
The commuting actions of $G$ and $H$ imply the identities
\[
\Pi_Z(a)T_\phi=T_{a\cdot\phi},
\qquad
S_\psi\Pi_Z(a)=S_{\psi\cdot a}.
\]
For $p<\infty$ these may first be checked on finitely supported
vectors and then extended by continuity; for $p=\infty$ the same
finite sum calculation applies directly to arbitrary bounded vectors.

These identities show that the two maps are module maps and satisfy the
formal adjoint identity.
Consequently,
\[
  \langle S_{\psi\cdot a},T_\phi\rangle_B
  =S_\psi\Pi_Z(a)T_\phi
  =\langle S_\psi,T_{a\cdot\phi}\rangle_B.
\]

Suppose now that $\Pi_Z(a)$ extends to a bounded operator on $E_Z^p$
and satisfies
\begin{equation}
  \|\Pi_Z(a)\|
  \leq \|a\|_{F_{\mathrm{red}}^p(G)}
  \qquad(a\in\mathcal{C}(G)).
  \label{eq:reduced-left-bound}
\end{equation}
Then the identities above and
\eqref{eq:reduced-left-bound} extend the algebraic action to a
contractive homomorphism
\[
  \pi_Z:F_{\mathrm{red}}^p(G)
  \longrightarrow
  \mathcal{L}_{F_{\mathrm{red}}^p(H)}(\X_Z,\Y_Z).
\]
Thus $((\X_Z,\Y_Z),\pi_Z)$ becomes a Banach
$\bigl(F_{\mathrm{red}}^p(G),F_{\mathrm{red}}^p(H)\bigr)$-correspondence.
The action is nondegenerate.  Indeed, if $\phi$ vanishes outside a compact
set $K\subseteq Z$, choose $e\in C_c(G^{(0)})$ with
$e=1$ on $r_Z(K)$.  Then
\[
  (e\cdot\phi)(z)
  =e(r_Z(z))\phi(z),
\]
and hence $e\cdot\phi=\phi$; the analogous statement holds on
$\mathcal{C}(Z^{\mathrm{op}})$.  Density proves nondegeneracy on the completed
pair.  Proposition~\ref{prop:reduced-ext} below proves
\eqref{eq:reduced-left-bound} when $p=1$, when $p=\infty$, and when
$1<p<\infty$ and the left $G$-action on $Z$ is proper.  Before such a
bound is established, $(\X_Z,\Y_Z)$ is a Banach
$F_{\mathrm{red}}^p(H)$-pair equipped with a $\mathcal{C}(G)$-action, but it is not yet a Banach
$(F_{\mathrm{red}}^p(G),F_{\mathrm{red}}^p(H))$-correspondence.

\subsection{Compact multipliers}

Retain the notation $B,E_H^p,E_Z^p,\X_Z,\Y_Z$ from Section~\ref{sect:BanCorresp}.
For \(f\in C_b(Q)\), define multiplication operators on the dense
subspaces $\mathcal{C}(Z)$ and $\mathcal{C}(Z^{op})$ by
\[
    (M_f\cdot\phi)(z)
    =
    f(q(z))\phi(z),
    \qquad
    (\psi\cdot M_f)(\bar z)
    =
    \psi(\bar z)f(q(z)),
\]
where $q:Z\to Z/H$ is the quotient map. Since \(q(z)=q(zh)\), formula
\eqref{eq:correspondence-pairing} gives
\[
\begin{aligned}
    \langle\psi\cdot M_f,\phi\rangle_H(h)
    &=
    \sum_{z\in Z_{r(h)}}
    \psi(\bar z)f(q(z))\phi(zh)                                      \\
    &=
    \sum_{z\in Z_{r(h)}}
    \psi(\bar z)f(q(zh))\phi(zh)                                     \\
    &=
    \langle\psi,M_f\cdot\phi\rangle_H(h).
\end{aligned}
\]
Thus the two multiplication maps are formal adjoints. They are module maps,
and the concrete norm construction gives
\[
    \|M_f\cdot\phi\|_{\Y_Z}
    \leq
    \|f\|_\infty\|\phi\|_{\Y_Z},
\]
and
\[
    \|\psi\cdot M_f\|_{\X_Z}
    \leq
    \|f\|_\infty\|\psi\|_{\X_Z}.
\]
Consequently,
\[
    M_f\in\mathcal L_B(\X_Z,\Y_Z),
    \qquad
    \|M_f\|\leq\|f\|_\infty.
\]

\begin{prop} \label{prop:compact-multiplier-characterization}
For \(f\in C_b(Q)\), one has
\[
    M_f\in \mathcal K_B(\X_Z,\Y_Z)
    \quad\Longleftrightarrow\quad
    f\in C_0(Q).
\]
Moreover,
\[
    \|M_f\|=\|f\|_\infty.
\]
\end{prop}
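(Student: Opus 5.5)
The plan is to prove the norm identity first, then the two directions of the compactness criterion. Throughout I will use Lemma~\ref{lem:slicenorm}: on a slice the $\Y_Z$-norm of a function is its sup norm. I will also use that $q$ is a local homeomorphism, so $q|_V\colon V\to q(V)$ is a homeomorphism onto an open set for every slice $V$.

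\emph{Norm identity.} The bound $\|M_f\|\le\|f\|_\infty$ was noted before the proposition, so only the lower bound is needed. Fix $\varepsilon>0$ and choose $z\in Z$ with $|f(q(z))|>\|f\|_\infty-\varepsilon$. Choose a slice $V\ni z$ and $\phi\in C_c(V)$ with $0\le\phi\le 1$ and $\phi(z)=1$. Then $M_f\cdot\phi=(f\circ q)\phi\in C_c(V)$, and Lemma~\ref{lem:slicenorm} gives
\[
\|M_f\cdot\phi\|_{\Y_Z}=\|(f\circ q)\phi\|_\infty\ge|f(q(z))|
\qquad\text{and}\qquad
\|\phi\|_{\Y_Z}=1 .
\]
Hence $\|M_f^r\|\ge\|f\|_\infty-\varepsilon$, and letting $\varepsilon\to0$ gives $\|M_f\|=\|f\|_\infty$.

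\emph{($\Leftarrow$).} Since $f\mapsto M_f$ is linear and contractive, and $\mathcal K_B(\X_Z,\Y_Z)$ is closed, it suffices to treat $f\in C_c(Q)$. Cover $\supp f$ by finitely many sets $q(V_i)$ with $V_i$ slices, and use a partition of unity to write $f=\sum_i f_i$ with $f_i\in C_c(q(V_i))$. For each $i$ put $\phi_i\coloneqq f_i\circ q|_{V_i}\in C_c(V_i)$. Choose $g_i\in C_c(V_i)$ with $g_i=1$ on $\supp\phi_i$, and let $\psi_i\coloneqq\widetilde{g_i}\in C_c(V_i^{\mathrm{op}})$. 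The key computation uses \eqref{cZ_right_cH} together with \eqref{eq:correspondence-pairing}:
\[
\theta^r_{\phi_i,\psi_i}(y)(z)=\bigl(\phi_i\cdot\langle\psi_i,y\rangle_H\bigr)(z)
=\sum_{r(\eta)=s_Z(z)}\ \sum_{s_Z(w)=s(\eta)}\phi_i(z\eta)\,g_i(w)\,y(w\eta^{-1}).
\]
Because $V_i$ is a slice, the $H$-orbit of $z$ meets $V_i$ in at most one point $v=z\eta$. Then $g_i\phi_i$ forces $w=v$, so $w\eta^{-1}=z$ and the double sum collapses to $\phi_i(v)\,y(z)=f_i(q(z))\,y(z)$. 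Thus $\theta^r_{\phi_i,\psi_i}=M^r_{f_i}$. The left components agree as well: they are determined through the pairing by formal adjointness, and the pairing separates points of the dense subspaces, so this can be checked by the symmetric computation. Therefore $M_f=\sum_i\theta_{\phi_i,\psi_i}$ is a finite sum of rank-one operators and lies in $\mathcal K_B(\X_Z,\Y_Z)$.

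\emph{($\Rightarrow$).} Suppose $f\notin C_0(Q)$. Then there is $\varepsilon>0$ such that for every compact $C\subseteq Q$ there is $x\notin C$ with $|f(x)|\ge\varepsilon$. Assume, for contradiction, that $M_f$ is compact.
\begin{enumerate}
\item Since $\|\theta_{w,x}\|\le\|w\|\|x\|$ and the algebraic modules are dense, there is a finite sum $T=\sum_j\theta_{\phi_j,\psi_j}$ with $\phi_j\in\mathcal C(Z)$, $\psi_j\in\mathcal C(Z^{\mathrm{op}})$ and $\|M_f-T\|<\varepsilon/2$.
\item The support observation: $\theta^r_{\phi,\psi}(y)(z)$ involves only $\phi(z\eta)$, so it vanishes unless the orbit of $z$ meets $\supp\phi$, i.e.\ unless $q(z)\in C\coloneqq\bigcup_j q(\supp\phi_j)$. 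The set $C$ is compact because $Q$ is Hausdorff and each $\supp\phi_j$ is compact. Hence $T^r(y)=0$ whenever $y$ is supported in $q^{-1}(Q\setminus C)$.
\item Pick $x\notin C$ with $|f(x)|\ge\varepsilon$ and $z\in q^{-1}(x)$. Choose a slice $V\ni z$ inside the open set $q^{-1}(Q\setminus C)$, and $y\in C_c(V)$ with $0\le y\le1$ and $y(z)=1$. By the norm computation above, $\|M_f\cdot y\|\ge\varepsilon$, while $T^r(y)=0$ and $\|y\|=1$.
\end{enumerate}
This gives $\|M_f-T\|\ge\varepsilon$, a contradiction.

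I expect the main obstacle to be the rank-one computation in ($\Leftarrow$), in particular bookkeeping the change of variables in \eqref{eq:correspondence-pairing} and confirming that the slice property makes the double sum collapse to a single term. The support observation in ($\Rightarrow$) follows the same pattern and should be routine once that computation is in place.
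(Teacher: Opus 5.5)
Your norm identity and your $(\Leftarrow)$ direction are fine. The latter is a function-level version of the paper's operator identity $T_{\alpha_i}S_{\widetilde{\beta_i}}=m_{f_i}$. Your matching of the left components is also legitimate, because the pairing does separate points: if $y\in C_c(V)$ on a slice with $y(z)=1$, then $\langle x_0,y\rangle_H(s_Z(z))=x_0(\bar z)$.

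The gap is in step (2) of $(\Rightarrow)$. Your support observation is a statement about the \emph{output}. Since $\theta^r_{\phi,\psi}(y)=\phi\cdot\langle\psi,y\rangle_H$, the function $T^r(y)$ vanishes outside $q^{-1}(C)$, where $C=\bigcup_j q(\supp\phi_j)$. You then conclude that $T^r(y)=0$ whenever the \emph{input} $y$ is supported in $q^{-1}(Q\setminus C)$. That does not follow: whether $T^r$ kills $y$ is decided by $\langle\psi_j,y\rangle_H$, i.e.\ by where the $\psi_j$ live, not the $\phi_j$. The claim is false in general. Take $G$ and $H$ to be the one-point groupoid and $Z=\mathbb N$ with the discrete topology. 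Then $Q=\mathbb N$, slices are singletons, and $\langle\psi,y\rangle_H=\sum_w\psi(\bar w)y(w)$. For $T=\theta_{\delta_1,\widetilde{\delta_n}}$ with $n\neq 1$ your set is $C=\{1\}$, yet $T^r(\delta_n)=\delta_1\neq0$. So step (3), as written, does not yield $\|(M_f-T)\cdot y\|\geq\varepsilon$.

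The repair is small; here are two ways.
\begin{enumerate}
\item Enlarge $C$ by $\bigcup_j q(K'_j)$, where $K'_j\subseteq Z$ is compact and $\psi_j$ vanishes outside $\{\bar w:w\in K'_j\}$. A nonzero term of $\langle\psi_j,y\rangle_H(\eta)=\sum_{s_Z(w)=r(\eta)}\psi_j(\bar w)y(w\eta)$ would force $q(w)=q(w\eta)$ to lie both in $C$ and in $Q\setminus C$. Hence $T^r(y)=0$ genuinely holds.
\item Keep your $C$ and test the \emph{left} component instead, on $x\in C_c(V^{\mathrm{op}})$ with $0\le x\le 1$ and $x(\bar z)=1$. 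Here $\langle x,\phi_j\rangle_H=0$ by the same orbit argument, so $T^l(x)=0$. Meanwhile $\|x\cdot M_f\|_{\X_Z}=\|x\cdot M_f\|_\infty\geq\varepsilon$ by Lemma~\ref{lem:slicenorm}.
\end{enumerate}

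The paper avoids test vectors altogether. It uses exactly your output-support fact, in the form $M_uR=R$ for $u\in C_c(Q)$ with $u|_C=1$. This gives $\|M_f-M_{uf}\|\le\|M_f-R\|+\|M_u(R-M_f)\|<2\varepsilon$. The isometric identity $\|M_f\|=\|f\|_\infty$ then turns this into $\|f-uf\|_\infty<2\varepsilon$ with $uf\in C_c(Q)$.
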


\begin{proof}
For $x\in Q$, choose $z\in q^{-1}(x)$, and let $V\subseteq Z$ be a slice
containing $z$.  Choose $\phi\in C_c(V)$ such that
\[
\phi(z)=1
\qquad\text{and}\qquad
\|\phi\|_\infty=1.
\]
By Lemma~\ref{lem:slicenorm},
\[
\|\phi\|_{\Y_Z}=1
\quad\text{and}\quad
\|(f\circ q)\phi\|_{\Y_Z}
 =\|(f\circ q)\phi\|_\infty.
\]
Hence
\[
\|M_f\|
 \geq \|M_f\phi\|_{\Y_Z}
 =\|(f\circ q)\phi\|_\infty
 \geq |f(x)|.
\]
Taking the supremum over $x\in Q$, and using the contractive estimate
$\|M_f\|\leq\|f\|_\infty$ established above, gives
\[
\|M_f\|=\|f\|_\infty.
\]

First suppose that $f\in C_c(Q)$, and put $K=\mathrm{supp}(f)$.  Since slices
form a basis for $Z$ and $q$ is a local homeomorphism, choose
finitely many slices $V_1,\ldots,V_n$ such that, with
$U_i=q(V_i)$,
\[
    K\subseteq\bigcup_{i=1}^n U_i.
\]
Choose $\rho_i\in C_c(U_i)$ such that
\[
    \sum_{i=1}^n\rho_i=1\ \text{on }K,
\]
and put $f_i=f\rho_i$.  Then
\[
    f=\sum_{i=1}^n f_i,
    \qquad
    \mathrm{supp}(f_i)\subseteq U_i.
\]

For each $i$, choose $\chi_i\in C_c(U_i)$ with
$\chi_i=1$ on $\mathrm{supp}(f_i)$, and define
$\alpha_i,\beta_i\in C_c(V_i)$, extended by zero outside $V_i$, by
\[
    \alpha_i(z)=f_i(q(z)),
    \qquad
    \beta_i(z)=\chi_i(q(z))
    \qquad(z\in V_i).
\]
Let $\widetilde{\beta_i}\in \mathcal{C}(Z^{op})$ be defined by
\[
    \widetilde{\beta_i}(\bar z)=\beta_i(z).
\]

Let $m_{f_i}\in\mathcal B(E_Z^p)$ denote multiplication by
$f_i\circ q$.  We claim that
\[
    T_{\alpha_i}S_{\widetilde{\beta_i}}=m_{f_i}.
\]
Indeed, for an arbitrary $\zeta\in E_Z^p$ and $z\in Z$, the sums
below are finite by the slice conditions, and
\[
\begin{aligned}
    (T_{\alpha_i}S_{\widetilde{\beta_i}}\zeta)(z)
    &=
    \sum_{\substack{r(\eta)=s_Z(z)\\ s_Z(w)=s_Z(z)}}
       \alpha_i(z\cdot\eta)\beta_i(w\cdot\eta)\zeta(w).
\end{aligned}
\]
A nonzero summand requires $z\cdot\eta,w\cdot\eta\in V_i$.  These two
points have the same $s_Z$-value, so injectivity of
$s_Z|_{V_i}$ gives $z\cdot\eta=w\cdot\eta$, and freeness of the right
$H$-action then gives $z=w$.

For fixed $z$, there is at most one $\eta$ with $z\cdot\eta\in V_i$:
if both $z\cdot\eta_1$ and $z\cdot\eta_2$ belong to $V_i$, then they have
the same image under $q$, so injectivity of $q|_{V_i}$ gives
$z\cdot\eta_1=z\cdot\eta_2$, and freeness gives $\eta_1=\eta_2$.  Such an
$\eta$ exists precisely when $q(z)\in U_i$.  Consequently,
\[
    (T_{\alpha_i}S_{\widetilde{\beta_i}}\zeta)(z)
    =
    f_i(q(z))\chi_i(q(z))\zeta(z)
    =
    f_i(q(z))\zeta(z),
\]
which proves the claim.

This operator identity gives both components of the corresponding
rank-one operator.  On the dense subspace $\mathcal{C}(Z)\subseteq \Y_Z$,
\[
\begin{aligned}
    \theta_{\alpha_i,\widetilde{\beta_i}}^r(T_\xi)
    &=
    T_{\alpha_i}
       (S_{\widetilde{\beta_i}}T_\xi)             \\
    &=
    (T_{\alpha_i}S_{\widetilde{\beta_i}})T_\xi    \\
    &=
    m_{f_i}T_\xi
     =
    T_{f_i\cdot\xi}
     =
    M_{f_i}^r(T_\xi).
\end{aligned}
\]
Similarly, on $\mathcal{C}(Z^{op})\subseteq \X_Z$,
\[
\begin{aligned}
    \theta_{\alpha_i,\widetilde{\beta_i}}^l(S_\psi)
    &=
    (S_\psi T_{\alpha_i})S_{\widetilde{\beta_i}} \\
    &=
    S_\psi
      (T_{\alpha_i}S_{\widetilde{\beta_i}})       \\
    &=
    S_\psi m_{f_i}
     =
    M_{f_i}^l(S_\psi).
\end{aligned}
\]
By continuity,
\[
    M_{f_i}
    =
    \theta_{\alpha_i,\widetilde{\beta_i}}
    \in \mathcal{K}_B(\X_Z,\Y_Z).
\]
Therefore
\[
    M_f
    =
    \sum_{i=1}^n
       \theta_{\alpha_i,\widetilde{\beta_i}}
    \in \mathcal{K}_B(\X_Z,\Y_Z).
\]

For general $f\in C_0(Q)$, choose $f_j\in C_c(Q)$ with
$\|f_j-f\|_\infty\to0$.  Since the multiplier representation is
contractive,
\[
    \|M_{f_j}-M_f\|
    \leq
    \|f_j-f\|_\infty
    \longrightarrow0,
\]
and hence $M_f$ is compact.

Conversely, suppose that $M_f\in \mathcal{K}_B(\X_Z,\Y_Z)$, and let
$\varepsilon>0$.  By density of $\mathcal{C}(Z)$ in $\Y_Z$ and of
$\mathcal{C}(Z^{op})$ in $\X_Z$, together with the continuity of the
rank-one operators in both variables, there are
$\alpha_j\in \mathcal{C}(Z)$ and $\psi_j\in \mathcal{C}(Z^{op})$ such that
\[
    R=\sum_{j=1}^m\theta_{\alpha_j,\psi_j},
    \qquad
    \|M_f-R\|<\varepsilon.
\]
For each $j$, choose a compact set $K_j\subseteq Z$ outside which
$\alpha_j$ vanishes, and put
\[
    C=\bigcup_{j=1}^m q(K_j)\subseteq Q.
\]
Choose $u\in C_c(Q)$ such that
\[
    0\leq u\leq1,\qquad u|_C=1.
\]
Then $M_u\alpha_j=\alpha_j$ for every $j$, and the rank-one
composition identity gives
\[
    M_uR=R.
\]
Therefore
\[
\begin{aligned}
    \|M_f-M_{uf}\|
    &=
    \|M_f-M_uM_f\|                                   \\
    &\leq
    \|M_f-R\|
    +
    \|M_u(R-M_f)\|                                    \\
    &<
    2\varepsilon.
\end{aligned}
\]
Since the multiplier representation is isometric,
\[
    \|f-uf\|_\infty<2\varepsilon.
\]
But $uf\in C_c(Q)$, and $\varepsilon>0$ was arbitrary.  Hence
$f\in C_0(Q)$.
\end{proof}

\subsection{Orbit properness and compactness}

\begin{lem}\label{lem:proper-pullback-C0}
Let \(r\colon X\to Y\) be a continuous map between locally compact
Hausdorff spaces. Then the following are equivalent:
\begin{enumerate}
    \item \(r\) is proper;
    \item \(a\circ r\in C_0(X)\) for every \(a\in C_0(Y)\).
\end{enumerate}
\end{lem}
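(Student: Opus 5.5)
We prove the two implications separately, using only Urysohn's lemma for locally compact Hausdorff spaces and the definition of properness as ``preimages of compact sets are compact.''

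\emph{(1)$\Rightarrow$(2).} Let $a\in C_0(Y)$ and $\varepsilon>0$. The set $C_\varepsilon=\{y\in Y:|a(y)|\geq\varepsilon\}$ is compact. Since $a\circ r$ is continuous, it suffices to show that $\{x:|a(r(x))|\geq\varepsilon\}$ is compact. This set equals $r^{-1}(C_\varepsilon)$, which is compact because $r$ is proper. Hence $a\circ r$ vanishes at infinity.

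\emph{(2)$\Rightarrow$(1).} Let $C\subseteq Y$ be compact. By Urysohn's lemma for the locally compact Hausdorff space $Y$, there is $a\in C_c(Y)\subseteq C_0(Y)$ with $0\leq a\leq1$ and $a=1$ on $C$. By hypothesis $a\circ r\in C_0(X)$, so
\[
D=\{x\in X: (a\circ r)(x)\geq 1/2\}
\]
is compact. We have $r^{-1}(C)\subseteq D$, and $r^{-1}(C)$ is closed in $X$, since $C$ is closed in the Hausdorff space $Y$ and $r$ is continuous. A closed subset of a compact set is compact, so $r^{-1}(C)$ is compact.

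This shows that preimages of compact sets are compact. If the paper's convention for properness also demands closedness of the map, we add the standard fact that a continuous map between locally compact Hausdorff spaces with compact preimages of compacta is automatically closed. To see this, let $F\subseteq X$ be closed and let $y_i=r(x_i)\to y$ with $x_i\in F$. Choose a compact neighbourhood $N$ of $y$. Eventually $x_i$ lies in the compact set $r^{-1}(N)$, so a subnet of $(x_i)$ converges to some $x\in F$. By continuity $r(x)=y$, and uniqueness of limits in the Hausdorff space $Y$ gives $y\in r(F)$. Hence $r(F)$ is closed.

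The whole argument is elementary. The only point needing care is this last one: checking which definition of ``proper'' is in force and supplying the closedness argument if it is required.
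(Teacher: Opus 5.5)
Your proof is correct and follows the paper's argument essentially verbatim. For $(1)\Rightarrow(2)$ you pull back the compact superlevel sets $\{|a|\geq\varepsilon\}$ along $r$; for $(2)\Rightarrow(1)$ you take a Urysohn function equal to $1$ on $C$ and note that the closed set $r^{-1}(C)$ lies in a compact superlevel set of $a\circ r$. Your extra net argument that such a map is automatically closed is valid and supplies a point the paper's proof leaves implicit, even though the paper's definition of a proper action does include closedness.
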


\begin{proof}
Suppose first that \(r\) is proper. For \(a\in C_0(Y)\) and
\(\varepsilon>0\),
\[
\begin{aligned}
    \{x\in X:|a(r(x))|\geq\varepsilon\}
    &=
    r^{-1}
    \bigl(
        \{y\in Y:|a(y)|\geq\varepsilon\}
    \bigr).
\end{aligned}
\]
The set inside the inverse image is compact, and therefore so is its
inverse image. Hence \(a\circ r\in C_0(X)\).

Conversely, suppose that \(a\circ r\in C_0(X)\) for every \(a\in C_0(Y)\).
Let \(K\subset Y\) be compact. Choose \(a\in C_c(Y)\) such that $a|_K\equiv 1$.
Then
\[
    L=
    \{x\in X:|a(r(x))|\geq\tfrac12\}
\]
is compact. Since \(K\) is closed in \(Y\), the set \(r^{-1}(K)\) is
closed in \(X\), and $r^{-1}(K)\subset L$.
Thus \(r^{-1}(K)\) is compact, proving that \(r\) is proper.
\end{proof}

For later use, note that $F^p_{\mathrm{red}}(G)$ has a contractive approximate
identity contained in $C_c(G^{(0)})$.  Indeed, direct the
compact subsets $K\subseteq G^{(0)}$ by inclusion and choose
$e_K\in C_c(G^{(0)})$ such that
\[
0\leq e_K\leq1,
\qquad
e_K|_K=1.
\]
Since a unit-space function acts in every regular representation by
multiplication,
\[
\|e_K\|_{F^p_{\mathrm{red}}(G)}
 =\|e_K\|_\infty\leq1.
\]
For $b\in\mathcal C(G)$, write
\[
b=\sum_{j=1}^n b_j,
\qquad b_j\in C_c(U_j),
\]
where each $U_j\subseteq G$ is open and Hausdorff.  Choose compact
sets $K_j\subseteq U_j$ outside which $b_j$ vanishes.  Once
\[
K\supseteq
\bigcup_{j=1}^n\bigl(r(K_j)\cup s(K_j)\bigr),
\]
we have
\[
e_K*b_j=b_j=b_j*e_K
\qquad(j=1,\ldots,n),
\]
and hence $e_K*b=b=b*e_K$.
Density of $\mathcal{C}(G)$ proves the assertion.

Combining Lemma~\ref{lem:proper-pullback-C0} with
Proposition~\ref{prop:compact-multiplier-characterization}, we obtain
\begin{equation}
    r_*\text{ is proper}
    \quad\Longleftrightarrow\quad
    M_{a\circ r_*}\in \mathcal K_B(\X_Z,\Y_Z)
    \text{ for every }a\in C_0(G^{(0)}).
  \label{eq:properness}
\end{equation}

\begin{thm} \label{thm:morita-cycle-implies-proper}
Let \(Z\) be a groupoid correspondence from \(G\) to \(H\).
Suppose that the action of \(\mathcal C(G)\) extends
to a Banach correspondence
\[
    \pi_Z\colon
    F_{\mathrm{red}}^p(G)
    \longrightarrow
    \mathcal L_{F_{\mathrm{red}}^p(H)}(\X_Z,\Y_Z).
\]
Then this Banach correspondence is a Morita cycle if and only if \(Z\) is proper.
\end{thm}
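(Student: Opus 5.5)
The plan is to reduce both directions to the multiplier characterization~\eqref{eq:properness}. The link is that the left action of unit-space functions is implemented by the compact-multiplier construction of Proposition~\ref{prop:compact-multiplier-characterization}. The nondegeneracy part of the Morita cycle definition is already available: nondegeneracy of the pair $(\X_Z,\Y_Z)$ over $B=F^p_{\mathrm{red}}(H)$ was proved when the pair was constructed, and nondegeneracy of the left action by the cut-off argument with $e\in C_c(G^{(0)})$, $e=1$ on $r_Z(K)$. So the statement amounts to
\[
\pi_Z\bigl(F^p_{\mathrm{red}}(G)\bigr)\subseteq\mathcal K_B(\X_Z,\Y_Z)
\quad\Longleftrightarrow\quad
r_*\ \text{is proper}.
\]

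\emph{Step 1: identify $\pi_Z$ on the unit space.} For $a\in C_c(G^{(0)})\subseteq\mathcal C(G)$, formula \eqref{eq:PiZ} collapses to a single term, giving
\[
(a\cdot\phi)(z)=a(r_Z(z))\phi(z)=a(r_*(q(z)))\phi(z)
\]
for $\phi\in\mathcal C(Z)$. Similarly $(\psi\cdot a)(\bar z)=\psi(\bar z)a(r_*(q(z)))$ for $\psi\in\mathcal C(Z^{\mathrm{op}})$, which uses $r(\bar z)=s(\bar z\cdot\,)$ conventions from \eqref{cH_Zop_cG}. Thus both components of $\pi_Z(a)$ agree with those of $M_{a\circ r_*}$ on the dense subspaces, and by continuity $\pi_Z(a)=M_{a\circ r_*}$. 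Since a unit-space function acts by multiplication in every regular representation, $\|a\|_{F^p_{\mathrm{red}}(G)}=\|a\|_\infty$. Combining this with Proposition~\ref{prop:compact-multiplier-characterization} (which gives $\|M_f\|=\|f\|_\infty$) and density of $C_c(G^{(0)})$ in $C_0(G^{(0)})$, the identity $\pi_Z(a)=M_{a\circ r_*}$ extends to all $a\in C_0(G^{(0)})$, where we regard $C_0(G^{(0)})$ as the closure of $C_c(G^{(0)})$ inside $F^p_{\mathrm{red}}(G)$. Note that $a\circ r_*\in C_b(Q)$, so $M_{a\circ r_*}$ makes sense.

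\emph{Step 2: Morita cycle $\Rightarrow$ proper.} If $\pi_Z$ takes values in $\mathcal K_B(\X_Z,\Y_Z)$, then Step~1 gives $M_{a\circ r_*}\in\mathcal K_B(\X_Z,\Y_Z)$ for every $a\in C_0(G^{(0)})$. By \eqref{eq:properness}, $r_*$ is proper.

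\emph{Step 3: proper $\Rightarrow$ Morita cycle.} Assume $r_*$ is proper. By \eqref{eq:properness} and Step~1, $\pi_Z(e_K)=M_{e_K\circ r_*}$ is compact for the contractive approximate identity $(e_K)\subseteq C_c(G^{(0)})$ constructed before \eqref{eq:properness}. For $b\in F^p_{\mathrm{red}}(G)$ we have $e_K b\to b$ in norm, and since $\pi_Z$ is a contractive homomorphism,
\[
\pi_Z(b)=\lim_K\pi_Z(e_K)\,\pi_Z(b).
\]
Because $\mathcal K_B(\X_Z,\Y_Z)$ is a closed two-sided ideal in $\mathcal L_B(\X_Z,\Y_Z)$, each product is compact, and hence so is the limit.

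The only point needing care is Step~1: checking that both the left component on $\X_Z$ and the right component on $\Y_Z$ of $\pi_Z(a)$ coincide with those of $M_{a\circ r_*}$, under the composition convention $TS=(S^lT^l,T^rS^r)$. This is also what makes the ideal property in Step~3 apply in the correct order. Everything else is bookkeeping with the already established results.
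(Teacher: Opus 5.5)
Your proposal is correct and follows the paper's proof essentially step for step. You identify $\pi_Z(a)=M_{a\circ r_*}$ for unit-space functions, invoke \eqref{eq:properness} for the forward direction, and use the approximate identity $(e_K)\subseteq C_c(G^{(0)})$ together with the ideal property of $\mathcal K_B(\X_Z,\Y_Z)$ for the converse. The only difference is that you spell out why the identity passes from $C_c(G^{(0)})$ to $C_0(G^{(0)})$ (the reduced norm restricts to the sup norm there, and $\|M_f\|\leq\|f\|_\infty$) and explicitly recall the nondegeneracy requirement; the paper leaves the first implicit and handles the second in the preceding subsection.
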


\begin{proof}
For $a\in C_c(G^{(0)})$ and $\phi\in \mathcal{C}(Z)$,
\[
(a\cdot\phi)(z)
 =a(r_Z(z))\phi(z)
 =(a\circ r_*)(q(z))\phi(z).
\]
The compatible action on $\mathcal{C}(Z^{op})$ is multiplication by the same
function.  Hence
\[
\pi_Z(a)=M_{a\circ r_*}.
\]
By density of $C_c(G^{(0)})$ in $C_0(G^{(0)})$, this equality holds for
every $a\in C_0(G^{(0)})$.

Suppose first that the Banach correspondence is a Morita cycle.  Then
$\pi_Z(a)$ is compact for every $a\in C_0(G^{(0)})$, so \eqref{eq:properness} implies
that $r_*:Q\to G^{(0)}$ is proper.  Thus $Z$ is proper.

Conversely, suppose that $Z$ is proper.  By \eqref{eq:properness},
\[
\pi_Z(a)\in \mathcal{K}_{F^p_{\mathrm{red}}(H)}(\X_Z,\Y_Z)
\qquad
(a\in C_0(G^{(0)})).
\]
Let $(e_\lambda)\subseteq C_c(G^{(0)})$ be the contractive approximate
identity described above.  If $b\in F^p_{\mathrm{red}}(G)$, then
\[
\pi_Z(e_\lambda)\pi_Z(b)
 =\pi_Z(e_\lambda b)
 \longrightarrow\pi_Z(b).
\]
Since each $\pi_Z(e_\lambda)$ is compact and
$\mathcal{K}_{F^p_{\mathrm{red}}(H)}(\X_Z,\Y_Z)$ is a closed two-sided ideal in
$\mathcal{L}_{F^p_{\mathrm{red}}(H)}(\X_Z,\Y_Z)$, it follows that $\pi_Z(b)$ is
compact.  Hence the Banach correspondence is a Morita cycle.
\end{proof}

\subsection{Reduced-norm extension and properness of the left action}

We now turn to a different properness condition.  The results of the
preceding subsection concern orbit properness of the correspondence.
Here we use properness of the left $G$-action as a sufficient condition
for the algebraic left action to extend to $F^p_{\mathrm{red}}(G)$.

Recall from Section~\ref{sect:BanCorresp} the space
  $E_Z^p=\bigoplus_{v\in H^{(0)}}^p\ell^p(Z_v)$ and the operator $\Pi_Z(a)$ from \eqref{eq:PiZ}.
The actions of \(G\) and \(H\) commute, so \(\Pi_Z(a)\) preserves each
space \(\ell^p(Z_v)\), \(v\in H^{(0)}\). The module actions on
\(\Y_Z\) and \(\X_Z\) are obtained by composing their concrete
representatives on the left and right, respectively, with
\(\Pi_Z(a)\). Consequently,
\[
    \|a\cdot\phi\|_{\Y_Z}
    \leq
    \|\Pi_Z(a)\|\|\phi\|_{\Y_Z},
    \qquad
    \|\psi\cdot a\|_{\X_Z}
    \leq
    \|\psi\|_{\X_Z}\|\Pi_Z(a)\|.
\]

\begin{prop}\label{prop:reduced-ext}
The algebraic action extends continuously to
\(F_{\mathrm{red}}^p(G)\) in each of the following cases:
\begin{enumerate}
    \item \(p=1\);
    \item \(p=\infty\);
    \item \(1<p<\infty\) and the left \(G\)-action on \(Z\) is proper.
\end{enumerate}
\end{prop}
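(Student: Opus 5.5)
The plan is to prove the bound \eqref{eq:reduced-left-bound}, namely $\|\Pi_Z(a)\|\leq\|a\|_{F^p_{\mathrm{red}}(G)}$ for $a\in\mathcal C(G)$. Together with the inequalities $\|a\cdot\phi\|_{\Y_Z}\leq\|\Pi_Z(a)\|\|\phi\|_{\Y_Z}$ and $\|\psi\cdot a\|_{\X_Z}\leq\|\psi\|_{\X_Z}\|\Pi_Z(a)\|$ recorded just before the statement, this gives the continuous extension. Since $\Pi_Z(a)$ preserves each summand $\ell^p(Z_v)$, it suffices to bound it on each $\ell^p(Z_v)$ uniformly in $v$.

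Relative to the canonical basis of $\ell^p(Z_v)$, $\Pi_Z(a)$ has matrix coefficients
\[
c_{z,w}=\sum_{\gamma\in G,\ \gamma\cdot w=z}a(\gamma).
\]
\textbf{Cases $p=1$ and $p=\infty$.} I would use the column-sum and row-sum formulas for operator norms on $\ell^1$ and $\ell^\infty$.

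For $p=1$, fix $w$. Each $\gamma$ with $s(\gamma)=r_Z(w)$ contributes to exactly one coefficient $c_{\gamma w,w}$. Hence
\[
\sum_z|c_{z,w}|\leq\sum_{s(\gamma)=r_Z(w)}|a(\gamma)|\leq\|a\|_{I,s}=\|a\|_{1,red}.
\]

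For $p=\infty$, fix $z$. Each $\gamma$ with $r(\gamma)=r_Z(z)$ contributes only to the coefficient with $w=\gamma^{-1}z$, so the row sum is at most $\|a\|_{I,r}=\|a\|_{\infty,red}$. Both identifications of the reduced norms with $I$-norms are from Section~\ref{sect_Prelim}. No properness is needed in either case, and finiteness of the sums follows from the bisection decomposition.

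\textbf{Case $1<p<\infty$ with a proper left action.} The plan is to decompose $Z_v$ into $G$-orbits; each orbit is $G$-invariant, so $\ell^p(Z_v)=\bigoplus^p_{\text{orbits}}\ell^p(G\cdot z)$ and $\Pi_Z(a)$ is block diagonal. It therefore suffices to bound each block by $\|(\mathrm{Ind}\,\delta_u)(a)\|$, where $u=r_Z(z)$.

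First I would show that the stabilizer $S=\{\gamma:\gamma\cdot z=z\}$ is a finite group. It equals the projection of $\Theta^{-1}(z,z)$, which is compact by properness, and it lies in the discrete set $G_u$. The map $\gamma\mapsto\gamma^{-1}\cdot z$ then identifies $G\cdot z$ with the coset space $G_u/S$, where $S$ acts on the right of $G_u$. Under this identification, $\Pi_Z(a)$ becomes the quasi-regular operator $\xi(\gamma S)\mapsto\sum_{r(\eta)=r(\gamma)}a(\eta)\xi(\eta^{-1}\gamma S)$.

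Next I would define $J\xi=|S|^{-1/p}\,\xi\circ\mathrm{quot}$, which maps $\ell^p(G_u/S)$ isometrically onto the right-$S$-invariant functions in $\ell^p(G_u)$. Since left convolution commutes with right translation by $S$, one checks directly that $(\mathrm{Ind}\,\delta_u)(a)J=J\,\Pi_Z(a)|_{\ell^p(G\cdot z)}$. Therefore each block has norm at most $\|(\mathrm{Ind}\,\delta_u)(a)\|\leq\|a\|_{p,red}$, and taking the supremum over orbits and over $v$ gives the bound.

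The main obstacle is the finiteness of stabilizers, together with careful bookkeeping of the orientation of the identification $G\cdot z\cong G_u/S$, so that the intertwining identity holds exactly rather than after an inversion. I would first verify the intertwining identity on point masses $\delta_{\gamma S}$, and then extend by linearity and density.
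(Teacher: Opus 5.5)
Your proposal is correct and follows the paper's proof essentially step for step: $I$-norm estimates at $p=1,\infty$, then finiteness of the stabilizer from properness together with discreteness of $G_u$, block diagonalization over orbits, and the isometry $J=|S|^{-1/p}(\,\cdot\circ\mathrm{quot})$ intertwining each orbit block with $\lambda_u(a)$. The one slip is the orientation you yourself flagged: for $\gamma\in G_u$ the identification must be $\gamma S\mapsto\gamma\cdot z$, as in the paper's $J_z$. The map $\gamma\mapsto\gamma^{-1}\cdot z$ lives on $G^u$ and produces left cosets $S\gamma$. Only $\gamma S\mapsto\gamma\cdot z$ is consistent with your quasi-regular formula and makes the intertwining identity exact.
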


\begin{proof}
Suppose first that \(p=\infty\). For every bounded \(\xi\),
\[
\begin{aligned}
    |(\Pi_Z(a)\xi)(z)|
    &\leq
    \sum_{r(\gamma)=r_Z(z)}
    |a(\gamma)|\,|\xi(\gamma^{-1}z)|                              \\
    &\leq
    \|a\|_{I,r}\|\xi\|_\infty.
\end{aligned}
\]
Hence
\[
    \|\Pi_Z(a)\|
    \leq
    \|a\|_{I,r} = \|a\|_{\infty,red},
\]
and the action extends continuously to
\(F_{\mathrm{red}}^\infty(G)\).

Suppose next that \(p=1\). Then
\[
\begin{aligned}
    \|\Pi_Z(a)\xi\|_1
    &\leq
    \sum_z
    \sum_{r(\gamma)=r_Z(z)}
    |a(\gamma)|\,|\xi(\gamma^{-1}z)|                              \\
    &=
    \sum_w
    |\xi(w)|
    \sum_{s(\gamma)=r_Z(w)}
    |a(\gamma)|                                                   \\
    &\leq
    \|a\|_{I,s}\|\xi\|_1.
\end{aligned}
\]
Therefore
\[
    \|\Pi_Z(a)\|
    \leq
    \|a\|_{I,s} = \|a\|_{1,red},
\]
and the action extends continuously to
\(F_{\mathrm{red}}^1(G)\).

Now suppose that \(1<p<\infty\) and that the left \(G\)-action on \(Z\) is
proper. Fix \(z\in Z\), put
\[
    u=r_Z(z),
\]
and define
\[
    K_z=\{k\in G:s(k)=r(k)=r_Z(z), k\cdot z=z\}.
\]
The set \(K_z\) is the fibre over \((z,z)\) of the proper action map
\[
    G*Z\longrightarrow Z\times Z,
    \qquad
    (\gamma,w)\longmapsto(\gamma w,w).
\]
Thus \(K_z\) is compact. Since \(G\) is étale, the source fibre
\(G_u=s^{-1}(u)\) is discrete. Hence \(K_z\) is finite.

The orbit map
\[
    G_u\longrightarrow Gz,
    \qquad
    \gamma\longmapsto\gamma z,
\]
has fibres equal to the right \(K_z\)-cosets. Indeed, $\gamma_1z=\gamma_2z$ if and only if $\gamma_2^{-1}\gamma_1\in K_z$.
Define
\[
    J_z\colon\ell^p(Gz)\longrightarrow\ell^p(G_u)
\]
by
\[
    (J_z\xi)(\gamma)
    =
    |K_z|^{-1/p}\xi(\gamma z).
\]
Since every point of \(Gz\) has exactly \(|K_z|\) preimages,
\[
    \|J_z\xi\|_p^p
    =
    |K_z|^{-1}
    \sum_{\gamma\in G_u}|\xi(\gamma z)|^p                         
    =
    \sum_{w\in Gz}|\xi(w)|^p.
\]
Thus \(J_z\) is an isometry.

Let \(\Pi_{Gz}(a)\) denote the restriction of
\(\Pi_Z(a)\) to \(\ell^p(Gz)\). For \(\gamma\in G_u\),
\[
\begin{aligned}
    (\lambda_u(a)J_z\xi)(\gamma)
    &=
    \sum_{\eta\in G^{r(\gamma)}}
    a(\eta)(J_z\xi)(\eta^{-1}\gamma)                              \\
    &=
    |K_z|^{-1/p}
    \sum_{\eta\in G^{r(\gamma)}}
    a(\eta)\xi(\eta^{-1}\gamma z)                                 \\
    &=
    (J_z\Pi_{Gz}(a)\xi)(\gamma).
\end{aligned}
\]
Consequently,
\[
    J_z\Pi_{Gz}(a)
    =
    \lambda_u(a)J_z,
\]
and therefore
\[
    \|\Pi_{Gz}(a)\|
    \leq
    \|\lambda_u(a)\|
    \leq
    \|a\|_{p,\mathrm{red}}.
\]

For each \(v\in H^{(0)}\), the \(G\)-space \(Z_v\) is the
disjoint union of its \(G\)-orbits, and
\[
    \ell^p(Z_v)
    =
    \bigoplus_{O\in G\backslash Z_v}^{p}\ell^p(O).
\]
The operator \(\Pi_Z(a)\) is block diagonal with respect to this
decomposition. Hence its norm is the supremum of the norms of the orbit
blocks, so
\[
    \|\Pi_Z(a)\|
    \leq
    \|a\|_{p,\mathrm{red}}.
\]
Together with the module estimates preceding this proposition, this proves that the action extends continuously to
\(F_{\mathrm{red}}^p(G)\).
\end{proof}

Combining Theorem~\ref{thm:morita-cycle-implies-proper} and Proposition~\ref{prop:reduced-ext}, we obtain the following:

\begin{thm} \label{thm:corrected-proper-correspondence}
Let \(Z\) be a groupoid correspondence from \(G\) to \(H\).
Assume, in addition, that either
\begin{enumerate}
    \item \(p\in\{1,\infty\}\); or
    \item \(1<p<\infty\) and the left \(G\)-action on \(Z\) is proper.
\end{enumerate}
Then the Banach $\bigl(
        F_{\mathrm{red}}^p(G),
        F_{\mathrm{red}}^p(H)
    \bigr)$-correspondence
associated with \(Z\) is a Morita cycle if and only if $Z$ is proper in the sense of Definition~\ref{def:groupoid-correspondence}.
\end{thm}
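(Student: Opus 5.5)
The theorem follows by chaining Proposition~\ref{prop:reduced-ext} into Theorem~\ref{thm:morita-cycle-implies-proper}. Fix $p$ and $Z$ as in the statement. Under either hypothesis (1) or (2), Proposition~\ref{prop:reduced-ext} gives the reduced bound \eqref{eq:reduced-left-bound}, namely $\|\Pi_Z(a)\|\leq\|a\|_{F^p_{\mathrm{red}}(G)}$ for $a\in\mathcal C(G)$. In case (1) this is the $I_{s}$ or $I_{r}$ estimate. In case (2) it comes from the orbitwise isometry $J_z$, which uses finiteness of the stabilizers $K_z$ coming from properness of the left action.

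I then feed this bound into the construction of Section~\ref{sect:BanCorresp}. The module estimates $\|a\cdot\phi\|_{\Y_Z}\leq\|\Pi_Z(a)\|\|\phi\|_{\Y_Z}$ and $\|\psi\cdot a\|_{\X_Z}\leq\|\psi\|_{\X_Z}\|\Pi_Z(a)\|$ hold on the dense subspaces. Together with the formal adjoint identity $\langle S_{\psi\cdot a},T_\phi\rangle_B=\langle S_\psi,T_{a\cdot\phi}\rangle_B$, they show that $a\mapsto(a\cdot-, -\cdot a)$ extends to a contractive homomorphism $\pi_Z\colon F^p_{\mathrm{red}}(G)\to\mathcal L_{F^p_{\mathrm{red}}(H)}(\X_Z,\Y_Z)$. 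Multiplicativity passes from $\mathcal C(G)$ to the completion by continuity. This gives the Banach correspondence associated with $Z$, and its nondegeneracy was already verified using cutoff functions $e\in C_c(G^{(0)})$. So the hypothesis of Theorem~\ref{thm:morita-cycle-implies-proper} is satisfied.

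Theorem~\ref{thm:morita-cycle-implies-proper} then states exactly that this correspondence is a Morita cycle if and only if $r_*\colon Z/H\to G^{(0)}$ is proper. That is properness of $Z$ in the sense of Definition~\ref{def:groupoid-correspondence}. The argument behind it identifies $\pi_Z(a)=M_{a\circ r_*}$ for $a\in C_0(G^{(0)})$, applies \eqref{eq:properness}, and passes from $C_0(G^{(0)})$ to all of $F^p_{\mathrm{red}}(G)$ via the contractive approximate identity in $C_c(G^{(0)})$ and the ideal property of $\mathcal K_B$.

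The only point that needs care is the extension step. The estimate in Proposition~\ref{prop:reduced-ext} is for $\Pi_Z(a)$ on $E_Z^p$, while contractivity into $\mathcal L_B(\X_Z,\Y_Z)$ is measured by the operator norms of both components $T_\phi\mapsto\Pi_Z(a)T_\phi$ and $S_\psi\mapsto S_\psi\Pi_Z(a)$. I would check that both are bounded by $\|\Pi_Z(a)\|$, which holds because $\X_Z$ and $\Y_Z$ carry the concrete operator norms. I would also note that the $p=\infty$ identities $\Pi_Z(a)T_\phi=T_{a\cdot\phi}$ are valid by the finite-sum calculation, so density arguments are not needed there.
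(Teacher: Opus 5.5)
Your proposal is correct and follows the paper's route exactly. The paper obtains this theorem by combining Proposition~\ref{prop:reduced-ext}, which gives the bound \eqref{eq:reduced-left-bound} in each of the two cases, with the extension paragraph of Section~\ref{sect:BanCorresp} and with Theorem~\ref{thm:morita-cycle-implies-proper}. Your extra check that both components of $\pi_Z(a)$ are bounded by $\|\Pi_Z(a)\|$, via the concrete operator norms on $\X_Z$ and $\Y_Z$, is the same module estimate the paper records just before Proposition~\ref{prop:reduced-ext}.
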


\section{Applications}\label{sect_app}

\subsection{Transformation groupoids}

Suppose that $G$ and $H$ are countable discrete groups acting freely and properly on a second-countable, locally compact Hausdorff space $Z$, on the left and right, respectively, with commuting actions.
Then the transformation groupoids $G\rtimes Z/H$ and $H\rtimes G\backslash Z$ are equivalent (cf. \cite[Example 2.33]{Will}). 
Thus, by Theorem~\ref{thm:main}, their reduced $L^p$-operator algebras $F^p_{red}(G\rtimes Z/H)$ and $F^p_{red}(H\rtimes G\backslash Z)$ are Morita equivalent.

By \cite[Proposition 3.1]{BK} (also see \cite[Proposition 6.4]{CGT}), for $p\in[1,\infty)$, these reduced $L^p$-operator algebras are isometrically isomorphic to the reduced $L^p$ crossed products associated with the group actions, i.e., 
\begin{align*}
F^p_{red}(G\rtimes Z/H) &\cong F^p_{red}(G, C_0(Z/H)), \\
F^p_{red}(H\rtimes G\backslash Z) &\cong F^p_{red}(H, C_0(G\backslash Z)).
\end{align*}
For the full algebras, the corresponding identifications are also
isometric.  Indeed, the disintegration theorem and norm estimates of
\cite{BKM1} identify the full $L^p$-operator crossed
product of a transformation group with the full $L^p$-operator algebra
of the associated transformation groupoid; see in particular
\cite[Example~4.8 and Theorems~5.5 and~5.13]{BKM1} and the discussion
in the introduction there.

Theorem~\ref{thm:main} and Corollary~\ref{cor:full-morita} therefore give:

\begin{thm} \label{thm:pGreen} ($L^p$ version of Green's symmetric imprimitivity theorem)
Suppose that $G$ and $H$ are countable discrete groups acting freely and properly on a second-countable, locally compact Hausdorff space $Z$, on the left and right, respectively, with commuting actions.
Then, for $p\in[1,\infty)$, the reduced $L^p$ crossed products $F^p_{red}(G,C_0(Z/H))$ and $F^p_{red}(H,C_0(G\backslash Z))$ are Morita equivalent, and the full $L^p$ crossed products $F^p(G,C_0(Z/H))$ and $F^p(H,C_0(G\backslash Z))$ are Morita equivalent.
\end{thm}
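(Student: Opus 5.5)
The plan is to reduce Theorem~\ref{thm:pGreen} to the groupoid results already established, Theorem~\ref{thm:main} and Corollary~\ref{cor:full-morita}, and then to transport the resulting Morita equivalences along isometric isomorphisms between groupoid algebras and crossed products.

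First, I will verify the groupoid equivalence. Put $\mathcal G=G\ltimes Z/H$ and $\mathcal H=H\ltimes G\backslash Z$; in the paper's notation these are $G\rtimes Z/H$ and $H\rtimes G\backslash Z$. The plan is to check that $Z$ is a $(\mathcal G,\mathcal H)$-equivalence as in \cite[Example 2.33]{Will}. The left anchor is $z\mapsto zH$ and the left action is $(g,zH)\cdot z=gz$. The right structure is symmetric, with anchor $z\mapsto Gz$. Freeness and properness of these groupoid actions come from freeness and properness of the group actions. The commuting hypothesis makes the actions compatible, and the anchors induce the required homeomorphisms $Z/\mathcal H\cong Z/H$ and $\mathcal G\backslash Z\cong G\backslash Z$. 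Both groupoids are Hausdorff and étale because $G$ and $H$ are discrete. Their unit spaces $Z/H$ and $G\backslash Z$ are locally compact Hausdorff, since the actions are proper, and they are second countable as open images of $Z$. Hence these unit spaces are paracompact, which is the hypothesis needed in Section~\ref{sect_AI} and Section~\ref{sec:full-Lp}.

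Second, I will apply the earlier results. Theorem~\ref{thm:main} gives a Morita equivalence between $F^p_{red}(\mathcal G)$ and $F^p_{red}(\mathcal H)$, and Corollary~\ref{cor:full-morita} does the same for $F^p(\mathcal G)$ and $F^p(\mathcal H)$, for all $p\in[1,\infty)$.

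Third, I will identify these algebras with crossed products. For the reduced algebras I will use \cite[Proposition 3.1]{BK}: the identification is induced on the dense subalgebra by $C_c(\mathcal G)\ni f\mapsto\sum_g f(g,\cdot)\,u_g$, and it is isometric. For the full algebras I will use \cite[Example~4.8, Theorems~5.5 and 5.13]{BKM1}. The idea is that $\|\cdot\|_I$-contractive representations of $C_c(\mathcal G)$ on $L^p$-spaces correspond to contractive covariant pairs $(\pi,v)$ of $(C_0(Z/H),G)$, with $v_g$ invertible isometries arising from the bisections $\{g\}\times Z/H$. Under this correspondence the two universal norms agree.

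Finally, I will use that a Morita equivalence in the sense of Definition~\ref{def:Morita} transports along isometric isomorphisms $A\cong A'$ and $B\cong B'$: one keeps the same bimodules $(\X,\Y)$, composes the actions and pairings with the isomorphisms, and fullness, nondegeneracy and all the norm inequalities are preserved.

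I expect the main obstacle to be the full-algebra identification. One has to check that the class of representations defining $F^p(G,C_0(Z/H))$ matches exactly the $\|\cdot\|_I$-contractive representations used for $F^p(\mathcal G)$. In particular, for $1<p<\infty$ the covariant pairs need not a priori be spatial, and the disintegration in \cite{BKM1} must be invoked to see that the two universal norms coincide. If that matching is delicate, I will first prove one inequality directly by integrating covariant pairs to $\|\cdot\|_I$-contractive representations. I will then obtain the reverse inequality from Proposition~\ref{prop:BKM-isometric}, since a spatial representation restricts along the constant bisections to a covariant pair of the group action.
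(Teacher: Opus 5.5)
Your proposal is correct and follows the paper's own argument. The paper likewise cites \cite[Example 2.33]{Will} for the equivalence of the two transformation groupoids, applies Theorem~\ref{thm:main} and Corollary~\ref{cor:full-morita}, and transports the resulting Morita equivalences along the isometric identifications of \cite[Proposition 3.1]{BK} and \cite[Example~4.8, Theorems~5.5 and~5.13]{BKM1}. The only difference is that you spell out two steps the paper leaves implicit: the paracompactness check, and a two-inequality sketch of the full-algebra identification (one inequality from the $I$-norm estimate for integrated covariant pairs, the other from restricting the spatial representation of Proposition~\ref{prop:BKM-isometric} to the constant bisections).
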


\begin{rem}
Second countability is assumed above just to ensure that the unit space of the transformation groupoid is paracompact. The result also holds when $Z$ is a compact Hausdorff space.
\end{rem}

\begin{cor} ($L^p$ version of Green's imprimitivity theorem)
Suppose that $H$ is a subgroup of a countable discrete group $G$.
Then, for every $p\in[1,\infty)$, \[
F^p(G,C_0(G/H))
\quad\text{and}\quad
F^p(H)
\]
are Morita equivalent, and
\[
F^p_{\mathrm{red}}(G,C_0(G/H))
\quad\text{and}\quad
F^p_{\mathrm{red}}(H)
\]
are Morita equivalent.
\end{cor}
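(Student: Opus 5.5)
The plan is to deduce the corollary from Theorem~\ref{thm:pGreen} with a suitable choice of commuting actions, plus an identification of the resulting algebras.

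Take $Z=G$ as a discrete space. Let $G$ act on the left by multiplication and $H$ act on the right by multiplication. Both actions are free and commute. They are proper because $Z$ is discrete and the actions are free: the map $(\gamma,z)\mapsto(\gamma z,z)$ is injective, and its preimages of finite sets are finite. Since $G$ is countable, $Z$ is second-countable and locally compact Hausdorff, so Theorem~\ref{thm:pGreen} applies. The orbit spaces are $Z/H=G/H$, carrying the left translation action of $G$, and $G\backslash Z=\{\mathrm{pt}\}$, carrying the trivial action of $H$. Theorem~\ref{thm:pGreen} therefore gives Morita equivalences
\[
F^p_{\mathrm{red}}(G,C_0(G/H))\sim F^p_{\mathrm{red}}(H,C_0(\mathrm{pt}))
\]
and
\[
F^p(G,C_0(G/H))\sim F^p(H,C_0(\mathrm{pt})).
\]

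It remains to identify the right-hand sides with $F^p_{\mathrm{red}}(H)$ and $F^p(H)$. The transformation groupoid $H\rtimes\{\mathrm{pt}\}$ is canonically isomorphic to the group $H$ viewed as an étale groupoid with one unit. So the reduced and full $L^p$-operator algebras of the groupoid coincide with $F^p_{\mathrm{red}}(H)$ and $F^p(H)$. In the reduced case this is immediate, since the single regular representation on $\ell^p(H)$ is the defining one. In the full case the norms agree by the definition of the full norm via $\|\cdot\|_I$-contractive representations. For a discrete group, $\|\cdot\|_I=\|\cdot\|_{\ell^1}$, so these are exactly the contractive representations of $\ell^1(H)$ on $L^p$-spaces, consistent with the identification in \cite{BKM1} already invoked before Theorem~\ref{thm:pGreen}.

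The only point needing care is this last identification of the full crossed product over a point with $F^p(H)$, that is, making sure the conventions for $F^p(H)$ match the groupoid definition. I expect no real obstacle here: both algebras are completions of $\mathbb C[H]$ in the supremum over $\ell^1$-contractive $L^p$-representations, and the identity on $\mathbb C[H]$ extends to an isometric isomorphism.
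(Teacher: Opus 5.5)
Your proposal is correct and follows the paper's proof, which is simply Theorem~\ref{thm:pGreen} specialized to $Z=G$ with the canonical left $G$- and right $H$-multiplication actions. Your checks of freeness and properness, and your identification of the crossed products over $G\backslash Z=\{\mathrm{pt}\}$ with $F^p(H)$ and $F^p_{\mathrm{red}}(H)$ via $\|\cdot\|_I=\|\cdot\|_{\ell^1}$, spell out steps the paper leaves implicit.
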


\begin{proof}
This is the special case of Theorem~\ref{thm:pGreen} where $Z=G$ and $H$ is a subgroup of $G$ with the canonical actions.
\end{proof}

\begin{cor}
Let $G$ be a countable discrete group and let $p\in[1,\infty)$.
Then the canonical map
\[
F^p(G,C_0(G))
   \longrightarrow
F^p_{\mathrm{red}}(G,C_0(G))
\]
is an isometric isomorphism, and this algebra is Morita equivalent to
$\mathbb C$.
\end{cor}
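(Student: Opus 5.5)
We apply Theorem~\ref{thm:pGreen} with $Z=G$, $G$ acting by left translation and the trivial group $H=\{e\}$ acting on the right. Both actions are free and proper, and they commute. Here $Z/H=G$ and $G\backslash Z$ is a point. The transformation groupoid $H\rtimes G\backslash Z$ is the trivial groupoid, so both its full and reduced $L^p$-operator algebras are $\mathbb C$. Theorem~\ref{thm:pGreen} then gives that $F^p(G,C_0(G))$ and $F^p_{\mathrm{red}}(G,C_0(G))$ are each Morita equivalent to $\mathbb C$. It remains to prove that the canonical map between them is an isometric isomorphism.

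For this I would work at the groupoid level. Under the isometric identifications recalled before Theorem~\ref{thm:pGreen} (\cite[Proposition 3.1]{BK} for the reduced algebras, and \cite{BKM1} for the full ones), the canonical map becomes the map $F^p(K)\to F^p_{\mathrm{red}}(K)$ extending the identity on $\mathcal C(K)$, where $K=G\rtimes G$. The groupoid $K$ is the pair groupoid $G\times G$: the arrow $(g,h)$ has source $h$ and range $gh$, so $K\cong G\times G$ is principal and transitive.

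\begin{itemize}
\item \textbf{Unit space.} The unit space $G$ is discrete, so every singleton $\{u\}$ is full and clopen, and $K|_{\{u\}}$ is the trivial groupoid.
\item \textbf{Full side.} Theorem~\ref{thm:full-clopen-all-p} gives $\|j_{\{u\}}(\lambda)\|_{F^p(K)}=|\lambda|$.
\item \textbf{Reduced side.} Theorem~\ref{thm:mainF_red^p}, or a direct computation of the regular representation on $\ell^p(K_u)\cong\ell^p(G)$, gives the same value $|\lambda|$ for the reduced norm.
\end{itemize}

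This alone does not control all of $\mathcal C(K)$, so I would compute the two norms directly. The regular representation identifies $F^p_{\mathrm{red}}(K)$ isometrically with the closure $\overline{M^p_G}$ of finitely supported $G\times G$ matrices acting on $\ell^p(G)$. It then suffices to show that every $I$-norm contractive representation $\rho$ of $\mathcal C(K)$ on an $L^p$-space satisfies $\|\rho(f)\|\le\|f\|_{p,\mathrm{red}}$.

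Fix $f\in\mathcal C(K)$ and let $F\subseteq G$ be a finite set containing the range and source of every arrow in the support of $f$. Then $f$ lies in the subalgebra spanned by the matrix units $e_{g,h}$ with $g,h\in F$, which is isomorphic to $M_n(\mathbb C)$ with $n=|F|$. A representation of $M_n$ is controlled by the matrix units $\rho(e_{g,h})$, and each of these has $I$-norm $1$. By Proposition~\ref{prop:BKM-isometric} I may take $\rho$ to be spatial. Then each $\rho(e_{g,h})$ is a spatial partial isometry, and the restriction of $\rho$ to this copy of $M_n$ is, up to spatial isometries, of the form $x\mapsto x\otimes 1$ on $\ell^p_n\otimes_p L^p(\nu)$. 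Its norm is therefore the $\ell^p_n$ operator norm of $x$, which equals the reduced norm of $f$. Hence $\|f\|_{F^p(K)}\le\|f\|_{p,\mathrm{red}}$, and combined with the general reverse inequality this shows the canonical map is isometric with dense range, hence onto.

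The main obstacle is justifying this spatial decomposition of the $M_n$-representation. I would obtain it from the multiplication representation of $C_0(G)$, which yields disjoint bands $E_g$, one for each $g\in F$, together with spatial isometries $v_{g,h}$ mapping $E_h$ onto $E_g$. Transporting every band to $E_{g_0}$ for a fixed $g_0\in F$ identifies the sum of the bands with $\ell^p_n\otimes_p E_{g_0}$, and this gives the decomposition.
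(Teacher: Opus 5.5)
Your proof is correct. For the Morita equivalence with $\mathbb C$ it is the paper's argument: Green's imprimitivity with the trivial subgroup, i.e.\ Theorem~\ref{thm:pGreen} with $Z=G$ and the trivial group acting on the right.

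For the isometric isomorphism you take a genuinely different route. The paper's proof is one line: left translation of $G$ on itself is amenable, so \cite[Corollary~4.14]{BK} makes the full and reduced $L^p$ crossed products coincide isometrically. You instead pass to the transformation groupoid, recognize it as the pair groupoid $G\times G$, and compute directly:
\begin{enumerate}
\item Every $f\in\mathcal C(K)$ lies in a finite matrix algebra $M_n$.
\item The full norm is attained by a single spatial representation $\pi\rtimes v$ (Proposition~\ref{prop:BKM-isometric}).
\item In that representation the matrix units act as spatial partial isometries $v_{\{(g,h)\}}$ between the disjoint bands $\pi(1_{\{g\}})E$. These are multiplicative by (SCR3) and restrict to isometric isomorphisms between bands.
\item This lets you conjugate the restriction of $\pi\rtimes v$ to $x\mapsto x\otimes 1$ on $\ell^p_n\otimes_p E_{g_0}$. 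Its norm is at most the $\ell^p_n$-operator norm of the matrix, which is exactly the reduced norm, since every regular representation of the pair groupoid is the same matrix acting on $\ell^p(G)$.
\end{enumerate}

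Two small points you leave implicit are routine. First, the crossed-product and groupoid identifications extend the same identification of dense subalgebras, so the canonical map really corresponds to the identity on $\mathcal C(K)$. Second, $\pi(1_{\{g\}})v_{\{(g,h)\}}=v_{\{(g,h)\}}$ by (SCR2). Your bullet points about full clopen reductions are not needed for the argument.

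As for what each approach buys: yours avoids amenability altogether and is self-contained modulo the disintegration theorem of \cite{BKM1}. However, it exploits the special structure of this example, where finitely supported functions sit inside finite-dimensional matrix algebras. The paper's argument rests on the heavier input \cite[Corollary~4.14]{BK}, but that same citation immediately handles the next corollary on arbitrary free proper actions on non-discrete spaces. There your matrix-unit computation does not apply directly.
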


\begin{proof}
The left translation action of $G$ on itself is amenable, so the full
and reduced $L^p$-operator crossed products coincide isometrically by
\cite[Corollary~4.14]{BK}.  The Morita equivalence with $\mathbb C$
then follows from the previous corollary with $H=\{e\}$.
\end{proof}

\begin{cor} 
Suppose that $G$ is a countable discrete group acting freely and properly
on a second-countable, locally compact Hausdorff space $X$. Then, for every
$p\in[1,\infty)$, the canonical map
\[
F^p(G,C_0(X))
   \longrightarrow
F^p_{\mathrm{red}}(G,C_0(X))
\]
is an isometric isomorphism, and these algebras are Morita
equivalent to $C_0(X/G)$. In particular, the $K$-theory of
$F^p(G,C_0(X))$, equivalently of
$F^p_{\mathrm{red}}(G,C_0(X))$, is independent of
$p\in[1,\infty)$.
\end{cor}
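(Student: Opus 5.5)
The plan has two parts: first identify full and reduced crossed products using amenability, then apply the Green-type theorem with trivial $H$. For the first part, note that a free and proper action of a discrete group $G$ on $X$ is amenable. Properness makes the transformation groupoid $G\ltimes X$ proper. Freeness then makes it principal, and equivalent to the space $X/G$ via $Z=X$. Amenability of proper actions is standard: use a cutoff function $c\in C_c^+(X)$ with $\sum_g c(g^{-1}x)=1$; such a function exists because $X/G$ is paracompact, $X$ being second countable. With amenability in hand, I would invoke \cite[Corollary~4.14]{BK}, as in the preceding corollary. This shows that the canonical map $F^p(G,C_0(X))\to F^p_{\mathrm{red}}(G,C_0(X))$ is an isometric isomorphism for $p\in[1,\infty)$.

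For the Morita equivalence, apply Theorem~\ref{thm:pGreen} with $Z=X$, the given left $G$-action, and the trivial group $H=\{e\}$ acting trivially on the right. The $H$-action is trivially free and proper and commutes with the $G$-action, so the hypotheses hold. The theorem gives that $F^p_{red}(G,C_0(X/\{e\}))=F^p_{red}(G,C_0(X))$ is Morita equivalent to $F^p_{red}(\{e\},C_0(G\backslash X))$. The latter algebra is just $C_0(G\backslash X)$ with its sup norm. Indeed, the trivial-group transformation groupoid is the space $G\backslash X$ itself, whose reduced $L^p$-norm is the sup norm, since units act by multiplication in every regular representation. Writing $X/G$ for the orbit space of the left action matches the notation $G\backslash X$. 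The full version follows identically from the full half of Theorem~\ref{thm:pGreen}, and is consistent with the isometric identification above.

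For the $K$-theory statement, Banach-algebra Morita equivalence preserves $K$-theory (see \cite[Theorem~1.2]{Par15}). Hence $K_*(F^p(G,C_0(X)))\cong K_*(C_0(X/G))$, and the right-hand side is the topological $K$-theory of $X/G$, independent of $p$.

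The main obstacle is the amenability input. One must check that the hypotheses of \cite[Corollary~4.14]{BK} are met by free proper actions, that is, that the notion of amenability used there is implied by the existence of a continuous cutoff function. I would verify this by building the approximately invariant positive-type functions $x\mapsto \sum_h c(h^{-1}x)^{1/p}c(h^{-1}g^{-1}x)^{1/q}$-style directly from $c$; these are in fact exactly invariant. A secondary point is that the identification of transformation groupoid algebras with crossed products for the full case was noted before Theorem~\ref{thm:pGreen}, so no further work is needed there.
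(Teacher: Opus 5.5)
Your proposal follows the paper's proof. Amenability of the proper groupoid $G\ltimes X$ (which the paper takes from \cite[Proposition~2.2.5]{ADR}, itself a cutoff-function argument) feeds into \cite[Corollary~4.14]{BK}. The Morita equivalence with $C_0(X/G)$ is Theorem~\ref{thm:pGreen} with $Z=X$ and trivial $H$, and $K$-theory invariance then follows.

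One correction to your amenability sketch. Unless $X/G$ is compact, no $c\in C_c^+(X)$ can satisfy $\sum_g c(g^{-1}x)=1$, since that would force $X=G\cdot\operatorname{supp}c$. The cutoff function should instead be a continuous $c\geq 0$ whose support meets each $GK$, for $K\subseteq X$ compact, in a compact set. Then $x\mapsto\bigl(h\mapsto c(h^{-1}x)\bigr)$ is a continuous, exactly $G$-equivariant map into probability measures on $G$, which is the invariance you need. Your displayed $p,q$ expression omits the translation by $g$ in the summation index, so it is not identically $1$.
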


\begin{proof}
The properness of the action implies that the transformation groupoid
$G\ltimes X$ is proper. Since its orbit space $X/G$ is paracompact (indeed, $X/G$ is second countable, locally compact, and Hausdorff),
$G\ltimes X$ is amenable by \cite[Proposition~2.2.5]{ADR}, and hence the action $G\curvearrowright X$ is
amenable. Therefore the canonical regular
homomorphism
\[
F^p(G,C_0(X))
   \longrightarrow
F^p_{\mathrm{red}}(G,C_0(X))
\]
is an isometric isomorphism for every $p\in[1,\infty)$ by
\cite[Corollary~4.14]{BK}.
The Morita equivalence with $C_0(X/G)$ follows from
Theorem~\ref{thm:pGreen} applied with $Z=X$ and the trivial group acting on the right.
The assertion about $K$-theory follows.
\end{proof}

More generally, using the $L^P$-crossed product notation of \cite[Section~4]{BK}, Remark~\ref{rem:L^P} gives a Morita equivalence
\[
F^P_{\mathrm{red}}(G,C_0(X))\sim_M C_0(X/G)
\]
for every nonempty $P\subseteq[1,\infty)$.  Proposition
\ref{prop:commKT} then shows that the canonical maps associated with nonempty
$P_1\subseteq P_2\subseteq [1,\infty)$ induce isomorphisms on $K$-theory.

\subsection{Coarse groupoids}

To each uniformly locally finite coarse space $(X,\mathcal{E})$ (for instance, a uniformly locally finite metric space with its bounded coarse structure), Skandalis, Tu and Yu \cite{STY} introduced a locally compact Hausdorff \'{e}tale groupoid $G(X)$ called the coarse groupoid. Its unit space is a locally compact Hausdorff space, and is in fact the Stone-\v{C}ech compactification $\beta X$ when the coarse structure $\mathcal{E}$ is unital. Thus, $G(X)$ is not second countable in general.

Coarsely equivalent uniformly locally finite coarse spaces have equivalent coarse groupoids \cite[Corollary 3.6]{STY}. Therefore, if these coarse spaces are unital (so the unit spaces of their coarse groupoids are compact Hausdorff), the reduced $L^p$-operator algebras of the coarse groupoids are Morita equivalent by Theorem~\ref{thm:main}.
The same groupoid equivalence, together with Corollary~\ref{cor:full-morita}, also shows
that the full $L^p$-operator algebras are Morita equivalent for every $p\in[1,\infty]$.

\begin{rem}
The reduced $L^p$-operator algebra of the coarse groupoid $G(X)$ is isometrically isomorphic to the $\ell^p$ uniform Roe algebra of $X$ \cite[Proposition 5.1]{CD}. 
For metric spaces with bounded geometry, coarse equivalence of two such spaces is equivalent to their $\ell^p$ uniform Roe algebras being stably isometrically isomorphic for each $p\in[1,\infty)$ \cite[Theorem 3.4]{CL}.
\end{rem}

\subsection{Groupoids associated with inverse semigroups}

In \cite[Section 4.3]{Pat}, Paterson introduced the universal groupoid of an inverse semigroup.
These universal groupoids are locally compact, \'{e}tale, and ample, but are not necessarily Hausdorff, and their unit spaces are locally compact Hausdorff.
If an inverse semigroup is countable, then its universal groupoid is second countable so the unit space is paracompact.
The universal groupoid $G(S)$ is Hausdorff if and only if $S$ is a weak semilattice \cite[Theorem 5.17]{Ste1}, and this includes $E$-unitary inverse semigroups as examples. 

Strongly Morita equivalent inverse semigroups have equivalent universal groupoids \cite[Theorem 4.7]{Ste2}.
Thus for strongly Morita equivalent countable inverse semigroups, the reduced $L^p$-operator algebras of their universal groupoids are Morita equivalent by Theorem~\ref{thm:main}, and their full $L^p$-operator algebras are Morita equivalent by Corollary~\ref{cor:full-morita}.

In \cite[Section 13]{Exel}, Exel introduced the tight groupoid of a countable inverse semigroup with zero as a certain reduction of the universal groupoid.
A characterization of Hausdorffness of the tight groupoid is given in \cite[Theorem 3.16]{ExelPardo}.

If two countable inverse semigroups with zero are strongly Morita equivalent, then the equivalence between their universal groupoids restricts to an equivalence between their tight groupoids \cite[Theorem 4.7 and the paragraph following it]{Ste2}, so the reduced $L^p$-operator algebras of the tight groupoids are Morita equivalent; likewise for the full $L^p$-operator algebras.

The full and reduced $C^*$-algebras of an inverse semigroup are isomorphic to those of its universal groupoid \cite[Theorems 4.4.1 and 4.4.2]{Pat}.
Moreover, for $p\in[1,\infty]$, the respective full $L^p$-operator algebras are isometrically isomorphic \cite[Corollary 6.10]{BKM1}, but we are not aware of an analogous statement in the literature for the reduced $L^p$-operator algebras when $p\neq 2$.

\begin{cor}
Let $S$ and $T$ be countable strongly Morita equivalent inverse semigroups.
Then their full $L^p$-operator algebras $F^p(S)$ and $F^p(T)$ are Morita
equivalent for every $p\in[1,\infty]$.
\end{cor}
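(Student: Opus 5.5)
The plan is to reduce the statement to Corollary~\ref{cor:full-morita} by passing through the universal groupoids. Let $G(S)$ and $G(T)$ be Paterson's universal groupoids of $S$ and $T$. These are locally compact, locally Hausdorff, \'{e}tale groupoids whose unit spaces are locally compact Hausdorff. Because $S$ and $T$ are countable, $G(S)$ and $G(T)$ are second countable, so their unit spaces are second-countable locally compact Hausdorff spaces and hence paracompact. This puts both groupoids within the standing hypotheses of Section~\ref{sec:full-Lp}.

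The steps, in order, are as follows.
\begin{enumerate}
\item By \cite[Theorem 4.7]{Ste2}, strong Morita equivalence of $S$ and $T$ yields a groupoid equivalence between $G(S)$ and $G(T)$.
\item Corollary~\ref{cor:full-morita} then gives, for every $p\in[1,\infty]$, a Morita equivalence between $F^p(G(S))$ and $F^p(G(T))$ in the sense of Definition~\ref{def:Morita}.
\item By \cite[Corollary 6.10]{BKM1}, there are isometric isomorphisms $F^p(S)\cong F^p(G(S))$ and $F^p(T)\cong F^p(G(T))$ for $p\in[1,\infty]$.
\item Morita equivalence of Banach algebras is transported along isometric isomorphisms. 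Given an isometric isomorphism $\alpha\colon A'\to A$ and a Morita equivalence $(\X,\Y)$ between $A$ and $B$, compose the $A$-actions and the $A$-valued pairing with $\alpha$ and $\alpha^{-1}$. All norm inequalities, fullness, nondegeneracy and the compatibility identities (3)--(4) are preserved, so $(\X,\Y)$ becomes a Morita equivalence between $A'$ and $B$. Applying this on both sides finishes the argument.
\end{enumerate}

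The main point requiring care is step 3. I must check that the definition of $F^p(S)$ used in \cite{BKM1} is the same full $L^p$-operator algebra intended here, and that the isomorphism is really isometric for all $p$, including $p=1,\infty$, and not merely an algebraic identification. I also need the universal groupoid convention in \cite{Ste2} to agree with Paterson's. If \cite[Corollary 6.10]{BKM1} were stated only for $1\le p<\infty$ or under a separate convention at $p=\infty$, I would treat the endpoints via the $I$-norm formulas $\|\cdot\|_{L^1}=\|\cdot\|_{I,s}$ and $\|\cdot\|_{L^\infty}=\|\cdot\|_{I,r}$. Everything else is a direct citation, together with the observation that Morita equivalence is invariant under isometric isomorphism.
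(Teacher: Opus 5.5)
Your proposal is correct and is essentially the paper's argument: countability makes $G(S)$ and $G(T)$ second countable with paracompact unit spaces, \cite[Theorem 4.7]{Ste2} makes them equivalent, Corollary~\ref{cor:full-morita} gives a Morita equivalence between $F^p(G(S))$ and $F^p(G(T))$, and \cite[Corollary 6.10]{BKM1} identifies these isometrically with $F^p(S)$ and $F^p(T)$ for all $p\in[1,\infty]$. Your step 4, transporting a Morita equivalence along isometric isomorphisms, is the routine step that the paper leaves implicit.
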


For the remainder of this subsection, assume that $S$ has no zero and
that $p\in[1,\infty)$.  We first compare the regular representations of
$S$ and its universal groupoid $G(S)$.  When $G(S)$ is Hausdorff, this
comparison yields an isometric isomorphism of the corresponding reduced
$L^p$-operator algebras.  

Recall that the left regular representation $\lambda$ of an inverse semigroup $S$ on $\ell^p(S)$ is given by
\[\lambda(s)\left(\sum_{t\in S}a_tt\right)=\sum_{\{ t\in S:tt^*\leq s^*s \}}a_tst=\sum_{\{ t\in S:s^*st=t \}}a_tst.\]

For $e\in E$, let $S_e=\{t\in S:t^*t=e\}$.
Note that $S=\bigsqcup_{e\in E}S_e$, so $\ell^p(S)=\bigoplus_{e\in E}\ell^p(S_e)$. 
For $e\in E$ and $s\in S$, define \[\lambda_e(s)\left(\sum_{t\in S_e}a_tt\right)=\sum_{\{ t\in S \colon  tt^*\leq s^*s,t^*t=e \}}a_tst.\]
Note that $t\mapsto st$ is injective on the domain $tt^*\leq s^*s$, so $\lambda_e(s)$ is a contraction on $\ell^p(S_e)$.
Extending $\lambda$ linearly to $\mathbb CS$, the operator norm closure of $\lambda(\mathbb{C}S)$ in $B(\ell^p(S))$ is the reduced $L^p$-operator algebra of $S$, denoted by $F^p_{red}(S)$.

\begin{lem} (cf. \cite[Proposition 4.4.5]{Pat} for the $p=2$ case)
$\lambda_e$ is a representation of $S$ on $\ell^p(S_e)$, and the left regular representation $\lambda$ is a direct sum (over $E$) of the representations $\lambda_e$.
\end{lem}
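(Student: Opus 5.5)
The plan is to verify three things directly from the formula for $\lambda_e$: that $\ell^p(S_e)$ is invariant under $\lambda(s)$ and $\lambda(s)$ restricts there to $\lambda_e(s)$; that $\lambda_e$ is multiplicative; and that the decomposition $\ell^p(S)=\bigoplus_{e\in E}\ell^p(S_e)$ is an $\ell^p$-direct sum respected by $\lambda$.

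First, invariance. Fix $e\in E$, $t\in S_e$ and $s\in S$ with $tt^*\le s^*s$. Then
\[
(st)^*(st)=t^*s^*st=t^*(s^*s)(tt^*)t=t^*tt^*t=e,
\]
using that idempotents commute and $s^*s\,tt^*=tt^*$. Hence $st\in S_e$. Comparing the defining formulas, the restriction of $\lambda(s)$ to $\ell^p(S_e)$ is exactly $\lambda_e(s)$, and $\lambda(s)$ maps each summand into itself. As already noted, $t\mapsto st$ is injective on $\{t:tt^*\le s^*s\}$ (it has left inverse $t\mapsto s^*t'$, since $s^*st=t$). So $\lambda_e(s)$ sends distinct basis vectors to distinct basis vectors or to $0$, and is a contraction on $\ell^p(S_e)$ for every $p$.

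Second, multiplicativity, which is where I expect the only real care to be needed: matching the domain conditions. Fix $t\in S_e$ and $s_1,s_2\in S$. We need
\[
\lambda_e(s_1)\lambda_e(s_2)\delta_t=\lambda_e(s_1s_2)\delta_t.
\]
The left side is nonzero, and then equal to $\delta_{s_1s_2t}$, exactly when $tt^*\le s_2^*s_2$ and $s_2t(s_2t)^*\le s_1^*s_1$. The right side is nonzero exactly when $tt^*\le (s_1s_2)^*(s_1s_2)=s_2^*s_1^*s_1s_2$. I will show these conditions are equivalent using the order relations $f\le g \iff fg=f$. Suppose $tt^*\le s_2^*s_1^*s_1s_2$. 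Since $s_2^*s_1^*s_1s_2\le s_2^*s_2$, we get $tt^*\le s_2^*s_2$. Moreover,
\[
s_2tt^*s_2^*\le s_2s_2^*s_1^*s_1s_2s_2^*\le s_1^*s_1,
\]
where the first step conjugates the inequality by $s_2$ and the second uses that idempotents commute. Conversely, if $tt^*\le s_2^*s_2$ and $s_2tt^*s_2^*\le s_1^*s_1$, then
\[
tt^*=s_2^*s_2tt^*s_2^*s_2=s_2^*(s_2tt^*s_2^*)s_2\le s_2^*s_1^*s_1s_2.
\]
So the two sides agree on basis vectors, and by linearity and boundedness they agree on all of $\ell^p(S_e)$. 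This makes $\lambda_e$ a representation of $S$, and it extends linearly to $\mathbb{C}S$.

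Third, the direct sum. Since $S=\bigsqcup_{e\in E}S_e$, the space $\ell^p(S)$ is canonically the $\ell^p$-direct sum of the $\ell^p(S_e)$. Each $\lambda(s)$ preserves the summands and acts on $\ell^p(S_e)$ as $\lambda_e(s)$, so $\lambda=\bigoplus_e\lambda_e$ as operators, for $s\in S$ and hence for all of $\mathbb{C}S$. Finally, $\|\lambda(x)\|=\sup_e\|\lambda_e(x)\|$ for $x\in\mathbb{C}S$, since a block-diagonal operator on an $\ell^p$-sum has norm equal to the supremum of the norms of its blocks.
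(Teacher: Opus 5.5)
Your proof is correct and follows essentially the same route as the paper: you check that $st\in S_e$, prove $\lambda_e(s_1s_2)=\lambda_e(s_1)\lambda_e(s_2)$ on point masses by showing that the condition $tt^*\le s_2^*s_1^*s_1s_2$ is equivalent to the pair of conditions $tt^*\le s_2^*s_2$ and $s_2tt^*s_2^*\le s_1^*s_1$, using the same conjugation inequalities as the paper, and then read off $\lambda=\bigoplus_e\lambda_e$ from $S=\bigsqcup_e S_e$. As in the paper, the density of point masses you invoke relies on the standing assumption $p<\infty$ of that subsection.
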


\begin{proof}
It is straightforward to check that if $t\in S_e$ and $tt^*\leq s^*s$, then $st\in S_e$.
Moreover, if $tt^*\leq s_2^*s_2$ and $s_2tt^*s_2^*=(s_2t)(s_2t)^*\leq s_1^*s_1$, then $tt^*=s_2^*s_2tt^*s_2^*s_2\leq s_2^*s_1^*s_1s_2=(s_1s_2)^*(s_1s_2)$.
Conversely, if $tt^*\leq s_2^*s_1^*s_1s_2$, then clearly $tt^*\leq s_2^*s_2$, and $s_2tt^*s_2^*\leq s_2s_2^*s_1^*s_1s_2s_2^*\leq s_1^*s_1$.
Hence $\lambda_e(s_1s_2)=\lambda_e(s_1)\lambda_e(s_2)$ as the point masses span a dense subspace of $\ell^p(S_e)$. 
Since $S=\bigsqcup_{e\in E}S_e$, we have $\ell^p(S)=\bigoplus_{e\in E}\ell^p(S_e)$, and $(\bigoplus_{e\in E}\lambda_e)(s)(t)=\lambda_{t^*t}(s)(t)=\lambda(s)(t)$ for $s,t\in S$.
\end{proof}

For the rest of this subsection, we shall write $G$ for $G(S)$.
Let $\overline{E}=\{\overline{e}:e\in E\}$, where $\overline{e}$ is the filter $\{f\in E:f\geq e\}$. 
Identifying $\overline{e}$ with its characteristic function, $\overline{E}$ is dense in $G^{(0)}$ by \cite[Proposition 4.3.1]{Pat} or \cite[Proposition 3.2(a)]{KS}.
Also, the source fiber $G_{\overline{e}}$ is $\{ [s,\overline{s^*s}]:s\in S_e \}$ by \cite[Proposition 3.2(b)]{KS}.
Moreover, by \cite[Proposition 3.2(c)]{KS}, the map from $S$ to $G$ sending $s$ to $[s,\overline{s^*s}]$ is injective with dense range.
Hence there is an invertible isometry $V_e:\ell^p(S_e)\to\ell^p(G_{\overline{e}})$ for $p\in[1,\infty]$. 

For $s\in S$, let
\[
\mathcal{O}_s=\{[s,\phi]\in G:\phi(s^*s)=1\},
\]
which is a compact open bisection of $G$.  Let $A_{\mathbb C}(G)$
denote the Steinberg algebra of $G$.  The map
\[
S\longrightarrow A_{\mathbb C}(G),\qquad
s\longmapsto\chi_{\mathcal O_s},
\]
extends to an algebra isomorphism
\[
\pi:\mathbb CS\longrightarrow A_{\mathbb C}(G)
\]
by \cite[Theorem~6.3 and Remark~4.2]{Ste1}.

In the next proposition, we shall write $\lambda^S$ and $\lambda^G$ for the semigroup and groupoid regular representations, respectively.
Let $\lambda^G_{\overline{e}}:\mathcal C(G)\to B(\ell^p(G_{\overline{e}}))$ be the regular representation associated with the unit $\overline{e}$.

\begin{prop}
For each $s\in S$ and $e\in E$, \[ \lambda^S_e(s) = V_e^{-1}\lambda^G_{\overline{e}}(\pi(s))V_e. \]
\end{prop}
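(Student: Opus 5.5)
Both sides are operators on $\ell^p(S_e)$ after conjugating by $V_e$, and $\ell^p(S_e)$ has the point masses as a basis. So it suffices to check the identity on each point mass $\delta_t$, $t\in S_e$, and compare the images under $V_e$. Here $V_e$ is the isometry induced by the bijection $S_e\to G_{\overline e}$, $t\mapsto[t,\overline{e}]$, with $\overline{t^*t}=\overline e$; that this is a bijection is \cite[Proposition 3.2(b)]{KS}. Concretely, the claim is
\[
\lambda^G_{\overline e}(\chi_{\mathcal O_s})\,\delta_{[t,\overline e]}=V_e\lambda^S_e(s)\delta_t .
\]
The right side is $\delta_{[st,\overline e]}$ when $tt^*\le s^*s$, and $0$ otherwise.

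First I compute the left side from the convolution formula defining $\mathrm{Ind}\,\delta_{\overline e}$. For $\gamma\in G_{\overline e}$,
\[
(\lambda^G_{\overline e}(\chi_{\mathcal O_s})\delta_{[t,\overline e]})(\gamma)=\sum_{r(\eta)=r(\gamma)}\chi_{\mathcal O_s}(\eta)\,\delta_{[t,\overline e]}(\eta^{-1}\gamma).
\]
A summand is nonzero exactly when $\eta\in\mathcal O_s$ and $\gamma=\eta[t,\overline e]$. Since $\mathcal O_s$ is a bisection, at most one such $\eta$ exists, namely the unique element of $\mathcal O_s$ with source $r([t,\overline e])=t\cdot\overline e$, the filter generated by $tet^*=tt^*$, that is, $\overline{tt^*}$. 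Hence the image is either $0$ or the point mass at $\eta[t,\overline e]$.

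Next I determine when this $\eta$ exists. By definition of $\mathcal O_s$ (with germs $[s,\phi]$, $\phi(s^*s)=1$), an element of $\mathcal O_s$ with source $\overline{tt^*}$ exists if and only if $s^*s\in\overline{tt^*}$, i.e.\ $tt^*\le s^*s$. This matches exactly the condition in $\lambda^S_e(s)$. In that case $\eta=[s,\overline{tt^*}]$, and the germ product gives $[s,\overline{tt^*}][t,\overline e]=[st,\overline e]$. This uses the standard multiplication rule $[s,t\cdot\phi][t,\phi]=[st,\phi]$ in Paterson's groupoid of germs. The image is therefore $\delta_{[st,\overline e]}=V_e\delta_{st}$, and $st\in S_e$ by the preceding lemma. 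So the two sides agree on basis vectors.

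Finally, both sides are bounded operators; indeed $\lambda^G_{\overline e}(\chi_{\mathcal O_s})$ is a partial isometry since $\mathcal O_s$ is a compact open bisection. Agreement on the finitely supported vectors then extends by density for $p<\infty$. For $p=\infty$ I would instead compute pointwise directly, since all sums are finite. I expect the main care point to be the bookkeeping of germ conventions: that $r([t,\overline e])=\overline{tt^*}$, and that $[s,\overline{tt^*}]$ is a legitimate germ with the product above. These are routine consequences of Paterson's definitions.
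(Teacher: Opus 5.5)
Your proposal is correct and follows essentially the same route as the paper: check the identity on point masses $\delta_t$, $t\in S_e$, via the convolution formula, identify the unique contributing germ $[s,\overline{tt^*}]\in\mathcal O_s$, and use $[s,\overline{tt^*}][t,\overline e]=[st,\overline e]$. The only difference is minor. You dispose of the case $tt^*\not\le s^*s$ by noting that the source set of $\mathcal O_s$ is $\{\phi:\phi(s^*s)=1\}$, whereas the paper computes $\gamma[t,\overline e]^{-1}=[ut^*,\overline{tt^*}]$ explicitly. Both come to the same thing.
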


\begin{proof}
Let $t\in S_e$, and assume $tt^*\leq s^*s$ (equivalently, $s^*st=t$). 
Then $st\in S_e$, and $\lambda^S_e(s)\delta_t=\delta_{st}$, so \[V_e\lambda^S_e(s)\delta_t=\delta_{[st,\overline{e}]}.\]
On the other hand, $V_e\delta_t=\delta_{[t,\overline{e}]}$, so for $\gamma\in G_{\overline{e}}$,
\begin{align*}
\lambda^G_{\overline{e}}(\pi(s))V_e\delta_t(\gamma) &= \lambda^G_{\overline{e}}(\chi_{\mathcal{O}_s})\delta_{[t,\overline{e}]}(\gamma) \\
&= \sum_{r(\eta)=r(\gamma)} \chi_{\mathcal{O}_s}(\eta)\delta_{[t,\overline{e}]}(\eta^{-1}\gamma) \\
&= \chi_{\mathcal{O}_s}(\gamma [t,\overline{e}]^{-1}).
\end{align*}
If $\gamma[t,\overline{e}]^{-1}=[s,\phi]\in\mathcal{O}_s$, then $\phi=t\cdot\overline{e}$, and $\gamma=[s,\phi][t,\overline{e}]=[st,\overline{e}]$.
Note that since $t\in S_e$, we have $t^*\cdot\overline{tet^*}=t^*\cdot\overline{tt^*}=\overline{t^*tt^*t}=\overline{e}$, so $[st,\overline{e}][t,\overline{e}]^{-1}=[st,\overline{e}][t^*,\overline{tt^*}]=[stt^*,\overline{tt^*}]$. 
Moreover, $\overline{tt^*}(tt^*)=1$ and $(stt^*)(tt^*)=s(tt^*)$, so $[stt^*,\overline{tt^*}]=[s,\overline{tt^*}]\in\mathcal{O}_s$ since $tt^*\leq s^*s$.
Hence, \[ \lambda^G_{\overline{e}}(\pi(s))V_e\delta_t=\delta_{[st,\overline{e}]}. \]

For the case where $tt^*\nleq s^*s$, we have $V_e\lambda^S_e(s)\delta_t=0$. 
On the other hand, for any $\gamma=[u,\overline{e}]\in G_{\overline{e}}$, we have $\gamma[t,\overline{e}]^{-1}=[ut^*,t\cdot\overline{e}]=[ut^*,\overline{tt^*}]\notin\mathcal{O}_s$, so $\lambda^G_{\overline{e}}(\pi(s))V_e\delta_t=0$.
\end{proof}

\begin{prop}\label{prop:inverse-semigroup-reduced}
Let $S$ be an inverse semigroup without zero whose universal groupoid
$G=G(S)$ is Hausdorff (equivalently, $S$ is a weak semilattice by \cite[Theorem 5.17]{Ste1}), and let $p\in[1,\infty)$.  Then the isomorphism
\[
\pi:\mathbb CS\longrightarrow A_{\mathbb C}(G)
\]
extends uniquely to an isometric isomorphism
\[
F^p_{\mathrm{red}}(S)\cong F^p_{\mathrm{red}}(G).
\]
\end{prop}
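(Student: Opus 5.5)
The plan is to compare norms fibrewise. For $x\in\mathbb CS$ the preceding proposition gives, by linearity,
\[
\lambda^S_e(x)=V_e^{-1}\lambda^G_{\overline e}(\pi(x))V_e \qquad (e\in E),
\]
and $V_e$ is an invertible isometry. Since $\lambda^S=\bigoplus_{e\in E}\lambda^S_e$ on $\ell^p(S)=\bigoplus^p_{e}\ell^p(S_e)$, we get
\[
\|\lambda^S(x)\|=\sup_{e\in E}\|\lambda^S_e(x)\|=\sup_{e\in E}\|\lambda^G_{\overline e}(\pi(x))\|.
\]
Thus $\|x\|_{F^p_{red}(S)}\le \|\pi(x)\|_{p,red}$. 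The whole problem is the reverse inequality: the supremum over the dense subset $\overline E\subseteq G^{(0)}$ must equal the supremum over all of $G^{(0)}$.

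For the reverse inequality, fix $f=\pi(x)\in A_{\mathbb C}(G)$, so $f$ is a finite linear combination of $\chi_{\mathcal O_s}$. Fix $u\in G^{(0)}$ and finitely supported $\xi\in\ell^p(G_u)$, $\zeta\in\ell^{q}(G_u)$ with norm at most one. We want to show that $|\langle\zeta,\lambda^G_u(f)\xi\rangle|$ is approximated by analogous quantities at points $\overline e$ close to $u$. This gives $\|\lambda^G_u(f)\|\le\sup_e\|\lambda^G_{\overline e}(f)\|$.

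The mechanism is local constancy of $f$, which comes from Hausdorffness and ampleness.

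1. The finitely many arrows $\gamma_1,\dots,\gamma_m\in G_u$ in the supports of $\xi,\zeta$ lie in compact open bisections $W_1,\dots,W_m$.
2. Since $G$ is Hausdorff, these can be chosen with pairwise disjoint $W_i$.
3. Again by Hausdorffness, $f$ is locally constant: it is a finite combination of characteristic functions of compact open (hence clopen) sets.
4. The products $\gamma_i\gamma_j^{-1}$ are finitely many. Shrink to a compact open neighbourhood $N\subseteq s(W_1)\cap\dots\cap s(W_m)$ of $u$ small enough that, for $v\in N$, the value $f\bigl((s|_{W_i})^{-1}(v)\,((s|_{W_j})^{-1}(v))^{-1}\bigr)$ equals $f(\gamma_i\gamma_j^{-1})$ for all $i,j$.

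This is possible because each map $v\mapsto (s|_{W_i})^{-1}(v)((s|_{W_j})^{-1}(v))^{-1}$ is continuous into $G$, and $f$ is locally constant.

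By density of $\overline E$, pick $\overline e\in N$. Transport $\xi,\zeta$ to $\ell^p(G_{\overline e})$ and $\ell^q(G_{\overline e})$ along $\gamma_i\mapsto(s|_{W_i})^{-1}(\overline e)$; this map is injective by disjointness of the $W_i$, so norms are preserved. The matrix coefficient $\langle\delta_{\gamma_i},\lambda(f)\delta_{\gamma_j}\rangle=f(\gamma_i\gamma_j^{-1})$ is then reproduced exactly, so the pairings agree. Taking suprema gives $\|f\|_{p,red}\le\|x\|_{F^p_{red}(S)}$.

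Hence $\pi$ is isometric for the two reduced norms. It has dense range because $A_{\mathbb C}(G)$ is dense in $\mathcal C(G)=C_c(G)$ in the inductive limit topology, hence in $I$-norm, hence in $\|\cdot\|_{p,red}$. So $\pi$ extends uniquely to an isometric isomorphism.

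The main obstacle I expect is the continuity/Hausdorff step above, in particular guaranteeing that the arrows in the finite supports can be separated by disjoint bisections and that $f$ is genuinely locally constant. This is exactly where Hausdorffness of $G(S)$ is used; in the non-Hausdorff case $\chi_{\mathcal O_s}$ need not be continuous and the comparison can fail. The case $p=1$ needs no separate treatment, since the argument is uniform in $p\in[1,\infty)$ with $q$ the conjugate exponent.
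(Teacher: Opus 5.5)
Your proof is correct and follows the same skeleton as the paper: identify $\lambda^S_e$ with $\lambda^G_{\overline e}\circ\pi$ fibrewise, show that the reduced norm of $\pi(x)$ is already attained as a supremum over the dense set $\overline E\subseteq G^{(0)}$, and conclude using density of $A_{\mathbb C}(G)$. The only difference is in the dense-subset step. The paper gets it from lower semicontinuity of $u\mapsto\|\lambda^G_u(f)\|$ for all $f\in C_c(G)$, citing an $\ell^p$-adaptation of \cite[Lemma~2.6]{Guo}; you prove it directly for locally constant $f\in A_{\mathbb C}(G)$ by transporting finitely supported test vectors along disjoint compact open bisections, which reproduces the matrix coefficients exactly and makes the argument self-contained.
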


\begin{proof}
Since $G$ is Hausdorff and ample, $A_{\mathbb C}(G)$ is $I$-norm dense in $C_c(G)$ by \cite[Proposition~4.2]{CFST}, and hence is dense
in $F^p_{\mathrm{red}}(G)$.
For $f\in C_c(G)$, the map
\[
G^{(0)}\longrightarrow [0,\infty),
\qquad
u\longmapsto
\|\lambda^G_u(f)\|_{\mathcal B(\ell^p(G_u))}
\]
is lower semicontinuous.  Indeed, the argument of
\cite[Lemma~2.6]{Guo} applies with the $\ell^2$-norm replaced by the
$\ell^p$-norm.  Hence the supremum defining the reduced norm may be
taken over any dense subset of $G^{(0)}$.

Since
\[
\overline E=\{\overline e:e\in E\}
\]
is dense in $G^{(0)}$, for $\xi\in\mathbb CS$ we have
\[
\|\pi(\xi)\|_{p,\mathrm{red}}
=\sup_{e\in E}
  \|\lambda^G_{\overline e}(\pi(\xi))\|  
=\sup_{e\in E}
  \|\lambda^S_e(\xi)\| 
=\|\lambda^S(\xi)\|.
\]
Here the second equality follows from the preceding proposition by linearity.

Thus $\pi$ is isometric for the reduced norms.  Since
$\pi(\mathbb CS)=A_{\mathbb C}(G)$ is dense in
$F^p_{\mathrm{red}}(G)$, it extends to the asserted isometric
isomorphism.
\end{proof}

The next result follows from Proposition~\ref{prop:inverse-semigroup-reduced} and Theorem~\ref{thm:main}.

\begin{cor}
Let $S$ and $T$ be countable, strongly Morita equivalent inverse semigroups without zero, and suppose both are weak semilattices. Then their respective reduced $L^p$-operator algebras $F^p_{red}(S)$ and $F^p_{red}(T)$ are Morita equivalent for $p\in[1,\infty)$.
\end{cor}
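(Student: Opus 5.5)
This is a two-step composition. First, Proposition~\ref{prop:inverse-semigroup-reduced} identifies the reduced algebras of $S$ and $T$ with those of their universal groupoids. Second, Theorem~\ref{thm:main} gives the Morita equivalence between the groupoid algebras. Morita equivalence in the sense of Definition~\ref{def:Morita} is then transported along the resulting isometric isomorphisms.

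Concretely, I would first record the properties of the universal groupoids $G(S)$ and $G(T)$. By \cite[Section 4.3]{Pat}, each is locally compact, \'{e}tale and ample, with locally compact Hausdorff unit space. Since $S$ and $T$ are countable, both groupoids are second countable. Their unit spaces are therefore second countable, locally compact and Hausdorff, hence paracompact. Because $S$ and $T$ are weak semilattices, \cite[Theorem 5.17]{Ste1} shows that $G(S)$ and $G(T)$ are Hausdorff. In particular they satisfy the standing hypotheses of Theorem~\ref{thm:main}.

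Next, strong Morita equivalence of $S$ and $T$ gives an equivalence of groupoids $G(S)\sim G(T)$ by \cite[Theorem 4.7]{Ste2}. By Theorem~\ref{thm:main}, $F^p_{red}(G(S))$ and $F^p_{red}(G(T))$ are Morita equivalent for each $p\in[1,\infty)$. The semigroups have no zero, so Proposition~\ref{prop:inverse-semigroup-reduced} applies and gives isometric isomorphisms $F^p_{red}(S)\cong F^p_{red}(G(S))$ and $F^p_{red}(T)\cong F^p_{red}(G(T))$.

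Finally, I would transfer the Morita equivalence along these isomorphisms. Given the equivalence $(\X,\Y)$ between $F^p_{red}(G(S))$ and $F^p_{red}(G(T))$, I keep the same spaces and define the module actions and pairings of $F^p_{red}(S)$ and $F^p_{red}(T)$ by composing with the isomorphisms. Because the isomorphisms are isometric and multiplicative, every norm inequality and every algebraic identity in Definition~\ref{def:Morita} is preserved. Fullness and nondegeneracy are also preserved, since surjective isometries carry dense spans to dense spans.

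The only point that needs any care is confirming that the hypotheses ``without zero'' and ``Hausdorff'' (the weak semilattice condition) are exactly what Proposition~\ref{prop:inverse-semigroup-reduced} requires, on both sides of the equivalence. No further obstacle is expected.
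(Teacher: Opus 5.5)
Your proposal is correct and is essentially the paper's argument. The paper combines Proposition~\ref{prop:inverse-semigroup-reduced}, which gives $F^p_{red}(S)\cong F^p_{red}(G(S))$ and likewise for $T$, with Theorem~\ref{thm:main} applied to the universal groupoids, which are equivalent by \cite[Theorem 4.7]{Ste2}. Your additional checks (second countability giving paracompact unit spaces, and transporting the Morita equivalence along the isometric isomorphisms) only make explicit what the paper leaves implicit.
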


\section*{Acknowledgments}

AD would like to thank David Blecher for fruitful conversations regarding Morita equivalence of operator spaces. 
YCC is supported by the Research Start-up Fund of Jilin University (grant number 419080524100).
ZW is supported by the Research Start-up Funding Program of Hangzhou Normal University (grant number 4235C50224204070).

\section*{Use of AI tools}
OpenAI's ChatGPT (GPT-5.6 Sol) was used as an editorial and checking aid during the preparation of this manuscript, including English-language editing, proofreading, consistency checks, and assistance in auditing the exposition and simplifying the presentation of some arguments. The mathematical ideas, constructions, and results were developed by the authors. In particular, the central mathematical ideas and the strategy of the proofs were conceived by the authors. All AI-assisted suggestions were independently checked by the authors, who take full responsibility for the mathematical content of the article.

\bibliographystyle{plain}
\bibliography{mybib}

\begin{thebibliography}{10}

\bibitem{ADR}
C.~Anantharaman-Delaroche and J.~Renault.
\newblock {\em Amenable groupoids}, volume~36 of {\em Monographies de
  L'Enseignement Math\'{e}matique}.
\newblock L'Enseignement Math\'{e}matique, Geneva, 2000.

\bibitem{AKM}
C.~Antunes, J.~Ko, and R.~Meyer.
\newblock The bicategory of groupoid correspondences.
\newblock {\em New York J. Math.}, 28:1329--1364, 2022.

\bibitem{AOP}
A.~Austad, E.~Ortega, and M.~Palmstr{\o}m.
\newblock Polynomial growth and property {RD$_p$} for \'{e}tale groupoids with
  applications to {$K$}-theory.
\newblock {\em J. Noncommut. Geom.}, 19(2):601--645, 2025.

\bibitem{BK}
K.~Bardadyn and B.~Kwa\'{s}niewski.
\newblock Topologically free actions and ideals in twisted {B}anach algebra
  crossed products.
\newblock {\em Proc. Roy. Soc. Edinburgh Sect. A}, 156(1):157--187, 2026.

\bibitem{BKM1}
K.~Bardadyn, B.~Kwa\'{s}niewski, and A.~McKee.
\newblock Banach algebras associated to twisted \'{e}tale groupoids: inverse
  semigroup disintegration and representations on {$L^p$}-spaces.
\newblock {\em J. Funct. Anal.}, 289(12):111163, 2025.

\bibitem{BKM2}
K.~Bardadyn, B.~Kwa\'{s}niewski, and A.~McKee.
\newblock Banach algebras associated to twisted \'{e}tale groupoids: simplicity
  and pure infiniteness.
\newblock {\em Trans. Amer. Math. Soc.}, 2026.
\newblock DOI: 10.1090/tran/9724.

\bibitem{CGT}
Y.~Choi, E.~Gardella, and H.~Thiel.
\newblock Rigidity results for {$L^p$}-operator algebras and applications.
\newblock {\em Adv. Math.}, 452:109747, 2024.

\bibitem{Chung4}
Y.C. Chung.
\newblock Morita equivalence of two {$\ell^p$} {R}oe-type algebras.
\newblock {\em J. Noncommut. Geom.}, 19(3):1069--1088, 2025.

\bibitem{CD}
Y.C. Chung and X.~Du.
\newblock Ideal structure of {$\ell^p$} uniform {R}oe algebras.
\newblock {\em Preprint}, 2026.
\newblock arXiv:2606.11586.

\bibitem{CD2}
Y.C. Chung and X.~Du.
\newblock Morita induction and preservation of geometric and dynamical ideals.
\newblock {\em Preprint}, 2026.
\newblock arXiv:2608.30380.

\bibitem{CL}
Y.C. Chung and K.~Li.
\newblock Rigidity of {$\ell^p$} {R}oe-type algebras.
\newblock {\em Bull. Lond. Math. Soc.}, 50(6):1056--1070, 2018.

\bibitem{CFST}
L.P. Clark, C.~Farthing, A.~Sims, and M.~Tomforde.
\newblock A groupoid generalisation of {L}eavitt path algebras.
\newblock {\em Semigroup Forum}, 89(3):501--517, 2014.

\bibitem{Delfin24}
A.~Delf\'{\i}n.
\newblock Representations of {$C^*$}-correspondences on pairs of {H}ilbert
  spaces.
\newblock {\em J. Operator Theory}, 92(1):167--188, 2024.

\bibitem{Delfin26}
A.~Delf\'{\i}n.
\newblock {$L^p$}-modules and {$L^p$}-correspondences.
\newblock {\em Banach J. Math. Anal.}, 20(1):Paper No. 13, 2026.

\bibitem{EngelkingGeneralTopology}
R.~Engelking.
\newblock {\em General Topology}, volume~6 of {\em Sigma Series in Pure
  Mathematics}.
\newblock Heldermann Verlag, Berlin, revised and completed edition, 1989.

\bibitem{Exel93}
R.~Exel.
\newblock A {F}redholm operator approach to {M}orita equivalence.
\newblock {\em {$K$}-Theory}, 7(3):285--308, 1993.

\bibitem{Exel}
R.~Exel.
\newblock Inverse semigroups and combinatorial {$C^*$}-algebras.
\newblock {\em Bull. Braz. Math. Soc. (N.S.)}, 39(2):191--313, 2008.

\bibitem{ExelPardo}
R.~Exel and E.~Pardo.
\newblock The tight groupoid of an inverse semigroup.
\newblock {\em Semigroup Forum}, 92(1):274--303, 2016.

\bibitem{Fremlin}
D.~H. Fremlin.
\newblock {\em Measure Theory. Vol. 2: Broad Foundations}.
\newblock Torres Fremlin, Colchester, 2003.
\newblock Corrected second printing of the 2001 original.

\bibitem{GL}
E.~Gardella and M.~Lupini.
\newblock Representations of \'{e}tale groupoids on {$L^p$}-spaces.
\newblock {\em Adv. Math.}, 318:233--278, 2017.

\bibitem{Guo}
L.~Guo.
\newblock A groupoid approach to the equivariant coarse {B}aum--{C}onnes
  conjecture.
\newblock 2026.
\newblock arXiv:2604.25595.

\bibitem{HO}
E.V. Hetland and E.~Ortega.
\newblock Rigidity of twisted groupoid {$L^p$}-operator algebras.
\newblock {\em J. Funct. Anal.}, 285(6):110037, 2023.

\bibitem{KS}
M.~Khoshkam and G.~Skandalis.
\newblock Regular representations of groupoid {$C^*$}-algebras and applications
  to inverse semigroups.
\newblock {\em J. Reine Angew. Math.}, 546:47--72, 2002.

\bibitem{MRW}
P.S. Muhly, J.~Renault, and D.P. Williams.
\newblock Equivalence and isomorphism for groupoid {$C^*$}-algebras.
\newblock {\em J. Operator Theory}, 17(1):3--22, 1987.

\bibitem{MuhlyWilliamsImprimitivity}
P.S. Muhly and D.P. Williams.
\newblock Groupoid cohomology and the {Dixmier--Douady} class.
\newblock {\em Proc. London Math. Soc. (3)}, 71(1):109--134, 1995.

\bibitem{MW}
P.S. Muhly and D.P. Williams.
\newblock {\em Renault's equivalence theorem for groupoid crossed products},
  volume~3 of {\em New York Journal of Mathematics. NYJM Monographs}.
\newblock State University of New York, University at Albany, Albany, NY, 2008.

\bibitem{Par09I}
W.~Paravicini.
\newblock Induction for {B}anach algebras, groupoids and {$KK^{ban}$}.
\newblock {\em J. {$K$}-Theory}, 4(3):405--468, 2009.

\bibitem{Par09}
W.~Paravicini.
\newblock Morita equivalences and {$KK$}-theory for {B}anach algebras.
\newblock {\em J. Inst. Math. Jussieu}, 8(3):565--593, 2009.

\bibitem{Par15}
W.~Paravicini.
\newblock {$kk$}-theory for {B}anach algebras {I}: {T}he non-equivariant case.
\newblock {\em J. Funct. Anal.}, 268(10):3108--3161, 2015.

\bibitem{Pat}
A.L.T. Paterson.
\newblock {\em Groupoids, inverse semigroups, and their operator algebras},
  volume 170 of {\em Progress in Mathematics}.
\newblock Birkh\"{a}user Boston, Inc., Boston, MA, 1999.

\bibitem{RaWi98}
I.~Raeburn and D.P. Williams.
\newblock {\em Morita equivalence and continuous-trace {$C^*$}-algebras},
  volume~60 of {\em Mathematical Surveys and Monographs}.
\newblock American Mathematical Society, Providence, RI, 1998.

\bibitem{Ren1}
J.~Renault.
\newblock {\em A groupoid approach to {$C^*$}-algebras}, volume 793 of {\em
  Lecture Notes in Mathematics}.
\newblock Springer-Verlag, Berlin-New York, 1980.

\bibitem{Ren2}
J.~Renault.
\newblock {$C^*$}-algebras of groupoids and foliations.
\newblock In {\em Operator algebras and applications, {P}art 1 ({K}ingston,
  {O}nt., 1980)}, volume~38 of {\em Proc. Sympos. Pure Math.}, pages 339--350.
  Amer. Math. Soc., Providence, RI, 1982.

\bibitem{Ren3}
J.~Renault.
\newblock Transverse properties of dynamical systems.
\newblock In {\em Representation Theory, Dynamical Systems, and Asymptotic
  Combinatorics}, volume 217 of {\em Amer. Math. Soc. Transl. Ser. 2}, pages
  185--199. Amer. Math. Soc., Providence, RI, 2006.

\bibitem{Rie74}
M.A. Rieffel.
\newblock Induced representations of {$C^*$}-algebras.
\newblock {\em Adv. Math.}, 13:176--257, 1974.

\bibitem{Rie74a}
M.A. Rieffel.
\newblock Morita equivalence for {$C^*$}-algebras and {$W^*$}-algebras.
\newblock {\em J. Pure Appl. Algebra}, 5:51--96, 1974.

\bibitem{CRM}
A.~Sims, G.~Szab\'{o}, and D.P. Williams.
\newblock {\em Operator Algebras and Dynamics: Groupoids, Crossed Products, and
  Rokhlin Dimension}.
\newblock Advanced Courses in Mathematics -- CRM Barcelona. Birkh\"{a}user,
  Cham, 2020.

\bibitem{SW}
A.~Sims and D.P. Williams.
\newblock Renault's equivalence theorem for reduced groupoid {$C^*$}-algebras.
\newblock {\em J. Operator Theory}, 68(1):223--239, 2012.

\bibitem{STY}
G.~Skandalis, J.-L. Tu, and G.~Yu.
\newblock The coarse {B}aum--{C}onnes conjecture and groupoids.
\newblock {\em Topology}, 41(4):807--834, 2002.

\bibitem{Ste1}
B.~Steinberg.
\newblock A groupoid approach to discrete inverse semigroup algebras.
\newblock {\em Adv. Math.}, 223(2):689--727, 2010.

\bibitem{Ste2}
B.~Steinberg.
\newblock Strong {M}orita equivalence of inverse semigroups.
\newblock {\em Houston J. Math.}, 37(3):895--927, 2011.

\bibitem{Tu}
J.-L. Tu.
\newblock Non-{H}ausdorff groupoids, proper actions and {$K$}-theory.
\newblock {\em Doc. Math.}, 9:565--597, 2004.

\bibitem{Will}
D.P. Williams.
\newblock {\em A Tool Kit for Groupoid {$C^*$}-Algebras}, volume 241 of {\em
  Mathematical Surveys and Monographs}.
\newblock American Mathematical Society, Providence, RI, 2019.

\end{thebibliography}
\end{document}